\newcommand{\ud}[0]{\,\mathrm{d}}
\newcommand{\abs}[1]{|#1|}
\newcommand{\Babs}[1]{\Big|#1\Big|}
\newcommand{\Norm}[2]{\|#1\|_{#2}}
\newcommand{\BNorm}[2]{\Big\|#1\Big\|_{#2}}
\newcommand{\pair}[2]{\langle #1,#2 \rangle}
\newcommand{\Bpair}[2]{\Big\langle #1,#2 \Big\rangle}
\newcommand{\ave}[1]{\langle #1\rangle}
\newcommand{\lspan}[0]{\operatorname{span}}
\newcommand{\bddlin}[0]{\mathscr{L}}
\newcommand{\BMO}[0]{\operatorname{BMO}}
\newcommand{\supp}[0]{\operatorname{supp}}
\newcommand{\loc}[0]{\operatorname{loc}}
\newcommand{\R}{\mathbb{R}}
\newcommand{\C}{\mathbb{C}}
\newcommand{\N}{\mathbb{N}}
\newcommand{\Z}{\mathbb{Z}}
\renewcommand{\S}{\mathbb{S}}
\newcommand{\prob}[0]{\mathbb{P}}
\newcommand{\D}[0]{\mathbb{D}}
\newcommand{\E}[0]{\mathbb{E}}
\renewcommand{\P}[0]{\mathbb{P}}
\newcommand{\eps}[0]{\varepsilon}
\newcommand{\avL}[0]{\textit{\L}}
\swapnumbers \numberwithin{equation}{section}
\theoremstyle{plain}
\newtheorem{theorem}[equation]{Theorem}
\newtheorem{proposition}[equation]{Proposition}
\newtheorem{corollary}[equation]{Corollary}
\newtheorem{lemma}[equation]{Lemma}
\theoremstyle{definition}
\newtheorem{definition}[equation]{Definition}
\theoremstyle{remark}
\newtheorem{remark}[equation]{Remark}
\newtheorem{example}[equation]{Example}
\begin{document}

\title[Schatten properties of commutators]{Schatten properties of commutators\\ on metric spaces}

% Schatten properties of commutators on metric spaces
% Commutators and Sobolev norms on metric spaces

\author{Tuomas Hyt\"onen}
\address{Aalto University,
Department of Mathematics and Systems Analysis,
P.O. Box 11100, FI-00076 Aalto, Finland}
\email{tuomas.hytonen@aalto.fi}

\thanks{The author was supported by the Research Council of Finland through projects 346314, 364208, and 371637.
Parts of the paper were written while the author participated the trimester ``Boolean Analysis in Computer Science'' at the Hausdorff Research Institute for Mathematics in Bonn in 9--10/2024, and he would like to thank the Institute for excellent working conditions.}
%\date{\today}

\keywords{Commutator, Schatten class, singular integral, Sobolev space, space of homogeneous type}
\subjclass[2020]{42B20, 42B35, 46E36, 47B10, 47B47}

% 42B20 Singular and oscillatory integrals (Calder\'on-Zygmund, etc.) 
% 42B35 Function spaces arising in harmonic analysis 
% 46E36 Sobolev (and similar kinds of) spaces of functions on metric spaces; analysis on metric spaces
% 47B10 Linear operators belonging to operator ideals (nuclear, p-summing, in the Schatten-von Neumann classes, etc.)
% 47B47 Commutators, derivations, elementary operators, etc.

% 42B25 Maximal functions, Littlewood-Paley theory
% 46B09 Probabilistic methods in Banach space theory
% 46E40 Spaces of vector- and operator-valued functions
% 47A60 Functional calculus
% 47F05 Partial differential operators
% 60G46 Martingales and classical analysis

%\vspace{-0.8cm}

\begin{abstract}
We characterise the Schatten class $S^p$ properties of commutators $[b,T]$ of singular integrals and pointwise multipliers in a general framework of \mbox{(quasi-)metric} measure spaces. This covers, unifies, and extends a range of previous results in different special cases. As in the classical results on $\mathbb R^d$, the characterisation has three parts: (1) For $p>d$, we have $[b,T]\in S^p$ if and only if $b$ is in a suitable Besov (or fractional Sobolev) space. (2) For $p\leq d$, we have $[b,T]\in S^p$ if and only if $b$ is constant. (3) For $p=d$, we have $[b,T]\in S^{d,\infty}$ (a weak-type Schatten class) if and only if $b$ is in a first-order Sobolev space.

Result (1) extends to all spaces of homogeneous type as long as there are appropriate singular integrals, but for the more delicate properties (2) and (3), we assume a complete doubling metric space supporting a suitable Poincar\'e inequality, which is still very general. These latter results depend on new characterisations of constant functions and Sobolev spaces over such spaces obtained in a companion paper of the author with R.~Korte.

Even when specialised to various concrete domains considered earlier, the present results extend ones available in the literature by covering a larger class of operators with minimal kernel assumptions, removing {\em a priori} assumptions on the pointwise multiplier $b$, and allowing Schatten classes on the weighted spaces $L^2(w)$ with an arbitrary Muckenhoupt weight $w\in A_2$. Even on $\mathbb R^d$, such weighted results were previously known for a few special operators $T$ only, and on all other domains, they are completely new.
\end{abstract}

\maketitle

\vspace{-0.8cm}

\setcounter{tocdepth}{1}

\tableofcontents

\part{Background and overview}

\section{Introduction and main results}

The theme of this paper is characterising, via oscillatory norms of a function~$b$, quantitative compactness 
of its commutators $[b,T]:f\mapsto b(Tf)-T(bf)$ with a singular integral $T$.
In a critical endpoint case, this involves a first-order Sobolev norm of~$b$ and its equivalent descriptions of independent interest. Over the past few years, classical commutator theory on $\R^d$ has been extended through case studies in several new settings, including the Heisenberg groups and more. The aim of this paper is to present a general framework that covers, unifies, and simplifies many of these previous results with far-reaching extensions, including weighted versions.

A large part of this theory can be and will be developed in general spaces of homogeneous type of \cite{CW:71}, but the more delicate aspects related to endpoint estimates and Sobolev norms will require a somewhat controlled geometry, which is quantified by postulating a suitable Poincar\'e inequality. This is a well-established assumption in the part of analysis on metric spaces dealing with questions of first-order smoothness and known to cover a broad range of examples \cite{Heinonen:book,HKST}. However, to the best of our knowledge, it is only here that this theory is linked with the study of commutators for the first time, thus bridging a gap between two communities.

Before stating our new results, let us briefly recall the state-of-the-art. With somewhat different phenomena in the special dimension $d=1$ investigated by Peller \cite{Peller:80}, the big picture of the situation in the Euclidean spaces $\R^d$ with $d\geq 2$ is as follows:
\begin{enumerate}[\rm(1)]
  \item\label{it:Rd<p} For $p\in(d,\infty)$, the commutator $[b,T]$ is of Schatten class $S^p$ if and only if $b$ belongs to a suitable homogeneous Besov (or fractional Sobolev) space.
  \item\label{it:Rd>p} For $p\in(0,d]$, the commutator $[b,T]$ is of class $S^p$ if and only if $b$ is constant.
  \item\label{it:Rd=p} At the critical index $p=d$, the commutator $[b,T]$ is of weak Schatten class $S^{d,\infty}$ if and only if $b$ is in the homogeneous first-order Sobolev space $\dot W^{1,d}$.
\end{enumerate} 
The dichotomy of \eqref{it:Rd<p} and \eqref{it:Rd>p} was already discovered by Janson and Wolff \cite{JW:82}. Rochberg and Semmes \cite{RS:NWO} provided a new approach, allowing them to describe the more general Schatten--Lorentz $S^{p,q}$ properties of $[b,T]$ in terms of the membership of $b$ in suitable oscillatory spaces $\operatorname{Osc}^{p,q}$. This included a preliminary result in the direction of \eqref{it:Rd=p}; however, the somewhat abstract characterising condition $\operatorname{Osc}^{d,\infty}$ at the critical index was not identified with any classical function space.

This shortcoming was rectified by Connes, Sullivan, Teleman, and Semmes \cite{CST}, who sketched a proof of the surprising identification $\operatorname{Osc}^{d,\infty}=\dot W^{1,d}$. (The direction ``$\supseteq$'' had already been obtained in \cite{RS:end}.) More recently, a completely different proof was given by Lord, McDonald, Sukochev, and Zanin  \cite{LMSZ} (under the technical {\em a priori} assumption of $L^\infty$ functions), and an elaboration of the approach of \cite{CST} was presented by Frank \cite{Frank}, lifting the $L^\infty$ assumption and providing new asymptotic information. A particular interest in the results at this critical index comes from their connection to {\em quantum differentiability} in the sense of Connes \cite[Chapter IV]{Connes:book}; this aspect has been further elaborated in \cite{FSZ:23}.

Suppressing the precise assumptions for the moment, the operators $T$ featuring in these results may be vaguely described as {\em classical singular integrals} associated (in a sense) with the usual Laplacian $\Delta$ on the Euclidean space: either directly as in the case of the Riesz transforms $\partial_j\Delta^{-\frac12}$, or indirectly by means of obeying size and regularity estimates modelled after these basic examples. Over the past few years, these Schatten properties of commutators have seen multiple extensions to new settings, where
\begin{enumerate}[\rm(a)]
  \item\label{it:weights} $\R^d$ is equipped with one \cite{GLW:23} or two \cite{LLW:2wH,LLWW:2wR} weights, or
  \item\label{it:BN} $\R^d$ with the usual Laplacian is replaced by the half space $\R^d_+$ with the Neumann \cite{FLLVW:Neu} or the Bessel \cite{FLLX,FLSZ} Laplacian, or
  \item\label{it:HL} $\R^d$ is replaced by 
the Heisenberg group \cite{FLL:23,FLMSZ},
a more general stratified Lie group \cite{LXY},
or a model domain without any group structure \cite{CLOW}, each coming with their natural singular integrals, or
  \item\label{it:WZ} on $\R^d$, fractional-order \cite{FSZ:24}, multiparameter \cite{LLW:multi}, operator-valued \cite{Wei:thesis,WZ:24,Zhang:thesis}, or less regular \cite{WZ:24b} singular integrals are considered.
\end{enumerate}

The results in these diverse examples have brought to light some general patterns, making the time ripe for a unification.
Our main results are summarised in the following theorem.
We will comment on the various assumptions and conclusions after the statement.
Close to completing this project, the author learned that related results in spaces of homogeneous type spaces have been independently obtained in \cite{FWZZ,WXZZ}.

\begin{theorem}\label{thm:main}
We make the following assumptions on the space of homogeneous type $(X,\rho,\mu)$ and the bounded linear operator $T\in\bddlin(L^2(\mu))$ with the off-support integral representation
\begin{equation*}
  Tf(x)=\int_X K(x,y)f(y)\ud\mu(y),\qquad x\in X\setminus\supp f,
\end{equation*}
in terms of a kernel $K\in C(X\times X\setminus\{(x,x):x\in X\})$:
\begin{enumerate}[\rm(i)]
  \item\label{it:cover} For some $\Delta>0$, every ball $B(x,R)$ contains at most $C(R/r)^\Delta$ mutually $r$-separated points $x_i$, uniformly in $x\in X$ and $0<r\leq R<\infty$.

  \item\label{it:RDD} For some $0\leq d\leq D<\infty$, the measures $V(x,r):=\mu(B(x,r))$ satisfy
\begin{equation}\label{eq:RDD}
  \Big(\frac{R}{r}\Big)^d\lesssim\frac{V(x,R)}{V(x,r)}\lesssim\Big(\frac{R}{r}\Big)^D
\end{equation}
uniformly in $x\in X$ and $0<r\leq R<\infty$.

  \item\label{it:Poincare} If $d\in(1,\infty)$, then we further assume that $(X,\rho)$ is a complete metric space, and for some $\lambda\in[1,\infty)$, we assume that the $(1,d)$-Poincar\'e inequality
\begin{equation}\label{eq:Poincare}
   \fint_{B(x,r)}\abs{f-\ave{f}_{B(x,r)}}\ud\mu
   \leq c_P\cdot r\cdot\Big(\fint_{B(x,\lambda r)}(\operatorname{lip}f)^d\ud\mu\Big)^{\frac1d},
\end{equation}
holds uniformly in $x\in X$, $r>0$, and all Lipschitz functions $f$ with
\begin{equation}\label{eq:lipf}
  \operatorname{lip}f(x):=\liminf_{r\to 0}\sup_{\rho(x,y)\leq r}\frac{\abs{f(x)-f(y)}}{r}.
\end{equation}

  \item\label{it:CZK} For some $\eta\in(0,1]$, the kernel $K$ satisfies the Calder\'on--Zygmund estimates
\begin{equation}\label{eq:CZ0}
   \abs{K(x,y)}\lesssim\frac{1}{V(x,y)}:=\frac{1}{\mu(B(x,\rho(x,y)))},
\end{equation}
\begin{equation}\label{eq:CZ1}
  \qquad\abs{K(x,y)-K(x',y)}+\abs{K(y,x)-K(y,x')}
  \lesssim\Big(\frac{\rho(x,x')}{\rho(x,y)}\Big)^\eta\frac{1}{V(x,y)}
\end{equation}
uniformly in $x,x',y\in X$ such that $\rho(x,x')\ll\rho(x,y)$.

  \item\label{it:nondeg} The kernel $K$ is non-degenerate in the following sense: for every $x\in X$ and $r\in(0,\infty)$, there is a point $y\in X$ with $\rho(x,y)\approx r$ and
\begin{equation}\label{eq:nondeg0}
  \abs{K(x,y)}+\abs{K(y,x)}\gtrsim\frac{1}{V(x,y)},
\end{equation}
uniformly in $x$ and $r$.
\end{enumerate}
Then the following conclusions hold for all $b\in L^1_{\loc}(\mu)$:
\begin{enumerate}[\rm(1)]
  \item\label{it:p>d} In order that $[b,T]\in S^p(L^2(\mu))$, the finiteness of the homogeneous Besov norm
\begin{equation}\label{eq:Besov}
  \Norm{b}{\dot B^p(\mu)}:=\Big(\iint_{X\times X}\frac{\abs{b(x)-b(y)}^p}{V(x,y)^2}\ud\mu(x)\ud\mu(y)\Big)^{\frac1p}
\end{equation}
is necessary for all $p\in(1,\infty)$, and sufficient for all
\begin{equation}\label{eq:p>d+pert}
  p>p(\eta):=\max\Big\{1,\ \Big(\frac{\eta}{\Delta}+\frac12\Big)^{-1}\Big\},
\end{equation}
and we have
\begin{align}
  \Norm{b}{\dot B^p(\mu)} \lesssim\ &\Norm{[b,T]}{S^p(L^2(\mu))}, \qquad & p\in(1,\infty), \label{eq:Sp>Bp} \\
    &\Norm{[b,T]}{S^p(L^2(\mu))} \lesssim \Norm{b}{\dot B^p(\mu)}, \qquad & p\in(p(\eta),\infty). \label{eq:Sp<Bp}
\end{align}
These conclusions do not use Assumptions \eqref{it:RDD} and \eqref{it:Poincare}.

  \item\label{it:p<d} If $d>1$ and $p\in(0,d]$,  then $[b,T]\in S^p(L^2(\mu))$ if and only if $b$ is constant.

  \item\label{it:p=d} If $d=D>1$ and $\eta>(1-\frac{d}{2})_+$, then $[b,T]\in S^{d,\infty}(L^2(\mu))$ if and only if $b$ has a Haj\l{}asz upper gradient $h\in L^d(\mu)$, meaning that
\begin{equation}\label{eq:HajlaszUpper}
   \abs{b(x)-b(y)}\leq(h(x)+h(y))\rho(x,y),\qquad\text{for $\mu$-a.a. }x,y\in X,
\end{equation}
and in this case
\begin{equation*}
  \Norm{[b,T]}{S^{d,\infty}(L^2(\mu))}\approx\Norm{b}{\dot M^{1,d}(\mu)}
  :=\inf\Big\{\Norm{h}{L^d(\mu)}: h\text{ satisfies \eqref{eq:HajlaszUpper}}\Big\}.
\end{equation*}
  \item\label{it:A2} If $w\in A_2$ is a Muckenhoupt weight, i.e., if
\begin{equation*}
    [w]_{A_2}:=\sup_{B\text{ ball}}\fint_B w\ud\mu\fint_B w^{-1}\ud\mu
\end{equation*}
is finite, then all conclusion \eqref{it:p>d} through \eqref{it:p=d} hold more generally with $S^p(L^2(\mu))$ or $S^{d,\infty}(L^2(\mu))$ replaced by the Schatten classes $S^p(L^2(w))$ or $S^{d,\infty}(L^2(w))$ of compact operators on the weighted space $L^2(w)=L^2(w\ud\mu)$, but with the same unweighted spaces $\dot B^p(\mu)$ and $\dot M^{1,p}(\mu)$ in the characterising conditions.
\end{enumerate}
\end{theorem}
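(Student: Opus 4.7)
My plan is to follow the paradigm of Rochberg and Semmes, adapted to the metric setting via Christ's dyadic decomposition, and to invoke the companion paper with Korte for the endpoint characterizations of constants and Sobolev functions. Fix a system of dyadic cubes $\mathscr{D}$ on $(X,\rho,\mu)$ and an associated Haar-type wavelet basis giving martingale differences $\Delta_Q$ with $b=\sum_Q\Delta_Q b$ in $L^2(\mu)$.

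For the sufficiency in \eqref{it:p>d}, I would expand
\begin{equation*}
  [b,T]=\sum_{Q,Q'\in\mathscr{D}}\Delta_{Q'}[b,T]\Delta_Q
\end{equation*}
and group terms by the relative scales and positions of $(Q,Q')$. On the diagonal, this reduces to a rank-one piece that directly contributes the Besov quantity; off-diagonal pieces at scale separation $2^k$ decay as $2^{-k\eta}$ by the kernel smoothness \eqref{eq:CZ1}, while the cube count grows as $2^{k\Delta}$ via \eqref{it:cover}, and the Rochberg--Semmes ``nearly weakly orthogonal'' (NWO) $S^p$-bound for rank-one sums indexed by cubes assembles these into a geometric series. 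The range restriction $p>p(\eta)$ in \eqref{eq:p>d+pert} is dictated precisely by summability of this series. For necessity \eqref{eq:Sp>Bp}, the non-degeneracy \eqref{eq:nondeg0} attaches to each cube $Q$ a ``twin'' $\tilde Q$ on which $\abs{K}\gtrsim 1/V(x,y)$; testing $[b,T]$ against sign-adjusted normalized indicators of $Q$ and $\tilde Q$ and applying a trace-duality lower bound yields
\begin{equation*}
  \Norm{[b,T]}{S^p}^p\gtrsim\sum_Q\mu(Q)^{1-p/2}\Norm{\Delta_Q b}{L^2(\mu)}^p\approx\Norm{b}{\dot B^p(\mu)}^p.
\end{equation*}

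For part \eqref{it:p<d}, I would combine the necessity in \eqref{it:p>d} (which holds for all $p>1$) with the companion paper's rigidity statement that under the Poincar\'e hypothesis \eqref{eq:Poincare}, any $b\in L^1_{\loc}(\mu)$ with $\Norm{b}{\dot B^p(\mu)}<\infty$ for some $p\leq d$ must be constant. For part \eqref{it:p=d}, I would run the same NWO machinery on the weak-Schatten/weak-$\ell^{d,\infty}$ scale; the crucial input from the companion paper is the identification
\begin{equation*}
  \Norm{b}{\dot M^{1,d}(\mu)}\approx\BNorm{\big(\mu(Q)^{1/d-1/2}\Norm{\Delta_Q b}{L^2(\mu)}\big)_{Q\in\mathscr{D}}}{\ell^{d,\infty}(\mathscr{D})},
\end{equation*}
which converts the abstract oscillation sequence produced by the weak-type NWO estimate into the concrete Haj\l{}asz--Sobolev norm. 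The non-degeneracy test from the proof of \eqref{it:p>d}, summed in the weak-$\ell^{d,\infty}$ quasi-norm, yields the reverse inequality.

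For part \eqref{it:A2}, I would work directly in $L^2(w)$, exploiting that $w\ud\mu$ remains a doubling measure with $[w]_{A_2}$-controlled constants so that the Christ cubes and the NWO machinery adapt without essential change. The reason the characterizing condition remains the \emph{unweighted} Besov or Sobolev norm of $b$ is that the oscillation appearing throughout is always the kernel pairing $b(x)-b(y)$, which sees only differences of $b$-values and not the weight; the weight enters only through $L^2(w)$-normalization of test functions, and standard $A_2$ self-improvement (together with weighted John--Nirenberg type estimates) allows one to pass between weighted and unweighted averages of $b$ in the relevant exponent ranges. The main obstacle I expect is the critical case \eqref{it:p=d}: the sharp weak-type NWO estimate is delicate and must be carefully coupled with the companion paper's Sobolev identification, and the hypothesis $\eta>(1-\tfrac{d}{2})_+$ encodes precisely the regularity threshold at which the kernel just barely accommodates the Sobolev endpoint.
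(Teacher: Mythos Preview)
Your overall architecture matches the paper's: the Rochberg--Semmes NWO machinery for both directions, with the companion paper supplying the endpoint identifications. The necessity argument and parts \eqref{it:p<d}, \eqref{it:p=d} are essentially right in outline, though the paper phrases everything in terms of mean oscillations $m_b(B_Q)=\fint_{B_Q}|b-\ave{b}_{B_Q}|$ rather than Haar coefficients $\Norm{\Delta_Q b}{L^2}$, and the companion paper's Sobolev characterisation is $\Norm{b}{\dot M^{1,d}}\approx\Norm{m_b}{L^{d,\infty}(\nu_d)}$ on $X\times(0,\infty)$, not the discrete Haar-coefficient formula you wrote (the paper needs a separate Proposition~\ref{prop:Osc=LdWeak} to pass to the dyadic $\ell^{d,\infty}$ form, and this uses Ahlfors regularity). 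For the weighted case, the paper does \emph{not} replace $\mu$ by $w\ud\mu$; the kernel estimates are tied to $\mu$, so instead one keeps $\mu$ fixed and proves that the bi-sublinear NWO maximal operator is bounded $L^2(w)\times L^2(\sigma)\to L^1(\mu)$ via $A_2$ self-improvement.

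The genuine gap is in your sufficiency argument. The expansion $[b,T]=\sum_{Q,Q'}\Delta_{Q'}[b,T]\Delta_Q$ does not yield terms to which the NWO $S^p$-bound applies: the pieces $\Delta_{Q'}[b,T]\Delta_Q$ are not rank-one, and there is no natural way to produce NWO sequences from them in a space without Fourier or polynomial structure. This is exactly the obstacle the paper identifies (see Section~\ref{sec:overview}): the original Rochberg--Semmes argument used local Fourier expansions of $K$, and later extensions used Alpert bases with high-order vanishing moments; neither exists on a general space of homogeneous type. The paper's solution is the Dyadic Representation Theorem (Theorem~\ref{thm:DRT}), which requires \emph{random} dyadic systems and writes $T$ as an average of paraproducts plus a series $\sum_m\omega(\delta^m)\sum_i\S_{m,i}$ of normalised dyadic shifts. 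One then estimates $\Norm{[b,\S]}{S^{p,q}}$ for each shift separately (Theorem~\ref{thm:shift}); the key point is that each shift component $A_R$ has rank at most $\#\operatorname{ch}^{i\wedge j}(R)\lesssim\delta^{-(i\wedge j)\Delta}$, which via $\Norm{A_R}{S^p}\leq(\operatorname{rank}A_R)^{(\frac1p-\frac12)_+}\Norm{A_R}{S^2}$ produces the factor $\delta^{-(i\wedge j)\Delta(\frac1p-\frac12)_+}$. Summing against $\omega(\delta^m)=\delta^{m\eta}$ converges precisely when $\eta>\Delta(\tfrac1p-\tfrac12)_+$, i.e.\ $p>p(\eta)$. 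Your heuristic ``decay $2^{-k\eta}$ vs.\ cube count $2^{k\Delta}$'' misses the $(\tfrac1p-\tfrac12)_+$ exponent and would wrongly predict the same threshold for all $p$, whereas in fact any $\eta>0$ suffices once $p\geq 2$.
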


Weaker kernel regularity assumptions suffice in place of \eqref{eq:CZ1}; see Remark \ref{rem:finite}.

\begin{remark}\label{rem:main}
Every space of homogeneous type satisfies assumptions \eqref{it:cover} and \eqref{it:RDD} with $d=0$ and some finite $D,\Delta$.
Note that these parameters always satisfy (see Proposition \ref{prop:dims})
\begin{equation}\label{eq:dDeltaD}
  d\leq\Delta\leq D.
\end{equation}

It is easy to see that $p(\eta)<2$, and hence we always get the characterisation
\begin{equation*}
  \Norm{[b,T]}{S^p(L^2(\mu))}\approx\Norm{b}{\dot B^p(\mu)}
\end{equation*}
at least for $p\in[2,\infty)$ in any space of homogeneous type. Extending this range to all $p\in(1,\infty)$, and asking for the further conclusion \eqref{it:p<d} in Theorem \ref{thm:main} is only available under more restricted assumptions on the parameters, even more so for conclusion \eqref{it:p=d}. This agrees with the earlier results for particular cases of $(X,\rho,\mu)$.

Since the introduction of spaces of homogeneous type over 50 years ago \cite{CW:71}, extending Euclidean harmonic analysis to this setting (for those results for which this is possible) is often a somewhat routine exercise by now. While parts of Theorem \ref{thm:main} may be considered merely examples of this routine, a key aspect of our contribution is separating such generic elements of the theory from ones that require additional input, and identifying the correct extra hypotheses, when these are needed. The justification of our assumptions comes both from their established role in the abstract theory \cite{Heinonen:book,HKST} as well as from their good coverage of concrete examples, as discussed further below (see Examples \ref{ex:Carnot} through \ref{ex:A2}).

If $d\geq p(\eta)$ (in particular, if $d\geq 2>p(\eta)$), a combination of conclusions \eqref{it:p>d} and \eqref{it:p<d} of Theorem \ref{thm:main} characterises the property $[b,T]\in S^p$ for all $p\in(0,\infty)$, namely
\begin{equation}\label{eq:p>0}
  [b,T]\in S^p\quad\Leftrightarrow\quad\begin{cases} b\in\dot B^p(\mu), & p>d, \\
  b=\text{constant}, & p\leq d.\end{cases}
\end{equation}
If $d>p(\eta)$, conclusions \eqref{it:p>d} and \eqref{it:p<d} give two competing characterisations for $[b,T]\in S^p$ in the overlapping range $p\in(p(\eta),d]$ but there is no contradiction: this simply means that $\dot B^p(\mu)$ only consists of constant functions in this range.
\end{remark}

\begin{remark}
In general, the lower dimension $d$ in \eqref{eq:RDD} and the power in the Poincar\'e inequality \eqref{eq:Poincare} need not be related. But, in Theorem \ref{thm:main}, the lower dimension~$d$ only plays a role if the space also supports the Poincar\'e inequality with this same exponent.
Assumption \eqref{it:Poincare} will never be used directly; it is only needed to apply characterisations of constant functions and the Haj\l{}asz--Sobolev space $\dot M^{1,d}(\mu)$ from \cite{HK:W1p} (quoted as Proposition \ref{prop:Bp=const} and Theorem \ref{thm:Rupert} below). As explained in \cite[Remark 2.9]{HK:W1p}, the assumption of ``a complete space with a $(1,p)$-Poincar\'e inequality'' could be replaced by assuming a $(1,p-\eps)$-Poincar\'e inequality, since completeness is only needed to apply a self-improvement of the Poincar\'e inequality from \cite[Theorem 1.0.1]{KZ:08}. 

Various examples of spaces supporting a Poincar\'e inequality can be found in \cite{BBG}, \cite[Section 14.2]{HKST}, and \cite[page 274]{Keith:04}. In particular, so-called {\em Laakso spaces} of \cite[Theorem 2.7]{Laakso} provide examples that satisfy the assumption of Theorem \ref{thm:main} for any given $d=D=\Delta\in(1,\infty)$. (For $d=D=\Delta\in(0,1]$, assumption \eqref{it:Poincare} becomes void, and Cantor sets of dimension $d$ will give examples in this range of $d$.)
\end{remark}

\begin{remark}\label{rem:finite}
Assumptions \eqref{it:CZK} are the usual ones of a Calder\'on--Zygmund kernel on a space of homogeneous type, following \cite[Definition IV.2.6]{Christ:book}. Boundedness and qualitative compactness (as opposed to the quantitative compactness described by the Schatten properties as in Theorem \ref{thm:main}) of commutators on spaces of homogeneous type have been studied under such assumptions already in \cite{BC:96,KL:01-I,KL:01-II}. For parts of Theorem \ref{thm:main}, the H\"older-type regularity \eqref{eq:CZ1} may be further relaxed to more general $\omega$-Calder\'on--Zygmund kernels with a suitable modulus of continuity~$\omega$. See Definition \ref{def:CZomega} for the general definition of such kernels, Proposition \ref{prop:Osc<S} for a (very weak) condition on $\omega$ that suffices for commutator lower bounds, and Corollary \ref{cor:bTSpq} and Remark \ref{rem:bTSpq} for relaxations of \eqref{eq:CZ1} that work for the upper bounds. In particular, for those cases of Theorem \ref{thm:main} where H\"older regularity of any order $\eta>0$ suffices, it can further be relaxed to a Dini-type modulus of continuity.

The non-degeneracy assumption \eqref{it:nondeg} is only needed to deduce properties of $b$ from the properties of $[b,T]$; the converse implications remain valid without this assumption. Note that this assumption implicitly requires that $X$ has some pairs of point $x,y$ essentially at any given distance $\rho(x,y)\approx r\in(0,\infty)$. This requires, in particular, that $\operatorname{diam}(X)=\infty$. Since the statement of Theorem \ref{thm:main} is already quite lengthy as it stands, we only note at this point that modification can be made to cover spaces of finite diameter, and refer the reader to Section \ref{sec:finite} for details.
\end{remark}

\begin{remark}
Classical Besov spaces $B^s_{p,q}(\R^d)$ (see e.g. \cite[Chapter 6]{BL:book}) come with three indices. Arguably the most important ones are those with $p=q$, in which case they are also known as fractional Sobolev spaces (at least for smoothness $s\in(0,1)$), and these have been also widely studied over spaces of homogeneous type under the definition
\begin{equation}\label{eq:Bspp}
  \Norm{b}{\dot B^s_{p,p}(\mu)}^p
  :=\int_X\int_X\Big(\frac{\abs{b(x)-b(y)}}{\rho(x,y)^s}\Big)^p\frac{\ud\mu(x)\ud\mu(y)}{V(x,y)};
\end{equation}
see e.g. \cite{BB:23,GKS:24}. If $(X,\rho,\mu)$ is Ahlfors $d$-regular, it is immediate that $\dot B^p(\mu)$ in \eqref{eq:Besov} is equivalent to $\dot B^{d/p}_{p,p}(\mu)$ in \eqref{eq:Bspp}, which also clarifies the connection of Theorem \ref{thm:main}\eqref{it:p>d} with classical results involving $B^{d/p}_{p,p}(\R^d)$. In general spaces of homogeneous type, the choice of definition \eqref{eq:Besov} is justified by the very result of Theorem \ref{thm:main}\eqref{it:p>d}: it is under this definition that the characterisation works in general.
\end{remark}

\begin{remark}
Conclusion \eqref{it:p=d} is probably the most surprising part of Theorem \ref{thm:main}: given that all previous related results dealt with Sobolev spaces defined through some form of derivatives, with proofs involving also higher than the first order calculus (like Taylor expansions with second-order terms), it was by no means clear at the outset whether such a result could be expected at this level of generality. This depends on a new characterisation of the homogeneous Haj\l{}asz Sobolev space $\dot M^{1,d}(\mu)$ obtained in the companion paper \cite{HK:W1p} of the author with R.~Korte. The resulting Case \eqref{it:p=d} of Theorem \ref{thm:main} is philosophically pleasing in the sense that a statement involving first order smoothness is proved by strictly first order methods only, which is manifested by formulating it in a setting where higher order smoothness is not even meaningfully defined.

We have stated conclusion \eqref{it:p=d} of Theorem \ref{thm:main} in terms of the Haj\l{}asz Sobolev space $M^{1,d}(\mu)$ of \cite{Hajlasz:96}, since the key tool in Theorem \ref{thm:Rupert}, borrowed from \cite{HK:W1p}, is most naturally proved for this space. However, under the assumptions of Theorem \ref{thm:main}, this space is known to be equivalent so several other established notions of Sobolev spaces over a metric measure space, including the Cheeger Sobolev space $\textit{Ch}^{1,d}$ of \cite{Cheeger}, the Korevaar--Schoen Sobolev space $\textit{KS}^{1,d}$ of \cite{KS:93}, the Newtonian Sobolev space $N^{1,d}$ of \cite{Nages:00}, and the Poincar\'e Sobolev space $P^{1,d}$ of \cite{HK:00}, as well as various Sobolev spaces $H^{1,d}$ associated with given systems of vector fields, which is often the preferred definition in concrete situations. See  \cite[Theorem 10.5.3]{HKST} for $M^{1,d}=P^{1,d}=\textit{KS}^{1,d}=N^{1,d}=\textit{Ch}^{1,d}$, and \cite[Theorem 10]{FHK:99} for $H^{1,d}=P^{1,d}$ under the assumptions of Theorem \ref{thm:main} on $(X,\rho,\mu)$, and quite general assumptions on the vector fields defining $H^{1,d}$ in the last equality. The said results are formulated for the inhomogeneous versions of these spaces (involving the size of both the function and its ``gradient''), but they can be readily adapted to the homogeneous versions relevant to Theorem \ref{thm:main}. (Note also that, by \cite[Theorem 1.0.1]{KZ:08}, the assumptions of Theorem \ref{thm:main} (in particular, completeness and $(1,d)$-Poincar\'e, when $d>1$) imply those of  \cite[Theorem 10.5.3]{HKST} (namely, $(1,q)$-Poincar\'e for some $q\in[1,d)$).)
\end{remark}

In the important special case that $d=\Delta$, Theorem \ref{thm:main} takes the following form: (By \eqref{eq:dDeltaD}, this is more general than Ahlfors regular spaces defined by the condition that $d=D$; see Example \ref{ex:Bessel} for a class of examples with $d=\Delta<D$.)

\begin{corollary}\label{cor:Ahlfors}
Let $(X,\rho,\mu)$ and $T\in\bddlin(L^2(\mu))$ be as in Theorem \ref{thm:main}, and assume moreover that $d=\Delta\in(0,\infty)$. Then
\begin{enumerate}[\rm(1)]
  \item\label{it:Ap>d} for $p>\max(d,1)$ and $\eta\geq\min\{\frac{d}{2},(1-\frac{d}{2})_+\}$, we have $[b,T]\in S^p$ iff $b\in\dot B^p(\mu)$;
  \item\label{it:Ap<d} if $d>1$ and $p\in(0,d]$, we have $[b,T]\in S^p$ iff $b$ is constant;
  \item\label{it:Ap=d} if $d=D>1$ and $\eta>(1-\frac{d}{2})_+$, then $[b,T]\in S^{d,\infty}$ iff $b\in\dot M^{1,d}(\mu)$;
  \item\label{it:Aw} if $w\in A_2$, these same conclusions also hold for Schatten classes on $L^2(w)$.
\end{enumerate}
In particular, if $d\geq 2$, then these conclusions are valid for every $\eta\in(0,1]$.
\end{corollary}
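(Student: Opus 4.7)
The plan is to deduce each of the four conclusions directly from the corresponding part of Theorem \ref{thm:main}. Since the corollary inherits every geometric, measure-theoretic, and kernel assumption of the theorem, the role of the additional hypothesis $d=\Delta$ is solely to simplify the Schatten exponent threshold $p(\eta)$ appearing in \eqref{eq:p>d+pert}, and the entire argument amounts to bookkeeping of parameters.

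For \eqref{it:Ap>d}, the necessity direction ``$[b,T]\in S^p \Rightarrow b\in\dot B^p(\mu)$'' for $p>\max(d,1)$ follows at once from \eqref{eq:Sp>Bp}, which is valid on the whole range $p\in(1,\infty)$. For sufficiency I must verify that the hypothesis $\eta\geq\min\{d/2,(1-d/2)_+\}$ guarantees $p(\eta)\leq\max(d,1)$, so that $p>\max(d,1)$ places $p$ in the sufficiency range of \eqref{eq:Sp<Bp}. Specialising $p(\eta)=\max\{1,(\eta/d+1/2)^{-1}\}$ and splitting into two cases: when $d\geq 1$, the inequality $(\eta/d+1/2)^{-1}\leq d$ is equivalent to $\eta\geq(1-d/2)_+$, which for $d\geq 1$ is the smaller of the two entries in the ``$\min$''; when $d<1$, the requirement $p(\eta)=1$ is equivalent to $\eta\geq d/2$, which for $d<1$ is the smaller entry. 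Hence the compact hypothesis exactly produces $p(\eta)\leq\max(d,1)$, and \eqref{eq:Sp<Bp} closes the implication.

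Conclusions \eqref{it:Ap<d}, \eqref{it:Ap=d}, and \eqref{it:Aw} are immediate transcriptions of parts \eqref{it:p<d}, \eqref{it:p=d}, and \eqref{it:A2} of Theorem \ref{thm:main}: the hypotheses stated (in particular $d>1$, which activates the Poincar\'e assumption \eqref{it:Poincare}) coincide with those of the theorem, so no further work is needed. For the final assertion, when $d\geq 2$ one has $(1-d/2)_+=0$, whence $\min\{d/2,(1-d/2)_+\}=0$ as well, so both the constraint in \eqref{it:Ap>d} and the constraint $\eta>(1-d/2)_+$ in \eqref{it:Ap=d} are automatically satisfied by every $\eta\in(0,1]$. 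The only step demanding actual verification is the parameter case analysis above; everything else is a direct appeal to Theorem \ref{thm:main}, so there is no substantive obstacle beyond correctly unwinding the definition of $p(\eta)$.
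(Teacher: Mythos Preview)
Your proof is correct and follows essentially the same approach as the paper: conclusions \eqref{it:Ap<d}--\eqref{it:Aw} are direct restatements of Theorem~\ref{thm:main}\eqref{it:p<d}--\eqref{it:A2}, and for \eqref{it:Ap>d} you carry out the same parameter analysis of $p(\eta)$ under $\Delta=d$, splitting into the cases $d\geq 1$ and $d<1$ (the paper splits at $d>1$ versus $d\leq 1$, but at the boundary $d=1$ both branches give the identical condition $\eta\geq\tfrac12$, so this is immaterial).
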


\begin{proof}
The last three conclusions are just restatements of the corresponding ones in Theorem \ref{thm:main}. 
As for the first conclusion, recall that Theorem \ref{thm:main}\eqref{it:p>d} says that $\Norm{[b,T]}{S^p}\approx\Norm{b}{\dot B^p(\mu)}$ for $p>p(\eta)$. We consider separately $d>1$ and $d\leq 1$:

If $d=\Delta>1$ and $\eta\geq(1-\frac{d}{2})_+$, then
$  \frac{\eta}{\Delta}+\frac12\geq\frac{1}{d},$
hence $p(\eta)\leq d$. Thus the range of $p>p(\eta)$ covers in particular all $p>d$, as claimed in Corollary \ref{cor:Ahlfors}\eqref{it:Ap>d}.

If $d=\Delta\in(0,1]$ and $\eta\geq\frac{d}{2}$, then $\frac{\eta}{\Delta}+\frac12\geq 1$, hence $p(\eta)=1$, and thus the range of $p>p(\eta)$ covers all $p\in(1,\infty)$, again as claimed in Corollary \ref{cor:Ahlfors}\eqref{it:Ap>d}.
\end{proof}

\begin{example}\label{ex:Carnot}
Any Heisenberg group $\mathbb H^n$, and more generally any Carnot group $\mathbb G$ equipped with the Carnot--Carath\'eodory metric $d_{cc}$ and Haar measure $\mu$, is an Ahlfors regular space (i.e., $d=D=\Delta$) supporting the $(1,1)$-Poincar\'e inequality, and the Newtonian Sobolev space $N^{1,p}(\mathbb G)$ (and thus the Haj\l{}asz Sobolev space $M^{1,p}(\mathbb G)$ by \cite[Theorem 10.5.3]{HKST}) is identified with the {\em horizontal Sobolev space} $W^{1,p}_H(\mathbb G)$; see \cite[Section 14.2]{HKST} for details and additional pointers to the literature. This shows that the set-ups of \cite{FLL:23,FLMSZ,LXY} fall under the scope of Corollary \ref{cor:Ahlfors}. In the Heisenberg groups $\mathbb H^n$ considered in \cite{FLL:23,FLMSZ}, the homogeneous dimension is $d=2n+2\geq 4$. In \cite[page 77]{LXY} it is noted that Carnot groups (necessarily of integer dimension) with $d\leq 3$ are in fact just $\R^d$, and hence the assumption $d\geq 4$ is also made there. For such dimensions, we have $(1-\frac{d}{2})_+=0$, and hence the conclusions of Corollary \ref{cor:Ahlfors} are valid for operators $T$ of arbitrary Calder\'on--Zygmund regularity $\eta\in(0,1]$. Conclusions \eqref{it:Ap>d} and \eqref{it:Ap<d} of the said corollary reproduce \cite[Theorem 1.1]{LXY}, which was stated under similar assumptions, while our conclusion \eqref{it:Ap=d}, still valid under minimal Calder\'on--Zygmund regularity, is a significant extension of \cite[Theorem 1.2]{LXY}, where derivative bounds on $K$ up to order $\gamma>d$ were required. Conclusion \eqref{it:Aw} on the weighted estimates is completely new, even for kernels with the higher-order derivative bounds considered in \cite{LXY}.
\end{example}

\begin{example}\label{ex:Bessel}
The Bessel setting considered in \cite{FLLX} consists of $X=\R_+^n$ with the Euclidean distance and the weighted Lebesgue measure $x_n^{2\lambda}\ud x$. In \cite{FLLX}, the assumption $\lambda\in(0,\infty)$ is made, but one can more generally consider $\lambda\in(-\frac12,\infty)$ as in \cite{CZ:14}; the restriction $\lambda>-\frac12$ is necessary to make this measure locally finite. We show in Proposition \ref{prop:Bessel} that this is a doubling metric measure space supporting a $(1,1)$-Poincar\'e inequality and having dimension parameters
\begin{equation*}
   d=n-2\lambda_-\in(n-1,n],\qquad\Delta=n,\qquad D=n+2\lambda_+\in[n,\infty).
\end{equation*}
For $\lambda>0$ as in \cite{FLLX}, it follows that $d=\Delta=n$, bringing us to the setting of Corollary \ref{cor:Ahlfors}. For $d=n\geq 2$, the said corollary shows that we have \eqref{eq:p>0} for operators $T$ of any Calder\'on--Zygmund regularity $\eta\in(0,1]$. For $d=n=1$, we have the characterisation $\Norm{[b,T]}{S^p}\approx\Norm{b}{\dot B^p(\mu)}$ for all $p\in(1,\infty)$ and all operators $T$ of Calder\'on--Zygmund regularity $\eta\in[\frac12,1]$. 

These two statements are extensions of \cite[Theorems 1.4 and 1.5]{FLLX}, where the special case of the Bessel--Riesz transforms in place of $T$ was considered, and the higher order derivative bounds of their kernels, established in \cite[Lemma 2.5]{FLLX} played a role in the proof. Moreover, the version of the constancy characterisation of Corollary \ref{cor:Ahlfors}\eqref{it:Ap<d} was obtained in \cite[Theorem 1.5]{FLLX} only under the {\em a priori} assumption that $b\in C^2(\R_+^n)$. Whether this assumption could be removed was raised there as an open problem. Our Corollary \ref{cor:Ahlfors} answers this in the affirmative.

By invoking the full Theorem \ref{thm:main} (instead of Corollary \ref{cor:Ahlfors}), we also obtain results for $\lambda\in(-\frac12,0)$, which are completely new. See Remark \ref{rem:Bessel} for details.

Shortly after the original arXiv announcement of the present results in 11/2024, another work in the Bessel setting appeared, \cite{FLSZ}, where the authors obtain a variant of the weak-type mapping property \eqref{it:p=d} of Theorem \ref{thm:main} in the Bessel setting. Since we only address the weak-type characterisation in Ahlfors-regular spaces, the results of \cite{FLSZ}, in a specific non-Ahlfors setting, fall outside the scope of Theorem \ref{thm:main}.
\end{example}

\begin{example}[{\em plus Open Problem}]
The setting of \cite{CLOW} involves a space of homogeneous type $\partial\Omega_k$ of lower dimension $d=4$ and upper dimension $D=2k+2$, where $k\geq 2$; see the penultimate paragraph of \cite[Introduction]{CLOW}. For these values, \cite[Theorem 1.3]{CLOW} gives the characterisation \eqref{eq:p>0}, which agrees with the numerology of our setting. However, the following question is open to our knowledge:
\begin{quote}
   Does $\partial\Omega_k$ of \cite{CLOW} satisfy the Poincar\'e assumption \eqref{it:Poincare}?
\end{quote}
If the answer is positive, this would have the advantage of not only recovering \cite[Theorem 1.3]{CLOW} but also lifting the {\em a priori} assumption $b\in C^2(\partial\Omega_k)$ in part (2) of the said result, since no such condition appears in our Theorem \ref{thm:main}.
\end{example}

\begin{example}\label{ex:A2}
For $X=\R^d$ and $T$ equal to any of the Riesz transforms $R_j=\partial_j\Delta^{-\frac12}$, the weighted conclusion \eqref{it:A2} of Theorem \ref{thm:main} coincides with \cite[Theorems 1.1 and 1.2]{GLW:23}. Concerning the critical-index result of Theorem \ref{thm:main}\eqref{it:p=d}, note that
\begin{equation*}
    \dot M^{1,p}(\R^d)=\dot W^{1,p}(\R^d)
\end{equation*}
by \cite[Theorem 1]{Hajlasz:96}. Even on $\R^d$, this weighted result of Theorem \ref{thm:main}\eqref{it:A2} seems to be new for more general Calder\'on--Zygmund operators. On all other spaces covered by Theorem \ref{thm:main}, no weighted results seemed to be available before.
\end{example}
 
The proof of Theorem \ref{thm:main} will proceed via an intermediate notion of oscillation spaces $\operatorname{Osc}^{p,q}$, which are natural extensions of corresponding spaces on $\R^d$ defined in \cite{RS:NWO}. This part of the argument is fairly general, and works for any space of homogeneous type, but it depends on the auxiliary notion of dyadic cubes in such spaces, which we recall in Section \ref{sec:cubes}:
 
 \begin{proposition}\label{prop:comVsOsc}
 Let $(X,\rho,\mu)$ be a space of homogeneous type and $T\in\bddlin(L^2(\mu))$ an operator with a non-degenerate Calder\'on--Zygmund kernel of regularity $\eta\in(0,1]$. Let $\mathscr D$ be a system of dyadic cubes on $(X,\rho,\mu)$ in the sense of Definition \ref{def:cubes} and, for each $Q\in\mathscr D$, let $B_Q$ be a ball centred at $Q$ and of radius $c\ell(Q)$ for a constant $c$ that only depends on the space.
For $b\in L^1_{\loc}(\mu)$, in order that $[b,T]\in S^{p,q}(L^2(\mu))$, the finiteness of
\begin{equation*}
  \Norm{b}{\operatorname{Osc}^{p,q}}
  :=\BNorm{\Big\{\fint_{B_Q}\abs{b-\ave{b}_{B_Q}}\ud\mu\Big\}_{Q\in\mathscr D}}{\ell^{p,q}}
\end{equation*}
is necessary for all $p\in(1,\infty)$ and $q\in[1,\infty]$, and sufficient for all $p\in(p(\eta),\infty)$ and $q\in[1,\infty]$, where $p(\eta)$ is as in \eqref{eq:p>d+pert}. Moreover, we have
\begin{align}
  \Norm{b}{\operatorname{Osc}^{p,q}}\lesssim\ &\Norm{[b,T]}{S^{p,q}(L^2(\mu))},\quad
 &  p\in(1,\infty),\ q\in[1,\infty],\label{eq:Osc<S} \\
 &\Norm{[b,T]}{S^{p,q}(L^2(\mu))}
 \lesssim \Norm{b}{\operatorname{Osc}^{p,q}},\quad & p\in(p(\eta),\infty),\ q\in[1,\infty].\label{eq:S<Osc}
\end{align}
For $w\in A_2$, the same results are valid for $L^2(w)$ in place of $L^2(\mu)$.
 \end{proposition}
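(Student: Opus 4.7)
The plan is to follow the nearly weakly orthogonal (NWO) framework of Rochberg--Semmes, realised in a general space of homogeneous type via the dyadic system $\mathscr D$ of Section \ref{sec:cubes}. Both $\Norm{b}{\operatorname{Osc}^{p,q}}$ and $\Norm{[b,T]}{S^{p,q}}$ will be tested through the same atomic family $(e_Q,f_Q)_{Q\in\mathscr D}$ via the bilinear pairing $\pair{[b,T]e_Q}{f_Q}$, so that \eqref{eq:Osc<S} becomes a lower bound on this pairing using the non-degeneracy \eqref{eq:nondeg0}, and \eqref{eq:S<Osc} becomes an upper bound using the Calder\'on--Zygmund regularity \eqref{eq:CZ1}.

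For the necessity \eqref{eq:Osc<S}, I use \eqref{eq:nondeg0} together with the continuity of $K$ off the diagonal to select, for each $Q\in\mathscr D$, two disjoint sub-balls $B_Q^1,B_Q^2\subset CB_Q$ of radius $\approx\ell(Q)$ on which a fixed real or imaginary part of $K$ has a consistent sign and size $\gtrsim 1/V(x,y)$. With $e_Q:=\mathbf 1_{B_Q^1}/\mu(B_Q^1)^{1/2}$ and $f_Q:=\mathbf 1_{B_Q^2}/\mu(B_Q^2)^{1/2}$, expansion of the pairing through the kernel, together with a subtraction of a suitable constant from $b$ and a median-type selection of $B_Q^1,B_Q^2$, gives
\begin{equation*}
   \bigl|\pair{[b,T]e_Q}{f_Q}\bigr|\gtrsim \fint_{B_Q}\bigl|b-\ave{b}_{B_Q}\bigr|\ud\mu.
\end{equation*}
The families $(e_Q),(f_Q)$ are NWO in $L^2(\mu)$ (dyadic Carleson embedding), and the standard principle that NWO sequences test the Schatten--Lorentz norm from below then yields \eqref{eq:Osc<S} in the full duality range $p\in(1,\infty)$, $q\in[1,\infty]$.

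For the sufficiency \eqref{eq:S<Osc}, I expand $b$ in a martingale/Haar basis adapted to $\mathscr D$, writing $b=\sum_Q D_Q b$ with $D_Q b$ supported on $Q$ and of mean zero, and split $[b,T]$ by a paraproduct identity into a diagonal piece (each $D_Q b$ interacting with averages at its own scale, producing near-rank-one operators whose coefficients are the local oscillations of $b$) and off-diagonal pieces (interactions across well-separated cubes). The diagonal piece is summed in Schatten--Lorentz via the NWO principle applied \emph{from above}, controlled exactly by $\Norm{b}{\operatorname{Osc}^{p,q}}$. The off-diagonal pieces are estimated by a Schur-type summation in Schatten classes in which the decay $(\ell(Q)/\mathrm{dist})^\eta$ from \eqref{eq:CZ1} is balanced against the $\approx 2^{k\Delta}$ neighbours at relative scale $2^k$; the convergence condition of this series is precisely $p>p(\eta)$ as in \eqref{eq:p>d+pert}.

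The weighted extension to $L^2(w)$ with $w\in A_2$ runs along the same scheme: standard Muckenhoupt theory gives the $L^2(w)$-boundedness of $T$ and the paraproducts, and the atoms $(e_Q,f_Q)$ renormalised in $L^2(w)$ remain NWO via the $A_2$-weighted Carleson embedding. The characterising oscillation is still measured against the unweighted $\mu$, since on any ball $B$ the $\mu$- and $w$-averaged oscillations of $b$ agree up to $[w]_{A_2}$, by $A_2\subset A_\infty$ self-improvement and the reverse H\"older inequality. I expect the main technical obstacle to be achieving the sharp sufficiency range $p>p(\eta)$: the off-diagonal Schatten--Lorentz bookkeeping has to exhibit exactly this threshold, which tends to be the delicate step, but the dyadic structure on $\mathscr D$ should permit a cube-by-cube execution of the Euclidean-style argument while keeping explicit track of the upper covering parameter $\Delta$ throughout.
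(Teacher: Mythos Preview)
Your lower-bound plan for \eqref{eq:Osc<S} is in the right spirit and close to the paper's: both pass through the NWO machinery (Corollaries \ref{cor:ellpq<Spq}, \ref{cor:ellpq<Spq-w}) once one has $m_b(B_Q)\lesssim\sum_j|\pair{[b,T]e^j_Q}{f^j_Q}|$ for NWO families $(e^j_Q),(f^j_Q)$. The paper, however, constructs the test functions by approximate weak factorisation (Proposition \ref{prop:GLS}, borrowed from \cite{GLS,Hyt:commu}) rather than your median-type selection; this handles complex-valued $b$ directly and naturally yields a sum of \emph{two} pairings. Your single-pairing claim would need the complex-median device of \cite{WZ:24} or similar for general $b$, a point your sketch does not address.

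The upper bound \eqref{eq:S<Osc} is where there is a genuine gap. The paper remarks explicitly (Section \ref{sec:overview}) that the Rochberg--Semmes NWO-from-above route depends on local Fourier or Alpert expansions of the \emph{kernel}, which are unavailable on a general space of homogeneous type. Your substitute---expanding $b$ in the Haar basis and handling off-diagonal pieces by a ``Schur-type summation in Schatten classes''---is not a recognised mechanism: Schur tests give operator-norm bounds, not $S^p$ bounds, and after the paraproduct splitting you still face terms like $[\Pi_b,T]$ involving the full $T$. Nothing in your outline produces the factor $(\tfrac1p-\tfrac12)_+$ that actually determines the threshold $p(\eta)$. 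The paper's route is the dyadic representation theorem (Theorem \ref{thm:DRT}), which requires \emph{random} dyadic systems to write $T$ as an expectation of dyadic shifts $\S_{m,i}$; the commutator $[b,\S]$ is then decomposed via Lemma \ref{lem:bScom}, and the key estimate (Proposition \ref{prop:SaBd}) uses that each shift component has rank $\lesssim\delta^{-(i\wedge j)\Delta}$, so that comparing $S^p$ to $S^2$ on these finite-rank pieces gives the factor $\delta^{-(i\wedge j)\Delta(\frac1p-\frac12)_+}$. Summed against the regularity decay $\delta^{m\eta}$ from the representation, this converges precisely when $p>p(\eta)$. The weighted case runs through the same representation with the weighted shift bounds of Theorem \ref{thm:shift-w}, not through renormalised atoms or a comparison of $\mu$- and $w$-oscillations.
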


While Proposition \ref{prop:comVsOsc} already achieves the goal of characterising the Schatten--Lorentz properties of the commutator $[b,T]$ in terms of {\em some} function space properties of the multiplier $b$, these conditions in terms of the norms $\Norm{b}{\operatorname{Osc}^{p,q}}$ remain somewhat exotic, and a more useful characterisation, as in Theorem \ref{thm:main}, is only achieved after identifying $\operatorname{Osc}^{p,q}$ with more {\em familiar} function spaces. But it seems worth noting that, up to this point, everything is valid in general spaces of homogeneous type, and the finer assumptions of Theorem \ref{thm:main} are only needed for the mutual identification of the characterising function spaces, not for the analysis of the commutators as such. This point might not have been obvious from the previous literature, where versions of Proposition \ref{prop:comVsOsc} (and not just of Theorem \ref{thm:main}, where additional assumptions do play a role) are formulated only for the concrete special cases of $(X,\rho,\mu)$ under consideration; this is explicit in \cite[Theorem 1.2]{FLLX} in the Bessel setting, and implicit in some other works.

Note that all the weighted aspect of Theorem \ref{thm:main} are captured by Proposition \ref{prop:comVsOsc}; the characterising $\operatorname{Osc}^{p,q}$ spaces are the same independently of the weight $w\in A_2$.
The following result that takes care of Theorem \ref{thm:main}\eqref{it:p>d} (and paves the way for the other parts of the theorem) is still relatively general:

\begin{proposition}\label{prop:Osc=classical}
Let $(X,\rho,\mu)$ be a space of homogeneous type and $b\in L^1_{\loc}(\mu)$.
\begin{enumerate}[\rm(1)]
\item For all $p\in(1,\infty)$, we have
\begin{equation}\label{eq:Osc=Besov}
  \Norm{b}{\operatorname{Osc}^{p,p}}\approx\Norm{b}{\dot B^p(\mu)}.
\end{equation}
\item If $X$ is Ahlfors regular of homogeneous dimension $d$, then
\begin{equation}\label{eq:Osc=weakLp}
  \Norm{b}{\operatorname{Osc}^{d,\infty}}
  \approx\Norm{m_b}{L^{d,\infty}(\nu_d)},
\end{equation}
where
\begin{equation}\label{eq:mb-nu}
  m_b(x,t):=\fint_{B(x,t)}\abs{b-\ave{b}_{B(x,t)}}\ud\mu,\qquad
  \ud\nu_d(x,t):=\frac{\ud\mu(x)\ud t}{t^{d+1}}
\end{equation}
are defined on the product space $X\times(0,\infty)$.
\end{enumerate}
\end{proposition}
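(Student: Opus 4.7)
My plan is to prove the two equivalences separately, with Part (1) being the more intricate one. Throughout, write $a_Q := \fint_{B_Q} \abs{b - \ave{b}_{B_Q}} \ud\mu$, so that $\Norm{b}{\operatorname{Osc}^{p,p}}^p = \sum_{Q \in \mathscr{D}} a_Q^p$, and let $I := \iint \abs{b(x)-b(y)}^p V(x,y)^{-2} \ud\mu(x)\ud\mu(y)$.

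For the easy direction $\sum_Q a_Q^p \lesssim I$ in Part (1), I would apply Jensen's inequality twice, first inside the definition of $a_Q$ and then when raising to the $p$-th power:
\[
a_Q^p \leq \Bigl( \fint_{B_Q}\fint_{B_Q} \abs{b(x)-b(y)} \ud\mu(x)\ud\mu(y) \Bigr)^p \leq \frac{1}{V(B_Q)^2} \iint_{B_Q\times B_Q} \abs{b(x)-b(y)}^p \ud\mu(x) \ud\mu(y).
\]
Summing in $Q$ and swapping integration with summation, the proof reduces to the pointwise estimate $\sum_Q \mathbf{1}_{B_Q\times B_Q}(x,y) / V(B_Q)^2 \lesssim V(x,y)^{-2}$, which I would establish by noting that only cubes with $\ell(Q) \gtrsim \rho(x,y)$ can cover the pair and that bounded overlap at each scale, together with the doubling property, reduces the scale-sum to a convergent geometric series.

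The reverse direction $I \lesssim \sum_Q a_Q^p$ is the heart of the matter. For a.e.\ pair $(x,y)$, let $Q_k(z)$ denote the dyadic cube at scale $2^k$ containing $z$ and let $m = m(x,y)$ be the smallest integer with $Q_m(x) = Q_m(y)$, so that $2^m \approx \rho(x,y)$. Lebesgue differentiation produces the telescoping identity
\[
b(x) - b(y) = \sum_{k<m} \Bigl[\bigl(\ave{b}_{B_{Q_k(x)}}-\ave{b}_{B_{Q_{k+1}(x)}}\bigr) - \bigl(\ave{b}_{B_{Q_k(y)}}-\ave{b}_{B_{Q_{k+1}(y)}}\bigr)\Bigr],
\]
and each martingale-difference term is controlled by $C a_{Q_{k+1}(z)}$ by virtue of the doubling property $B_{Q_k(z)} \subseteq CB_{Q_{k+1}(z)}$. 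Apply Hölder's inequality with a geometric weight $2^{-\alpha(m-k)}$ with $\alpha > 0$ chosen so small that the dual exponent series converges against reverse doubling of $V(x,r)$; this yields
\[
\abs{b(x)-b(y)}^p \leq C \sum_{k\leq m(x,y)} 2^{\alpha(m(x,y)-k)} \bigl(a_{Q_k(x)}^p + a_{Q_k(y)}^p\bigr).
\]
Integrating in $y$ over $\{m(x,y)\geq k\}$ at each fixed $k$ produces a factor of order $V(B_{Q_k(x)})^{-1}$ (using $2^{m(x,y)}\approx \rho(x,y)$ and doubling), so the $x$-integral of $a_{Q_k(x)}^p V(B_{Q_k(x)})^{-1}$ against $\ud\mu$ equals $\sum_{Q\in\mathscr{D}_k} a_Q^p$, and summing in $k$ finishes the bound.

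For Part (2), I would exploit the ``quantised'' nature of the measure $\nu_d$ in the Ahlfors regular case. The cells $C_Q := Q \times [\ell(Q), 2\ell(Q))$ for $Q\in\mathscr{D}$ partition $X\times(0,\infty)$ up to $\nu_d$-null sets, and the computation
\[
\nu_d(C_Q) = \mu(Q) \int_{\ell(Q)}^{2\ell(Q)} \frac{\ud t}{t^{d+1}} \approx \ell(Q)^d \cdot \ell(Q)^{-d} = 1
\]
shows each cell has $\nu_d$-mass $\approx 1$. A finite family of adjacent dyadic systems $\mathscr{D}^{(1)},\ldots,\mathscr{D}^{(N)}$ guarantees that every ball $B(x,t)$ is nested between two cubes of comparable size from one of the systems, whence $m_b(x,t) \approx a_{Q}$ for an appropriate $Q$ whose cell $C_Q$ contains $(x,t)$, up to a harmless passage to neighbouring systems. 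Translating distribution functions then gives $\nu_d(\{m_b > \lambda\}) \approx \#\{Q \in \bigcup_i \mathscr{D}^{(i)}: a_Q > c\lambda\}$, and multiplying by $\lambda^d$ and taking the supremum yields \eqref{eq:Osc=weakLp}; the fact that $\ell^{d,\infty}$ is (quasi-)invariant under passage between finitely many adjacent dyadic systems reduces the right-hand side back to a single $\mathscr{D}$.

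The main obstacle I anticipate is the reverse inequality in Part (1): controlling $I$ in terms of $L^1$-oscillations (rather than $L^p$-oscillations) requires the geometric-weight Hölder trick to be calibrated exactly against the reverse doubling of the measure, and verifying that this works uniformly across spaces of homogeneous type, including degenerate cases where reverse doubling is weak, is the subtle point.
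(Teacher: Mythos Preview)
Your worry at the end is the actual gap, and it is not a technicality that can be calibrated away. The step ``Integrating in $y$ over $\{m(x,y)\geq k\}$ at each fixed $k$ produces a factor of order $V(B_{Q_k(x)})^{-1}$'' unpacks to
\[
\sum_{m\geq k}2^{\alpha(m-k)}\,V(x,2^m)^{-1}\ \lesssim\ V(x,2^k)^{-1},
\]
and this inequality requires $V(x,2^m)/V(x,2^k)\gtrsim 2^{(m-k)d}$ for some $d>\alpha>0$. In a general space of homogeneous type the lower dimension may be $d=0$ (the paper explicitly allows this; see Remark~\ref{rem:main}), and then no choice of $\alpha>0$ works. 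The same issue recurs in your assertion $2^{m(x,y)}\approx\rho(x,y)$: the condition $Q_{m-1}(x)\neq Q_{m-1}(y)$ gives no lower bound on $\rho(x,y)$ in general.

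The paper circumvents this completely by \emph{not} telescoping pointwise. Instead, it first observes that the identity
\[
\sum_{Q\in\mathscr D}\frac{1_{B_Q}(x)1_{B_Q}(y)}{\mu(B_Q)^2}\ \approx\ \frac{1}{V(x,y)^2}
\]
(summing over the chain of \emph{strict} dyadic ancestors $P^{[k]}$, for which $\mu(P^{[k]})\geq c^k\mu(P)$ with $c>1$ always holds by Lemma~\ref{lem:strRD}) gives $\Norm{b}{\dot B^p}\approx\Norm{b}{\operatorname{Osc}^{p,p}_p}$ with the $L^p$-oscillation directly, via Lemma~\ref{lem:LpOsc}. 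The passage from $L^p$- to $L^1$-oscillation, $\operatorname{Osc}^{p,p}_p\approx\operatorname{Osc}^{p,p}_1$, is then handled by a Lerner-type local oscillation formula (Lemma~\ref{lem:Lerner}) combined with boundedness of the dyadic Carleson operator (Proposition~\ref{prop:CarBd}); both of these again use only the strict-ancestor reverse doubling of Lemma~\ref{lem:strRD}, never metric reverse doubling. This is the missing idea in your sketch: the reverse doubling you need is available, but only in the \emph{dyadic} (set-inclusion) sense, and the route through Lerner's formula is what lets you exploit it.

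For Part~(2), your approach via adjacent dyadic systems is valid but unnecessary. Since $B_Q$ is already a dilated ball containing $Q$, every ball $B(x,t)$ with $x\in Q$ and $t\approx\ell(Q)$ satisfies $B(x,t)\subseteq B_Q$ with $\mu(B_Q)\approx\mu(B(x,t))$, and hence $m_b(x,t)\lesssim m_b(B_Q)$. The paper uses a single dyadic system throughout, working with two slightly different cells $E(Q)$ and $F(Q)$ for the two directions of the equivalence; see Proposition~\ref{prop:Osc=LdWeak}.
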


For the proof of Theorem \ref{thm:main}\eqref{it:p<d}, we have
\begin{equation*}
\begin{split}
  \Norm{b}{\dot B^d(\mu)} &\approx\Norm{b}{\operatorname{Osc}^{d,d}(\mu)}\qquad\text{by Proposition \ref{prop:Osc=classical}} \\
  &\lesssim\Norm{[b,T]}{S^d(L^2(\mu))}\qquad\text{by Proposition \ref{prop:comVsOsc}} \\
  &\leq\Norm{[b,T]}{S^p(L^2(\mu))}\qquad\text{for }p\leq d, \\
\end{split}
\end{equation*}
and the argument is completed by the following result of the companion paper \cite{HK:W1p}:

\begin{proposition}[\cite{HK:W1p}, Theorem 1.5(2) with $p=d$]\label{prop:Bp=const}
Let $d\in(1,\infty)$ and let $(X,\rho,\mu)$ be a doubling metric measure space having lower dimension $d$ and supporting the $(1,d)$-Poincar\'e inequality.
If $f\in L^1_{\loc}(\mu)$ satisfies
\begin{equation*}
  \Big[(x,y)\mapsto  \frac{\abs{f(x)-f(y)}^d}{V(x,y)^2}\Big]\in L^1_{\loc}(\mu\times\mu),
\end{equation*}
then $f$ is equal to a constant almost everywhere. In particular,
\begin{equation}\label{eq:Bp=const}
  \dot B^d(\mu)=\{\text{constants}\}.
\end{equation}
\end{proposition}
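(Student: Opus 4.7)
The plan is to deduce that $f$ must agree $\mu$-a.e.\ with a constant by showing that the hypothesis forces an upper gradient of $f$ to vanish, whereupon the $(1,d)$-Poincar\'e inequality yields the conclusion. The criticality of the pairing $(p,d) = (d,d)$ is the crux: at this scale, the Besov-type integrability is strictly stronger than any first-order Sobolev condition and in fact rules out nonconstant functions.

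\textbf{Step 1 (Pointwise reformulation).} By Fubini, for $\mu$-a.e.\ $x \in X$,
\begin{equation*}
\int_{B(x,R)} \frac{|f(x)-f(y)|^d}{V(x,y)^2}\, d\mu(y) < \infty \quad \text{for every } R > 0.
\end{equation*}
Splitting $B(x,R)$ into dyadic annuli and using the doubling property to replace $V(x,y)$ by $V(x, 2^{-k} R)$ on the $k$-th annulus, this is equivalent to
\begin{equation*}
\sum_{k \geq 0} \frac{1}{V(x, 2^{-k}R)^2}\int_{B(x, 2^{-k}R)\setminus B(x, 2^{-k-1}R)} |f(x)-f(y)|^d\, d\mu(y) < \infty.
\end{equation*}

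\textbf{Step 2 (Scaling with the lower dimension).} The lower-dimension hypothesis $V(x, r_1)/V(x, r_2) \gtrsim (r_1/r_2)^d$ lets us absorb the $V(x,\cdot)$ factors into pure $r^d$ factors. After a telescoping that converts annular integrals into ball averages $\phi_k := \fint_{B(x, 2^{-k}R)}|f(x)-f(y)|^d\, d\mu$, and an application of Jensen's inequality, Step~1 upgrades to
\begin{equation*}
\lim_{r \to 0}\ \frac{1}{r}\fint_{B(x, r)} |f(x) - f(y)|\, d\mu(y) = 0 \quad \text{for $\mu$-a.e.\ } x \in X.
\end{equation*}

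\textbf{Step 3 (Vanishing upper gradient).} Under the doubling plus $(1,d)$-Poincar\'e hypotheses, every $f \in L^1_{\loc}(\mu)$ admits a minimal upper gradient $g_f$ in the Haj\l{}asz / Newtonian / Cheeger sense, and this $g_f$ is pointwise dominated by $C \limsup_{r \to 0}\, r^{-1}\fint_{B(x,r)}|f - f(x)|\, d\mu$; see \cite{HKST} for the classical form of this principle and especially \cite{HK:W1p} for the adaptation relevant here. Combined with Step~2, $g_f \equiv 0$ $\mu$-a.e.

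\textbf{Step 4 (Conclusion via Poincar\'e and connectedness).} With the vanishing upper gradient, the $(1,d)$-Poincar\'e inequality \eqref{eq:Poincare} (applicable to general Sobolev functions with their minimal upper gradient, by the self-improvement of \cite{KZ:08}) yields
\begin{equation*}
\fint_{B(x,r)} |f - \ave{f}_{B(x,r)}|\, d\mu \lesssim r \Big(\fint_{B(x,\lambda r)} g_f^d\, d\mu\Big)^{1/d} = 0
\end{equation*}
for every ball. Hence $f \equiv \ave{f}_{B(x,r)}$ $\mu$-a.e.\ on each ball. Since $X$ is connected (a consequence of doubling plus a Poincar\'e inequality on a complete space, cf.\ \cite{HKST}), the ball-average value is independent of the ball, proving that $f$ is $\mu$-a.e.\ equal to a single constant. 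The special case \eqref{eq:Bp=const} follows by applying this to $f = b \in \dot B^d(\mu)$.

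\textbf{Main obstacle.} The delicate step is Step~3, where a \emph{limit} (rather than a supremum) of pointwise averaged differences must control the minimal upper gradient. This is precisely the type of characterization of $\dot M^{1,d}(\mu)$ at the critical exponent proved in the companion paper \cite{HK:W1p}; the lower-dimension assumption enters here essentially, since it forces the scale-wise summability produced in Steps~1--2 to be sharp. Steps~1, 2 and 4 are routine dyadic bookkeeping and standard metric-Sobolev machinery given this input.
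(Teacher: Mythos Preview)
The paper does not supply its own proof of this proposition; it simply imports the result from the companion paper \cite{HK:W1p}. So the comparison is really between your proposal and a self-contained argument.

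Your Steps 1, 2 and 4 are essentially sound. The dyadic decomposition in Step~1 and the use of the lower-dimension bound $V(x,r)\lesssim C_x r^d$ in Step~2 are carried out correctly and do yield, for $\mu$-a.e.\ $x$,
\[
  \lim_{r\to 0}\frac{1}{r}\fint_{B(x,r)}|f(x)-f(y)|\,d\mu(y)=0.
\]
Step~4 is also fine once Step~3 is in hand: connectedness follows from the Poincar\'e inequality alone (a disconnection would produce a Lipschitz indicator with $\lip f\equiv 0$ but nonzero mean oscillation), so completeness is not needed there.

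The genuine gap is Step~3. First, the sentence ``every $f\in L^1_{\loc}(\mu)$ admits a minimal upper gradient $g_f$'' is false as written: an arbitrary $L^1_{\loc}$ function need not belong to any Sobolev class, and without that, talk of a minimal $p$-weak upper gradient is premature. Second, and more importantly, the inequality you invoke---that $g_f(x)$ is dominated pointwise by $\limsup_{r\to 0}r^{-1}\fint_{B(x,r)}|f-f(x)|\,d\mu$---goes in the \emph{opposite} direction to the standard Poincar\'e/Lebesgue machinery, which bounds such averaged differences \emph{above} by averages of an upper gradient. Passing from ``averaged differences vanish at small scales'' to ``$f$ lies in a Sobolev class with zero upper gradient'' is precisely the nontrivial content of the result in \cite{HK:W1p} that you cite; but citing that paper here is circular, since it is exactly where the proposition itself is proved. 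You correctly flag this as the main obstacle, but you do not overcome it---you defer to the very source you are meant to be reproving. A complete argument would need an independent mechanism (e.g.\ a chaining argument over balls combined with the Poincar\'e inequality at the level of mean oscillations, or an appeal to an established characterisation of $M^{1,d}$ by pointwise difference quotients that does \emph{not} already presuppose the result) to bridge the conclusion of Step~2 to the constancy of $f$.
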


From the same work, we also have an identification of the space on the right of \eqref{eq:Osc=weakLp}, the main result of \cite{HK:W1p}:

\begin{theorem}[\cite{HK:W1p}, Theorem 1.1]\label{thm:Rupert}
Let $(X,\rho,\mu)$ be a complete doubling metric measure space supporting a $(1,p)$-Poincar\'e inequality for some $p\in(1,\infty)$. Let $b\in L^1_{\loc}(\mu)$. Then $b\in\dot M^{1,p}(\mu)$ if and only if $m_b\in L^{p,\infty}(\nu_p)$. Moreover, we have
\begin{align}
  \Norm{b}{\dot M^{1,p}(\mu)} \label{eq:CST}
  &\approx\Norm{m_b}{L^{p,\infty}(\nu_p)}:=\sup_{\kappa>0}\kappa\cdot\nu_p(\{m_b>\kappa\})^{1/p} \\
  &\approx\liminf_{\kappa\to0}\kappa\cdot\nu_p(\{m_b>\kappa\})^{1/p}, \label{eq:Rupert}
\end{align}
where $m_b$ and $\nu_p$ are as in \eqref{eq:mb-nu}.
\end{theorem}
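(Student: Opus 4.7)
The plan is to establish the chain of inequalities
\begin{equation*}
\liminf_{\kappa\to 0}\kappa\cdot\nu_p(\{m_b>\kappa\})^{1/p}\leq \Norm{m_b}{L^{p,\infty}(\nu_p)}\lesssim\Norm{b}{\dot M^{1,p}(\mu)}\lesssim\liminf_{\kappa\to 0}\kappa\cdot\nu_p(\{m_b>\kappa\})^{1/p},
\end{equation*}
in which the first step is trivial, the second is the easy direction, and the third is the substance of the theorem.

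For the easy direction, I would start with any admissible $h\in L^p(\mu)$ for the Haj\l{}asz gradient of $b$ and observe that
\begin{equation*}
  m_b(x,t)\leq \fint_{B(x,t)}\fint_{B(x,t)}\abs{b(y)-b(z)}\ud\mu(y)\ud\mu(z)\leq 2t\cdot Mh(x),
\end{equation*}
where $M$ denotes the Hardy--Littlewood maximal operator on $(X,\rho,\mu)$. A Fubini computation of the $\nu_p$-measure of the level set $\{(x,t):2tMh(x)>\kappa\}$ together with the $L^p$-boundedness of $M$ (valid since $p>1$) yields $\nu_p(\{m_b>\kappa\})\lesssim \kappa^{-p}\Norm{h}{L^p(\mu)}^p$; taking infimum over admissible $h$ gives $\Norm{m_b}{L^{p,\infty}(\nu_p)}\lesssim\Norm{b}{\dot M^{1,p}(\mu)}$.

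For the main inequality $\Norm{b}{\dot M^{1,p}(\mu)}\lesssim\liminf_{\kappa\to 0}\kappa\nu_p(\{m_b>\kappa\})^{1/p}$, the strategy is to construct a Haj\l{}asz upper gradient $g\in L^p(\mu)$ for $b$ directly from the level-set information on $m_b$. The decisive input is the Poincar\'e inequality, which by \cite{KZ:08} and completeness self-improves to a $(1,q)$-Poincar\'e inequality for some $q\in(1,p)$. I would then apply a standard Haj\l{}asz--Koskela chaining argument: for $\mu$-a.e.\ $x,y\in X$, write $b(x)-b(y)$ as a telescoping sum of averages of $b$ over a chain of balls with geometrically decreasing radii centred at $x$ and at $y$, bound each telescoping difference by $m_b$ at the corresponding scale, and use the $(1,q)$-Poincar\'e inequality to sum the resulting series. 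This produces a pointwise bound $\abs{b(x)-b(y)}\lesssim \rho(x,y)(g(x)+g(y))$ with $g$ a suitable maximal-type object built from the weak-$L^p$ data on $m_b$.

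The main obstacle is to show that the liminf as $\kappa\to 0$ (rather than the full supremum) of $\kappa\cdot\nu_p(\{m_b>\kappa\})^{1/p}$ already controls $\Norm{g}{L^p(\mu)}$. The relevant mechanism is scale invariance: the quantity $\kappa^p\nu_p(\{m_b>\kappa\})$ is preserved under the joint scaling $(b,\kappa)\mapsto(\lambda b,\lambda\kappa)$, so any scale-invariant gradient norm should be visible already in the asymptotics at $\kappa\to 0$. Concretely, for $\mu$-a.e.\ $x$ one expects $m_b(x,t)/t$ to capture, in the limit $t\to 0$, a pointwise metric derivative of $b$, which one then identifies with $g$. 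Converting a weak-type, globally integrated bound on measures of level sets into pointwise almost-everywhere gradient information with matching $L^p$ norm, in a setting devoid of any smooth structure, is the genuinely delicate step, and is the substantive content borrowed from the companion paper \cite{HK:W1p}.
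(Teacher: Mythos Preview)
The paper does not prove this theorem at all: it is quoted verbatim as \cite[Theorem~1.6]{HK:W1p} and used as a black box. There is therefore no ``paper's own proof'' to compare against; the paper explicitly flags the result as the main theorem of the companion work and discusses its relation to earlier special cases (the Euclidean result of \cite{CST,Frank} and the Carnot-group version in \cite{LXY}), but gives no argument.

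Your sketch is consistent with this: you give a correct and complete proof of the easy direction $\Norm{m_b}{L^{p,\infty}(\nu_p)}\lesssim\Norm{b}{\dot M^{1,p}(\mu)}$ (the constant should be $4t$ rather than $2t$, but this is immaterial), and for the hard direction you correctly identify the ingredients---Keith--Zhong self-improvement and a Haj\l{}asz--Koskela chaining argument---while acknowledging in your final paragraph that turning the weak-type level-set information, and especially the \emph{liminf} rather than the supremum, into an $L^p$ bound for a Haj\l{}asz gradient is ``the substantive content borrowed from the companion paper \cite{HK:W1p}.'' That is exactly the status the present paper assigns to this step, so your proposal and the paper are aligned: both defer the real work to \cite{HK:W1p}.
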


Thus Theorem \ref{thm:main}\eqref{it:p=d} is obtained via the chain
\begin{equation*}
\begin{split}
  \Norm{[b,T]}{S^{d,\infty}}
  &\approx\Norm{b}{\operatorname{Osc}^{d,\infty}}\qquad\text{by Proposition \ref{prop:comVsOsc}} \\
  &\approx\Norm{m_b}{L^{d,\infty}(\nu_d)}\qquad\text{by Proposition \ref{prop:Osc=classical}} \\
  &\approx\Norm{b}{\dot M^{1,d}(\mu)}\qquad\text{by Theorem \ref{thm:Rupert}},
\end{split}
\end{equation*}
where the first step is valid in general spaces of homogeneous type (and all $p\in(1,\infty)$ in place of $d$), the second one only needs Ahlfors regularity of homogeneous dimension $d$, and the final one the $(1,d)$-Poincar\'e inequality (but no relation between $d$ and the dimensions of the space).
 
Theorem \ref{thm:Rupert} is only relevant for Theorem \ref{thm:main} in the special case of Ahlfors $d$-regular spaces and exponents $p=d$ matching the homogeneous dimension, but Theorem \ref{thm:Rupert} itself is more general. In the said special case, with the additional assumption that $X$ is {\em compact}, a variant of Theorem \ref{thm:Rupert} \eqref{eq:CST} (with different but closely analogous right-hand side) is also contained in \cite[Theorem 1.4]{BS:18}. It seems possible that, with suitable truncation and approximation arguments to reduce the considerations to the compact case, the proof of Theorem \ref{thm:main}\eqref{it:p=d} could also be completed using \cite[Theorem 1.4]{BS:18} in place Theorem \ref{thm:Rupert} from \cite{HK:W1p}, but we prefer the latter, which can be applied directly, without a need of additional reductions to the compact case.

For $X=\R^d$ and $p=d\geq 2$, \eqref{eq:CST} is \cite[Theorem on page 679]{CST}. The extension to all $p\in(1,\infty)$ and the limit relation \eqref{eq:Rupert} are contained in \cite[Theorem 1.1]{Frank}. Beyond $\R^d$, a version of Theorem \ref{thm:Rupert} in Carnot groups was previously obtained in \cite[Theorem A.3]{LXY}. These results have played a similar role in the proofs of the special cases of Theorem \ref{thm:main} in the respective settings, as Theorem \ref{thm:Rupert}  plays in the proof of Theorem \ref{thm:main}.

\section{An overview of methods and organisation of the paper}\label{sec:overview}

Different parts of our main Theorem \ref{thm:main} depend on different sets of assumptions, and thereby also on somewhat different methods. While these components are combined together in the end, substantial parts of the proof are largely independent of each other, allowing alternatives for the order of presentation. 

With the occasional assumption of more particular assumptions, we will be largely working in the general setting of spaces of homogeneous type, and the extensions of many of the Euclidean arguments are less surprising on the local level (say, within a section). Still, we feel that the big picture obtained by the combination of these local elements was not obvious, given the explicit treatment in the recent literature of various special cases that are now subsumed by Theorem \ref{thm:main} or even Proposition \ref{prop:comVsOsc}.

Throughout the paper, the notion of dyadic cubes in spaces of homogeneous type (available since \cite{Christ:90}, with more recent refinements in \cite{AH:13,HK:12}) will be a principal tool. Roughly speaking, Part \ref{part:NWO} covers the part of the theory within reach by an adaptation of the Euclidean methods of \cite{JW:82,RS:NWO}, simply using the dyadic cubes of \cite{Christ:90} in place of the classical ones. With the established properties of the dyadic cubes, the proof of Proposition \ref{prop:Osc=classical} is straightforward. The dyadic cubes also lead to a natural extension of the method of ``nearly weakly orthogonal'' sequences of \cite{RS:NWO}, which allows us to give a proof of the commutator lower bound \eqref{eq:Osc<S}. Together with Proposition \ref{prop:Osc=classical}, we thus obtain
\begin{equation}\label{eq:comLower}
  \Norm{b}{\dot B^p(\mu)}\approx\Norm{b}{\operatorname{Osc}^{p,p}}\lesssim\Norm{[b,T]}{S^p(L^2(\mu))},\qquad p\in(1,\infty).
\end{equation}

In the other direction, a clever interpolation technique of \cite{JW:82} allows us to prove the estimate 
\begin{equation}\label{eq:JWbound}
  \Norm{[b,T]}{S^p(L^2(\mu))}\lesssim\Norm{b}{\dot B^p(\mu)},\qquad p\in(2,\infty).
\end{equation}
Together with \eqref{eq:Bp=const} proved in \cite{HK:W1p}, the estimates \eqref{eq:comLower} and \eqref{eq:JWbound} already give parts \eqref{it:p>d} and \eqref{it:p<d} of Theorem \ref{thm:main} whenever $d\geq 2$. This is a worthwhile observation, since this dimensional assumptions holds in most of the concrete spaces in which the commutator theory has been studied. While \eqref{eq:JWbound} will also follow from the more general methods of Part \ref{part:dyadic} (to be discussed shortly), the simple alternative argument of \cite{JW:82} has independent interest and also provides additional information; see Section \ref{sec:JW} for details.

\begin{remark}
The interpolation behind \eqref{eq:JWbound} is worth a separate comment. The argument of \cite{JW:82} proceeds by first proving the weak-type estimate
\begin{equation*}
   \Norm{[b,T]}{S^{p,\infty}(L^2(\R^d))}\lesssim\Norm{b}{\dot B^{d/p}_{p,p}(\R^d)},
\end{equation*}
and then interpolating. This requires in particular an interpolation result for the Besov spaces on the right, which is classical and well known for Besov spaces on $\R^d$, but less obvious on a general space of homogeneous type. While results {\em of this type} are contained in the literature (see \cite[Theorem 4.4]{GKS:10} and \cite[Theorem 3.1]{Yang:04}), they only deal with interpolation of $B^s_{p,q}$ between variable $(s,q)$ for fixed~$p$, which is not enough for the case at hand. While it is conceivable that the needed interpolation theorem has been proved, and likely that it can be proved, we are not aware of it appearing in the literature. It seems that this detail has been overlooked in some previous adaptations of the argument of \cite{JW:82}.

We avoid this problem by setting up a variant of the argument that only requires standard interpolation of $L^p$ spaces, where the presence of an abstract measure space is no issue. This might be of some interest even for the original result of \cite{JW:82} on $\R^d$, if one wanted to present a self-contained proof, say on an advanced course or in a textbook, without developing a heavy piece of Littlewood--Paley theory of Besov spaces as a sidetrack. Our results and proofs only rely on the definition \eqref{eq:Besov}.
\end{remark}

We finally come to Part \ref{part:dyadic} of the paper. It is here that we prove the commutator upper bound \eqref{eq:S<Osc} for all $p\in(p(\eta),\infty)$ and $q\in[1,\infty]$, extending the result for $p=q\in(2,\infty)$ available by the method of \cite{JW:82} used in Part \ref{part:NWO}, and also obtain the weighted version of this bound, as stated in Theorem \ref{thm:main}\eqref{it:A2}. Even for dimensions $d\geq 2$ (when the methods of Part \ref{part:NWO} were enough for conclusions \eqref{it:p>d} and \eqref{it:p<d} of Theorem \ref{thm:main}), this extension is necessary to cover the end-point estimate with $q=\infty$ as in Theorem \ref{thm:main}\eqref{it:p=d} and the weighted extension in Theorem \ref{thm:main}\eqref{it:A2}. For spaces of lower dimension $d<2$ and/or lacking a Poincar\'e inequality, this allows us to cover the full range of $p$ in Theorem \ref{thm:main}\eqref{it:p>d}, rather than just $p>2$.

The proof of the Euclidean predecessor of \eqref{eq:S<Osc} in \cite{RS:NWO} is based on their method of ``nearly weakly orthogonal'' series produced by local Fourier expansions of the kernel $K$. In some of the recent extensions \cite{FLLX,LXY}, these have been replaced by expansions in terms of so-called Alpert bases \cite{Alpert}, whose characteristic feature is orthogonality to polynomials up to a high degree, and the desired decay of the expansions is achieved via Taylor expansions of a kernel of high-order smoothness. These methods are clearly unavailable in the generality that we consider, and the proof of this remaining bound \eqref{eq:S<Osc} will instead be based on the {\em dyadic representation} of general Calder\'on--Zygmund operators $T$ as averages of series of so-called {\em dyadic shifts} $\S$. By linearity, the estimation of $[b,T]$ is then reduced to the estimation of $[b,\S]$, with control on the shift parameters to ensure the summability of the series.

With significant predecessors, notably \cite{Pet:00,PTV:02}, a dyadic representation theorem in the sense of today was introduced in \cite{Hyt:A2}, originally for proving the so-called $A_2$ conjecture on sharp weighted norm inequalities for Calder\'on--Zygmund operators on $\R^d$. The earlier representation of \cite{Pet:00} had been used for such a result in a special case in \cite{Pet:07}, but its original motivation in \cite{Pet:00} was proving commutator boundedness; thus its more recent role also in related Schatten results is no surprise. The more general version of \cite{Hyt:A2} has also been subsequently applied for this purpose in \cite{DO:16,HLW:17,HW:18}. 

For $S^p$ estimates of commutators, dyadic representation was probably first used in \cite{PS:04} and more recently in \cite{GLW:23,LLW:multi,LLW:2wH,LLWW:2wR}, but all these works only dealt with the Hilbert or Riesz transforms using the special representations of \cite{Pet:00,PTV:02}. It was only very recently that the general dyadic representation of \cite{Hyt:A2} was applied to $S^p$ estimates of commutators on $\R^d$, first for $p\geq 2$ in \cite{Zhang:thesis}, with subsequent extensions to $p<2$ \cite{WZ:24b,WZ:24}. Versions of the dyadic representation theorem on doubling metric spaces were already known \cite{Hyt:Enflo,NRV}, but somewhat implicitly within the proofs of other results, and we supply an explicit statement in Theorem \ref{thm:DRT} in the full generality of spaces of homogeneous type. Due to the many applications of the Euclidean version of this representation, this has independent interest.
Together with commutator estimates for the dyadic shifts $\S$ featuring in this representation obtained in Sections \ref{sec:shift} (unweighted) and \ref{sec:shift-w} (weighted), we obtain the upper bounds for $[b,T]$, which complete the proof of Theorem \ref{thm:main}. Besides the main point of generalising the Euclidean results of \cite{WZ:24b,WZ:24} to spaces of homogeneous type, the weighted aspects of our result are completely new even in $\R^d$, and we can even slightly relax the kernel assumptions of \cite{WZ:24b,WZ:24} also in the unweighted case.

On a technical level, while the results of Part \ref{part:NWO} only require a fixed system of dyadic cubes due to \cite{Christ:90}, the dyadic representation fundamentally depends on the notion of {\em random dyadic systems} constructed in \cite{AH:13,HK:12,HM:12}, which we will also need to revisit in Part \ref{part:dyadic}.

%In summary, the key innovations of this paper are contained in Part \ref{part:Sobolev} and Section \ref{sec:shift-w}.

\section{Dimension concepts}\label{sec:prelim}

Recall that three different dimension concepts featured in Theorem \ref{thm:main}. For completeness we provide some background about them.

\begin{definition}\label{def:dims}
We say that a quasi-metric measure space $(X,\rho,\mu)$ has
\begin{enumerate}
  \item {\em separation dimension} $\Delta$ if every ball $B(x,R)$ can contain at most $C(R/r)^\Delta$ mutually $r$-separated points;
  \item {\em covering dimension} $\Delta'$ if every ball $B(x,R)$ can be covered by at most $C'(R/r)^{\Delta'}$ balls;
  \item  {\em lower dimension} $d$ if
\begin{equation}\label{eq:lowerD}
  \Big(\frac{R}{r}\Big)^d\lesssim\frac{V(x,R)}{V(x,r)};
\end{equation}
 \item {\em upper dimension} $D$ if 
\begin{equation}\label{eq:upperD}
  \frac{V(x,R)}{V(x,r)}\lesssim\Big(\frac{R}{r}\Big)^D
\end{equation}
\end{enumerate}
in each case uniformly in $x\in X$ and $0<r\leq R<\infty$.
\end{definition}

\begin{remark}\label{rem:dims}
In our terminology of Definition \ref{def:dims}, none of the dimensions is a unique number: a space with separation dimension $\Delta$ also has separation dimension $\widetilde\Delta$ for every $\widetilde\Delta\geq\Delta$, and the same applies to the covering dimension and the upper dimension. On the other hand, a space with lower dimension $d$ also has lower dimension $\widetilde d$ for every $\widetilde d\leq d$. Unique values could be obtained by taking the infimum or the supremum of the admissible dimensions; e.g., the infimum of the covering dimensions is called the {\em Assouad dimension} of $X$ \cite[Definition 10.15]{Heinonen:book}. However, we find the terminology of Definition \ref{def:dims} convenient for the present purposes.
\end{remark}

\begin{lemma}\label{lem:sepVsCov}
Let $(X,\rho)$ be a quasi-metric space with quasi-triangle constant $A_0$, and let $E\subset X$.
For $r\in(0,\infty)$, suppose that
\begin{itemize}
  \item $E$ contains at most $M_E(r)$ points that are mutually $r$-separated;
  \item $E$ can be covered by $N_E(r)$ balls of radius $r$.
\end{itemize}
Then $\displaystyle N_E(r)\leq M_E(r)\leq N_E(\frac{r}{2A_0})$.
\end{lemma}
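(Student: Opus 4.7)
The plan is to prove each direction by a standard packing--covering argument, adapted to the quasi-metric setting through careful bookkeeping with the quasi-triangle constant $A_0$.

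For the first inequality $N_E(r)\leq M_E(r)$, I would extract a maximal $r$-separated subset $\{x_1,\dots,x_k\}$ of $E$, which by hypothesis satisfies $k\leq M_E(r)$. By maximality, every $y\in E$ must satisfy $\rho(y,x_i)<r$ for some $i$; otherwise $\{x_1,\dots,x_k,y\}$ would still be $r$-separated, contradicting the choice. Hence the open balls $B(x_i,r)$ cover $E$, exhibiting a covering by $k\leq M_E(r)$ balls of radius $r$, which gives $N_E(r)\leq M_E(r)$.

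For the second inequality $M_E(r)\leq N_E(r/(2A_0))$, I would fix any $r$-separated set $\{y_1,\dots,y_k\}\subseteq E$ together with any covering of $E$ by balls $B(z_1,s),\dots,B(z_N,s)$ with $s:=r/(2A_0)$. The key observation is that each $B(z_j,s)$ contains at most one of the $y_i$: if $y_i,y_{i'}$ both belonged to $B(z_j,s)$, then the quasi-triangle inequality would yield
\[
  \rho(y_i,y_{i'}) \leq A_0\bigl(\rho(y_i,z_j)+\rho(z_j,y_{i'})\bigr) < 2A_0 s = r,
\]
contradicting $r$-separation. Consequently the map sending each $y_i$ to an index $j$ with $y_i\in B(z_j,s)$ is well defined and injective, forcing $k\leq N$; taking $k$ up to $M_E(r)$ and $N$ down to $N_E(r/(2A_0))$ gives the claim.

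The only subtlety, and also the sole role of the factor $2A_0$, lies in calibrating the smaller radius so that two points in a common open ball are guaranteed to be strictly closer than $r$: the strict inequality in the open-ball definition, inflated by the quasi-triangle factor $A_0$, gives exactly $\rho(y_i,y_{i'})<2A_0 s$, and setting $s=r/(2A_0)$ reduces this to $<r$. For a genuine metric ($A_0=1$) this recovers the familiar packing--covering comparison at radius $r/2$, and no further obstacle is anticipated beyond this bookkeeping.
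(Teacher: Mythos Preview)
Your proof is correct and follows essentially the same approach as the paper: a maximal $r$-separated subset furnishes the cover for $N_E(r)\leq M_E(r)$, and the quasi-triangle inequality shows that no ball of radius $r/(2A_0)$ can contain two $r$-separated points, yielding the reverse inequality via the same pigeonhole argument.
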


\begin{proof}
Let $\{x_i\}_{i=1}^M$ be a maximal $r$-separated subset of $E$. Let $x\in E$. Since $\{x_i\}_{i=1}^M$ is maximal, the set $\{x_i\}_{i=1}^M\cup\{x\}$ is not $r$-separated, and hence $\rho(x,x_i)<r$ for some $i$. Thus the balls $B(x_i,r)$ cover $E$, and hence $N_E(r)\leq M\leq M_E(r)$.

On the other hand, let $\{B_k:=B(y_k,\frac{r}{2A_0})\}_{k=1}^N$ be some cover of $E$. Thus each $x_i$ is contained in some $B_k$
If two of these points satisfy $x_i,x_j\in B_k$ for the same $k$, then 
\begin{equation*}
  \rho(x_i,x_j)\leq A_0(\rho(x_i,y_k)+\rho(y_k,x_j))<A_0(\frac{r}{2A_0}+\frac{r}{2A_0})=r,
\end{equation*}
which contradicts the $r$-separation. Hence each $x_i$ is contained in a different $B_k$, and thus the number of the balls $B_k$ is at least the number of the points $x_i$. This means that $M\leq N$. Now it is possible to choose a cover by such balls, where $N\leq N_E(\frac{r}{2A_0})$, and since the bound $M\leq N$ holds for every $r$-separated sequence of length $M$, we obtain $M_E(r)\leq N_E(\frac{r}{2A_0})$.
\end{proof}

\begin{proposition}\label{prop:dims}
Let $(X,\rho,\mu)$ be a space of homogeneous type, and let $d,D,\Delta,\Delta'$ be as in Definition \ref{def:dims}. Then
\begin{equation*}
  d\leq\Delta=\Delta'\leq D
\end{equation*}
in the following sense:
\begin{enumerate}[\rm(1)]
  \item $X$ has separation dimension $\Delta$ if and only if it has covering dimension $\Delta$;
  \item if $X$ has upper dimension $D$, then it also has separation dimension $D$;
  \item if $X$ has lower dimension $d$, it cannot have separation dimension below $d$.
\end{enumerate}
\end{proposition}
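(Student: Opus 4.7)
The three claims will be verified separately, with the central tool being Lemma \ref{lem:sepVsCov} for part (1) and standard volume-packing arguments (using the quasi-triangle inequality and the doubling property, which is automatic in a space of homogeneous type) for parts (2) and (3).

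\textbf{Part (1), $\Delta=\Delta'$.} I would apply Lemma \ref{lem:sepVsCov} with $E=B(x,R)$. If $X$ has separation dimension $\Delta$, then $M_{B(x,R)}(r)\leq C(R/r)^\Delta$, so the first inequality of the lemma gives $N_{B(x,R)}(r)\leq M_{B(x,R)}(r)\leq C(R/r)^\Delta$, i.e.\ covering dimension $\Delta$. Conversely, if $X$ has covering dimension $\Delta$, the second inequality yields $M_{B(x,R)}(r)\leq N_{B(x,R)}(r/(2A_0))\leq C'(2A_0)^\Delta(R/r)^\Delta$, giving separation dimension $\Delta$.

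\textbf{Part (2), $\Delta\leq D$.} Assume $X$ has upper dimension $D$, and let $\{x_i\}_{i=1}^N$ be any $r$-separated family in $B(x,R)$. By the quasi-triangle inequality, the balls $B_i:=B(x_i,r/(2A_0))$ are pairwise disjoint, and each $B_i$ is contained in a ball $B(x,C_1 R)$ (with $C_1$ depending on $A_0$). Moreover, $B(x,R)\subset B(x_i,C_2 R)$ for each $i$, so doubling gives $\mu(B(x_i,C_2 R))\lesssim\mu(B(x,R))$. Applying the upper dimension bound at the centre $x_i$ between the radii $r/(2A_0)$ and $C_2 R$, I obtain $\mu(B_i)\gtrsim (r/R)^D\mu(B(x_i,C_2 R))\gtrsim (r/R)^D\mu(B(x,R))$. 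Summing the disjoint pieces and comparing with $\mu(B(x,C_1 R))\lesssim\mu(B(x,R))$ then yields $N\lesssim (R/r)^D$, so $X$ has separation dimension $D$.

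\textbf{Part (3), $d\leq\Delta$.} Suppose $X$ has lower dimension $d$ and separation dimension $\Delta$. Let $\{x_i\}_{i=1}^N$ be a maximal $r$-separated set in $B(x,R)$, so $N\leq C(R/r)^\Delta$ and, as in the proof of Lemma \ref{lem:sepVsCov}, the balls $B(x_i,r)$ cover $B(x,R)$. Each $B(x_i,r)$ sits inside $B(x_i,C R)$ with $C\asymp A_0$, and doubling plus $B(x_i,CR)\subset B(x,C'R)$ yields $\mu(B(x_i,CR))\lesssim\mu(B(x,R))$. The lower dimension inequality \eqref{eq:lowerD} applied at $x_i$ gives $\mu(B(x_i,r))\lesssim(r/R)^d\mu(B(x_i,CR))\lesssim(r/R)^d\mu(B(x,R))$. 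Summing over $i$ and using that the $B(x_i,r)$ cover $B(x,R)$,
\begin{equation*}
\mu(B(x,R))\leq\sum_{i=1}^N\mu(B(x_i,r))\lesssim N(r/R)^d\mu(B(x,R)),
\end{equation*}
hence $N\gtrsim (R/r)^d$. Combined with $N\leq C(R/r)^\Delta$, this forces $(R/r)^{d-\Delta}\lesssim 1$ uniformly, so (sending $R/r\to\infty$ in any regime where such balls exist) $d\leq\Delta$.

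The only routine technical issue is keeping track of the quasi-triangle constants that enter the ball inclusions and the doubling iterations; no step should be difficult once these constants are absorbed into the implied $\lesssim$'s. The cleanest obstacle to anticipate is verifying that the inclusions $B(x_i,CR)\subset B(x,C'R)$ and $B(x,R)\subset B(x_i,CR)$ used in parts (2) and (3) are valid for an appropriate choice of constants depending only on $A_0$ and the doubling constant, but this is standard.
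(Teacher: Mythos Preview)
Your argument is correct and follows essentially the same route as the paper: Lemma \ref{lem:sepVsCov} for part (1), a disjoint-packing volume comparison for part (2), and a covering volume comparison for part (3). One small slip in part (2): the inclusion $B(x,R)\subset B(x_i,C_2R)$ gives $\mu(B(x_i,C_2R))\gtrsim\mu(B(x,R))$ directly (no doubling needed), which is in fact the direction you use in the next line; the stated $\lesssim$ is a typo and plays no role in the argument.
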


\begin{proof}
In the notation of Lemma \ref{lem:sepVsCov}, the separation and covering dimensions are defined by the conditions that $M_{B(x,R)}(r)\leq C(R/r)^\Delta$ and $N_{B(x,R)}(r)\leq C'(R/r)^{\Delta'}$ for all $x\in X$ and $0<r<R<\infty$. By the said lemma, we have
\begin{equation*}
  N_{B(x,R)}(r)\leq M_{B(x,R)}(r)\leq C(R/r)^\Delta,
\end{equation*}
and
\begin{equation*}
  M_{B(x,R)}(r)\leq N_{B(x,R)}(\frac{r}{2A_0})\leq C'\Big(\frac{R}{ r/(2A_0)}\Big)^{\Delta'}=
  C'(2A_0)^{\Delta'}(R/r)^{\Delta'},
\end{equation*}
which shows that we always have $\Delta=\Delta'$ and $C'\leq C\leq C'(2A_0)^{\Delta'}$.

Suppose that $B(x,R)$ contains $M$ mutually $r$-separated points $x_i$. Then the balls $B(x_i,\frac{r}{2A_0})$ are mutually disjoint and contained in $B(x,c_1 R)$ which in turn is contained in each $B(x_i,c_2 R)$ for some $c_1,c_2$ depending only on $A_0$.
Thus
\begin{equation*}
\begin{split}
  M\min_i\mu(B(x_i,\frac{r}{2A_0}))
  &\leq\sum_{i=1}^M\mu(B(x_i,\frac{r}{2A_0})) \\
  &\leq\mu(B(x,c_1 R)) \\
  &\leq\min_i\mu(B(x_i,c_2 R)) \\
  &\lesssim\Big(\frac{c_2 R}{\frac{1}{2A_0}r}\Big)^D\min_i\mu(B(x_i,\frac{r}{2A_0})).
\end{split}
\end{equation*}
Cancelling out the common term from both sides and absorbing inessential factors into the implied constant, it follows that $M\lesssim(R/r)^D$. Since $M$ was the size of an arbitrary $r$-separated subset of $B(x,R)$, we get $M_{B(x,R)}\lesssim(R/r)^D$ and hence $\Delta\leq D$.

Finally, suppose that $B(x,R)$ can be covered by $N\leq C'(R/r)^{\Delta'}$ balls $B(x_i,r)$, while $\mu(B(y,r))\lesssim(r/R)^d\mu(B(y,R))$.
We may assume that there is some $z_i\in B(x_i,r)\cap B(x,R)$, for otherwise $B(x_i,r)$ is not needed in the cover, and then it is easy to check that $B(x_i,R)\subset B(x,cR)$ for some $c$ depending only on $A_0$.
\begin{equation*}
\begin{split}
  \mu(B(x,R))
  &\leq\sum_{i=1}^N\mu(B(x_i,r))
  \lesssim\sum_{i=1}^N(r/R)^d\mu(B(x_i,R)) \\
  &\leq\sum_{i=1}^N(r/R)^d\mu(B(x,cR)) 
  \lesssim(R/r)^{\Delta'}(r/R)^d\mu(B(x,cR)).
\end{split}
\end{equation*}
If $d>\Delta'$, the right-hand side converges to zero as $r\to 0$, which is absurd. Hence it is necessary that $d\leq\Delta'$.
\end{proof}

\section{Example: the Bessel setting}\label{sec:Bessel}

This section serves as an illustration of the assumptions of Theorem \ref{thm:main} in the concrete ``Bessel setting'' previously considered in \cite{FLLX}. We detail the claims of Example \ref{ex:Bessel} on getting the results of \cite{FLLX} as special cases of Theorem \ref{thm:main} and also provide some extensions. None of the subsequent sections depend on the current one, which can be skipped by a reader uninterested in this particular setting.

\begin{definition}\label{def:Bessel}
The ``Bessel setting'', with parameters $n\in\{1,2,\ldots\}$,
\begin{equation*}
   \vec\lambda=(\lambda_i)_{i=1}^n\in(-\frac12,\infty)^n,\quad
   \vec\alpha=(\alpha_i)_{i=1}^n\in\{0,1\}^n
\end{equation*}
is the metric measure space $(X,\rho,\mu)=(\R^n_{\vec\lambda,\vec\alpha},\abs{x-y},\mu_{\vec\lambda}^{(n)})$, where $\abs{x-y}$ is the Euclidean metric on
\begin{equation*}
  \R^n_{\vec\lambda,\vec\alpha}:=\prod_{i=1}^n\R_{\lambda_i,\alpha_i},\qquad 
  \R_{\lambda_i,\alpha_i}
  :=\begin{cases} \R, & \text{if}\quad\lambda_i=0=\alpha_i, \\
  [0,\infty), & \text{otherwise}, \end{cases}
\end{equation*}
and
\begin{equation*}
  \ud\mu_{\vec\lambda}^{(n)}(x):= \prod_{i=1}^n\ud\mu_{\lambda_i}^{(1)}(x_i),\qquad
  \ud\mu_{\lambda_i}^{(1)}(x_i):=x_i^{2\lambda_i}\ud x_i.
\end{equation*}
\end{definition}

Note that the parameter $\vec\alpha$ plays a role only if some of the components of $\vec\lambda$ are zero; we have incorporated this parameter to cover slight variants of the setting appearing in the literature. In \cite{CZ:14}, general $\vec\lambda$ with $\vec\alpha=(1,\ldots,1)$ is considered, while \cite{FLLX} deals with $\vec\lambda=(0,\ldots,0,\lambda)$ with $\lambda>0$ and $\vec\alpha=(0,\ldots,0,1)$.

\begin{proposition}\label{prop:Bessel}
The Bessel setting with parameters $n\in\{1,2,\ldots\}$, $\vec\lambda\in(-\frac12,\infty)^n$, and $\vec\alpha\in\{0,1\}^n$ is a complete doubling metric measure space supporting the $(1,1)$-Poincar\'e inequality and having separating dimension $\Delta=n$ and lower and upper dimension $d$ and $D$, where
\begin{equation*}
\begin{split}
  d :=\sum_{i=1}^n d_i\in(0,n],\quad d_i &:=1-2\lambda_{i,-}\in(0,1],\\
  D :=\sum_{i=1}^n D_i\in[n,\infty),\quad D_i &:=1+2\lambda_{i,+}\in[1,\infty).
\end{split}
\end{equation*}
\end{proposition}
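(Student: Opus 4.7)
The plan is to check each piece of the claim separately, with the dimensional indices and doubling all flowing from one sharp estimate of ball volumes, and with the $(1,1)$-Poincar\'e inequality handled by tensorization of one-dimensional Poincar\'e inequalities.

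First, since each factor $\R_{\lambda_i,\alpha_i}$ is either $\R$ or $[0,\infty)$, the underlying set $\R^n_{\vec\lambda,\vec\alpha}$ is a closed subset of $\R^n$, and with the inherited Euclidean metric it is complete. For the separating dimension $\Delta=n$, any Euclidean ball of radius $R$ in $\R^n$ is covered by $\lesssim(R/r)^n$ balls of radius $r$ (the covering dimension of $\R^n$ is $n$), and the same bound transfers to subsets; Lemma~\ref{lem:sepVsCov} then upgrades this to the required $r$-separation bound.

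Next, I would compute $V(x,r)=\mu(B(x,r))$ sharply. Using that the Euclidean ball lies between axis-parallel cubes $Q(x,r/\sqrt{n})\subset B(x,r)\subset Q(x,r)$ centred at $x$, and that $\mu_{\vec\lambda}^{(n)}$ is a product measure, I obtain
\begin{equation*}
   V(x,r)\approx\prod_{i=1}^n\mu^{(1)}_{\lambda_i}\bigl(\{t\in\R_{\lambda_i,\alpha_i}:\abs{t-x_i}\leq r\}\bigr)\approx\prod_{i=1}^n r(x_i+r)^{2\lambda_i},
\end{equation*}
where the 1D estimate is a direct integration of $t^{2\lambda_i}$ split into the cases $x_i\leq r$ and $x_i>r$ (and read as $1$ when $\lambda_i=0$), with the hypothesis $\lambda_i>-\frac12$ ensuring local integrability at the origin. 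For $0<r\leq R$ this yields
\begin{equation*}
   \frac{V(x,R)}{V(x,r)}\approx\Big(\frac{R}{r}\Big)^n\prod_{i=1}^n\Big(\frac{x_i+R}{x_i+r}\Big)^{2\lambda_i}.
\end{equation*}
For indices with $\lambda_i=0$ the $i$-th factor is $1$; otherwise $x_i\geq 0$ and $(x_i+R)/(x_i+r)\in[1,R/r]$, so the factor lies in $[1,(R/r)^{2\lambda_i}]$ if $\lambda_i>0$ and in $[(R/r)^{2\lambda_i},1]$ if $\lambda_i<0$. Collecting the extreme cases on each side then gives the lower dimension $d=n-2\sum_i\lambda_{i,-}$ and upper dimension $D=n+2\sum_i\lambda_{i,+}$; finiteness of $D$ forces doubling.

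The main obstacle is the $(1,1)$-Poincar\'e inequality. I would first establish it in each 1D factor $(\R_{\lambda_i,\alpha_i},\abs{\cdot},\mu^{(1)}_{\lambda_i})$: a direct fundamental theorem of calculus argument bounds $\fint_I\abs{f-\ave{f}_I}\ud\mu^{(1)}_{\lambda_i}$ by a double integral of $\int_{x\wedge y}^{x\vee y}\abs{f'(t)}\ud t$, with the required kernel estimate supplied by the explicit two-sided volume estimate above; this is the classical 1D weighted Poincar\'e inequality for the doubling power weight $t^{2\lambda_i}$. Then I would invoke the standard tensorization principle that a product of doubling metric measure spaces each supporting a $(1,p)$-Poincar\'e inequality supports one in the product metric as well (see e.g.\ \cite{HKST}), transferring the 1D inequalities to the full Bessel space. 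The only subtlety is the chaining argument underlying tensorization, but this is routine once doubling and 1D Poincar\'e are in hand.
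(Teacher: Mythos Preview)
Your proposal is correct and follows essentially the same approach as the paper: the same product volume estimate $V(x,r)\approx r^n\prod_i\max(x_i,r)^{2\lambda_i}$ (your $(x_i+r)^{2\lambda_i}$ is equivalent) yields the doubling and the dimensional indices, and the Poincar\'e inequality is reduced to the one-dimensional weighted case via the product structure. The only difference in presentation is that the paper writes out the product step by hand---proving the $(1,1)$-Poincar\'e inequality directly on axis-parallel rectangles by telescoping coordinate-by-coordinate and applying the one-dimensional inequality in each factor (Lemmas~\ref{lem:BesselPoinc1} and~\ref{lem:BesselPoincN}), then sandwiching balls between rectangles---whereas you outsource this to an abstract tensorization principle; the hands-on version is short and avoids the question of locating the precise product theorem in \cite{HKST}.
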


\begin{remark}\label{rem:Bessel}
Accepting Proposition \ref{prop:Bessel} for the moment, we can compare the implications of Theorem \ref{thm:main} in the Bessel setting with the results of \cite{FLLX} obtained by working explicitly in this situation. We note that \cite{FLLX} only considers non-negative $\vec\lambda$, and among them only those of the special form $\vec\lambda=(0,\ldots,0,\lambda)$.
\begin{enumerate}[\rm(1)]
  \item If $n\geq 2$ and $\lambda_i\geq 0$ for at least two indices $i$, then $d\geq 2$. In this case, Theorem \ref{thm:main} characterises the $S^p$ properties of $[b,T]$ for all $p\in(0,\infty)$ and all Calder\'on--Zygmund operators of any regularity $\eta\in(0,1]$. This is a significant generalisation of the corresponding results of \cite{FLLX}, where only the Bessel--Riesz transforms are considered. (Note that our $n$ corresponds to $n+1$ in the notation of \cite{FLLX}.)
  \item In particular, if $n\geq 3$, all vectors $\vec\lambda=(0,\ldots,0,\lambda)$ satisfy the previous case, also for negative $\lambda$, Thus, for such dimensions, we have a full extension of the results of \cite{FLLX} to $\lambda\in(-\frac12,0]$, which is new even for the Bessel--Riesz transforms. For $n=2$ and $\vec\lambda=(0,\lambda)$ with $\lambda\in(-\frac12,0]$, we have $d=2+2\lambda>1$ and $\Delta=D=2$. With $\eta=1$, it follows that
\begin{equation*}
  \Big(\frac{\eta}{\Delta}+\frac{1}{2}\Big)^{-1}=1
\end{equation*}
and hence we still achieve a characterisation of the $S^p$ properties, for all $p\in(0,\infty)$, of all Calder\'on-Zygmund operators with regularity $\eta=1$; this covers in particular the Bessel--Riesz transforms considered in \cite{FLLX}.
  \item If $n=1$, then $d\in(0,1]$ and $\Delta=1$. 
Now $(\eta/\Delta+\frac12)^{-1}\leq 1$ provided that $\eta\geq\frac12$, and we characterise the $S^p$ properties of $[b,T]$ for all $p\in(1,\infty)$ and all Calder\'on-Zygmund operators with regularity $\eta\geq\frac12$. For the Bessel--Riesz transforms, this recovers the result of \cite{FLLX} for $\lambda\geq 0$ and is new for $\lambda\in(-\frac12,0)$.
\end{enumerate}
\end{remark}

For the proof of Proposition \ref{prop:Bessel}, we begin with two lemmas related to the Poincar\'e inequality.

\begin{lemma}\label{lem:BesselPoinc1}
Let $\lambda\in(-\frac12,\infty)$ and $0\leq a<b<\infty$, or $\lambda=0$ and $-\infty<a<b<\infty$. If $f$ is a Lipschitz function on $(a,b)$, then
\begin{equation}\label{eq:BesselPoinc1}
  \fint_a^b\fint_a^b\abs{f(x)-f(y)}\ud \mu_\lambda^{(1)}(x)\ud \mu_\lambda^{(1)}(y)
  \leq 2(b-a)\fint_a^b \abs{f'(z)}\ud \mu_\lambda^{(1)}(z).
\end{equation}
\end{lemma}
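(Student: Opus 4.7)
The plan is a Fubini-type reduction to a pointwise measure comparison. Writing, for $y<x$, $f(x)-f(y)=\int_y^x f'(s)\,ds$, so that $|f(x)-f(y)|\leq\int_a^b\mathbf{1}_{s\in(\min(x,y),\max(x,y))}|f'(s)|\,ds$, I would integrate both sides against $d\mu_\lambda^{(1)}\otimes d\mu_\lambda^{(1)}$ and swap the order of integration by Tonelli. The resulting inner double integral evaluates to $2\mu_\lambda^{(1)}([a,s])\mu_\lambda^{(1)}([s,b])$ (the set of pairs with $s$ between them consists of the two rectangles $[a,s]\times[s,b]$ and $[s,b]\times[a,s]$), yielding
\begin{equation*}
  \int_a^b\int_a^b |f(x)-f(y)|\,d\mu_\lambda^{(1)}(x)\,d\mu_\lambda^{(1)}(y)\leq 2\int_a^b|f'(s)|\,\mu_\lambda^{(1)}([a,s])\,\mu_\lambda^{(1)}([s,b])\,ds.
\end{equation*}

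Since $s^{2\lambda}\,ds=d\mu_\lambda^{(1)}(s)$ on $(0,\infty)$, after dividing \eqref{eq:BesselPoinc1} by $\mu_\lambda^{(1)}([a,b])^2$ the claim reduces to the pointwise bound
\begin{equation*}
  \mu_\lambda^{(1)}([a,s])\,\mu_\lambda^{(1)}([s,b])\leq (b-a)\,\mu_\lambda^{(1)}([a,b])\,s^{2\lambda},\qquad s\in(a,b).
\end{equation*}
For $\lambda=0$ this is the trivial $(s-a)(b-s)\leq(b-a)^2$. For $\lambda\neq 0$ (so $a\geq 0$), I would set $\alpha:=2\lambda+1>0$ and apply the mean value theorem to $z\mapsto z^\alpha$ on each of $[a,s]$ and $[s,b]$ to write $\mu_\lambda^{(1)}([a,s])=\eta^{\alpha-1}(s-a)$ and $\mu_\lambda^{(1)}([s,b])=\xi^{\alpha-1}(b-s)$ with $\eta\in(a,s)$, $\xi\in(s,b)$; summing these gives $\mu_\lambda^{(1)}([a,b])=\eta^{\alpha-1}(s-a)+\xi^{\alpha-1}(b-s)$, and the inequality rewrites as
\begin{equation*}
  \eta^{\alpha-1}\xi^{\alpha-1}(s-a)(b-s)\leq (b-a)\bigl[\eta^{\alpha-1}(s-a)+\xi^{\alpha-1}(b-s)\bigr]s^{\alpha-1}.
\end{equation*}

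The verification then splits on the sign of $\alpha-1$. For $\lambda\geq 0$ (so $\alpha\geq 1$), the map $z\mapsto z^{\alpha-1}$ is nondecreasing and $\eta\leq s$ gives $\eta^{\alpha-1}\leq s^{\alpha-1}$; combined with $s-a\leq b-a$, the left-hand side is at most $s^{\alpha-1}\xi^{\alpha-1}(b-a)(b-s)$, which is dominated by the right-hand side after discarding its $\eta^{\alpha-1}(s-a)$ summand. For $-\tfrac{1}{2}<\lambda<0$ (so $\alpha\in(0,1)$), the monotonicity reverses and $\xi\geq s$ gives $\xi^{\alpha-1}\leq s^{\alpha-1}$; using $b-s\leq b-a$ and discarding the $\xi^{\alpha-1}(b-s)$ summand instead closes the estimate. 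The main subtlety of the argument is precisely this case split: the naive bound $\mu_\lambda^{(1)}([a,s])\leq(s-a)\max_{[a,s]}z^{2\lambda}$ blows up as $a\to 0$ when $\lambda<0$, so the problematic factor must be compensated not by a pointwise estimate on $\mu_\lambda^{(1)}([a,s])$ alone but by pairing it with the vanishing partner $\mu_\lambda^{(1)}([s,b])$ in the product.
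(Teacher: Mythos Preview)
Your proposal is correct and follows essentially the same route as the paper's proof: Fubini reduces the double integral to a pointwise bound involving $\mu_\lambda^{(1)}([a,s])\,\mu_\lambda^{(1)}([s,b])$, which is then verified via the mean value theorem with a case split on the sign of $\lambda$. The only cosmetic difference is that the paper phrases the pointwise step as two alternative bounds on $H(z)$ obtained by dropping one of the constraints $x<z$ or $z<y$, whereas you keep the exact product and compare with the additive decomposition $\mu_\lambda^{(1)}([a,b])=\mu_\lambda^{(1)}([a,s])+\mu_\lambda^{(1)}([s,b])$; these amount to the same estimate.
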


\begin{proof}
Let $0\leq a<b<\infty$, and note that
\begin{equation}\label{eq:BesselPoinc10}
  \mu_\lambda^{(1)}(a,b)
  =\int_a^b x^{2\lambda}\ud x
  =\frac{1}{2\lambda+1}(b^{2\lambda+1}-a^{2\lambda+1}).
\end{equation}

For $a<x<y<b$, we have
\begin{equation*}
  \abs{f(x)-f(y)}=\Babs{\int_x^y f'(z)\ud z}\leq\int_a^b 1_{\{x<z<y\}}\abs{f'(z)}\ud z.
\end{equation*}
Using the symmetry of the halves of the integral with $x<y$ and $y<x$, it follows that
\begin{equation*}
\begin{split}
  \operatorname{LHS}\eqref{eq:BesselPoinc1}
  &=2\fint_a^b\fint_a^b 1_{\{x<y\}}\abs{f(x)-f(y)}\ud \mu_\lambda^{(1)}(x)\ud\mu_\lambda^{(1)}(y), \\
  &\leq 2\int_a^b\abs{f'(z)} H(z)\ud z,\qquad
  H(z):=\fint_a^b\fint_a^b 1_{\{x<z<y\}}\ud \mu_\lambda^{(1)}(x)\ud\mu_\lambda^{(1)}(y).
\end{split}
\end{equation*}

Making one or the other of the trivial estimates $1_{\{x<z<y\}}\leq 1_{\{x<z\}}$ or $1_{\{x<z<y\}}\leq 1_{\{z<y\}}$, we get the alternative bounds
\begin{equation*}
  H(z)
  \leq\begin{cases}
 \displaystyle   \fint_a^b 1_{\{x<z\}}\ud \mu_\lambda^{(1)}(x)
 =\mu_\lambda^{(1)}(a,z)/\mu_\lambda^{(1)}(b,a), \\
% =\frac{z^{2\lambda+1}-a^{2\lambda+1}}{b^{2\lambda+1}-a^{2\lambda+1}}, \\
 \displaystyle   \fint_a^b 1_{\{z<y\}}\ud \mu_\lambda^{(1)}(y)
  =\mu_\lambda^{(1)}(z,b)/\mu_\lambda^{(1)}(b,a). \\
% =\frac{b^{2\lambda+1}-z^{2\lambda+1}}{b^{2\lambda+1}-a^{2\lambda+1}}.
 \end{cases}
\end{equation*}
By \eqref{eq:BesselPoinc10} and the mean value theorem, for every $z\in(a,b)$ there are $\xi\in(a,z)$ and $\eta\in(z,b)$ such that
\begin{equation*}
\begin{split}
  \mu_\lambda^{(1)}(a,z)=(2\lambda+1)^{-1}(z^{2\lambda+1}-a^{2\lambda+1})
  =\xi^{2\lambda}(z-a)
  \leq z^{2\lambda}(b-a),\quad\text{if}\quad\lambda\geq 0, \\
  \mu_\lambda^{(1)}(z,b)=(2\lambda+1)^{-1}(b^{2\lambda+1}-z^{2\lambda+1})
  =\eta^{2\lambda}(b-z)
  \leq z^{2\lambda}(b-a),\quad\text{if}\quad\lambda\leq 0.
\end{split}
\end{equation*}
Substituting back, and choosing the favourable alternative bound depending on the sign of $\lambda$, we find that, in each case,
\begin{equation*}
  H(z)\leq\frac{z^{2\lambda}(b-a)}{\mu_\lambda^{(1)}(a,b)}.
\end{equation*}
Hence
\begin{equation*}
\begin{split}
  \operatorname{LHS}\eqref{eq:BesselPoinc1}
  &\leq 2\int_a^b\abs{f'(z)}H(z)\ud z \\
  &=2(b-a)\frac{1}{\mu^{(1)}_\lambda(a,b)}\int_a^b\abs{f'(z)}z^{2\lambda}\ud z \\
  &=2(b-a)\fint_a^b\abs{f'(z)}\ud\mu_\lambda^{(1)}(z)=\operatorname{RHS}\eqref{eq:BesselPoinc1},
\end{split}
\end{equation*}
completing the proof of \eqref{eq:BesselPoinc1} in the case that $\lambda\in(-\frac12,\infty)$ and $0\leq a<b<\infty$. When $\lambda=0$, the same computation works for general $-\infty<a<b<\infty$; indeed, this case is just the usual Poincar\'e inequality for the Lebesgue measure.
\end{proof}

\begin{lemma}\label{lem:BesselPoincN}
Let $R=\prod_{i=1}^n I_i \subset\R^n_{\vec\lambda,\vec\alpha}$ be an axis-parallel rectangle, and $f$ be a Lipschitz function of $R$. Then
\begin{equation}\label{eq:BesselPoincN}
  \fint_R\fint_R\abs{f(x)-f(y)}\ud \mu^{(n)}_\lambda(x)\ud \mu^{(n)}_\lambda(y)
  \lesssim\operatorname{diam}(R)\fint_R\abs{\nabla f(z)}\ud \mu^{(n)}_\lambda(z).
\end{equation}
\end{lemma}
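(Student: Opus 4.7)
The natural approach is to reduce the multidimensional estimate \eqref{eq:BesselPoincN} to the one-dimensional one \eqref{eq:BesselPoinc1} by a standard telescoping along axis-parallel paths, combined with Fubini. Since the measure $\mu_{\vec\lambda}^{(n)}$ is a product measure and $R=\prod_{i=1}^n I_i$ is an axis-parallel rectangle, this is tailor-made for this device.

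For $x,y\in R$, define the intermediate points $z^{(0)}:=x$, $z^{(n)}:=y$, and
\begin{equation*}
  z^{(i)}:=(y_1,\ldots,y_i,x_{i+1},\ldots,x_n),\qquad i=0,1,\ldots,n,
\end{equation*}
so that $z^{(i-1)}$ and $z^{(i)}$ differ only in the $i$-th coordinate. The triangle inequality gives
\begin{equation*}
  \abs{f(x)-f(y)}\leq\sum_{i=1}^n\abs{f(z^{(i-1)})-f(z^{(i)})}.
\end{equation*}
The first step is to integrate this pointwise estimate against $\ud\mu_{\vec\lambda}^{(n)}(x)\ud\mu_{\vec\lambda}^{(n)}(y)$ on $R\times R$ and handle each of the $n$ summands separately.

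For the $i$-th summand, the variables $x_1,\ldots,x_{i-1}$ and $y_{i+1},\ldots,y_n$ do not appear in the integrand, so their averages over the respective one-dimensional factors equal $1$. What remains is an iterated integral over the ``frozen'' common variables $(y_1,\ldots,y_{i-1},x_{i+1},\ldots,x_n)$, followed by the double average over $x_i,y_i\in I_i$. For each fixed choice of frozen variables, the function $t\mapsto g(t):=f(y_1,\ldots,y_{i-1},t,x_{i+1},\ldots,x_n)$ is Lipschitz on $I_i$ with derivative $g'(t)=\partial_i f(\ldots,t,\ldots)$, and $I_i$ satisfies the hypothesis of Lemma~\ref{lem:BesselPoinc1} (either $I_i\subset[0,\infty)$ or $\lambda_i=0$ and $I_i\subset\R$). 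Thus Lemma~\ref{lem:BesselPoinc1} applied in the variable $x_i$ gives
\begin{equation*}
  \fint_{I_i}\fint_{I_i}\abs{g(x_i)-g(y_i)}\ud\mu^{(1)}_{\lambda_i}(x_i)\ud\mu^{(1)}_{\lambda_i}(y_i)
  \leq 2\abs{I_i}\fint_{I_i}\abs{\partial_i f}\ud\mu^{(1)}_{\lambda_i}(t).
\end{equation*}
Reassembling the iterated integral and using Fubini in the other direction, the $i$-th summand is bounded by $2\abs{I_i}\fint_R\abs{\partial_i f}\ud\mu_{\vec\lambda}^{(n)}\leq 2\operatorname{diam}(R)\fint_R\abs{\partial_i f}\ud\mu_{\vec\lambda}^{(n)}$.

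Summing over $i=1,\ldots,n$ and using $\sum_{i=1}^n\abs{\partial_i f}\leq\sqrt{n}\abs{\nabla f}$ yields \eqref{eq:BesselPoincN} with an implied constant $2\sqrt{n}$. There are no real obstacles here: the only point to check is that Lemma~\ref{lem:BesselPoinc1} is applicable to each one-dimensional slice, which is guaranteed by the definition of $\R^n_{\vec\lambda,\vec\alpha}$ (the $i$-th factor is $[0,\infty)$ whenever $\lambda_i\neq 0$, while for $\lambda_i=0$ the lemma covers general intervals in $\R$).
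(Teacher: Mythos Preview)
Your proposal is correct and follows essentially the same approach as the paper: both telescope $f(x)-f(y)$ along an axis-parallel path, apply the one-dimensional Lemma~\ref{lem:BesselPoinc1} in each coordinate with the other variables frozen, and then reassemble via Fubini. The only cosmetic difference is the order of the telescoping (you change coordinates $1,\ldots,n$ from $x$ to $y$, the paper changes $n,\ldots,1$), and you make the applicability of Lemma~\ref{lem:BesselPoinc1} on each factor explicit, which the paper leaves implicit.
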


\begin{proof}
We express points of $\R^n$ as
\begin{equation*}
  x=(x_{[1,i)},x_i,x_{(i,n]})\in\R^{i-1}\times\R\times\R^{n-i},
\end{equation*}
where $x_{[1,i)}$ resp. $x_{(i,n]}$ is omitted if $i=1$ resp. $i=n$. 
Then
\begin{equation*}
  \abs{f(x)-f(y)}
  \leq\sum_{i=1}^n  \abs{ f(x_{[1,i)}, x_i, y_{(i,n]}) -f(x_{[1,i)}, y_i, y_{(i,n]})}.
\end{equation*}
Applying Lemma \ref{lem:BesselPoinc1} with each fixed $x_{[1,i)}$ and $y_{(i,n]}$,
\begin{equation*}
\begin{split}
  \fint_{I_i}\fint_{I_i} &\abs{ f(x_{[1,i)}, x_i, y_{(i,n]}) -f(x_{[1,i)}, y_i, y_{(i,n]})}\ud\mu_{\lambda_i}^{(1)}(x_i)\ud\mu_{\lambda_i}^{(1)}(y_i) \\
  &\lesssim\ell(I_i)\fint_{I_i}\abs{\partial_i f(x_{[1,i)}, z_i, y_{(i,n]})}\ud\mu_{\lambda_i}^{(1)}(z_i).
\end{split}
\end{equation*}
Integrating over the remaining variables and summing up, it follows that
\begin{equation*}
  \operatorname{LHS}\eqref{eq:BesselPoincN}
  \lesssim\sum_{i=1}^n\ell(I_i) \fint_R\abs{\nabla f(z)}\ud m_\lambda^{(n+1)}(z)
  \approx\operatorname{RHS}\eqref{eq:BesselPoincN},
\end{equation*}
which completes the argument.
\end{proof}

\begin{proof}[Proof of Proposition \ref{prop:Bessel}]
The completeness of $\R^{n}_{\vec\lambda,\vec\alpha}$ with the Euclidean metric is clear.
For $n=1$ and $\alpha_i=1$, it is easy to estimate
\begin{equation*}
  \mu_{\lambda_i}^{(1)}(B(x_i,r))
  =\int_{(x_i-r)_+}^{x_i+r} y^{2\lambda_i}\ud y
  \approx\left.\begin{cases} r\cdot x_i^{2\lambda_i}, & \text{if } x_i\geq 2r, \\
    r^{2\lambda_i+1}, & \text{if } x_i\in(0,2r) \end{cases}\right\}
    \approx r\max(x_i,r)^{2\lambda_i};
\end{equation*}
indeed, the integrand is essentially the constant $x_i^{2\lambda}$ in the first case, and in the second case we may estimate the integral from above by the integral over $[0,3r]$, which can be directly computed, and from below by the integral over  $[x_i+\frac12r,x_i+r]$, where the integrand is essentially the constant $r^{2\lambda}$. The case $\alpha_i=0$ makes a difference only when $\lambda_i=0$ also, and in this case the same result is obvious.
Thus, for $0<r<R<\infty$,
\begin{equation*}
  \frac{\mu_\lambda^{(1)}(B(x_i,R))}{\mu_\lambda^{(1)}(B(x_i,r))}
  \approx\Big(\frac{R}{r}\Big) \Big(\frac{\max(x_i,R)}{\max(x_i,r)}\Big)^{2\lambda_i}
  \begin{cases}\leq (R/r)^{1+2\lambda_{i,+}}, \\ \geq (R/r)^{1-2\lambda_{i,-}},\end{cases}
\end{equation*}

The case of general $n$ follows readily via comparison of balls from inside and outside with rectangles, and we obtain
\begin{equation*}
  \mu_{\vec\lambda}^{(n)}(B(x,r))\approx r^{n}\prod_{i=1}^n \max(x_i ,r)^{2\lambda_i}
\end{equation*}
and
\begin{equation*}
  \prod_{i=1}^n\Big(\frac{R}{r}\Big)^{1-2\lambda_{i,-}}
  \lesssim\frac{\mu_\lambda^{(n)}(B(x,R))}{\mu_\lambda^{(n)}(B(x,r))}
  \lesssim\prod_{i=1}^n\Big(\frac{R}{r}\Big)^{1+2\lambda_{i,+}},
\end{equation*}
which coincides with the claim about the lower and upper dimensions.

It remains to verify the $(1,1)$-Poincar\'e inequality.
This is obtained by dominating the mean oscillation over a ball by the same quantity over a containing rectangle, applying the rectangular Poincar\'e inequality of Lemma \ref{lem:BesselPoincN}, and then dominating the right-hand side by an integral over a containing ball again; this results in a somewhat expanded ball on the right-hand side, but this is permitted by our definition of the Poincar\'e inequality. Let us also note that a Lipschitz function on a domain of $\R^{n}$ is differentiable almost everywhere by Rademacher's theorem, and it is easy to see that $\abs{\nabla f(x)}=\operatorname{lip}f(x)$ at the points of differentiability.
\end{proof}

\begin{remark}
Proposition \ref{prop:Bessel} shows that the characterisation of constants in Proposition \ref{prop:Bp=const} applies, in particular, to the Bessel setting studied in \cite{FLLX}. As explained above, the set-up of \cite{FLLX} corresponds to parameters $n\in\{1,2,\ldots\}$, $\vec\lambda=(0,\ldots,0,\lambda)$ and $\vec\alpha=(0,\ldots,0,1)$. Proposition \ref{prop:Bessel} shows that the associated space has lower dimension $n$ and satisfies the $(1,1)$-Poincar\'e inequality, hence also the $(1,n)$-Poincar\'e inequality. Thus Proposition \ref{prop:Bp=const} guarantees that $f\in \dot B^n(\mu^{(n)}_{\vec\lambda})$ if and only if $f$ is constant almost everywhere. In \cite[Proposition 6.4]{FLLX}, this is obtained under the {\em a priori} assumption that $f\in C^2(\R_+^n)$, which is now seen to be redundant.
\end{remark}

\part{Commutator bounds via classical methods}\label{part:NWO}

\section{The Janson--Wolff method}\label{sec:JW}

We now turn to the proof those parts of Theorem \ref{thm:main} that admit a relatively straightforward adaptation of the classical methods of Janson and Wolff \cite{JW:82}, and Rochberg and Semmes \cite{RS:NWO} to spaces of homogeneous type.

We begin our study of the commutators by providing an extension to spaces of homogeneous type of some of the results of \cite{JW:82}. This method will be limited to Schatten class $S^p$ estimates for $p\in(2,\infty)$, but it has the advantage of requiring minimal prerequisites, and this section can be studied independently of the rest of the paper. Besides extending the classical Euclidean results of \cite{JW:82} to spaces of homogeneous type, we achieve two novelties even on $\R^d$:
\begin{enumerate}[\rm(1)]
  \item On the level of methods, we simplify the required prerequisites from interpolation theory, in that interpolation of Besov spaces spaces is completely avoided in favour of the simpler interpolation of $L^p$ spaces only.
  \item On the level of results, we improve the $S^p$ estimate of the single operator $[b,T]$ to a variational $S^p$ estimate of its truncations in Proposition \ref{prop:JWvar}.
\end{enumerate}

Turning to the details, for functions of two variables $x,y$, we denote
\begin{equation*}
  L^p(L^{q,r})_{\operatorname{symm}}
  :=L^p_x(L^{q,r}_y)\cap L^p_y(L^{q,r}_x).
\end{equation*}

A key lemma of \cite{JW:82} is the following:

\begin{lemma}[\cite{JW:82}, Lemma 1]\label{lem:JWlem1}
For $p\in(2,\infty)$, there is a continuous embedding
\begin{equation*}
    L^p(L^{p',\infty})_{\operatorname{symm}}\subseteq
    (L^\infty(L^1)_{\operatorname{symm}},L^2(L^2))_{\frac{2}{p},\infty},
\end{equation*}
where $(\ ,\ )_{\frac2p,\infty}$ denotes real interpolation.
\end{lemma}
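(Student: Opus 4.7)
The plan is to verify the $K$-functional characterisation of the real interpolation space on the right: $f\in(A_0,A_1)_{2/p,\infty}$ exactly when $K(t,f;A_0,A_1)\lesssim t^{2/p}$ uniformly in $t>0$. Writing $F(x):=\Norm{f(x,\cdot)}{L^{p',\infty}_y}$ and $G(y):=\Norm{f(\cdot,y)}{L^{p',\infty}_x}$, the hypothesis says precisely that $F,G\in L^p$; by homogeneity, I normalise $\Norm{f}{L^p(L^{p',\infty})_{\operatorname{symm}}}=1$. Given $t>0$ and an auxiliary parameter $\varepsilon>0$ (to be optimised at the end), I would decompose $f=f_0+f_1$ via truncation at the \emph{symmetric} level
\begin{equation*}
  \lambda(x,y):=\varepsilon^{-(p-1)}\max(F(x)^p,G(y)^p),\qquad f_0:=f\mathbf{1}_{\abs{f}>\lambda},\qquad f_1:=f-f_0.
\end{equation*}

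For the $L^\infty(L^1)_{\operatorname{symm}}$ bound on $f_0$: at fixed $x$, the support of $f_0(x,\cdot)$ is contained in $\{\abs{f(x,\cdot)}>\varepsilon^{-(p-1)}F(x)^p\}$, so the layer-cake identity combined with the weak-type bound $\abs{\{\abs{f(x,\cdot)}>s\}}\leq s^{-p'}F(x)^{p'}$ yields $\Norm{f_0(x,\cdot)}{L^1_y}\lesssim \varepsilon$ uniformly in $x$ — the arithmetic miracle being that $p'+p(1-p')=0$, so all factors of $F(x)$ cancel. By the symmetric form of $\lambda$, the analogous estimate holds after swapping $x$ and $y$, giving $\Norm{f_0}{L^\infty(L^1)_{\operatorname{symm}}}\lesssim\varepsilon$. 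For the $L^2(L^2)$ bound on $f_1$, I would exploit the trivial pointwise inequality $\mathbf{1}_{\abs{f}\leq\lambda}\leq \mathbf{1}_{\abs{f}\leq\lambda_x}+\mathbf{1}_{\abs{f}\leq\lambda_y}$, with $\lambda_x:=\varepsilon^{-(p-1)}F(x)^p$ and $\lambda_y$ defined symmetrically, splitting $\abs{f_1}$ into two one-variable pieces. On each piece, layer cake with the weak-$L^{p'}$ bound (using the hypothesis $p>2$, i.e.\ $p'<2$, to make the resulting integral $\int_0^{\lambda_x}s^{1-p'}\ud s$ converge) gives $\int\abs{f}^2\mathbf{1}_{\abs{f}\leq\lambda_x}\ud y\lesssim F(x)^{p'}\lambda_x^{2-p'}=\varepsilon^{-(p-2)}F(x)^p$ via the identity $p'+p(2-p')=p$, so that integrating in $x$ and using $\Norm{F}{L^p}\leq 1$ gives $\Norm{f_1}{L^2(L^2)}\lesssim \varepsilon^{-(p-2)/2}$.

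Assembling the two bounds, $K(t,f)\lesssim \varepsilon+t\varepsilon^{-(p-2)/2}$, and the choice $\varepsilon\approx t^{2/p}$ balances and minimises the right-hand side to $\lesssim t^{2/p}$, which yields the claimed embedding. The main — and essentially only — obstacle is the \emph{symmetry} constraint encoded in the target space $L^\infty(L^1)_{\operatorname{symm}}$: a one-sided truncation at a level depending only on $F(x)$ would control $\Norm{f_0(x,\cdot)}{L^1_y}$ but give no useful bound on $\Norm{f_0(\cdot,y)}{L^1_x}$, while the functions $F$ and $G$ need not be comparable. Choosing the maximum threshold $\max(F(x)^p,G(y)^p)$ forces symmetric control of $f_0$, and the trivial splitting of the opposite indicator ensures that the $L^2(L^2)$ bound on $f_1$ is only degraded by a harmless factor of $2$.
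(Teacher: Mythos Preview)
Your proof is correct. The paper does not give its own proof of this lemma but simply cites it from \cite{JW:82}; your argument via the $K$-functional, symmetric truncation at the level $\varepsilon^{-(p-1)}\max(F(x)^p,G(y)^p)$, and the exponent identities $p'+p(1-p')=0$ and $p'+p(2-p')=p$ is essentially the Janson--Wolff argument, carried out cleanly.
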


Let us note that this result is valid for general $L^p$ and $L^{q,r}$ spaces on a $\sigma$-finite measure space, so in particular for those on spaces of homogeneous type $(X,\rho,\mu)$ that we deal with.

\begin{corollary}[\cite{JW:82}, Lemma 2]\label{cor:SpWeakJW}
Let $p\in(2,\infty)$ and $K\in L^p(L^{p',\infty})_{\operatorname{symm}}$. Then the integral operator
\begin{equation*}
  \mathcal{I}_K f(x):=\int_X K(x,y)f(y)\ud\mu(y)
\end{equation*}
belongs to the weak Schatten class $S^{p,\infty}(L^2(\mu))$ and
\begin{equation*}
  \Norm{\mathcal{I}_K}{S^{p,\infty}}\lesssim\Norm{K}{L^p(L^{p',\infty})_{\operatorname{symm}}}.
\end{equation*}
\end{corollary}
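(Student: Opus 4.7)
The plan is to deduce this from Lemma \ref{lem:JWlem1} by interpolating the linear map $K\mapsto \mathcal{I}_K$ between its behaviour on the two endpoint spaces $L^\infty(L^1)_{\operatorname{symm}}$ and $L^2(L^2)$.

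First, I would establish boundedness of this map on each endpoint. For $K\in L^\infty(L^1)_{\operatorname{symm}}$, Schur's test gives
\begin{equation*}
  \Norm{\mathcal{I}_K}{\bddlin(L^2(\mu))}\leq \Norm{K}{L^\infty_x(L^1_y)}^{1/2}\Norm{K}{L^\infty_y(L^1_x)}^{1/2}\leq\Norm{K}{L^\infty(L^1)_{\operatorname{symm}}},
\end{equation*}
so $\mathcal{I}:L^\infty(L^1)_{\operatorname{symm}}\to S^\infty(L^2(\mu))=\bddlin(L^2(\mu))$ is bounded. For $K\in L^2(L^2)=L^2(\mu\times\mu)$, $\mathcal{I}_K$ is a Hilbert--Schmidt operator with $\Norm{\mathcal{I}_K}{S^2}=\Norm{K}{L^2(\mu\times\mu)}$, so $\mathcal{I}:L^2(L^2)\to S^2(L^2(\mu))$ is bounded (indeed isometric).

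Next I would invoke real interpolation of Schatten classes. With $p_0=\infty$, $p_1=2$, and $\theta=2/p\in(0,1)$, the identity $1/p=(1-\theta)/p_0+\theta/p_1$ holds, and the standard identification
\begin{equation*}
   (S^\infty(L^2(\mu)),S^2(L^2(\mu)))_{2/p,\infty}=S^{p,\infty}(L^2(\mu))
\end{equation*}
holds (see, for instance, the general theory of interpolation of operator ideals). Applying the real interpolation functor $(\cdot,\cdot)_{2/p,\infty}$ to the bounded linear map $\mathcal{I}$ on the two endpoints therefore yields a bounded map
\begin{equation*}
  \mathcal{I}:(L^\infty(L^1)_{\operatorname{symm}},L^2(L^2))_{2/p,\infty}\to S^{p,\infty}(L^2(\mu)).
\end{equation*}

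Finally, combining this with the continuous embedding
\begin{equation*}
  L^p(L^{p',\infty})_{\operatorname{symm}}\hookrightarrow(L^\infty(L^1)_{\operatorname{symm}},L^2(L^2))_{2/p,\infty}
\end{equation*}
provided by Lemma \ref{lem:JWlem1} yields the asserted estimate $\Norm{\mathcal{I}_K}{S^{p,\infty}}\lesssim\Norm{K}{L^p(L^{p',\infty})_{\operatorname{symm}}}$.

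There is no serious obstacle here: both endpoint estimates are textbook facts (Schur's test and Hilbert--Schmidt), the real interpolation identity for Schatten classes with an $S^\infty$ endpoint is standard, and Lemma \ref{lem:JWlem1} does the heavy lifting on the input side. The only point requiring mild care is to use a version of the interpolation identity for Schatten classes that accommodates the $\infty$-endpoint and the secondary parameter $\infty$, but this is well documented in the literature.
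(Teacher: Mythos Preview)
Your proposal is correct and follows essentially the same route as the paper: endpoint bounds via Schur's test and the Hilbert--Schmidt identity, real interpolation of Schatten classes, and then Lemma \ref{lem:JWlem1}. The only refinement in the paper is that it makes explicit the ``mild care'' you allude to at the $\infty$-endpoint: Merucci's theorem gives $(S^2,\mathcal K(H))_{2/p,\infty}=S^{p,\infty}$ with compact operators rather than $\bddlin(H)$, and the paper then invokes \cite[Theorem 3.4.2(d)]{BL:book} together with $\mathcal K(H)=\overline{S^2(H)}^{\bddlin(H)}$ to replace $\mathcal K(H)$ by $\bddlin(H)$.
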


\begin{proof}
This follows by interpolation between $p=2$ and $p=\infty$:
For the Hilbert--Schmidt norm of integral operators, there is the explicit formula
\begin{equation}\label{eq:IKS2}
  \Norm{\mathcal{I}_K}{S^2}=\Norm{K}{L^2(L^2)}.
\end{equation}
On the other hand, it is also well known that
\begin{equation}\label{eq:IKL}
  \Norm{\mathcal{I}_K}{\bddlin(L^2(\mu))}\leq\big(\Norm{K}{L^\infty_x(L^1_y)}\Norm{K}{L^\infty_y(L^1_x)}\big)^{\frac12}
  \leq\Norm{K}{L^\infty(L^1)_{\operatorname{symm}}}.
\end{equation}
We will then use classical interpolation results for operators on the Hilbert space $H:=L^2(\mu)$. Denoting by $S^\infty(H)=\mathcal K(H)$ the space of all compact operators on $H$, and noting that this is the closure of Hilbert--Schmidt operators in the space $\bddlin(H)$ of all bounded linear operators on $H$, i.e.,
\begin{equation}\label{eq:KHisClosure}
  \mathcal K(H)=\overline{S^2(H)}^{\bddlin(H)}=\overline{S^2(H)\cap\bddlin(H)}^{\bddlin(H)},
\end{equation}
we have the following identities:
\begin{equation}\label{eq:SpInfInterp}
\begin{split}
     S^{p,\infty}(H) &=(S^2(H),\mathcal K(H))_{\frac2p,\infty}\qquad\text{by \cite[Th\'eor\`eme 1]{Merucci}} \\
     &=\big(S^2(H),\overline{S^2(H)\cap\bddlin(H)}^{\bddlin(H)}\big)_{\frac2p,\infty}\qquad\text{by \eqref{eq:KHisClosure}} \\
     &=(S^2(H),\bddlin(H))_{\frac2p,\infty}\qquad\text{by \cite[Theorem 3.4.2(d)]{BL:book}}.
\end{split}
\end{equation}
Thus
\begin{equation*}
\begin{split}
  \Norm{\mathcal{I}_K}{S^{p,\infty}(L^2(\mu))}
  &=\Norm{\mathcal{I}_K}{\big(\bddlin(L^2(\mu)),\ S^2(L^2(\mu))\big)_{\frac2p,\infty}}\qquad\text{by \eqref{eq:SpInfInterp}}   \\
  &\leq\Norm{K}{\big(L^\infty(L^1)_{\operatorname{symm}},\ L^2(L^2)\big)_{\frac2p,\infty}}
  \qquad\text{interpolating \eqref{eq:IKS2}, \eqref{eq:IKL}} \\
  &\lesssim\Norm{K}{ L^p(L^{p',\infty})_{\operatorname{symm}} }\qquad\qquad\qquad\text{by Lemma \ref{lem:JWlem1}}.
\end{split}
\end{equation*}
This is the claimed estimate.
\end{proof}

In order to apply Corollary \ref{cor:SpWeakJW} to concrete kernels featuring the $V(x,y)^{-1}$ type singularity, the following lemma is useful:

\begin{lemma}\label{lem:VinvWeak}
Let $(X,\rho,\mu)$ be a space of homogeneous type. Then
\begin{equation*}
  \Norm{V^{-1}}{L^\infty(L^{1,\infty})_{\operatorname{symm}}}\lesssim 1.
\end{equation*}
\end{lemma}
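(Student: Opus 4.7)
The plan is a direct distribution-function computation: for each fixed $x\in X$ and each level $t>0$, I want to bound the measure of the superlevel set
\begin{equation*}
  E_t(x):=\{y\in X:V(x,y)<t\}=\{y\in X:V(x,y)^{-1}>t^{-1}\}
\end{equation*}
by a constant multiple of $t$, uniformly in $x$. This would give $\Norm{V(x,\cdot)^{-1}}{L^{1,\infty}_y}\lesssim 1$ uniformly in $x$, which is exactly the $L^\infty_x(L^{1,\infty}_y)$ half of the symmetric norm. The other half is then obtained by invoking the quasi-symmetry $V(x,y)\approx V(y,x)$, which follows from the quasi-triangle inequality together with the doubling property of $\mu$.

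To implement the distribution estimate, I would introduce the critical radius
\begin{equation*}
  R:=\sup\{r>0:\mu(B(x,r))<t\}\in[0,\infty],
\end{equation*}
and argue that $y\in E_t(x)\setminus\{x\}$ forces $\rho(x,y)\leq R$: indeed, $\mu(B(x,\rho(x,y)))<t$ means that the radius $\rho(x,y)$ belongs to the set whose supremum is $R$. Hence, modulo the diagonal point,
\begin{equation*}
  E_t(x)\subseteq\overline{B(x,R)}\subseteq B(x,2R).
\end{equation*}
By the definition of $R$ as a supremum, $\mu(B(x,r))<t$ for every $r<R$, so in particular $\mu(B(x,R/2))<t$ when $R\in(0,\infty)$. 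Applying the doubling property twice (to pass from radius $R/2$ to radius $2R$) then yields $\mu(B(x,2R))\lesssim\mu(B(x,R/2))<t$, which is what I want. The two degenerate cases are essentially trivial: if $R=\infty$, then $\mu(X)\leq t$, while if $R=0$, then $V(x,y)\geq t$ for every $y\neq x$, so $E_t(x)$ is empty off the diagonal.

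The last piece is the symmetric (in $x$) estimate. The quasi-triangle inequality gives $B(x,\rho(x,y))\subseteq B(y,2A_0\rho(x,y))$ and vice versa; doubling then implies $V(x,y)\approx V(y,x)$ with constants depending only on $A_0$ and the doubling constant, so the estimate just proved for $y\mapsto V(x,\cdot)^{-1}$ immediately transfers to $x\mapsto V(\cdot,y)^{-1}$. I do not anticipate any serious obstacle; the only minor nuisance is the bookkeeping around the diagonal $\{x=y\}$ (on which $V$ formally vanishes), but this is harmless since the kernel $V^{-1}$ is only meant to be understood off the diagonal in all subsequent uses, and a single point contributes nothing to the $L^{1,\infty}$ norm once $\mu$ is non-atomic at $x$ (the only case compatible with finiteness of a weak-$L^1$ norm).
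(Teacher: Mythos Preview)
Your approach is essentially the same as the paper's: both fix $x$, identify the superlevel set $\{y:V(x,y)^{-1}>\lambda\}$ with a ball of the critical radius, bound its measure, and then invoke $V(x,y)\approx V(y,x)$ for the symmetric half. The one technical difference is that the paper switches to \emph{closed} balls $\bar B(x,r)$ and uses continuity of measure along $r_n\uparrow R$ to get the sharp bound $\mu(\{V(x,\cdot)^{-1}>\lambda\})\leq\lambda^{-1}$ \emph{without} invoking doubling at this stage; your version instead applies doubling twice to pass from $B(x,R/2)$ to $B(x,2R)$. Both are correct, but the closed-ball route has the side benefit of handling atoms cleanly: with closed balls $V(x,x)=\mu(\{x\})$ rather than $0$, so the diagonal causes no trouble even when $\mu(\{x\})>0$, whereas your final remark effectively concedes that the open-ball version fails at atoms and appeals to the off-diagonal usage instead.
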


\begin{proof}
Since we have $V(x,y)\approx V(y,x)$ by doubling, it suffices to estimate the notm $\Norm{V^{-1}}{L^\infty_x(L^1_y)}$.
Since $\mu(B(x,r))\approx\mu(\bar B(x,r))$ by doubling, we may further assume, for this proof, that $V(x,y)=\mu(\bar B(x,\rho(x,y)))$ is defined with the balls $\bar B(x,r):=\{y\in X:\rho(x,y)\leq r\}$.

For each fixed $x\in X$, we observe that
\begin{equation*}
\begin{split}
  \{y\in X:V(x,y)^{-1}>\lambda\}
  &=\{y\in X:\mu(\bar B(x,d(x,y))<\lambda^{-1}\} \\
  &=\bigcup_{\substack{r\in[0,\infty) \\ V(x,r)<\lambda^{-1}}}\bar B(x,r)
  =\bigcup_{n=1}^\infty\bar B(x,r_n)
\end{split}
\end{equation*}
for some $r_n\uparrow\sup\{r\in[0,\infty): V(x,r)<\lambda^{-1}\}$ if the supremum is not achieved, or $r_n$ equal to this supremum otherwise. From the continuity of the measure it then follows that
\begin{equation*}
  \mu(\{y\in X:V(x,y)^{-1}>\lambda\})
  =\lim_{n\to\infty}\mu(\bar B(x,r_n))
  =\lim_{n\to\infty}V(x,r_n)
  \leq\lambda^{-1},
\end{equation*}
so in fact
\begin{equation*}
  \Norm{V(x,y)^{-1}}{L^{1,\infty}_y}\leq 1,\qquad
  \Norm{V^{-1}}{L^\infty_x(L^{1,\infty}_y)}\leq 1,
\end{equation*}
and this estimate did not even require the doubling condition.
\end{proof}

In the following lemma, we obtain $S^p$ estimates for a class of integral operators slightly more general than the commutators that we eventually aim at. This added generality actually simplifies matters, as we will discuss in more detail below.

\begin{proposition}\label{prop:JWtrick}
Let $K$ be a kernel that satisfies \eqref{eq:CZ0}.
Then for any measurable function $B$ on $X\times X$ and exponent $p\in(2,\infty)$, the integral operator $\mathcal{I}_{BK}$ with kernel $BK$ satisfies
\begin{equation*}
  \Norm{{\mathcal I}_{BK}}{S^p(L^2(\mu))}\lesssim\Norm{B}{L^p(V^{-2})}
  :=\Big(\iint_{X\times X}\frac{\abs{B(x,y)}^p}{V(x,y)^2}\ud\mu(x)\ud\mu(y)\Big)^{\frac1p}.
\end{equation*}
\end{proposition}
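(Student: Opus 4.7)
The plan is to deduce this from the weak-type Schatten bound of Corollary \ref{cor:SpWeakJW} via a H\"older-type factorisation of the kernel, and to upgrade the resulting weak bound to the strong Schatten bound by real interpolation in $B$. This exploits the linearity of the map $B\mapsto\mathcal{I}_{BK}$, which lets us interpolate on the $B$-side in weighted $L^p$ spaces only, bypassing the more delicate interpolation of Besov-type spaces highlighted in the preceding remark.

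A mild strengthening of the proof of Lemma \ref{lem:VinvWeak} gives, for every $s>0$, $\Norm{V(x,\cdot)^{-s}}{L^{1/s,\infty}_y}\lesssim 1$ uniformly in $x$, since $\{V(x,\cdot)^{-s}>\lambda\}$ is a ball of $\mu$-measure at most $\lambda^{-1/s}$. Combining with \eqref{eq:CZ0} I rewrite
\[
   |B(x,y)K(x,y)|\lesssim\frac{|B(x,y)|}{V(x,y)}=\bigl(|B(x,y)|V(x,y)^{-2/p}\bigr)\cdot V(x,y)^{-(p-2)/p}.
\]
For $p\in(2,\infty)$ both exponents are positive, and H\"older in Lorentz spaces in the $y$-variable (with $\tfrac1p+\tfrac{p-2}{p}=\tfrac{1}{p'}$) yields
\[
   \Norm{BK(x,\cdot)}{L^{p',\infty}_y}\lesssim\Norm{B(x,\cdot)V(x,\cdot)^{-2/p}}{L^p_y}.
\]
Taking the $L^p_x$-norm of both sides gives $\Norm{BK}{L^p_x(L^{p',\infty}_y)}\lesssim\Norm{B}{L^p(V^{-2})}$; the symmetric bound in $L^p_y(L^{p',\infty}_x)$ follows since $V(x,y)\approx V(y,x)$ by doubling. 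Corollary \ref{cor:SpWeakJW} then produces the weak-type estimate
\[
   \Norm{\mathcal{I}_{BK}}{S^{p,\infty}(L^2(\mu))}\lesssim\Norm{B}{L^p(V^{-2})},\qquad p\in(2,\infty).
\]

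To promote this to the strong Schatten bound, fix $p\in(2,\infty)$ and pick $2<p_0<p<p_1<\infty$ together with $\theta\in(0,1)$ satisfying $\tfrac1p=\tfrac{1-\theta}{p_0}+\tfrac{\theta}{p_1}$. Real interpolation of the two weak-type bounds applied to the linear map $B\mapsto\mathcal{I}_{BK}$ yields a bounded operator
\[
   (L^{p_0}(V^{-2}),L^{p_1}(V^{-2}))_{\theta,p}\to(S^{p_0,\infty},S^{p_1,\infty})_{\theta,p}.
\]
Standard interpolation of weighted $L^{p_i}$ spaces sharing a common weight identifies the domain with $L^p(V^{-2})$, while an argument as in \eqref{eq:SpInfInterp} (Merucci's theorem and the reiteration theorem) identifies the target with $S^{p,p}=S^p$, which is the claimed estimate. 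The only mildly technical point is the H\"older step in Lorentz spaces, but this is a standard O'Neil-type inequality; everything else relies on tools already cited (Corollary \ref{cor:SpWeakJW}, Lemma \ref{lem:VinvWeak}) together with textbook interpolation of $L^p$ spaces, in keeping with the methodological simplification announced at the start of Section~\ref{sec:JW}.
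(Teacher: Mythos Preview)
Your proof is correct and follows essentially the same route as the paper: the same H\"older factorisation $V^{-1}=V^{-2/p}\cdot V^{-1/q}$ with $\tfrac1q=\tfrac{p-2}{p}$, the same application of Lemma~\ref{lem:VinvWeak} and Corollary~\ref{cor:SpWeakJW} to obtain the weak-type bound, and the same real interpolation between weighted $L^{p_i}(V^{-2})$ spaces and weak Schatten classes to upgrade to $S^p$. The only cosmetic difference is that the paper cites \cite[Th\'eor\`eme~2]{Merucci} directly for $(S^{p_0,\infty},S^{p_1,\infty})_{\theta,p}=S^p$, whereas you invoke Merucci plus reiteration.
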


\begin{proof}
We begin with the estimates
\begin{equation}\label{eq:BKvsBV}
\begin{split}
  \Norm{BK}{ L^p(L^{p',\infty}) }
  &\lesssim\BNorm{\frac{B}{V}}{ L^p(L^{p',\infty}) } \\
  &=\BNorm{\frac{B}{V^{\frac2p}}\frac{1}{V^{\frac1q}}}{ L^p(L^{p',\infty}) },\qquad\frac1q:=1-\frac{2}{p}=\frac{1}{p'}-\frac{1}{p}, \\
  &\lesssim\BNorm{\frac{B}{V^{\frac2p}}}{L^p(L^p)}\BNorm{\frac{1}{V^{\frac1q}}}{ L^\infty(L^{q,\infty}) } \\
  &=\Norm{B}{L^p(V^{-2})}\Norm{V^{-1}}{L^\infty(L^{1,\infty})}^{\frac1q}.
\end{split}
\end{equation}
where we used H\"older's inequality with $\frac1p=\frac1p+\frac{1}{\infty}$ and weak-type H\"older's inequality with $\frac{1}{p'}=\frac{1}{p}+\frac{1}{q}$. The previous computation is valid for either order of the variables in the iterated norms, and hence
\begin{equation}\label{eq:IBKweak}
\begin{split}
  \Norm{ \mathcal{I}_{BK} }{S^{p,\infty}}
  &\lesssim\Norm{BK}{ L^p(L^{p',\infty})_{\operatorname{symm}} }\qquad \text{(Corollary \ref{cor:SpWeakJW})} \\
  &\lesssim\Norm{B}{L^p(V^{-2})}\Norm{V^{-1}}{L^\infty(L^{1,\infty})_{\operatorname{symm}}}^{\frac1q}
  \quad \text{(\eqref{eq:BKvsBV} symmetrized)} \\
  &\lesssim\Norm{B}{L^p(V^{-2})}\qquad \text{(Lemma \ref{lem:VinvWeak})}.
\end{split}\end{equation}

To finish, we apply interpolation again. Given $p\in(2,\infty)$, we choose $2<p_0<p<p_1<\infty$ and $\theta\in(0,1)$ such that $\frac1p=\frac{1-\theta}{p_0}+\frac{\theta}{p_1}$. Then
\begin{equation}\label{eq:JWvariant}
\begin{split}
   \Norm{ \mathcal{I}_{BK} }{S^{p}}
   &=\Norm{ \mathcal{I}_{BK} }{(S^{p_0,\infty},S^{p_1,\infty})_{\theta,p}}
   \qquad\text{by \cite[Th\'eor\`em 2]{Merucci}}\\
   &\lesssim\Norm{B}{(L^{p_0}(V^{-2}),L^{p_1}(V^{-2}))_{\theta,p}}
   \qquad\text{interpolating \eqref{eq:IBKweak}} \\
   &=\Norm{B}{L^{p}(V^{-2})}\qquad\text{by \cite[Theorem 5.2.1]{BL:book}},
\end{split}
\end{equation}
where the last step is just the usual interpolation of $L^p$ spaces.
\end{proof}

The desired commutator estimate is now an immediate consequence:

\begin{corollary}\label{cor:JWSpBp}
Let $K$ be a kernel that satisfies \eqref{eq:CZ0} and $T$ be the (not necessarily bounded) integral operator with kernel $K$. If $b\in B^p$, then the formal commutator
\begin{equation*}
  [b,T]f(x):=\int_X (b(x)-b(y))K(x,y)f(y)\ud\mu(y)
\end{equation*}
defines a bounded operator of Schatten class $S^p$, and
\begin{equation*}
  \Norm{[b,T]}{S^p}\lesssim\Norm{b}{B^p}
  :=\Big(\iint_{X\times X}\frac{\abs{b(x)-b(y)}^p}{V(x,y)^2}\ud\mu(x)\ud\mu(y)\Big)^{\frac1p}.
\end{equation*}
\end{corollary}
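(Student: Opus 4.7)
The proof is essentially immediate from Proposition \ref{prop:JWtrick}. The plan is to observe that the formal commutator $[b,T]$ is itself an integral operator of the type $\mathcal{I}_{BK}$ considered in that proposition, with the particular choice of the auxiliary function
\begin{equation*}
    B(x,y):=b(x)-b(y).
\end{equation*}
Indeed, by the very definition of $[b,T]$ given in the statement, its kernel is $(b(x)-b(y))K(x,y)=B(x,y)K(x,y)$, so $[b,T]=\mathcal{I}_{BK}$ as formal integral operators.

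With this identification in place, I would invoke Proposition \ref{prop:JWtrick} to conclude that $\mathcal{I}_{BK}$ is bounded on $L^2(\mu)$ and lies in $S^p(L^2(\mu))$ with
\begin{equation*}
    \Norm{[b,T]}{S^p}=\Norm{\mathcal{I}_{BK}}{S^p}\lesssim\Norm{B}{L^p(V^{-2})}.
\end{equation*}
It then only remains to recognise that, by definition,
\begin{equation*}
    \Norm{B}{L^p(V^{-2})}^p
    =\iint_{X\times X}\frac{\abs{b(x)-b(y)}^p}{V(x,y)^2}\ud\mu(x)\ud\mu(y)
    =\Norm{b}{B^p}^p,
\end{equation*}
which delivers the claimed estimate.

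There is no real obstacle in this argument, as all the hard work has already been carried out in Proposition \ref{prop:JWtrick}: the two stages of interpolation (first from the Hilbert--Schmidt and operator-norm endpoints to the weak-type bound $S^{p,\infty}$, then the real interpolation between two such weak-type bounds to recover the strong $S^p$ bound), the weak-type Hölder trick to absorb the singularity of $K$ via $1/V$, and the weak $L^1$ bound for $V^{-1}$ coming from Lemma \ref{lem:VinvWeak}. The only thing worth double-checking is that membership of $b$ in $B^p$ guarantees that the integrand $(b(x)-b(y))K(x,y)f(y)$ is absolutely integrable for sufficiently many test functions $f$, so that the formal definition really produces a densely defined operator which agrees with the integral operator $\mathcal{I}_{BK}$; this is automatic from $\Norm{BK}{L^p(L^{p',\infty})_{\operatorname{symm}}}<\infty$ (established already inside the proof of Proposition \ref{prop:JWtrick}), which controls $\mathcal{I}_{BK}$ as a bounded operator on $L^2(\mu)$.
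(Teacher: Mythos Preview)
Your proof is correct and follows exactly the same approach as the paper: both apply Proposition \ref{prop:JWtrick} with $B(x,y):=b(x)-b(y)$ and identify $\Norm{B}{L^p(V^{-2})}=\Norm{b}{B^p}$. The paper's own proof is a single sentence to this effect.
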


\begin{proof}
This is an immediate application of Proposition \ref{prop:JWtrick} to the function $B(x,y):=b(x)-b(y)$.
\end{proof}

\begin{remark}
What we did so far in this section is largely the natural adaptation of the sufficiency part of \cite[Theorem 1]{JW:82} to $(X,\rho,\mu)$ in place of $\R^d$. However, we have chosen to derive the commutator result of Corollary \ref{cor:JWSpBp} from the more general statement of Proposition \ref{prop:JWtrick} involving general functions $B(x,y)$ in place of $b(x)-b(y)$. This generality actually simplifies the argument as follows: In the very last step \eqref{eq:JWvariant} of the proof of Proposition \ref{prop:JWtrick}, we simply apply the usual interpolation of $L^p$ spaces.
By considering commutators only, \cite{JW:82} first prove that
\begin{equation*}
  \Norm{[b,T]}{S^{p,\infty}}\lesssim\Norm{b}{B^p}. 
\end{equation*}
In order to bootstrap this to the desired strong type bound, they argue as in \eqref{eq:JWvariant} but need the interpolation result
\begin{equation*}
  (B^{p_0},B^{p_1})_{\theta,p}=B^p
\end{equation*}
for Besov spaces $B^p$ in the last step. When $B^p=B^{d/p}_{p,p}(\R^d)$ as in \cite{JW:82}, this is an instance of the classical interpolation theorem (e.g. \cite[Theorem 6.4.5(3)]{BL:book})
\begin{equation*}
  (B^{s_0}_{p_0,q_0},B^{s_1}_{p_1,q_1})_{\theta,p}
  =B^s_{p,p},\quad \begin{cases} \theta\in(0,1),\quad s_0\neq s_1,\quad s=(1-\theta)s_0+\theta s_1, \\  \displaystyle
   p_h,q_h\in[1,\infty], \ \frac1p=\frac{1-\theta}{p_0}+\frac{\theta}{p_1}=\frac{1-\theta}{q_0}+\frac{\theta}{q_1} \end{cases}
\end{equation*}
with parameters $s_h=d/p_h$ and $q_h=p_h$ that fit into the above assumptions.

But for Besov spaces on a general doubling space, the issue is more tricky. For example, results in \cite{GKS:10,Yang:04} only deal with interpolation of $B^{s_h}_{p,q_h}$ for fixed $p$, corresponding to the Euclidean result in \cite[Theorem 6.4.5(1)]{BL:book}, and this is not what is needed for the present purposes. One should also note that the proofs of the Besov space interpolation results tend to depend on a sequence of other results in the rich theory of these spaces. Hence, even if a suitable interpolation theorem was available, the reader may agree on the simplicity of our approach that only uses the definition of the Besov space together with classical interpolation of $L^p$ spaces.
\end{remark}

A slight modification of the proof of Corollary \ref{cor:JWSpBp} gives the following elaboration, which seems to be new even for $X=\R^d$.

\begin{proposition}\label{prop:JWvar}
Let $K$ be a kernel that satisfies \eqref{eq:CZ0}.
For each measurable $E\subset X\times X$, the integrals
\begin{equation*}
  [b,T]_E f(x):=\int_X 1_E(x,y)(b(x)-b(y))K(x,y)f(y)\ud\mu(y)
\end{equation*}
define bounded operators of Schatten class $S^p$, which moreover satisfy the variational estimate
\begin{equation}\label{eq:variationalSp}
  \sup_{\mathscr E}\Big(\sum_{E\in\mathscr E}\Norm{[b,T]_E}{S^p}^p\Big)^{\frac1p}\lesssim\Norm{b}{\dot B^p(\mu)},
\end{equation}
where the supremum is over all disjoint collections $\mathscr E$ of measurable subsets $E\subset X\times X$.
\end{proposition}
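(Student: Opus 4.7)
My plan is to observe that Proposition \ref{prop:JWvar} is essentially an immediate consequence of the stronger form of Proposition \ref{prop:JWtrick} already established for general measurable functions $B(x,y)$, rather than just the commutator-specific choice $B(x,y) = b(x) - b(y)$. This is precisely why it was advantageous to prove Proposition \ref{prop:JWtrick} in that greater generality.

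Concretely, for each measurable $E\subseteq X\times X$, I would set $B_E(x,y) := 1_E(x,y)(b(x)-b(y))$, so that the operator $[b,T]_E$ coincides with $\mathcal{I}_{B_E K}$ in the notation of Proposition \ref{prop:JWtrick}. Applying that proposition directly yields
\begin{equation*}
  \Norm{[b,T]_E}{S^p(L^2(\mu))}^p
  \lesssim \Norm{B_E}{L^p(V^{-2})}^p
  = \iint_E \frac{\abs{b(x)-b(y)}^p}{V(x,y)^2}\,\ud\mu(x)\,\ud\mu(y),
\end{equation*}
with an implied constant independent of $E$.

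For any disjoint collection $\mathscr E$ of measurable subsets of $X\times X$, summing over $E\in\mathscr E$ and using the disjointness together with $\bigcup_{E\in\mathscr E} E \subseteq X\times X$ gives
\begin{equation*}
  \sum_{E\in\mathscr E}\Norm{[b,T]_E}{S^p}^p
  \lesssim \sum_{E\in\mathscr E}\iint_E \frac{\abs{b(x)-b(y)}^p}{V(x,y)^2}\,\ud\mu(x)\,\ud\mu(y)
  \leq \Norm{b}{\dot B^p(\mu)}^p,
\end{equation*}
and taking the $p$-th root yields \eqref{eq:variationalSp}.

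There is no real obstacle here: the only subtlety is to recognise that since the constant in Proposition \ref{prop:JWtrick} depends neither on $B$ nor on the support of $B$, one gains the variational estimate essentially for free once the computation is arranged to bound the $p$-th power of the Schatten norm by an honest $p$-th-power integral with respect to the measure $V(x,y)^{-2}\,\ud\mu\otimes\ud\mu$. It is this additivity on the right-hand side that makes summation over disjoint $E$ harmless, in contrast to approaches passing through a weak-type estimate followed by interpolation of Besov norms.
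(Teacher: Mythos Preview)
Your proposal is correct and essentially identical to the paper's own proof: both interpret $[b,T]_E=\mathcal{I}_{B_EK}$ with $B_E(x,y)=1_E(x,y)(b(x)-b(y))$, apply Proposition~\ref{prop:JWtrick} to bound $\Norm{[b,T]_E}{S^p}^p$ by the integral of $\abs{b(x)-b(y)}^p/V(x,y)^2$ over $E$, and then sum over the disjoint family $\mathscr E$. The paper phrases the final step via $\sum_{E\in\mathscr E}1_E\leq 1$, which is the same as your $\bigcup_{E\in\mathscr E}E\subseteq X\times X$.
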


\begin{proof}
It is immediate that each $K_E(x,y):=1_E(x,y)K(x,y)$ inherits estimate \eqref{eq:CZ0} from $K$, and hence the $S^p$ property of each individual $[b,T]_E$ is just an application of Corollary \ref{cor:JWSpBp} to $K_E$ in place of $K$.

To obtain the variational estimate, we again apply Proposition \ref{prop:JWtrick}. We may equally well interpret $[b,T]_E=\mathcal{I}_{B_E K}$, where
\begin{equation*}
  B_E(x,y):=1_E(x,y)(b(x)-b(y)).
\end{equation*}
Hence, by Proposition \ref{prop:JWtrick},
\begin{equation*}
  \Norm{[b,T]_E}{S^p}=\Norm{\mathcal{I}_{B_E K}}{S^p}
  \lesssim\Norm{B_E}{L^p(V^{-2})},
\end{equation*}
and thus
\begin{equation*}
\begin{split}
  \sum_{E\in\mathscr E}  \Norm{[b,T]_E}{S^p}^p
  &\lesssim \sum_{E\in\mathscr E}\int_X\int_X 1_E(x,y)\frac{\abs{b(x)-b(y)}^p}{V(x,y)^2}\ud\mu(x)\ud\mu(y) \\
  &= \int_X\int_X  \sum_{E\in\mathscr E} 1_E(x,y)\frac{\abs{b(x)-b(y)}^p}{V(x,y)^2}\ud\mu(x)\ud\mu(y) \\
  &\leq \int_X\int_X \frac{\abs{b(x)-b(y)}^p}{V(x,y)^2}\ud\mu(x)\ud\mu(y) =\Norm{b}{\dot B^p(\mu)}^p,
\end{split}  
\end{equation*}
where the last inequality used the fact that
\begin{equation*}
  \sum_{E\in\mathscr E}1_E\leq 1
\end{equation*}
by the assumed disjointness of $E\in\mathscr E$.
\end{proof}

\begin{remark}
For sets of the form
\begin{equation*}
  E=E_I=\{(x,y)\in X\times X: \rho(x,y)\in I\},
\end{equation*}
where $I$ are (open, half-open, or closed) subintervals of $(0,\infty)$, the operators $[b,T]_{E_I}$ are simply commutators $[b,T_I]$ of the truncated singular integrals
\begin{equation*}
  T_I f(x)=\int_X 1_I(\rho(x,y))K(x,y)f(y)\ud\mu(y),
\end{equation*}
whose variational estimates have been widely studied in different contexts; see \cite{CJRW:00,CJRW:03} and the many subsequent works citing these papers.
 Compared to most such results in the literature, Proposition \ref{prop:JWvar} is surprisingly simple. This is also reflected in the fact that the estimate is obtained for general sets $E$, instead of those of the special form $E=E_I$.
\end{remark}

\section{Dyadic cubes and Haar functions}\label{sec:cubes}

The considerations in the rest of this paper will rely on the notion of dyadic cubes. Thus we interrupt the development of the commutator theory to recall the relevant definitions and results.

\begin{definition}\label{def:cubes}
A {\em system of dyadic cubes} $\mathscr D$, or just a {\em dyadic system}, on the space of homogeneous type $(X,\rho,\mu)$ is a collection
\begin{equation*}
  \mathscr D=\bigcup_{k\in\Z}\mathscr D_k,
\end{equation*}
where
\begin{enumerate}
  \item\label{it:Dpartition} each $\mathscr D_k$ is a measurable partition of $X$;
  \item\label{it:Drefine} each $\mathscr D_{k+1}$ refines the previous $\mathscr D_k$;
  \item\label{it:DlikeB} for parameters $\delta\in(0,1)$ and $0<c_0\leq C_0<\infty$, each $Q\in\mathscr D_k$ is essentially a ball of size $\delta^k$, in the sense that, for some ``centre'' $z_Q\in X$,
\begin{equation}\label{eq:cubeVsBalls}
  B(z_Q,c_0\delta^k)\subseteq Q\subseteq B(z_Q,C_0\delta^k).
\end{equation}
\end{enumerate}
The dyadic system is said to be {\em connected} if, in addition,
\begin{enumerate}\setcounter{enumi}{3}
  \item\label{it:Dconnected} for any two $P,Q\in\mathscr D$, there is some $R\in\mathscr D$ such that $P\cup Q\subseteq R$.
\end{enumerate}
\end{definition}

The original construction of such a system, for any space of homogeneous type, is due to Christ \cite{Christ:90}, except that the partition and refinement properties are only obtained up to sets of measure zero; a construction as stated can be found in \cite{HK:12}, including the possibility of requiring the system to be connected. Note that the standard dyadic system $\mathscr D=\{2^{-k}([0,1)^d+m):k\in\Z,m\in\Z^d\}$ of $\R^d$ is not connected, but it is easy to modify the construction to achieve this additional property.

At a later point, we will also need the notion of random dyadic systems (see Section \ref{sec:random}); however, we wish to stress the point that one fixed dyadic system is quite sufficient for a substantial part of the commutator theory, and we will thus introduce the additional random ingredients only when they are needed.

These dyadic cubes share most essential features of the familiar dyadic cubes of $\R^d$, with a notable exception related to property \eqref{it:Drefine}. In $\R^d$, this refinement is strict; every cube of a finer scale is a strict subset of some cube of a coarser scale. This remain true for all spaces with a positive lower dimension $d>0$ provided that the parameter $\delta>0$, which describes the change of scale between two consecutive levels of cubes, is chosen small enough, but it need not be the case for general spaces of homogeneous type. In other words, the same cube $Q$, as a set, may appear as a member of several different $\mathscr D_k$.

\begin{lemma}\label{lem:DuniqueLevel}
If $(X,\rho,\mu)$ has positive lower dimension $d>0$, then the dyadic systems $\mathscr D$ can be constructed so that each $Q\in\mathscr D$ appears in only one level $\mathscr D_k$.
\end{lemma}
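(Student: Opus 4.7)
The plan is to inspect the classical constructions of dyadic cubes (for example \cite{Christ:90} or \cite{HK:12}) and show that when the scale parameter $\delta$ is chosen small enough, the property that each cube appears in exactly one level comes essentially for free from the lower-dimension assumption. Concretely, I would recall that those constructions produce, for any $\delta$ in a range $(0,\delta_0)$ determined only by the doubling constant, a system $\mathscr D = \bigcup_k \mathscr D_k$ satisfying \eqref{it:Dpartition}--\eqref{it:DlikeB} (and \eqref{it:Dconnected} if desired). The goal is then to tighten $\delta_0$ further using the hypothesis $d>0$.

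The core of the argument is the following contradiction: suppose $Q \in \mathscr D_k \cap \mathscr D_{k+1}$. Since $\mathscr D_{k+1}$ refines the partition $\mathscr D_k$, the $(k+1)$-cube $Q$ must lie in some $k$-cube, and that parent must be $Q$ itself. So $Q$ comes equipped with two (possibly different) centres $z_Q$ and $z_Q'$ satisfying
\begin{equation*}
   B(z_Q,c_0\delta^k)\subseteq Q\subseteq B(z_Q,C_0\delta^k),\qquad
   B(z_Q',c_0\delta^{k+1})\subseteq Q\subseteq B(z_Q',C_0\delta^{k+1}).
\end{equation*}
In particular $V(z_Q,c_0\delta^k)\leq\mu(Q)\leq V(z_Q',C_0\delta^{k+1})$. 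Since $z_Q'\in Q\subseteq B(z_Q,C_0\delta^k)$, the quasi-triangle inequality gives $B(z_Q',C_0\delta^k)\subseteq B(z_Q,2A_0C_0\delta^k)$, so by doubling $V(z_Q',C_0\delta^k)\lesssim V(z_Q,c_0\delta^k)$. The lower-dimension assumption \eqref{eq:lowerD} applied at the common centre $z_Q'$ with radii $C_0\delta^{k+1}\leq C_0\delta^k$ yields
\begin{equation*}
   V(z_Q',C_0\delta^{k+1})\lesssim \delta^{d}\, V(z_Q',C_0\delta^k)\lesssim \delta^{d}\, V(z_Q,c_0\delta^k).
\end{equation*}
Chaining these two estimates gives $V(z_Q,c_0\delta^k)\lesssim\delta^{d}\,V(z_Q,c_0\delta^k)$, hence $1\lesssim \delta^{d}$, with implicit constant depending only on the doubling and lower-dimension constants. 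Since $d>0$, this fails as soon as $\delta$ is taken smaller than some $\delta_1>0$ depending only on $X$.

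Thus the construction, performed with any $\delta\in(0,\min(\delta_0,\delta_1))$, automatically yields a system in which $\mathscr D_k\cap\mathscr D_{k+1}=\emptyset$ for every $k$. Iterating (or noting that $\mathscr D_{k+j}$ refines $\mathscr D_k$ for $j\geq 1$, so any repetition of a cube between levels $k$ and $k+j$ would force a repetition between two consecutive levels on the way) gives $\mathscr D_k\cap\mathscr D_\ell=\emptyset$ for all $k\neq\ell$, which is the claim.

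I expect the only mildly delicate point to be checking that the existing constructions really do allow the parameter $\delta$ to be chosen as small as we like (rather than pinning it down to a specific value dictated by their proof); this is made explicit in \cite{HK:12}, so the argument above goes through without further work.
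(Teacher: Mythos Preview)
Your argument is correct and essentially identical to the paper's: both use the lower-dimension bound to show that a cube at level $k+1$ has measure $\lesssim\delta^d$ times that of its parent at level $k$, which for small enough $\delta$ forces the parent to be strictly larger. The paper phrases this as $\mu(Q)<\mu(R)$ for any child--parent pair (hence every cube has at least two children), while you phrase it as a contradiction from assuming $Q\in\mathscr D_k\cap\mathscr D_{k+1}$, but the underlying chain of inequalities and the role of $\delta$ are the same.
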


\begin{proof}
We show that the claim is achieved as soon as the parameter $\delta\in(0,1)$ is chosen small enough, noting that $c_0,C_0$ in \eqref{eq:cubeVsBalls} are independent of $\delta\in(0,1)$. Thus, if $Q\in\mathscr D_{k+1}$ and $R\in\mathscr D_k$ satisfy $Q\subseteq R$ then, by the quasi-triangle inequality with constant $A_0$,
\begin{equation}\label{eq:BzQzR}
  B(z_Q,C_0\delta^k)\subseteq B(z_R,2A_0\delta^k)
\end{equation}
and hence
\begin{equation*}
\begin{split}
    \mu(Q) &\leq\mu(B(z_Q,C_0\delta^{k+1}))\qquad\text{by \eqref{eq:cubeVsBalls}} \\
    &\lesssim\Big(\frac{C_0\delta^{k+1}}{C_0\delta^k}\Big)^d \mu(B(z_Q,C_0\delta^{k}))
    \qquad\text{by the assumed lower dimension }d \\
    &\leq\delta^d \mu(B(z_R,2A_0 C_0\delta^{k}))\qquad\text{by \eqref{eq:BzQzR}} \\
    &\lesssim\delta^d\Big(\frac{2A_0 C_0\delta^k}{c_0\delta^k}\Big)^D\mu(B(z_R,c_0\delta^k)
    \qquad\text{by finite upper dimension }D \\
    &\lesssim \delta^d\mu(R)\qquad\text{by \eqref{eq:cubeVsBalls}},
\end{split}
\end{equation*}
where we used the fact that every space of homogeneous type has some finite upper dimension. Fixing $\delta\in(0,1)$ small enough, depending only on the parameters of the space in the implied constants, we find that $\mu(Q)<\mu(R)$, and hence every $R\in\mathscr D_k$ must be a union of more than one cube $Q\in\mathscr D_{k+1}$.
\end{proof}

While Lemma \ref{lem:DuniqueLevel} covers many concrete spaces of interest, and in particular all those for which we state conclusions \eqref{it:p<d} and \eqref{it:p=d} of Theorem \ref{thm:main}, a positive lower dimension is not needed for conclusion \eqref{it:p>d} of Theorem \ref{thm:main}, and thus we take the slight additional trouble to deal with the possible ambiguity of the level of dyadic cubes in general spaces of homogeneous type. At times, we wish to view a cube simply as a set, but there will also be occasions, where it is useful to understand it as the pair $(Q,k)$, also containing the information about the level $k$.

For $Q\in\mathscr D_k$ (where we understand $Q$ as the pair $(Q,k)$), we denote by $Q^{(1)}=(Q,k)^
{(1)}$ the unique $R\in\mathscr D_{k-1}$ such that $Q\subseteq R$, and by
\begin{equation*}
  \operatorname{ch}(Q):=\operatorname{ch}(Q,k):=\{Q'\in\mathscr D_{k+1}:(Q')^{(1)}=Q\}.
\end{equation*}
We refer to $Q^{(1)}$ and $\operatorname{ch}(Q)$ as the (dyadic) parent and the (dyadic) children of $Q$, respectively.
Recursively, we define $Q^{(j)}:=(Q^{(j-1)})^{(1)}$ and $\operatorname{ch}^{(j)}(Q):=\operatorname{ch}(\operatorname{ch}^{(j-1))}(Q))$.
It follows from the doubling property that
\begin{equation*}
  M_Q:=\#\operatorname{ch}(Q)\leq M<\infty
\end{equation*}
for some finite constant $M$ independent of $Q\in\mathscr D$.

For $Q\in\mathscr D$ (where we do not specify the level $k$), we denote by $Q^{[1]}$ the minimal $R\in\mathscr D$ such that $Q\subsetneq R$, and by
\begin{equation*}
  [\operatorname{ch}](Q):=\{Q'\in\mathscr D:(Q')^{[1]}=Q\}.
\end{equation*}
We refer to $Q^{[1]}$ and $[\operatorname{ch}](Q)$ as the strict parent and the strict children of $Q$, respectively.
Note that $Q^{[1]}$ exists unless $Q=X$, and $[\operatorname{ch}](Q)\neq\varnothing$ unless $Q=\{x\}$ for some $x\in X$.
Recursively, we define $Q^{(j)}:=(Q^{(j-1)})^{(1)}$ and $\operatorname{ch}^{(j)}(Q):=\operatorname{ch}(\operatorname{ch}^{(j-1))}(Q))$.

For $Q\in\mathscr D$, let
\begin{equation}\label{eq:kMinMaxQ}
\begin{split}
   k_{\min}(Q)&:=\inf\{ k\in\Z: Q\in\mathscr D_k\}\in\Z\cup\{-\infty\},\\
   k_{\max}(Q)&:=\sup\{ k\in\Z: Q\in\mathscr D_k\}\in\Z\cup\{\infty\},
\end{split}
\end{equation}
where $k_{\min}(Q)=-\infty$ if and only if $Q=X$, and $k_{\max}(Q)=\infty$ if and only if $Q=\{x\}$ for some $x\in X$.
Then we observe that
\begin{equation*}
  Q^{[1]}=(Q,k_{\min}(Q))^{(1)},\qquad
  [\operatorname{ch}]Q=\operatorname{ch}(Q,k_{\max}(Q))
\end{equation*}
whenever $k_{\min}(Q)$ resp. $k_{\max}(Q)$ is finite. Whenever $[\operatorname{ch}](Q)\neq\varnothing$, it follows that also
\begin{equation}\label{eq:strChQ>1}
  2\leq\#[\operatorname{ch}(Q)]\leq M<\infty.
\end{equation}

\begin{proof}[Proof of \eqref{eq:strChQ>1}]
Let $Q\in\mathscr D$ satisfy $[\operatorname{ch}](Q)\neq\varnothing$. Thus, there is at least one $Q'\in[\operatorname{ch}](Q)$. By definition, this means that $(Q')^{[1]}=Q$, thus $Q'\subsetneq Q$. If $Q'\in\mathscr D_{k'}$ and $Q\in\mathscr D_k$ for some $k',k\in\Z$, it follows from properties \eqref{it:Dpartition} and \eqref{it:Drefine} of dyadic cubes from Definition \ref{def:cubes} that $k'>k$. Hence both $k_{\min(Q')},k_{\max(Q)}$ are finite and $k_{\min(Q')}>k_{\max(Q)}$.

Suppose for contradiction that there exists $\hat k\in\Z$ with $k_{\min(Q')}>\hat k>k_{\max(Q)}$. By properties \eqref{it:Dpartition} and \eqref{it:Drefine} of dyadic cubes, there exists $\hat Q\in\mathscr D_{\hat k}$ with $Q'\subseteq\hat Q\subseteq Q$. Since $Q',Q\notin\mathscr D_{\hat k}$ by definition of $k_{\min(Q')}$ and $k_{\max(Q)}$, both containments are strict. But then $Q$ would not be a minimal dyadic cube strictly containing $Q'$, which is the contradiction that we wanted. Thus $k_{\min(Q')}=k_{\max(Q)}+1=:k_0+1$.

Now $Q'\subsetneq Q$, where $Q'\in\mathscr D_{k_0+1}$ and $Q\in\mathscr D_{k_0}$. 
By properties \eqref{it:Dpartition} and \eqref{it:Drefine} of dyadic cubes again, $Q$ is a disjoint union of the cubes $Q_i\in\operatorname{ch}(Q,k_0)\subseteq\mathscr D_{k_0+1}$, one of them being $Q'$. Since $Q'\neq Q$, there are at least two such cubes $Q_1\neq Q_2$. Moreover, all these $Q_i$ satisfy $(Q_i)^{(1)}=Q\neq Q_i$, thus $(Q_i)^{[1]}=Q$, and hence $Q_i\in[\operatorname{ch}](Q)$. 

On the other hand, each $Q''\in[\operatorname{ch}](Q)$ must be contained in some $Q_i\in\operatorname{ch}(Q,k_0)$. Since $Q_i\subsetneq Q$, it cannot be that $Q''\subsetneq Q_i$, since this would contradict the property that $Q=(Q'')^{[1]}$ is the dyadic cube strictly containing $Q''$. Thus $Q''=Q_i\in\operatorname{ch}(Q)$.

Altogether, we have checked that $\{Q_1,Q_2\}\subseteq[\operatorname{ch}](Q)=\operatorname{ch}(Q,k_0)$. Recalling that $\operatorname{ch}(Q,k_0)\leq M$, both bounds in \eqref{eq:strChQ>1} follow.
\end{proof}

\begin{lemma}\label{lem:strRD}
There are constants $1<c\leq C<\infty$ such that for all $Q\in\mathscr D\setminus\{X\}$,
\begin{equation}\label{eq:strRD}
   c \mu(Q)\leq \mu(Q^{[1]})\leq C\mu(Q).
\end{equation}
\end{lemma}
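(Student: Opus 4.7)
The plan is to identify the strict parent $Q^{[1]}$ explicitly, and then derive both inequalities from the ball-containment property \eqref{eq:cubeVsBalls} together with the doubling property of $\mu$.

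First I would observe that $Q^{[1]}$ can be described concretely as the \emph{dyadic} parent of $Q$ at its minimal level: namely $Q^{[1]} = (Q, k_{\min}(Q))^{(1)}\in\mathscr{D}_{k_{\min}(Q)-1}$. Indeed, if this parent were equal to $Q$ as a set, then $Q$ would already belong to $\mathscr{D}_{k_{\min}(Q)-1}$, contradicting the definition \eqref{eq:kMinMaxQ} of $k_{\min}(Q)$, so the containment $Q\subsetneq Q^{[1]}$ is strict; a short argument using the refining partition structure then rules out any intermediate cube between $Q$ and $Q^{[1]}$. Set $R:=Q^{[1]}$ and $k:=k_{\min}(Q)$, so that $R\in\mathscr D_{k-1}$ and $Q\in\operatorname{ch}(R,k-1)$ along with at least one further child $Q'\neq Q$.

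For the upper bound, since $z_Q\in Q\subseteq R\subseteq B(z_R, C_0\delta^{k-1})$ we have $\rho(z_Q,z_R)\leq C_0\delta^{k-1}$, so by the quasi-triangle inequality
\begin{equation*}
  R\subseteq B(z_R,C_0\delta^{k-1})\subseteq B(z_Q, 2A_0C_0\delta^{k-1}).
\end{equation*}
Doubling (with a ratio of radii $2A_0C_0/(c_0\delta)$ independent of $Q$) then yields $\mu(B(z_Q,2A_0C_0\delta^{k-1}))\leq C\,\mu(B(z_Q,c_0\delta^k))\leq C\,\mu(Q)$, giving $\mu(Q^{[1]})\leq C\mu(Q)$.

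For the lower bound (which I expect to be the more interesting step), I would use the existence of a sibling $Q'\neq Q$ inside $R$. The same chain of inclusions applied with $z_{Q'}$ in place of $z_Q$ shows
\begin{equation*}
  R\subseteq B(z_{Q'}, 2A_0C_0\delta^{k-1}),
\end{equation*}
so by doubling $\mu(R)\leq C_1\,\mu(B(z_{Q'},c_0\delta^k))\leq C_1\,\mu(Q')$ for some constant $C_1$ depending only on the space. Since $Q$ and $Q'$ are disjoint subsets of $R$,
\begin{equation*}
  \mu(R)=\mu(Q)+\mu(R\setminus Q)\geq \mu(Q)+\mu(Q')\geq \mu(Q)+C_1^{-1}\mu(R),
\end{equation*}
which rearranges to $\mu(R)\geq (1-C_1^{-1})^{-1}\mu(Q)$. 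Setting $c:=(1-C_1^{-1})^{-1}>1$ completes the proof. The main subtlety is purely notational: having to work with the minimal level $k_{\min}(Q)$ to ensure that the strict inclusion $Q\subsetneq Q^{[1]}$ produces a genuine sibling whose ball-contained interior can absorb a definite fraction of $\mu(R)$ via doubling.
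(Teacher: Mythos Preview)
Your proof is correct and follows essentially the same approach as the paper: both establish the upper bound by doubling and then obtain the strict lower bound by picking a sibling $Q'\in[\operatorname{ch}](Q^{[1]})\setminus\{Q\}$, applying the upper bound to $Q'$, and rearranging $\mu(Q^{[1]})\geq\mu(Q)+\mu(Q')\geq\mu(Q)+C^{-1}\mu(Q^{[1]})$. The only differences are cosmetic: you spell out the ball-containment behind the doubling step and re-derive the identification $Q^{[1]}=(Q,k_{\min}(Q))^{(1)}$, which the paper records just before the lemma.
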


\begin{proof}
The upper bound is immediate from doubling. Let us then pick some $Q'\in[\operatorname{ch}](Q^{[1]})\setminus\{Q\}$, as we may by \eqref{eq:strChQ>1}. 
Using this upper bound with $Q'$ in place of $Q$, observing that $(Q')^{[1]}=Q^{[1]}$, we find that
\begin{equation*}
  \mu(Q^{[1]})
  \geq\mu(Q)+\mu(Q')
  \geq\mu(Q)+\frac{1}{C}\mu((Q')^{[1]})  
  \geq\mu(Q)+\frac{1}{C}\mu(Q^{[1]}).
\end{equation*}
Rearranging terms gives the claimed lower bound with $c=\frac{C}{C-1}>1$.
\end{proof}

The lower bound in \eqref{eq:strRD} has the following useful consequence: For every $\alpha>0$,
\begin{equation}\label{eq:strRDsum}
  \sum_{k=0}^\infty\frac{1}{\mu(Q^{[k]})^\alpha}
  \leq\sum_{k=0}^\infty \frac{1}{c^{k\alpha}\mu(Q)^\alpha}
  =\frac{c^\alpha}{c^\alpha-1}\frac{1}{\mu(Q)^\alpha}.
\end{equation}
Here and in what follows, we make the implicit convention that, if $Q^{[k]}$ is undefined some $k$ (i.e., $Q^{[j]}=X$ for some $j<k$), then we simply omit the corresponding terms in a sum like the one above.

\begin{remark}\label{rem:Haar}
Corresponding to a system of dyadic cubes, it is easy to construct a related orthonormal system of Haar wavelets.
Since it naturally fits with the concepts just introduced, we do this here, although it will only be needed in Part~\ref{part:dyadic}.

For each $Q\in\mathscr D$, we have the non-cancellative wavelet
\begin{equation*}
  h^0_Q:=\mu(Q)^{-1/2} 1_Q.
\end{equation*}
A possible way of constructing $M_Q-1$ cancellative wavelets is as follows. (Note that there are none if $M_Q=1$.) Fix some enumeration $\{Q_i\}_{i=1}^{M_Q}$ of $\operatorname{ch}(Q)$. Let
\begin{equation*}
  \hat Q_i:=\bigcup_{j=i}^{M_Q} Q_j
\end{equation*}
and
\begin{equation*}
  h_Q^i:=\sqrt{\frac{\mu(Q_i)\mu(\hat Q_{i+1})}{\mu(\hat Q_i)}}\Big(\frac{1_{Q_i}}{\mu(Q_i)}-\frac{1_{\hat Q_{i+1}}}{\mu(\hat Q_{i+1})}\Big),\qquad
  i=1,\ldots,M_Q-1.
\end{equation*}
Then it is easy to check that
\begin{enumerate}
  \item $\Norm{h_Q^i}{L^2(\mu)}=1$ for $i=0,\ldots,M_Q-1$;
  \item $h_Q^i$ is constant on the support of $h_Q^j$ for $0\leq i<j\leq M_Q-1$;
  \item $\int h_Q^j\ud\mu=0$ for $j=1,\ldots,M_Q-1$;
  \item $\{h_Q^i\}_{i=0}^{M_Q-1}$ is an orthonormal basis of
\begin{equation*}
  L^2(\operatorname{ch}(Q)):=\{f\in L^2(\mu):f\text{ is constant on each }Q'\in\operatorname{ch}(Q), 1_{Q^c}f=0\};
\end{equation*}
  \item $\{h_Q^i\}_{i=1}^{M_Q-1}$ is an orthonormal basis of
\begin{equation*}
  L^2_0(\operatorname{ch}(Q)):=\Big\{f\in L^2(\operatorname{ch}(Q)):\int f\ud\mu=0\Big\}.
\end{equation*}  
\end{enumerate}
Let
\begin{equation*}
  \E_Qf:=\ave{f}_Q 1_Q=\pair{f}{h_Q^0}h_Q^0
\end{equation*}
and let $\D_Q$ be the orthogonal projection of $L^2(\mu)$ onto $L^2_0(\operatorname{ch}(Q))$. Then
\begin{equation*}
  \D_Qf=\Big(\sum_{Q'\in\operatorname{ch}(Q)}\E_{Q'}-\E_Q\Big)=\sum_{i=1}^{M_Q-1}\pair{f}{h_Q^i}h_Q^i.
\end{equation*}
\end{remark}

\section{The Rochberg--Semmes method}\label{sec:RS}

The essence of this section goes back to \cite{RS:BMO}, where the results and proofs for $X=\R$ are expressed in a complex-variable notation of the upper half-place $\C_+=\{x+iy:x\in\R,y>0\}$. The results are restated and partially reproved in $\R^n$ in \cite{RS:NWO}. Once the dyadic cubes in a doubling metric space are available, adapting these results to a general $X$ is straightforward. Since the Euclidean version is already a little scattered over the two papers \cite{RS:BMO,RS:NWO}, we include a treatment here for the sake of completeness and easy reference.

Slightly adapting the ideas of \cite{RS:NWO}, for a sequence $\mathcal E=(e_Q,h_Q)_{Q\in\mathscr D}$ of pairs of functions, we consider the bi-sublinear maximal operator
\begin{equation}\label{eq:ME}
   M_{\mathcal E}(f,g)(x):=\sup_{Q\in\mathscr D}1_Q(x)\frac{\abs{\pair{f}{e_Q}\pair{g}{h_Q}}}{\mu(Q)}.
\end{equation}
Considering this one bi-sublinear operator, instead of the two operators
\begin{equation*}
  M_{\mathcal E}^1(f)(x):=\sup_{Q\in\mathscr D}1_Q(x)\frac{\abs{\pair{f}{e_Q}}}{\mu(Q)^{\frac12}},\quad
  M_{\mathcal E}^2(g)(x):=\sup_{Q\in\mathscr D}1_Q(x)\frac{\abs{\pair{g}{h_Q}}}{\mu(Q)^{\frac12}}
\end{equation*}
featuring in \cite{RS:NWO} and several subsequent works, is a minor technical difference, in principle, but it also brings a certain conceptual advantage: with $M_{\mathcal E}$, we only need to take care of the scaling of one object instead of two, and the scaling in \eqref{eq:ME}, emphasising the averaging nature of $M_{\mathcal E}$, is perhaps more natural than the scalings of the $M_{\mathcal E}^i$. This is even more apparent in comparison to the $L^p$ variants in \cite[(7.38)--(7.39)]{RS:NWO}, where a different $L^{p}$-adapted scaling is used for $M_{\mathcal E}^1$ and for $M_{\mathcal E}^2$, but they cancel out to produce the same $M_{\mathcal E}$ independent of $p$. Although we will not develop this direction here, it seems worth recording for possible future extensions. The use of $M_{\mathcal E}$ will also simplify the weighted extension of these considerations that we take up in the following Section \ref{sec:RSweighted}.

For a sequence $\lambda=(\lambda_Q)_{Q\in\mathscr D}$ of numbers, we define the new sequence $\operatorname{Car}\lambda$ (denoted by $M\lambda$ in \cite{RS:BMO,RS:NWO}) by
\begin{equation*}
  \operatorname{Car}\lambda(P):=\frac{1}{\mu(P)}\sum_{Q\subseteq P}\abs{\lambda_Q}\mu(Q).
\end{equation*}
We say that $\lambda$ is a Carleson sequence if $\operatorname{Car}\lambda\in\ell^\infty(\mathscr D)$.

\begin{proposition}\label{prop:NWO}
For sequences $\mathcal E=(e_Q,h_Q)_{Q\in\mathscr D}$ and $\lambda=(\lambda_Q)_{Q\in\mathscr D}$, we have the estimates
\begin{equation*}
\begin{split}
  \BNorm{\sum_{Q\in\mathscr D}\lambda_Q e_Q\otimes h_Q }{\bddlin(L^2(\mu))}
  &\leq\Norm{M_{\mathcal E}}{L^2(\mu)\times L^2(\mu)\to L^1(\mu)}\Norm{\operatorname{Car}\lambda}{\infty}, \\
  a_n\Big(\sum_{Q\in\mathscr D}\lambda_Q e_Q \otimes h_Q \Big)
  &\leq\Norm{M_{\mathcal E}}{L^2(\mu)\times L^2(\mu)\to L^1(\mu)}(\operatorname{Car}\lambda)^*(n),
\end{split}
\end{equation*}
where
\begin{equation*}
  a_n(T):=\inf\Big\{\Norm{T-F}{\bddlin(L^2(\mu))}:F\in\bddlin(L^2(\mu)),\ \operatorname{rank}(F)<n\Big\}
\end{equation*}
is the $n$th approximation number of $T\in\bddlin(L^2(\mu))$ and
 $(\operatorname{Car}\lambda)^*$ is the decreasing rearrangement of $\operatorname{Car}\lambda$.
\end{proposition}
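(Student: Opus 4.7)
The plan is to prove both estimates through a single layer-cake and stopping-cube argument, the second being a finite-rank refinement of the first.

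For the operator norm bound I would test against unit vectors $f,g\in L^2(\mu)$ and expand
\begin{equation*}
\abs{\pair{Tf}{g}}
\le\sum_{Q\in\mathscr D}\abs{\lambda_Q}\mu(Q)\cdot a_Q,\qquad
a_Q:=\frac{\abs{\pair{f}{e_Q}\pair{g}{h_Q}}}{\mu(Q)}.
\end{equation*}
Writing $a_Q=\int_0^\infty \mathbf 1_{\{a_Q>t\}}\ud t$ and swapping sums reduces matters to estimating $\sum_{Q:\,a_Q>t}\abs{\lambda_Q}\mu(Q)$ for each $t>0$. Grouping these cubes under the disjoint family $\mathscr M_t$ of their maximal ancestors in $\{R:a_R>t\}$, the very definition of $\operatorname{Car}$ yields
\begin{equation*}
\sum_{Q:\,a_Q>t}\abs{\lambda_Q}\mu(Q)
\le\sum_{R\in\mathscr M_t}\operatorname{Car}\lambda(R)\,\mu(R)
\le\Norm{\operatorname{Car}\lambda}{\infty}\mu(\{M_{\mathcal E}(f,g)>t\}),
\end{equation*}
since $\bigcup\mathscr M_t\subseteq\{M_{\mathcal E}(f,g)>t\}$ from $a_R\le M_{\mathcal E}(f,g)$ pointwise on $R$. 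Integrating in $t$ produces $\Norm{M_{\mathcal E}(f,g)}{L^1(\mu)}$, and the assumed boundedness of $M_{\mathcal E}$ closes the first bound.

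For the approximation-number estimate, fix $n$, set $s:=(\operatorname{Car}\lambda)^*(n)$, and let $\mathscr A:=\{Q\in\mathscr D:\operatorname{Car}\lambda(Q)>s\}$, a set of cardinality strictly less than $n$ by definition of the decreasing rearrangement. Split $T=T_1+T_2$ with $T_1:=\sum_{Q\in\mathscr A}\lambda_Q e_Q\otimes h_Q$, so $\operatorname{rank}(T_1)<n$ and hence $a_n(T)\le\Norm{T_2}{\bddlin(L^2(\mu))}$. Applying the first estimate to the thinned sequence $\lambda''_Q:=\lambda_Q\mathbf 1_{Q\notin\mathscr A}$ reduces the whole claim to verifying $\Norm{\operatorname{Car}\lambda''}{\infty}\le s$.

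This verification is a short stopping-cube argument, and it is the main subtlety of the proof. For $P\notin\mathscr A$ the bound $\operatorname{Car}\lambda''(P)\le\operatorname{Car}\lambda(P)\le s$ is automatic. For $P\in\mathscr A$, let $\mathscr R_P$ denote the (possibly empty) family of maximal $R\subsetneq P$ with $\operatorname{Car}\lambda(R)\le s$. Because $\mathscr A$ is finite and strict-ancestor chains inside $P$ have finitely many members (by Lemma~\ref{lem:strRD}, as $\mu(Q^{[k]})\ge c^k\mu(Q)$), every $Q\subsetneq P$ with $Q\notin\mathscr A$ is contained in a unique $R\in\mathscr R_P$; these $R$ are pairwise disjoint subcubes of $P$, so
\begin{equation*}
\mu(P)\operatorname{Car}\lambda''(P)
=\sum_{\substack{Q\subsetneq P\\ Q\notin\mathscr A}}\abs{\lambda_Q}\mu(Q)
\le\sum_{R\in\mathscr R_P}\operatorname{Car}\lambda(R)\mu(R)
\le s\,\mu(P),
\end{equation*}
as required. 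The point worth stressing is that without the cardinality bound $\#\mathscr A<n$ the stopping procedure could encounter an infinite chain of bad ancestors and the maximal $R$ above a given $Q$ would be ill defined; it is precisely the finiteness of $\mathscr A$ that makes the whole argument go through.
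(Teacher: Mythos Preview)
Your proof is correct and follows essentially the same route as the paper: the layer-cake/maximal-cube argument for the first bound is identical, and for the second you remove the cubes with $\operatorname{Car}\lambda(Q)>s$ where the paper removes the $n-1$ cubes of largest $\operatorname{Car}\lambda$-value, after which both run the same stopping-cube estimate $\Norm{\operatorname{Car}(1_{\mathscr A^c}\lambda)}{\infty}\le s$. One small quibble: your closing remark overstates the role of $\#\mathscr A<n$ in the stopping step---the existence of the maximal $R\in\mathscr R_P$ above a given $Q$ follows already from the finiteness of strict-ancestor chains inside $P$ (Lemma~\ref{lem:strRD}), while the bound $\#\mathscr A<n$ is needed only to ensure $\operatorname{rank}(T_1)<n$.
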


\begin{proof}
For two functions $f,g\in L^2(\mu)$ and a number $t>0$, let
\begin{equation*}
   \mathscr Q_t:=\Big\{Q\in\mathscr D:  \frac{ \abs{\pair{f}{e_Q}\pair{g}{h_Q} } }{\mu(Q)} > t\Big\},\ 
   \mathscr Q_t^*:=\Big\{Q\in\mathscr Q_t\text{ is maximal in }\mathscr Q_t\Big\}.
\end{equation*}
Letting the operator act on one of these functions, and dualising with the other one, we are led to estimate
\begin{equation}\label{eq:NWOestStart}
\begin{split}
  \sum_{Q\in\mathscr D}&\abs{\lambda_Q}\abs{\pair{f}{e_Q}\pair{h_Q}{g}}
  =\sum_{Q\in\mathscr D}\abs{\lambda_Q}\mu(Q)\int_0^{\frac{\abs{\pair{f}{e_Q}\pair{h_Q}{g}}}{\mu(Q)}}\ud t \\
  &=\int_0^\infty \sum_{Q\in\mathscr Q_t}\abs{\lambda_Q}\mu(Q)\ud t 
  \leq\int_0^\infty \sum_{P\in\mathscr Q_t^*}\sum_{Q\subseteq P}\abs{\lambda_Q}\mu(Q)\ud t \\
  &=\int_0^\infty \sum_{P\in\mathscr Q_t^*}\operatorname{Car}\lambda(P)\mu(P)\ud t  
  \leq\Norm{\operatorname{Car}\lambda}{\infty}\int_0^\infty \mu\Big(\bigcup_{P\in\mathscr Q_t^*}P\Big)\ud t \\
  &\leq\Norm{\operatorname{Car}\lambda}{\infty}\int_0^\infty \mu(\{M_{\mathcal E}(f,g)>t\})\ud t  \\
  &=\Norm{\operatorname{Car}\lambda}{\infty}\Norm{M_{\mathcal E}(f,g)}{L^1(\mu)}  \\
  &\leq\Norm{\operatorname{Car}\lambda}{\infty}\Norm{M_{\mathcal E}}{L^2(\mu)\times L^2(\mu)\to L^1(\mu)} 
  \Norm{f}{L^2(\mu)}\Norm{g}{L^2(\mu)}.
\end{split}
\end{equation}
This proves the first claim.

Let $(P_n)_{n=1}^\infty$ be an enumeration of $\mathscr D$ such that $\operatorname{Car}\lambda(P_n)=(\operatorname{Car}\lambda)^*(n)$. Then
\begin{equation*}
  \sum_{i=1}^{n-1}\lambda_{P_i} e_{P_i}\otimes f_{P_i}
\end{equation*}
has rank less than $n$. Denoting $\mathscr P_n:=\mathscr D\setminus\{P_1,\ldots,P_{n-1}\}$, it follows that
\begin{equation*}
  a_n\Big(\sum_{Q\in\mathscr D}\lambda_Q e_Q\otimes f_Q\Big)
  \leq\BNorm{\sum_{Q\in\mathscr P_{n}}\lambda_Q e_Q\otimes f_Q}{} 
  \leq\Norm{M_{\mathcal E}}{}
  \Norm{\operatorname{Car}(1_{\mathscr P_{n}}\lambda)}{\infty}
\end{equation*}
by an application of the first part of the lemma to the truncated sequence $(1_{\mathscr P_{n}}(Q)\lambda_Q)_{Q\in\mathscr D}$ in place of $\lambda$. Let $P\in\mathscr D$, and let $\mathscr P_n^*(P)$ be the collection of maximal $R\in\mathscr P_n$ contained in $P$. Then
\begin{equation*}
\begin{split}
  \mu(P)\operatorname{Car}(1_{\mathscr P_n}\lambda)(P)
  &=\sum_{Q\subseteq P}\mu(Q)\abs{\lambda_Q}1_{\mathscr P_n}(Q)
  \leq\sum_{R\in\mathscr P_n^*(P)}\sum_{Q\subseteq R}\mu(Q)\abs{\lambda_Q} \\
  &=\sum_{R\in\mathscr P_n^*(P)}\mu(R)(\operatorname{Car}\lambda)(R) \\
  &\leq\Norm{1_{\mathscr P_n}\operatorname{Car}\lambda}{\infty}\sum_{R\in\mathscr P_n^*(P)}\mu(R)
  \leq (\operatorname{Car}\lambda)^*(n)\mu(P).
\end{split}
\end{equation*}
Hence
\begin{equation*}
  a_n\Big(\sum_{Q\in\mathscr D}\lambda_Q e_Q\otimes f_Q\Big)
  \leq\Norm{M_{\mathcal E}}{}
  \Norm{\operatorname{Car}(1_{\mathscr P_{n}}\lambda)}{\infty}
  \leq \Norm{M_{\mathcal E}}{}
  (\operatorname{Car}\lambda)^*(n),
\end{equation*}
which is the second claim of the lemma.
\end{proof}

\begin{proposition}\label{prop:CarBd}
For all $p\in(0,\infty)$ and $q\in(0,\infty]$, the operator $\operatorname{Car}$ is bounded on $\ell^{p,q}(\mathscr D)$.
\end{proposition}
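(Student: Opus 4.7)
My strategy is to first prove the strong $\ell^p \to \ell^p$ boundedness of $\operatorname{Car}$ for every $p \in (0, \infty)$, and then obtain the $\ell^{p,q}$ statement by standard real interpolation via the identity $(\ell^{p_0}, \ell^{p_1})_{\theta, q} = \ell^{p,q}$ (valid in the full quasi-Banach range $p_0, p_1 \in (0,\infty)$, $q \in (0,\infty]$). The only ingredient beyond routine manipulations is the geometric summability \eqref{eq:strRDsum} over strict ancestors, which is a direct consequence of Lemma \ref{lem:strRD}.

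\textbf{Strong bound, $p \in (0,1]$.} Here the subadditivity of $t \mapsto t^p$ yields the pointwise inequality $\operatorname{Car}\lambda(P)^p \leq \mu(P)^{-p}\sum_{Q \subseteq P} |\lambda_Q|^p \mu(Q)^p$. Summing over $P$, switching the order of summation, and applying \eqref{eq:strRDsum} with $\alpha = p$ to the inner sum $\sum_{P \supseteq Q}\mu(P)^{-p} \lesssim \mu(Q)^{-p}$ gives $\|\operatorname{Car}\lambda\|_{\ell^p}^p \lesssim \|\lambda\|_{\ell^p}^p$.

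\textbf{Strong bound, $p \in (1,\infty)$.} Now the naive H\"older inequality applied directly to the weights $\mu(Q)/\mu(P)$ fails because $\sum_{Q \subseteq P}\mu(Q)/\mu(P)$ typically diverges. I would introduce an auxiliary exponent $\epsilon \in (1/p', 1)$, split $|\lambda_Q|\mu(Q) = (|\lambda_Q|\mu(Q)^{1-\epsilon})\cdot \mu(Q)^{\epsilon}$, and apply H\"older with exponents $(p,p')$. The resulting factor $\sum_{Q \subseteq P}\mu(Q)^{\epsilon p'}$ is controlled by $C\mu(P)^{\epsilon p'}$: enumerating strict descendants of $P$ by generation $j$, Lemma \ref{lem:strRD} yields $\mu(Q) \leq c^{-j}\mu(P)$ for $Q \in [\operatorname{ch}]^{j}(P)$, while $\sum_{Q \in [\operatorname{ch}]^j(P)}\mu(Q) \leq \mu(P)$, which gives a geometrically summable bound (this is exactly where $\epsilon p' > 1$ is used). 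Raising the resulting pointwise estimate to the $p$-th power, summing over $P$, switching order, and invoking \eqref{eq:strRDsum} once more, this time with $\alpha = (1-\epsilon)p > 0$, closes the estimate.

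\textbf{Main difficulty.} The substantial technical point is the $p > 1$ case: the obvious power-mean/H\"older proof breaks down, and one must carefully distribute the weight $\mu(Q)$ between the two H\"older factors so that each becomes geometrically summable via Lemma \ref{lem:strRD}. Once that is done, the Lorentz statement is immediate: for any $p \in (0, \infty)$ and $q \in (0, \infty]$, pick $0 < p_0 < p < p_1 < \infty$ and apply real interpolation (quasi-Banach version when $p_0 < 1$) between the two strong $\ell^{p_i}$ bounds to conclude boundedness of $\operatorname{Car}$ on $\ell^{p,q}$.
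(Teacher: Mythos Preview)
Your proposal is correct and follows essentially the same architecture as the paper's proof: prove the strong $\ell^p$ bound for all $p\in(0,\infty)$ and then interpolate to get $\ell^{p,q}$. The $p\in(0,1]$ case is identical. For $p\in(1,\infty)$, the paper organises the sum over $Q\subseteq P$ by strict generation $k$, applies H\"older in $k$ with auxiliary weights $a_k=c^{-k/p'}$, and then uses Jensen in the inner sum; your variant instead splits $\mu(Q)=\mu(Q)^{1-\epsilon}\mu(Q)^\epsilon$ with $\epsilon\in(1/p',1)$ and applies H\"older over the whole sum, controlling $\sum_{Q\subseteq P}\mu(Q)^{\epsilon p'}$ by a generation-wise argument afterwards. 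These are two equivalent packagings of the same idea, both ultimately driven by the geometric decay of Lemma~\ref{lem:strRD}, and neither buys anything the other does not.
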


\begin{proof}
It suffices to consider $q=p\in(0,\infty)$, since the rest then follows by interpolation of Lorentz spaces,
\begin{equation*}
   \ell^{p,q}=(\ell^{p_0},\ell^{p_1})_{\theta,q},\qquad\frac{1}{p}=\frac{1-\theta}{p_0}+\frac{\theta}{p_1},\qquad p_0\neq p_1,\qquad 0<\theta<1;
\end{equation*}
see \cite[Theorem 5.3.1]{BL:book}.

If $p\in(0,1]$, then
\begin{equation*}
\begin{split}
  \sum_{P\in\mathscr D}(\operatorname{Car}\lambda)^p(P)
  &\leq\sum_{P\in\mathscr D}\sum_{Q\subseteq P} \Big( \frac{\mu(Q)}{\mu(P))}\Big)^p\abs{\lambda_Q}^p \\
  &=\sum_{Q\in\mathscr D}\abs{\lambda_Q}^p\sum_{P\supseteq Q}\Big( \frac{\mu(Q)}{\mu(P))}\Big)^p
  =\sum_{Q\in\mathscr D}\abs{\lambda_Q}^p\sum_{k=0}^\infty\Big( \frac{\mu(Q)}{\mu(Q^{[k]})}\Big)^p,
\end{split}
\end{equation*}
where $Q^{[k]}$ is the $k$th strict dyadic ancestor. From \eqref{eq:strRDsum}, it then follows that
\begin{equation*}
  \sum_{P\in\mathscr D}(\operatorname{Car}\lambda)^p(P)
  \leq\sum_{Q\in\mathscr D}\abs{\lambda_Q}^p\frac{c^p}{c^p-1}
\end{equation*}
and hence
\begin{equation*}
  \Norm{\operatorname{Car}\lambda}{\ell^p}\leq\frac{c}{(c^p-1)^{\frac1p}}\Norm{\lambda}{\ell^p}.
\end{equation*}

If $p\in(1,\infty)$, then for a positive sequence $(a_k)_{k=0}^\infty$ to be chosen,
\begin{equation*}
\begin{split}
   \sum_{P\in\mathscr D}(\operatorname{Car}\lambda)^p(P)
   &=\sum_{P\in\mathscr D}\Big(\sum_{k=0}^\infty \frac{a_k}{a_k}
   \sum_{Q\in\operatorname{ch}^k(P)}\frac{\mu(Q)}{\mu(P)}\abs{\lambda_Q}\Big)^p \\
   &\leq\sum_{P\in\mathscr D}\Big(\sum_{i=0}^\infty a_i^{p'}\Big)^{p-1}
      \sum_{k=0}^\infty a_k^{-p}\Big(\sum_{Q\in\operatorname{ch}^k(P)}\frac{\mu(Q)}{\mu(P)}\abs{\lambda_Q}\Big)^p \\
   &\leq \Big(\sum_{i=0}^\infty a_i^{p'}\Big)^{p-1}\sum_{P\in\mathscr D}
      \sum_{k=0}^\infty a_k^{-p}\sum_{Q\in\operatorname{ch}^k(P)}\frac{\mu(Q)}{\mu(P)}\abs{\lambda_Q}^p \\
   &= \Big(\sum_{i=0}^\infty a_i^{p'}\Big)^{p-1}\sum_{Q\in\mathscr D}\abs{\lambda_Q}^p
      \sum_{k=0}^\infty a_k^{-p}\frac{\mu(Q)}{\mu(Q^{[k]})}.
\end{split}
\end{equation*}
By Lemma \ref{lem:strRD}, $\mu(Q^{[k]})/\mu(Q)\geq c^k$ for some $c>1$. We then choose $a_k$ so that $a_k^{p'}=a_k^{-p}c^{-k}$, i.e., $a_k=c^{-kp/(p+p')}=c^{-k/p'}$. Then
\begin{equation*}
  \sum_{k=0}^\infty a_k^{p'}=\sum_{k=0}^\infty a_k^{-p}c^{-k}=\sum_{k=0}^\infty c^{-k}=\frac{c}{c-1},
\end{equation*}
and we obtain
\begin{equation*}
  \Norm{\operatorname{Car}\lambda}{\ell^p}\leq\frac{c}{c-1}\Norm{\lambda}{\ell^p}.
\end{equation*}
This completes the proof.
\end{proof}

\begin{remark}
The proof of Proposition \ref{prop:CarBd} gives the explicit estimate
\begin{equation*}
  \Norm{\operatorname{Car}\lambda}{\ell^p}\leq\frac{c}{(c^{\hat p}-1)^{\frac{1}{\hat p}}}\Norm{\lambda}{\ell^p},\qquad\hat p:=\min(p,1),
\end{equation*}
where $c>1$ is the constant from Lemma \ref{lem:strRD}. For $X=\R^d$ with the usual dyadic cubes and the Lebesgue measure, $c=2^d$.
\end{remark}

\begin{corollary}\label{cor:Spq<ellpq}
Let $p\in(0,\infty)$ and $q\in(0,\infty]$.
For sequences $\mathcal E=(e_Q,h_Q)_{Q\in\mathscr D}$ and $\lambda=(\lambda_Q)_{Q\in\mathscr D}$, we have the estimate
\begin{equation*}
  \BNorm{\sum_{Q\in\mathscr D}\lambda_Q e_Q\otimes h_Q }{S^{p,q}(L^2(\mu))}
  \lesssim\Norm{M_{\mathcal E}}{L^2(\mu)\times L^2(\mu)\to L^1(\mu)}\Norm{\lambda}{\ell^{p,q}}.
\end{equation*}
\end{corollary}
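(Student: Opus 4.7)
The proof plan is a direct chain composition of the two preceding results. First I would recall that for any compact operator $T$ on a Hilbert space, the $n$th singular number coincides with the $n$th approximation number, $s_n(T)=a_n(T)$ (a classical fact, e.g.\ from the minimax characterisation). Consequently, the Schatten--Lorentz norm admits the expression
\begin{equation*}
  \Norm{T}{S^{p,q}(L^2(\mu))}=\Norm{(a_n(T))_{n\geq 1}}{\ell^{p,q}}.
\end{equation*}

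Apply this identity to $T=\sum_{Q\in\mathscr D}\lambda_Q e_Q\otimes h_Q$. The second estimate in Proposition \ref{prop:NWO} gives the pointwise domination
\begin{equation*}
  a_n(T)\leq \Norm{M_{\mathcal E}}{L^2(\mu)\times L^2(\mu)\to L^1(\mu)}\cdot(\operatorname{Car}\lambda)^*(n),
\end{equation*}
uniformly in $n$. Taking $\ell^{p,q}$ norms on both sides and using the fact that the quasi-norm $\Norm{\cdot}{\ell^{p,q}}$ is monotone with respect to pointwise majorisation of nonnegative sequences (indeed, it only depends on the decreasing rearrangement), we obtain
\begin{equation*}
  \Norm{T}{S^{p,q}}\leq\Norm{M_{\mathcal E}}{}\cdot\Norm{(\operatorname{Car}\lambda)^*}{\ell^{p,q}}
  =\Norm{M_{\mathcal E}}{}\cdot\Norm{\operatorname{Car}\lambda}{\ell^{p,q}},
\end{equation*}
the last equality being the rearrangement-invariance of the Lorentz quasi-norm.

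Finally, Proposition \ref{prop:CarBd} provides the boundedness $\Norm{\operatorname{Car}\lambda}{\ell^{p,q}}\lesssim\Norm{\lambda}{\ell^{p,q}}$, and concatenating the estimates yields the desired conclusion. No step poses a genuine obstacle here; the main conceptual point is the reduction $s_n=a_n$, after which everything is a mechanical assembly of Propositions \ref{prop:NWO} and \ref{prop:CarBd}.
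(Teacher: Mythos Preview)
Your proposal is correct and follows essentially the same route as the paper: apply the approximation-number estimate from Proposition \ref{prop:NWO}, pass to $\ell^{p,q}$ norms via the definition of $S^{p,q}$, and finish with the boundedness of $\operatorname{Car}$ from Proposition \ref{prop:CarBd}. The paper's proof is just a terser version of yours, leaving the identification $s_n=a_n$ implicit.
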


\begin{proof}
By the definition of the norm of $S^{p,q}$ in terms of the singular values, and the similar definition of the norm of $\ell^{p,q}$ in terms of the decreasing rearrangement, the estimate
\begin{equation*}
    \BNorm{\sum_{Q\in\mathscr D}\lambda_Q e_Q\otimes h_Q }{S^{p,q}(L^2(\mu))}
  \lesssim\Norm{M_{\mathcal E}}{L^2(\mu)\times L^2(\mu)\to L^1(\mu)}\Norm{\operatorname{Car}\lambda}{\ell^{p,q}}
\end{equation*}
is immediate from Proposition \ref{prop:NWO}. The claim of the corollary then follows from the boundedness of $\operatorname{Car}$ guaranteed by Proposition \ref{prop:CarBd}.
\end{proof}

\begin{corollary}\label{cor:ellpq<Spq}
Let $p\in(1,\infty)$ and $q\in[1,\infty]$. Let $A\in S^{p,q}(L^2(\mu))$.
Then for any sequence $\mathcal E=(e_Q,h_Q)_{Q\in\mathscr D}$, we have the estimate
\begin{equation*}
  \Norm{\{\pair{Ae_Q}{h_Q}\}_{Q\in\mathscr D}}{\ell^{p,q}(\mathscr D)}
  \lesssim\Norm{M_{\mathcal E}}{L^2(\mu)\times L^2(\mu)\to L^1(\mu)}\Norm{A}{S^{p,q}(L^2(\mu))}.
\end{equation*}
\end{corollary}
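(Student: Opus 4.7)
The plan is to prove this by duality against Corollary \ref{cor:Spq<ellpq}, which is precisely the statement with the roles of $(p,q)$ and $(p',q')$ interchanged and the quantities $\ell^{p,q}$ and $S^{p,q}$ swapped. The idea is that Corollary \ref{cor:Spq<ellpq} applied with conjugate exponents produces test operators whose trace pairing with $A$ reproduces the sequence $\{\pair{Ae_Q}{h_Q}\}_{Q\in\mathscr D}$, and the Schatten--Lorentz H\"older inequality then gives the desired bound.

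First I will invoke the standard Lorentz-space duality
\begin{equation*}
  \Norm{\mu}{\ell^{p,q}(\mathscr D)} \approx \sup\Big\{\Big|\sum_{Q\in\mathscr D}\lambda_Q\mu_Q\Big|: \Norm{\lambda}{\ell^{p',q'}(\mathscr D)}\leq 1\Big\},
\end{equation*}
valid for $p\in(1,\infty)$ and $q\in[1,\infty]$ with $\frac{1}{p}+\frac{1}{p'}=1=\frac{1}{q}+\frac{1}{q'}$ (using the conventions $1'=\infty$ and $\infty'=1$); for the endpoint $q=\infty$ this is the identification $\ell^{p,\infty}\cong(\ell^{p',1})^*$ up to an equivalent norm. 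It suffices to test against finitely supported sequences $\lambda$, so convergence issues in the construction below are avoided.

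Next, for such $\lambda$, I will introduce the finite-rank operator
\begin{equation*}
  B := \sum_{Q\in\mathscr D}\lambda_Q\, e_Q\otimes h_Q,
\end{equation*}
so that by computing the trace of each rank-one summand against $A$,
\begin{equation*}
  \sum_{Q\in\mathscr D}\lambda_Q\pair{Ae_Q}{h_Q} = \operatorname{tr}(AB).
\end{equation*}
The Schatten--Lorentz H\"older inequality, applicable in the range $p\in(1,\infty)$, $q\in[1,\infty]$, then yields
\begin{equation*}
  |\operatorname{tr}(AB)|\leq \Norm{A}{S^{p,q}(L^2(\mu))}\,\Norm{B}{S^{p',q'}(L^2(\mu))}.
\end{equation*}

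Finally, I will apply Corollary \ref{cor:Spq<ellpq} to $B$ with parameters $(p',q')$ (which also lie in the admissible range) and the same sequence $\mathcal E=(e_Q,h_Q)_{Q\in\mathscr D}$, yielding
\begin{equation*}
  \Norm{B}{S^{p',q'}(L^2(\mu))} \lesssim \Norm{M_{\mathcal E}}{L^2(\mu)\times L^2(\mu)\to L^1(\mu)}\Norm{\lambda}{\ell^{p',q'}(\mathscr D)}.
\end{equation*}
Chaining these three inequalities and taking the supremum over $\lambda$ in the unit ball of $\ell^{p',q'}$ delivers the claim. The only delicate point I anticipate is the $q=\infty$ endpoint, where one must be careful that Lorentz duality and the Schatten--Lorentz H\"older inequality still apply in the normed (rather than merely quasi-normed) sense; this is handled by passing to an equivalent norm on $\ell^{p,\infty}$ and on $S^{p,\infty}$, which only affects implied constants.
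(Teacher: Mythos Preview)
Your proof is correct and follows essentially the same approach as the paper: duality against finitely supported $\lambda\in\ell^{p',q'}$, the trace identity $\sum_Q\lambda_Q\pair{Ae_Q}{h_Q}=\operatorname{tr}(AB)$ with $B=\sum_Q\lambda_Q e_Q\otimes h_Q$, the Schatten--Lorentz H\"older inequality, and then Corollary~\ref{cor:Spq<ellpq} applied with exponents $(p',q')$. Your explicit attention to the endpoint $q=\infty$ is a welcome addition, but otherwise the argument matches the paper's proof step for step.
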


\begin{proof}
For any finitely non-zero sequence $\lambda=(\lambda_Q)_{Q\in\mathscr D}$, we have
\begin{equation}\label{eq:ellpq<Spq}
\begin{split}
  \Babs{\sum_{Q\in\mathscr D}\lambda_Q\pair{Ae_Q}{h_Q}}
  &=\Babs{\operatorname{trace}\Big(A\sum_{Q\in\mathscr D}\lambda_Q e_Q\otimes h_Q\Big)} \\
  &\leq\Norm{A}{S^{p,q}}\BNorm{\sum_{Q\in\mathscr D}\lambda_Q e_Q\otimes h_Q}{S^{p',q'}} \\
  &\lesssim\Norm{A}{S^{p,q}}\Norm{M_{\mathcal E}}{L^2\times L^2\to L^1}\Norm{\lambda}{\ell^{p',q'}(\mathscr D)}
\end{split}
\end{equation}
by the duality of the $S^{p,q}$ spaces in the second step and Corollary \ref{cor:Spq<ellpq} in the last one. Taking the supremum over all such sequences with $\Norm{\lambda}{\ell^{p',q'}(\mathscr D)}\leq 1$, and using the duality of the $\ell^{p,q}$ spaces, the claim follows.
\end{proof}

\begin{remark}\label{rem:NWOsuff}
The boundedness of $M_{\mathcal E}:L^2(\mu)\times L^2(\mu)\to L^1(\mu)$ follows in particular if both maximal operators
\begin{equation*}
  M_{\mathcal E}^1 f:= \sup_{Q\in\mathscr D}1_Q\frac{\abs{\pair{f}{e_Q}}}{\mu(Q)^{\frac12}},\qquad
  M_{\mathcal E}^2 g \sup_{Q\in\mathscr D}1_Q\frac{\abs{\pair{g}{h_Q}}}{\mu(Q)^{\frac12}}
\end{equation*}
are bounded on $L^2(\mu)$. When $M_{\mathcal E}^1$ is bounded, the sequence $(e_Q)_{Q\in\mathscr D}$ is called {\em nearly weakly orthogonal} (NWO) in \cite{RS:NWO}. (A seemingly different definition of NWO already appears in \cite{RS:BMO}; their equivalence is proved in \cite{RS:NWO}.) If
\begin{equation}\label{eq:NWOsuffBasic}
   \abs{e_Q}\lesssim \mu(Q)^{-\frac12}1_{B_Q^*}, 
\end{equation}
where $B_Q^*=B(x_Q,c\ell(Q))$ for any fixed constant $c$ independent of $Q$,
then $M_{\mathcal E}^1f\lesssim M f$, where $M$ is the Hardy--Littlewood maximal operator. More generally, if $e_Q$ is supported on $B_Q^*$ and $\Norm{e_Q}{r}\lesssim\mu(Q)^{\frac1r-\frac12}$ for some $r>2$, then
\begin{equation*}
  \frac{\abs{\pair{f}{e_Q}}}{\mu(Q)^{\frac12}}
  \leq\frac{\Norm{1_{B_Q^*}f}{r'}\Norm{e_Q}{r}}{\mu(Q)^{\frac12}}
  \lesssim\Norm{1_{B_Q^*}f}{r'}\mu(Q)^{\frac1r-1}=\Big(\fint_{B_Q^*}\abs{f}^{r'}\Big)^{\frac{1}{r'}},
\end{equation*}
and
\begin{equation*}
  M_{\mathcal E}^1 f\lesssim (M\abs{f}^{r'})^{\frac{1}{r'}}
\end{equation*}
is dominated by the rescaled dyadic maximal operator, which is still bounded on $L^2(\mu)$ for $r'<2$.
\end{remark}

\section{A weighted extension of the Rochberg--Semmes method}\label{sec:RSweighted}

The first extension of the method of nearly weakly orthogonal sequences of \cite{RS:NWO} to weighted spaces $L^2(w)$ on $\R^n$ was already given in the same paper \cite[Proposition 7.25]{RS:NWO}, and it has been recently revisited in \cite{GLW:23}. In this section, we present a version of this extension tailored for our needs in spaces of homogeneous type, and making use of the bi-sublinear maximal operator \eqref{eq:ME}.

We consider the weighted spaces
\begin{equation*}
\begin{split}
    L^2(w)=L^2(w\ud\mu):=\Big\{ & f\text{ measurable function on }X: \\
    &\Norm{f}{L^2(w)}:=\Big(\int_X\abs{f(x)}^2 w(x)\ud\mu(x)\Big)^{\frac12}<\infty\Big\},
\end{split}
\end{equation*}
where the weight $w$ is a measurable function with $w(x)\in(0,\infty)$ at $\mu$-almost every $x\in X$. With respect to the unweighted duality
\begin{equation*}
   \pair{f}{g}:=\int_X f(x)g(x)\ud\mu(x),
\end{equation*}
the dual of $L^2(w)$ is another weighted space $L^2(\sigma)$ with $\sigma(x):=w^{-1}(x):=1/w(x)$ (the reciprocal, not the inverse function).

As a weighted version of Proposition \ref{prop:NWO}, we have the following, where all that we need to know about the weight $w$ is neatly encoded in the boundedness properties of the maximal operator $M_{\mathcal E}$.

\begin{proposition}\label{prop:NWOw}
For sequences $\mathcal E=(e_Q,h_Q)_{Q\in\mathscr D}$ and $\lambda=(\lambda_Q)_{Q\in\mathscr D}$, we have the estimates
\begin{equation*}
\begin{split}
  \BNorm{\sum_{Q\in\mathscr D}\lambda_Q e_Q\otimes h_Q }{\bddlin(L^2(w))}
  &\leq\Norm{M_{\mathcal E}}{L^2(w)\times L^2(\sigma)\to L^1(\mu)}\Norm{\operatorname{Car}\lambda}{\infty}, \\
  a_n\Big(\sum_{Q\in\mathscr D}\lambda_Q e_Q \otimes h_Q\Big)_{\bddlin(L^2(w))}
  &\leq\Norm{M_{\mathcal E}}{L^2(w)\times L^2(\sigma)\to L^1(\mu)}(\operatorname{Car}\lambda)^*(n),
\end{split}
\end{equation*}
where $w$ is a weight, $\sigma=w^{-1}$,
\begin{equation*}
  a_n(T)_{\bddlin(L^2(w))}:=\inf\Big\{\Norm{T-F}{\bddlin(L^2(w))}:F\in\bddlin(L^2(w)),\ \operatorname{rank}(F)<n\Big\}
\end{equation*}
is the $n$th approximation number of $T\in\bddlin(L^2(w))$ and
 $(\operatorname{Car}\lambda)^*$ is the decreasing rearrangement of $\operatorname{Car}\lambda$.
\end{proposition}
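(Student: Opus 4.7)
The plan is to follow the proof of Proposition \ref{prop:NWO} almost verbatim, with the only genuine change being the way we compute the operator norm on the weighted space. Recall that the map $f\mapsto w^{1/2}f$ is an isometric isomorphism $L^2(w)\to L^2(\mu)$, and under the unweighted pairing
\begin{equation*}
  \pair{f}{g}_\mu:=\int_X fg\ud\mu,\qquad f\in L^2(w),\ g\in L^2(\sigma),
\end{equation*}
the space $L^2(\sigma)$ realises the dual of $L^2(w)$ isometrically (Cauchy--Schwarz gives one inequality and the choice $g=\bar f w$ saturates it). Consequently,
\begin{equation*}
  \BNorm{T}{\bddlin(L^2(w))}=\sup\Big\{\abs{\pair{Tf}{g}_\mu}:\Norm{f}{L^2(w)}\leq 1,\ \Norm{g}{L^2(\sigma)}\leq 1\Big\}.
\end{equation*}

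The first step is to replay the estimate chain \eqref{eq:NWOestStart} with $f\in L^2(w)$ and $g\in L^2(\sigma)$ in place of $f,g\in L^2(\mu)$. The definitions of $\mathscr Q_t$, of the maximal subfamily $\mathscr Q_t^*$, of the Carleson sum $\operatorname{Car}\lambda$, and the layer-cake identity
\begin{equation*}
  \int_0^\infty\mu\Big(\bigcup_{P\in\mathscr Q_t^*}P\Big)\ud t\leq\Norm{M_{\mathcal E}(f,g)}{L^1(\mu)}
\end{equation*}
all involve only the underlying measure $\mu$ and the unweighted pairing, and so are unaffected by the presence of $w$. Only the very last line is modified: instead of invoking the unweighted $L^2(\mu)\times L^2(\mu)\to L^1(\mu)$ bound for $M_{\mathcal E}$, we bound
\begin{equation*}
  \Norm{M_{\mathcal E}(f,g)}{L^1(\mu)}\leq\Norm{M_{\mathcal E}}{L^2(w)\times L^2(\sigma)\to L^1(\mu)}\Norm{f}{L^2(w)}\Norm{g}{L^2(\sigma)}.
\end{equation*}
Taking the supremum over unit vectors in $L^2(w)\times L^2(\sigma)$ yields the first claimed inequality.

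The second step, the approximation-number bound, is an abstract consequence of the first, exactly as in Proposition \ref{prop:NWO}. Enumerate $\mathscr D=\{P_n\}_{n\geq 1}$ so that $n\mapsto\operatorname{Car}\lambda(P_n)$ realises the decreasing rearrangement of $\operatorname{Car}\lambda$. Then $\sum_{i<n}\lambda_{P_i}e_{P_i}\otimes h_{P_i}$ is a rank $<n$ operator on $L^2(w)$, and the truncated tail is estimated by applying the just-proved operator-norm bound to the sequence $(1_{\mathscr P_n}(Q)\lambda_Q)_{Q\in\mathscr D}$, yielding the factor $\Norm{\operatorname{Car}(1_{\mathscr P_n}\lambda)}{\infty}$. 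The pointwise bound $\operatorname{Car}(1_{\mathscr P_n}\lambda)(P)\leq(\operatorname{Car}\lambda)^*(n)$ is purely combinatorial and uses neither $L^2(\mu)$ nor the weight $w$, so its verification is identical to the one in the proof of Proposition \ref{prop:NWO}.

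There is essentially no obstacle: the weight enters only through the identification of the dual pairing and the hypothesised boundedness of $M_{\mathcal E}$. All the Carleson-type bookkeeping, the stopping time argument on $\mathscr Q_t^*$, and the rank truncation are unweighted and carry over without change.
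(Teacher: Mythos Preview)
Your proof is correct and follows essentially the same approach as the paper: both observe that the only change from Proposition \ref{prop:NWO} is replacing the unweighted bound for $M_{\mathcal E}$ by the weighted one $L^2(w)\times L^2(\sigma)\to L^1(\mu)$, invoking the duality $L^2(w)^*=L^2(\sigma)$ under the unweighted pairing, and then repeating the approximation-number argument verbatim. Your write-up is slightly more explicit about the duality and the rank-truncation bookkeeping, but the content is the same.
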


\begin{proof}
For $f\in L^2(w)$ and $g\in L^2(\sigma)$, we have the same sequence of estimates \eqref{eq:NWOestStart} as in the proof of Proposition \ref{prop:NWO}, except in the last step we use the weighted boundedness $M_{\mathcal E}:L^2(w)\times L^2(\sigma)\to L^1(\mu)$. This gives
\begin{equation*}
  \sum_{Q\in\mathscr D}\abs{\lambda_Q}\abs{\pair{f}{e_Q}\pair{h_Q}{g}}
  \leq\Norm{\operatorname{Car}\lambda}{\infty}\Norm{M_{\mathcal E}}{L^2(w)\times L^2(\sigma)\to L^1(\mu)}
  \Norm{f}{L^2(w)}\Norm{g}{L^2(\sigma)}.
\end{equation*}
The left-hand side of the previous bound dominates the action of the operator $\sum_{Q\in\mathscr D}\lambda_Q e_Q\otimes h_Q$ on $f\in L^2(w)$, paired with $g\in L^2(\sigma)$. Taking the supremum over the unit balls of these spaces, and recalling that $L^2(\sigma)$ is the dual of $L^2(w)$, we obtain the first estimate of the proposition. Using this first estimate, the proof of the second estimate is obtained by repeating the proof of Proposition \ref{prop:NWO} verbatim.
\end{proof}

\begin{corollary}\label{cor:Spq<ellpq-w}
Let $p\in(0,\infty)$ and $q\in(0,\infty]$.
For sequences $\mathcal E=(e_Q,h_Q)_{Q\in\mathscr D}$ and $\lambda=(\lambda_Q)_{Q\in\mathscr D}$, and all weights $w$ and $\sigma=w^{-1}$, we have the estimate
\begin{equation*}
  \BNorm{\sum_{Q\in\mathscr D}\lambda_Q e_Q\otimes h_Q }{S^{p,q}(L^2(w))}
  \lesssim\Norm{M_{\mathcal E}}{L^2(w)\times L^2(\sigma)\to L^1(\mu)}\Norm{\lambda}{\ell^{p,q}}.
\end{equation*}
\end{corollary}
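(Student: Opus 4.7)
The plan is to imitate the proof of Corollary~\ref{cor:Spq<ellpq} line by line, with Proposition~\ref{prop:NWOw} taking the place of Proposition~\ref{prop:NWO}. First I would apply Proposition~\ref{prop:NWOw} to obtain the pointwise bound on the approximation numbers
\begin{equation*}
 a_n\Big(\sum_{Q\in\mathscr D}\lambda_Q e_Q\otimes h_Q\Big)_{\bddlin(L^2(w))}
 \leq C\cdot (\operatorname{Car}\lambda)^*(n),
\end{equation*}
where $C:=\Norm{M_{\mathcal E}}{L^2(w)\times L^2(\sigma)\to L^1(\mu)}$.

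Since $L^2(w)$ is a Hilbert space (with inner product $\pair{f}{g}_w:=\int f\,\overline g\,w\ud\mu$), the approximation numbers of any bounded operator on $L^2(w)$ coincide with its singular values, and the Schatten--Lorentz norm $\Norm{\cdot}{S^{p,q}(L^2(w))}$ is by definition the $\ell^{p,q}$ norm of the singular value sequence. Because the $\ell^{p,q}$ norm of a non-negative sequence depends only on its decreasing rearrangement, taking $\ell^{p,q}$-norms on both sides of the preceding display and writing $T:=\sum_{Q\in\mathscr D}\lambda_Q e_Q\otimes h_Q$ yields
\begin{equation*}
  \Norm{T}{S^{p,q}(L^2(w))}
  =\Norm{\{a_n(T)\}_{n\in\N}}{\ell^{p,q}}
  \leq C\cdot\Norm{(\operatorname{Car}\lambda)^*}{\ell^{p,q}}
  =C\cdot\Norm{\operatorname{Car}\lambda}{\ell^{p,q}(\mathscr D)}.
\end{equation*}

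Finally, I invoke Proposition~\ref{prop:CarBd}, whose statement and proof make no reference to any weight: the operator $\operatorname{Car}$ is bounded on $\ell^{p,q}(\mathscr D)$ for every $p\in(0,\infty)$ and $q\in(0,\infty]$. Combining this with the previous display gives
\begin{equation*}
  \Norm{T}{S^{p,q}(L^2(w))}
  \lesssim C\cdot\Norm{\lambda}{\ell^{p,q}(\mathscr D)}
  =\Norm{M_{\mathcal E}}{L^2(w)\times L^2(\sigma)\to L^1(\mu)}\Norm{\lambda}{\ell^{p,q}},
\end{equation*}
which is the claim.

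There is no genuine obstacle here: the weight enters only through the $L^1(\mu)$-norm of the bi-sublinear maximal operator $M_{\mathcal E}$ acting on $L^2(w)\times L^2(\sigma)$, while the passage from that estimate to the $S^{p,q}$ bound via approximation numbers and the Carleson boundedness on $\ell^{p,q}$ is entirely a statement about sequences and about the Hilbert space structure of $L^2(w)$, both of which are weight-agnostic. This cleanly explains why the weighted conclusion has the \emph{same} abstract shape as its unweighted predecessor, and why, in the end, all weighted effects in Theorem~\ref{thm:main}\eqref{it:A2} will be reducible to verifying weighted boundedness of certain concrete bi-sublinear maximal operators.
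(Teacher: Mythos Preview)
Your proposal is correct and follows essentially the same approach as the paper: apply Proposition~\ref{prop:NWOw} to bound the approximation numbers by $(\operatorname{Car}\lambda)^*(n)$, pass to the $\ell^{p,q}$ norm via the definition of $S^{p,q}(L^2(w))$, and conclude with the boundedness of $\operatorname{Car}$ from Proposition~\ref{prop:CarBd}. The paper's proof is even more terse but records exactly these same three steps.
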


\begin{proof}
Just like in proof of the unweighted Corollary \ref{cor:Spq<ellpq},
by the definition of the norm of $S^{p,q}(L^2(w))$ in terms of the singular values $a_n(\ )_{\bddlin(L^2(w))}$, and the similar definition of the norm of $\ell^{p,q}$ in terms of the decreasing rearrangement, the estimate
\begin{equation*}
    \BNorm{\sum_{Q\in\mathscr D}\lambda_Q e_Q\otimes h_Q }{S^{p,q}(L^2(\mu))}
  \lesssim\Norm{M_{\mathcal E}}{L^2(w)\times L^2(\sigma)\to L^1(\mu)}\Norm{\operatorname{Car}\lambda}{\ell^{p,q}}
\end{equation*}
is immediate from Proposition \ref{prop:NWOw}. The claim of the corollary then follows from the boundedness of $\operatorname{Car}$ guaranteed by Proposition \ref{prop:CarBd}.
\end{proof}

\begin{corollary}\label{cor:ellpq<Spq-w}
Let $p\in(1,\infty)$ and $q\in[1,\infty]$. Let $w$ and $\sigma=w^{-1}$ be weights and $A\in S^{p,q}(L^2(w))$.
Then for any sequence $\mathcal E=(e_Q,h_Q)_{Q\in\mathscr D}$, we have the estimate
\begin{equation*}
  \Norm{\{\pair{Ae_Q}{h_Q}\}_{Q\in\mathscr D}}{\ell^{p,q}(\mathscr D)}
  \lesssim\Norm{M_{\mathcal E}}{L^2(w)\times L^2(\sigma)\to L^1(\mu)}\Norm{A}{S^{p,q}(L^2(w))}.
\end{equation*}
\end{corollary}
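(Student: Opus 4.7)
The plan is to mirror the proof of the unweighted Corollary \ref{cor:ellpq<Spq}, replacing the Hilbert space $L^2(\mu)$ by the weighted Hilbert space $L^2(w)$ and carefully identifying the resulting trace. The essential observation is that the unweighted pairing is related to the $L^2(w)$ inner product $(f,g)_w:=\int fg\,w\,d\mu$ through
\begin{equation*}
  \pair{f}{g}=\int fg\ud\mu=\int f\cdot(g\sigma)\cdot w\ud\mu=(f,g\sigma)_w.
\end{equation*}
Consequently, the rank-one operator $(e_Q\otimes h_Q)f:=\pair{f}{h_Q}e_Q$ satisfies $(e_Q\otimes h_Q)f=(f,h_Q\sigma)_w\,e_Q$, i.e., it coincides with the $L^2(w)$-rank-one operator with range $e_Q$ and co-vector $h_Q\sigma$. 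A direct trace computation in the Hilbert space $L^2(w)$ then yields
\begin{equation*}
  \operatorname{trace}_{L^2(w)}\bigl(A\cdot(e_Q\otimes h_Q)\bigr)=(Ae_Q,h_Q\sigma)_w=\pair{Ae_Q}{h_Q}.
\end{equation*}

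With this identification in hand, the argument proceeds by duality. For any finitely non-zero test sequence $\lambda=(\lambda_Q)_{Q\in\mathscr D}$, linearity and the Schatten--Lorentz trace duality $S^{p,q}(L^2(w))^*=S^{p',q'}(L^2(w))$ (which is a standard consequence of Hilbert-space Schatten theory, applied to $L^2(w)$ in the same way as to $L^2(\mu)$ in the original Corollary \ref{cor:ellpq<Spq}) give
\begin{equation*}
\begin{split}
  \Babs{\sum_{Q\in\mathscr D}\lambda_Q\pair{Ae_Q}{h_Q}}
  &=\Babs{\operatorname{trace}_{L^2(w)}\Big(A\sum_{Q\in\mathscr D}\lambda_Q e_Q\otimes h_Q\Big)}\\
  &\leq\Norm{A}{S^{p,q}(L^2(w))}\BNorm{\sum_{Q\in\mathscr D}\lambda_Q e_Q\otimes h_Q}{S^{p',q'}(L^2(w))}.
\end{split}
\end{equation*}
Applying Corollary \ref{cor:Spq<ellpq-w} with $(p',q')$ in place of $(p,q)$ then bounds the last factor by a constant times $\Norm{M_{\mathcal E}}{L^2(w)\times L^2(\sigma)\to L^1(\mu)}\Norm{\lambda}{\ell^{p',q'}(\mathscr D)}$.

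Taking the supremum over $\lambda$ with $\Norm{\lambda}{\ell^{p',q'}(\mathscr D)}\leq 1$ and invoking the $\ell^{p,q}$--$\ell^{p',q'}$ duality for the sequence $\{\pair{Ae_Q}{h_Q}\}_{Q\in\mathscr D}$ delivers the claimed estimate. The only non-routine step is the trace identification above; the rest is the exact weighted counterpart of the unweighted argument, and I do not foresee any substantial obstacle beyond being careful that $e_Q\otimes h_Q$, defined through the unweighted pairing, is reinterpreted as the correct $L^2(w)$-rank-one operator so that Corollary \ref{cor:Spq<ellpq-w} is applied to the right object.
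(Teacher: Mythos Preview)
Your proposal is correct and follows essentially the same approach as the paper's proof: both argue by duality, applying Corollary~\ref{cor:Spq<ellpq-w} with exponents $(p',q')$ to the finite-rank operator $\sum_Q\lambda_Q e_Q\otimes h_Q$, and then taking the supremum over $\lambda$ in the unit ball of $\ell^{p',q'}$. Your explicit verification that the $L^2(w)$-trace of $A(e_Q\otimes h_Q)$ recovers the unweighted pairing $\pair{Ae_Q}{h_Q}$ is a useful clarification that the paper leaves implicit when it simply says to ``repeat the proof of Corollary~\ref{cor:ellpq<Spq}''.
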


\begin{proof}
This follows by repeating the proof of Corollary \ref{cor:ellpq<Spq} but using the weighted Corollary \ref{cor:Spq<ellpq-w} in the last step of \eqref{eq:ellpq<Spq}. For any finitely non-zero sequence $\lambda=(\lambda_Q)_{Q\in\mathscr D}$, this gives
\begin{equation*}
  \Babs{\sum_{Q\in\mathscr D}\lambda_Q\pair{Ae_Q}{h_Q}}
  \lesssim\Norm{A}{S^{p,q}(L^2(w))}\Norm{M_{\mathcal E}}{L^2(w)\times L^2(\sigma)\to L^1(\mu)}\Norm{\lambda}{\ell^{p',q'}(\mathscr D)}
\end{equation*}
Taking the supremum over all such sequences with $\Norm{\lambda}{\ell^{p',q'}(\mathscr D)}\leq 1$, and using the duality of the $\ell^{p,q}$ spaces, the claim follows.
\end{proof}

\begin{remark}\label{rem:NWOsuff-w}
The boundedness of $M_{\mathcal E}:L^2(w)\times L^2(\sigma)\to L^1(\mu)$ follows in particular if the maximal operators
\begin{equation}\label{eq:ME1}
  M_{\mathcal E}^1 f:= \sup_{Q\in\mathscr D}1_Q\frac{\abs{\pair{f}{e_Q}}}{\mu(Q)^{\frac12}},\qquad
  M_{\mathcal E}^2 g \sup_{Q\in\mathscr D}1_Q\frac{\abs{\pair{g}{h_Q}}}{\mu(Q)^{\frac12}}
\end{equation}
are bounded on $L^2(w)$ and $L^2(\sigma)$, respectively. Under the basic estimate \eqref{eq:NWOsuffBasic}, we have $M_{\mathcal E}^1f\lesssim Mf$, where $M$ is the Hardy--Littlewood maximal operator, which is known to be bounded on $L^2(w)$ if and only if $w$ belongs to the Muckenhoupt class $A_2$ defined by the finiteness of
\begin{equation}\label{eq:A2dyad}
  [w]_{A_2}:=\sup_{B \text{ ball}}\fint_B w\ud\mu\fint_B \sigma\ud\mu=[\sigma]_{A_2}.
\end{equation}
\end{remark}

More generally, we have the following:

\begin{lemma}\label{lem:NWOsuff-w}
Let $(X,\rho,\mu)$ be a space of homogeneous type and $w\in A_2$. Let $(e_Q)_{\mathscr D}$ is a sequence of functions that satisfies
\begin{equation}\label{eq:NWOrEst}
  \supp e_Q\subseteq B_Q^*,\qquad \Norm{e_Q}{r}\lesssim\mu(Q)^{\frac1r-\frac12}
\end{equation}
for every $r\in(2,\infty)$ (with implied constant allowed to depend on $r$),
where $B_Q^*=B(z_Q,c\ell(Q))$ for some fixed constant independent of $Q$.

Then $M_{\mathcal E}^1$ as in \eqref{eq:ME1} is bounded on $L^2(w)$. If $(h_Q)_{Q\in\mathscr D}$ satisfies the same properties, then $M_{\mathcal E}$ as in \eqref{eq:ME} is bounded from $L^2(w)\times L^2(\sigma)$ to $L^1(\mu)$.
\end{lemma}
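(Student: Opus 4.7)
My plan is to bound $M_{\mathcal E}^1$ pointwise by a rescaled Hardy--Littlewood maximal operator and then to invoke the self-improving property of $A_2$. Fix a parameter $r \in (2,\infty)$, to be chosen below, and set $r' = r/(r-1) \in (1,2)$. For each $Q \in \mathscr{D}$ and $x \in Q$, H\"older's inequality with the dual pair $(r',r)$, combined with the support and size hypotheses \eqref{eq:NWOrEst}, would yield
\begin{equation*}
\frac{|\langle f,e_Q\rangle|}{\mu(Q)^{1/2}}
\leq \frac{\|1_{B_Q^*} f\|_{L^{r'}(\mu)}\,\|e_Q\|_{L^r(\mu)}}{\mu(Q)^{1/2}}
\lesssim \Big(\fint_{B_Q^*} |f|^{r'}\,\ud\mu\Big)^{1/r'}
\lesssim (M|f|^{r'}(x))^{1/r'},
\end{equation*}
using $x\in Q\subseteq B_Q^*$ and doubling in the last step. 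Taking the supremum over $Q\ni x$ produces the pointwise bound $M_{\mathcal E}^1 f\lesssim (M|f|^{r'})^{1/r'}$.

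The second step is a straightforward reduction: squaring and integrating against $w\,\ud\mu$ shows $\|M_{\mathcal E}^1 f\|_{L^2(w)}^2 \lesssim \|M|f|^{r'}\|_{L^{2/r'}(w)}^{2/r'}$, so it suffices to prove $M$ bounded on $L^{2/r'}(w)$, i.e., $w\in A_{2/r'}$. Since $r'<2$, the target exponent $2/r'$ lies in $(1,2)$, \emph{below} the $A_2$ threshold. Here I would invoke the self-improvement (openness) of the Muckenhoupt classes in spaces of homogeneous type: from $w\in A_2$ it follows that $w\in A_{2-\eps}$ for some $\eps>0$ depending only on $[w]_{A_2}$. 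Choosing $r$ large enough that $2/r' \geq 2-\eps$, the weighted maximal theorem delivers the required $L^{2/r'}(w)$-boundedness of $M$, and chaining the estimates yields $\|M_{\mathcal E}^1 f\|_{L^2(w)} \lesssim \|f\|_{L^2(w)}$.

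For the bi-sublinear statement, I would use the pointwise factorization $M_{\mathcal E}(f,g) \leq M_{\mathcal E}^1 f \cdot M_{\mathcal E}^2 g$ that is immediate from the definitions \eqref{eq:ME} and \eqref{eq:ME1}, followed by the weighted Cauchy--Schwarz inequality via $FG = (Fw^{1/2})(G\sigma^{1/2})$:
\begin{equation*}
\|M_{\mathcal E}(f,g)\|_{L^1(\mu)} \leq \|M_{\mathcal E}^1 f\|_{L^2(w)}\,\|M_{\mathcal E}^2 g\|_{L^2(\sigma)}.
\end{equation*}
The symmetry $w\in A_2 \iff \sigma\in A_2$, with comparable constants by \eqref{eq:A2dyad}, lets me repeat the previous argument with $\sigma$ and $(h_Q)$ in place of $w$ and $(e_Q)$, bounding $M_{\mathcal E}^2$ on $L^2(\sigma)$, and the conclusion follows.

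The only nontrivial ingredient is the openness of $A_2$ in spaces of homogeneous type. It is classical, but it is precisely this self-improvement that lets me exploit the full freedom of $r\in(2,\infty)$ granted by hypothesis \eqref{eq:NWOrEst}: without it, the reduction to boundedness of $M$ on $L^{2/r'}(w)$ with $2/r'<2$ could not be completed from $w\in A_2$ alone.
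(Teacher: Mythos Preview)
Your proof is correct and follows essentially the same approach as the paper: the pointwise bound $M_{\mathcal E}^1 f\lesssim (M|f|^{r'})^{1/r'}$ via H\"older, the reduction to $w\in A_{2/r'}$, the openness of $A_2$ to choose $r$ large enough, and the factorisation $M_{\mathcal E}(f,g)\le M_{\mathcal E}^1 f\cdot M_{\mathcal E}^2 g$ combined with Cauchy--Schwarz and the symmetry $w\in A_2\iff\sigma\in A_2$ are exactly the steps the paper takes.
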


\begin{proof}
The computation in Remark \ref{rem:NWOsuff} shows that
\begin{equation}\label{eq:ME1vsMr}
   M_{\mathcal E}^1 f\lesssim (M\abs{f}^{r'})^{\frac{1}{r'}}
\end{equation}
under the assumptions that we made. Note that
 \begin{equation}\label{eq:MrwBd}
  \Norm{(M\abs{f}^{r'})^{\frac{1}{r'}}}{L^2(w)}
  =\Norm{M\abs{f}^{r'}}{L^{\frac{2}{r'}}(w)}^{\frac{1}{r'}}
  \lesssim \Norm{\abs{f}^{r'}}{L^{\frac{2}{r'}}(w)}^{\frac{1}{r'}}=\Norm{f}{L^2(w)}
\end{equation}
by the boundedness of the maximal operator on $L^{\frac{2}{r'}}(w)$, which holds {\em if} $w\in A_{\frac{2}{r'}}$.

By the self-improvement property of Muckenhoupt weights, which is also well known over space of homogeneous type (see \cite[Lemma I.8]{ST:89}), every $w\in A_2$ satisfies $w\in A_{2-\eps}$ for some $\eps>0$, or equivalently, $w\in A_{\frac{2}{r'}}=A_{2(1-\frac1r)}$ for some finite $r$. Since the assumptions allow us to pick $r$ as large as we like, we obtain $w\in A_{\frac{2}{r'}}$ and hence the boundedness \eqref{eq:MrwBd}. Together with \eqref{eq:ME1vsMr}, this gives the claimed boundedness of $M_{\mathcal E}^1$ on $L^2(w)$.

Recall from \eqref{eq:A2dyad} that $w\in A_2$ if and only if $\sigma=w^{-1}\in A_2$. Hence, if $(h_Q)_{Q\in\mathscr D}$, we also obtain the boundedness of $M_{\mathcal E}^2$ on $L^2(\sigma)$. Since
\begin{equation*}
\begin{split}
    \Norm{M_{\mathcal E}(f,g)}{L^1(\mu)} &\leq \Norm{M_{\mathcal E}^1(f)M_{\mathcal E}^2(g)}{L^1(\mu)} \\
    &\leq \Norm{M_{\mathcal E}^1(f)}{L^2(w)}\Norm{M_{\mathcal E}^2(g)}{L^1(\sigma)},
\end{split}
\end{equation*}
the claimed boundedness of $M_{\mathcal E}:L^2(w)\to L^2(\sigma)\to L^1(\mu)$ follows.
\end{proof}

 \begin{remark}
 In \cite[Proposition 7.25]{RS:NWO}, the condition $w\in A_2$ is implicit in the statement, but explicit in the proof. In \cite{GLW:23}, this assumption is explicit throughout.
 \end{remark}

\section{The lower bound for the commutators}\label{sec:osc<com}

In this section, we prove that the membership of the commutator $[b,T]$ in the Schatten--Lorentz class $S^{p,q}$ implies an oscillatory norm bound on the symbol $b$. This is an analogue of \cite[Theorem 3.4]{RS:NWO}, and it will be proved with the help their technique of nearly weakly orthogonal sequences as formulated in Corollary \ref{cor:ellpq<Spq-w}. The level of originality of this section is rather modest; it is included for the sake of completeness to show that the existing techniques for the lower bounds of commutators also apply to the setting at hand.

With Corollary \ref{cor:ellpq<Spq-w} at hand, the remaining issue is coming up with sequences $\mathcal E=(e_Q,h_Q)_{Q\in\mathscr D}$ such that $m_b(Q)$ can be dominated by $\pair{[b,T]e_Q}{h_Q}$. In \cite{RS:NWO}, such sequences were produced by local Fourier expansions of the inverse kernel $K(x,y)^{-1}$. In the recent extensions, a variety of alternatives have been employed.
The ``median method'' was introduced for similar lower bounds in the context of boundedness of commutators in \cite[Proposition 3.1]{LOR:19}, and seems to have been first employed for $S^p$ estimates in \cite[Lemma 4.1]{FLL:23} on the Heisenberg group, and then in \cite[Section 2.1]{LXY} on more general Carnot groups. In the Bessel setting \cite[Section 5]{FLLX}, Alpert wavelet expansions were used a bit like Fourier expansions in \cite{RS:NWO}.

A drawback of the median method, until recently, was its restriction to real-valued functions only. A ``complex median method'' was introduced in \cite{WZ:24} and could be most likely applied here as well, but we will take a different route based on the alternative method of ``approximate weak factorisation'', which was introduced on $\R^d$ in \cite{Hyt:commu} and adapted to spaces of homogeneous type in \cite{GLS}. However, we will actually only use this method implicitly, given that we can simply apply, essentially as a black box, a consequence of this method from \cite{GLS}. Before stating the result, let us elaborate a little on our key assumption, which is a minor generalisation of the related assumptions in \cite{GLS,Hyt:commu}.

\begin{definition}\label{def:CZomega}
A function $K:X\times X\setminus\{(x,x):x\in X\}\to\C$ is said to be an $\omega$-Calder\'on--Zygmund kernel if its satisfies \eqref{eq:CZ0} and
\begin{equation}
  \abs{K(x,y)-K(x',y)}+\abs{K(y,x)-K(y,x')}
  \leq\omega\Big(\frac{\rho(x,x')}{\rho(x,y)}\Big)\frac{1}{V(x,y)}  
\end{equation}
for all $x,x',y\in X$ with $\rho(x,x')\ll\rho(x,y)$.
\end{definition}

\begin{definition}\label{def:nondeg}
A Calder\'on--Zygmund kernel is said to be non-degenerate at point $x\in X$ and scale $r>0$ if there exists $y\in X$ with $\rho(x,y)\approx r$ such that 
\begin{equation}\label{eq:nondeg}
  \abs{K(x,y)}+\abs{K(y,x)}\gtrsim\frac{1}{V(x,y)}.
\end{equation}
A Calder\'on--Zygmund kernel is said to be non-degenerate if it is non-degenerate at all points and scales, with implied constants independent of $x\in X$ and $r>0$.
\end{definition}

Under this non-degeneracy condition on the kernel of an $\omega$-Calder\'on--Zygmund operator, we have the following estimate (which essentially a combination of \cite[Proposition 4.3 and 4.12]{GLS}, as explained below) for the oscillations of $b$ in terms of the commutator $[b,T]$. Note that condition \eqref{eq:omega0} below is much weaker than any condition of the form $\omega(t)=t^\eta$ as used in Theorem \ref{thm:main}.

\begin{proposition}\label{prop:GLS}
Let $b\in L^1_{\loc}(\mu)$, let $K$ be an $\omega$-Calder\'on--Zygmund kernel with
\begin{equation}\label{eq:omega0}
  \lim_{t\to 0}\omega(t)=0,
\end{equation}
and let $[b,T]$ be an operator with kernel $(b(x)-b(y))K(x,y)$.
Let $x\in X$ and $r>0$, and 
suppose that $K$ is non-degenerate at point $x\in X$ and scale $Ar$,
where $A$ is a large but fixed constant depending only on the parameters of the space and the kernel $K$, not on the specific $x$ and $r$.
Then there exists a point $y\in X$ such that
the balls $B_1:=B(x,r)$ and $B_2:=B(y,r)$ satisfy $\rho(B_1,B_2)\approx r$
and for all $E_i\subset B_i$ with $\mu(E_i)\approx\mu(B_i)$, there exist functions $f_j,g_j$ such that
\begin{enumerate}[\rm(1)]
  \item $\Norm{f_j}{\infty}+\Norm{g_j}{\infty}\lesssim 1$ for both $j=1,2$;
  \item\label{it:GLSsupp} $\supp f_j\times\supp g_j$ is contained in either $E_1\times E_2$ or $E_2\times E_1$; 
  \item the oscillation of $b$ over $E_1$ has the following estimate:
\begin{equation}\label{eq:oscVsCom}
  \int_{E_1}\abs{b-\ave{b}_{E_1}}\ud\mu\lesssim\sum_{j=1}^2\abs{\pair{[b,T]f_j}{g_j}}.
\end{equation}
\end{enumerate}
In particular, if $K$ is non-degenerate, then these conclusions hold for all $B=B(x,r)\subset X$.
\end{proposition}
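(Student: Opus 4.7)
Plan. My plan is to realise this proposition as a synthesis of two results within the ``approximate weak factorisation'' framework of \cite{Hyt:commu, GLS}: the geometric step of \cite[Proposition 4.3]{GLS}, which turns the non-degeneracy hypothesis into a pointwise lower bound on a well-separated pair of balls, and the oscillation recovery of \cite[Proposition 4.12]{GLS}, which converts such a bound into a commutator-based estimate for $\int_{E_1}\abs{b - \ave{b}_{E_1}}\ud\mu$. The key device is to choose $A$ large enough that \eqref{eq:omega0} makes $\omega(c/A)$ arbitrarily small, which forces $K$ to be essentially constant on $B_1 \times B_2$.

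First, I would apply the non-degeneracy at $(x, Ar)$ to obtain $y \in X$ with $\rho(x,y) \approx Ar$ and $\abs{K(x,y)} + \abs{K(y,x)} \gtrsim V(x,y)^{-1}$. Setting $B_1 := B(x,r)$ and $B_2 := B(y,r)$, the quasi-triangle inequality with $A$ sufficiently large relative to the quasi-metric constant gives $\rho(B_1, B_2) \approx r$. By the Calder\'on--Zygmund regularity and doubling, for $(u,v) \in B_1 \times B_2$,
$$\abs{K(u,v) - K(x,y)} + \abs{K(v,u) - K(y,x)} \lesssim \omega(c/A)\, V(x,y)^{-1},$$
so both $K(u,v)$ and $K(v,u)$ lie close to the constants $K_0 := K(x,y)$ and $K_0' := K(y,x)$ respectively, with relative error $\omega(c/A) \ll 1$.

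Next, for the main term, note that by non-degeneracy at least one of $\abs{K_0}, \abs{K_0'}$ is $\gtrsim V(x,y)^{-1}$. Assume the former; the latter yields the second pair $(f_2, g_2)$ by a symmetric argument with the roles of $B_1, B_2$ swapped. Set $c := \ave{b}_{E_2}$ and choose $\phi$ of modulus one so that $K_0(b(u)-c)\phi(u) = \abs{K_0}\abs{b(u)-c}$ pointwise on $\{b \neq c\}$, and take $f_1 := 1_{E_2}$, $g_1 := \phi\,1_{E_1}$; these satisfy the supremum-norm and support constraints of the proposition. Computing $\pair{[b,T]f_1}{g_1}$ with the $v$-integration first, and using the constant-kernel approximation,
$$\pair{[b,T]f_1}{g_1} = \abs{K_0}\mu(E_2)\int_{E_1}\abs{b - c}\ud\mu + \mathrm{err},$$
where $\abs{K_0}\mu(E_2) \gtrsim 1$ (by $\mu(E_2) \approx V(x,y)$ up to $A$-dependent constants) and $\int_{E_1}\abs{b-c}\ud\mu \geq \tfrac12\int_{E_1}\abs{b - \ave{b}_{E_1}}\ud\mu$, since the mean minimises the $L^1$-distance to constants up to a factor of two.

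The main obstacle I expect is controlling $\mathrm{err}$, which is bounded by
$$\omega(c/A)\,V(x,y)^{-1}\iint_{E_1\times E_2}\abs{b(u)-b(v)}\ud\mu(u)\ud\mu(v),$$
a small multiple of an average oscillation of $b$ over the enclosing ball of radius $\approx Ar$, i.e., an oscillation at the next geometric scale. Following \cite[Proof of Proposition 4.12]{GLS}, I would iterate the construction at scales $A^k r$ for $k = 0,1,2,\ldots$, expressing the scale-$r$ oscillation as a telescoping sum over scales and absorbing the successive errors into a geometric series in $\omega(c/A)$, which converges for $A$ large enough by \eqref{eq:omega0}. This scale-by-scale absorbing step is the technical heart of the argument, and once executed it yields the stated bound \eqref{eq:oscVsCom}.
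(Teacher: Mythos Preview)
Your overall set-up (choose $y$ via non-degeneracy at scale $Ar$, show $K$ is nearly constant on $B_1\times B_2$ by the $\omega$-regularity, and pair $[b,T]$ against suitable test functions) is correct, but the closing iteration is wrong in a way that prevents you from reaching the stated conclusion with only \emph{two} pairs $(f_j,g_j)$.

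The error term is not an oscillation at the larger scale $Ar$. With your choice $c=\ave{b}_{E_2}$, split $b(u)-b(v)=(b(u)-c)-(b(v)-c)$: the $(b(u)-c)$-part of the error is a small multiple of $\int_{E_1}\abs{b-c}\ud\mu$ and is absorbed directly into the main term; the $(b(v)-c)$-part is $\lesssim\omega(c/A)\int_{E_2}\abs{b-\ave{b}_{E_2}}\ud\mu$, i.e.\ the oscillation over the \emph{companion} set $E_2$ at the \emph{same} scale. The correct move is then to run the identical argument with the roles of $E_1$ and $E_2$ interchanged (this produces the second pair $(f_2,g_2)$, with $\supp f_2\times\supp g_2\subseteq E_1\times E_2$), obtaining $\int_{E_2}\abs{b-\ave{b}_{E_2}}\ud\mu\lesssim\abs{\pair{[b,T]f_2}{g_2}}+\omega(c/A)\int_{E_1}\abs{b-\ave{b}_{E_1}}\ud\mu$. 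Substituting back, the residual error is $O(\omega(c/A)^2)\int_{E_1}\abs{b-\ave{b}_{E_1}}\ud\mu$, absorbable on the left for $A$ large. This is exactly why the statement has $j=1,2$ and no more. Your proposed iteration over scales $A^k r$ would instead generate infinitely many test functions at growing scales, with no mechanism for termination, and would not yield \eqref{eq:oscVsCom} as stated.

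A secondary point: you mischaracterise \cite[Propositions 4.3 and 4.12]{GLS}. They are not a ``geometric step'' followed by an ``oscillation recovery''; each is a complete version of the present proposition, proved under one of the two alternatives $\abs{K(y,x)}\gtrsim V(x,y)^{-1}$ or $\abs{K(x,y)}\gtrsim V(x,y)^{-1}$. The paper's proof simply observes that the non-degeneracy hypothesis guarantees at least one of these at each $(x,r)$ and invokes the matching case, noting that the cited proofs only use non-degeneracy at the single point and scale in question.
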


\begin{proof}
The assumption \eqref{eq:nondeg} clearly implies that
\begin{equation}\label{eq:nondeg1}
  \abs{K(x,y)}\gtrsim\frac{1}{V(x,y)}
\end{equation}
or
\begin{equation}\label{eq:nondeg2}
  \abs{K(y,x)}\gtrsim\frac{1}{V(x,y)}.
\end{equation}
In \cite[Proposition 4.3 resp. 4.12]{GLS}, the conclusions of Proposition \ref{prop:GLS} are proved under the assumption \eqref{eq:nondeg2} resp. \eqref{eq:nondeg1}. Strictly speaking the said propositions of \cite{GLS}, as stated there, assume that \eqref{eq:nondeg2} resp. \eqref{eq:nondeg1} holds for all $x$ and $r$. However, an inspection of their proofs shows that, 
to get the conclusion for a given ball $B=B(x,r)$, the non-degeneracy assumption is only used for the corresponding
point $x$ and a scale $Ar$, as in the assumptions; essentially, one chooses $A$ large enough so that $\omega(A^{-1})$ is small enough, which uses assumption \eqref{eq:omega0}. In particular, it follows from this observation that, to have the conclusions of Proposition \ref{prop:GLS} for all $x$ and $r$, whether \eqref{eq:nondeg1} or \eqref{eq:nondeg2} holds may vary from one $(x,r)$ to another, and hence it is enough to have the symmetric condition \eqref{eq:nondeg}.
\end{proof}

\begin{proposition}\label{prop:Osc<S}
Let $K$ be a non-degenerate $\omega$-Calder\'on--Zygmund kernel with \eqref{eq:omega0}, and let $b\in L^1_{\loc}(\mu)$.
Let $p\in(1,\infty)$ and $q\in[1,\infty]$.
Suppose that there exists an operator $[b,T]\in S^{p,q}(L^2(\mu))$ with kernel $(b(x)-b(y))K(x,y)$. Then
\begin{equation*}
  \Norm{b}{\operatorname{Osc}^{p,q}}
  :=\Norm{\{m_b(B_Q)\}_{Q\in\mathscr D}}{\ell^{p,q}}
  \lesssim\Norm{[b,T]}{S^{p,q}(L^2(\mu))}.
\end{equation*}
More generally, if $w\in A_2$ and $[b,T]\in S^{p,q}(L^2(w))$, then
\begin{equation*}
  \Norm{b}{\operatorname{Osc}^{p,q}}
  \lesssim\Norm{[b,T]}{S^{p,q}(L^2(w))}.
\end{equation*}
\end{proposition}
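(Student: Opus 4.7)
The plan is to combine the geometric lower bound of Proposition~\ref{prop:GLS} with the nearly weakly orthogonal (NWO) machinery of Corollary~\ref{cor:ellpq<Spq-w}, noting that the unweighted statement is recovered from the weighted one by taking $w\equiv 1\in A_2$. Throughout, I would fix the dyadic system $\mathscr D$ together with the center balls $B_Q$ appearing in the definition of $\operatorname{Osc}^{p,q}$, and after a harmless dilation I may apply Proposition~\ref{prop:GLS} with $E_1=B_Q$ so that doubling gives $\mu(E_1)\approx\mu(B_1)$. The hypothesis of non-degeneracy at point $z_Q$ and scale $A\ell(Q)$ comes for free since $K$ is non-degenerate at all points and scales.

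For each $Q\in\mathscr D$, Proposition~\ref{prop:GLS} then produces a companion ball $B_Q'$ with $\rho(B_Q,B_Q')\approx\ell(Q)$ and, for $j=1,2$, functions $(f_Q^j,g_Q^j)$ with $\Norm{f_Q^j}{\infty}+\Norm{g_Q^j}{\infty}\lesssim 1$, supports contained in $B_Q\cup B_Q'$, and
\[
  \int_{B_Q}|b-\ave{b}_{B_Q}|\ud\mu\lesssim\sum_{j=1}^{2}|\pair{[b,T]f_Q^j}{g_Q^j}|.
\]
I would $L^2$-normalize by setting $e_Q^j:=\mu(B_Q)^{-1/2}f_Q^j$ and $h_Q^j:=\mu(B_Q)^{-1/2}g_Q^j$, so that dividing the previous display by $\mu(B_Q)$ gives the key pointwise-in-$Q$ bound
\[
  m_b(B_Q)\lesssim\sum_{j=1}^{2}|\pair{[b,T]e_Q^j}{h_Q^j}|.
\]
Since $B_Q'\subseteq cB_Q$ for a uniform $c$ (the distance and radius of $B_Q'$ being both $\approx\ell(Q)$), the functions $e_Q^j$ and $h_Q^j$ are all supported in a common enlarged ball $B_Q^*$, and a trivial H\"older estimate from their $L^\infty$ bounds yields $\Norm{e_Q^j}{r}+\Norm{h_Q^j}{r}\lesssim\mu(Q)^{1/r-1/2}$ for every $r\in(2,\infty)$.

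These are exactly the hypotheses of Lemma~\ref{lem:NWOsuff-w}, so for any $w\in A_2$ the bi-sublinear maximal operator $M_{\mathcal E^j}$ associated to $\mathcal E^j=(e_Q^j,h_Q^j)_{Q\in\mathscr D}$ maps $L^2(w)\times L^2(\sigma)\to L^1(\mu)$ boundedly. Corollary~\ref{cor:ellpq<Spq-w} therefore gives
\[
  \Norm{\{\pair{[b,T]e_Q^j}{h_Q^j}\}_{Q\in\mathscr D}}{\ell^{p,q}}\lesssim\Norm{[b,T]}{S^{p,q}(L^2(w))},\qquad j=1,2,
\]
and summing the two contributions via the triangle inequality in $\ell^{p,q}$ (valid since $q\in[1,\infty]$) yields the desired estimate on $\Norm{b}{\operatorname{Osc}^{p,q}}$.

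The only genuine analytic content is packaged inside Proposition~\ref{prop:GLS}, which encapsulates the approximate weak factorisation argument of \cite{GLS,Hyt:commu} and which we invoke as a black box; the main obstacle therefore lies in verifying that its output can be fed cleanly into the NWO framework. Beyond that, the remaining work is purely structural: choosing the $\mu(B_Q)^{-1/2}$ normalisation so that $m_b(B_Q)$ (rather than the unscaled integral) appears on the left, and absorbing the shifted support in $B_Q'$ into an enlarged $B_Q^*$ without affecting the size and support conditions of Lemma~\ref{lem:NWOsuff-w}.
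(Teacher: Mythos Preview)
Your proposal is correct and follows essentially the same approach as the paper: apply Proposition~\ref{prop:GLS} with $E_1=B_1=B_Q$ to produce the test functions, normalise by $\mu(B_Q)^{-1/2}$, observe that the resulting sequences satisfy the NWO-type conditions, and conclude via Corollary~\ref{cor:ellpq<Spq-w}. The only cosmetic difference is that the paper invokes the simpler pointwise bound \eqref{eq:NWOsuffBasic} and Remark~\ref{rem:NWOsuff-w} directly (since the functions from Proposition~\ref{prop:GLS} are bounded in $L^\infty$), whereas you route through the slightly more general $L^r$ condition of Lemma~\ref{lem:NWOsuff-w}; both lead to the same conclusion.
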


\begin{proof}
For each $Q\in\mathscr D$, we apply Proposition \ref{prop:GLS} with $E_1=B_1=B_Q$ to produce functions $f_Q^j$ and $g_Q^j$ such that
\begin{equation}\label{eq:use-GLS}
  m_b(B_Q)
  =\frac{1}{\mu(B_Q)}\int_{B_Q}\abs{b-\ave{b}_Q}\ud\mu
  \lesssim\sum_{j=1}^2\abs{ \pair{[b,T]f_Q^j}{g_Q^j}},
\end{equation}
and
\begin{equation}\label{eq:GLS-NWO}
  \abs{f_Q^j}+\abs{g_Q^j}\lesssim\frac{1_{B_Q^*}}{\mu(Q)^{\frac12}},
\end{equation}
where $B_Q^*= c\cdot B_Q$ is a concentric extension by some factor $c$ that depends only on the implied constants in Proposition \ref{prop:GLS}; thus $B_Q^*$ contains both $E_1=B_1=B_Q$ and $E_2\subseteq B_2$, and there is no need to distinguish between the two cases in Proposition \ref{prop:GLS}\eqref{it:GLSsupp}. Note also that we have absorbed the factor $\mu(B_Q)^{-1}$ into the functions $f_Q^j,g_Q^j$, resulting in a normalisation different from that in Proposition \ref{prop:GLS}, where we estimated the ``plain'' integral $\int_{E_1}$ instead of the average integrals $\fint_{B_Q}$ in the case at hand.

Under the condition \eqref{eq:GLS-NWO}, it is immediate that the maximal operators
\begin{equation}\label{eq:Mjfg}
  \mathcal M^j:(f,g)\mapsto \sup_{Q\in\mathscr D} 1_Q\frac{\abs{\pair{f^j_Q}{f}\pair{g^j_Q}{g}}}{\mu(Q)}
\end{equation}
are bounded $L^2(\mu)\times L^2(\mu)\to L^1(\mu)$. Hence, combining \eqref{eq:use-GLS} with the quasi-triangle inequality in $\ell^{p,q}$ and Corollary \ref{cor:ellpq<Spq}, we obtain
\begin{equation}\label{eq:osc<SpqFinish}
\begin{split}
   &\Norm{\{m_b(B_Q)\}_{Q\in\mathscr D}}{\ell^{p,q}}
  \lesssim\sum_{j=1}^2\Norm{ \{ \pair{[b,T]f_Q^j}{g_Q^j} \}_{Q\in\mathscr D}}{\ell^{p,q}} \\
  &\qquad\lesssim\sum_{j=1}^2\Norm{\mathcal M^j}{L^2(\mu)\times L^2(\mu)\to L^1(\mu)} \Norm{ [b,T]}{S^{p,q}} 
  \lesssim \Norm{ [b,T] }{S^{p,q}}.
\end{split}
\end{equation}
This is the first claimed estimate.

For the weighted version, the proof is identical except for minor modifications in the previous paragraph. Under condition \eqref{eq:GLS-NWO}, the maximal operators \eqref{eq:Mjfg} are also bounded $L^2(w)\times L^2(\sigma)\to L^1(\mu)$ for every $w\in A_2$ and $\sigma=w^{-1}$ by Remark \ref{rem:NWOsuff-w}. Then, using Corollary \ref{cor:ellpq<Spq-w} in place of Corollary \ref{cor:ellpq<Spq} in \eqref{eq:osc<SpqFinish}, this estimate becomes
\begin{equation*}
\begin{split}
   \Norm{\{m_b(B_Q)\}_{Q\in\mathscr D}}{\ell^{p,q}}
  &\lesssim\sum_{j=1}^2\Norm{\mathcal M^j}{L^2(w)\times L^2(\sigma)\to L^1(\mu)} \Norm{ [b,T]}{S^{p,q}(L^2(w))}  \\
  &\lesssim \Norm{ [b,T] }{S^{p,q}(L^2(w))}.
\end{split}
\end{equation*}
This is the second claimed estimate and completes the proof.
\end{proof}

Note that Proposition \ref{prop:Osc<S} is identical to \eqref{eq:Osc<S} of Proposition \ref{prop:comVsOsc}, which we have proved now.

\section{On spaces of finite diameter}\label{sec:finite}

In this section we discuss a modification of Proposition \ref{prop:Osc<S} in the case of spaces $(X,\rho,\mu)$ with $\operatorname{diam}(X)<\infty$. (For spaces of homogeneous type, it is well known that this is equivalent to $\mu(X)<\infty$; see \cite[Lemma 1.9]{BC:96}.) For such spaces, non-degeneracy in the sense of Definition \ref{def:nondeg} can never hold for all scales, simply because there are no pairs of points $x,y\in X$ with $\rho(x,y)\approx r$ when $r\gg\operatorname{diam}(X)$. In such a space, it might be more reasonable to assume that a kernel is non-degenerate at all points $x\in X$ and scales $r\in(0,\operatorname{diam}(X))$.

In Proposition \ref{prop:GLS}, we require non-degeneracy at scale $Ar$ as an assumption, which can only reasonably hold for $r\in(0,A^{-1}\operatorname{diam}(X))$. As a result, in Proposition \ref{prop:Osc<S}, we may only consider cubes $Q$ with $\ell(Q)\lesssim A^{-1}\operatorname{diam}(X)$. Note that cubes with $\ell(Q)\gg \operatorname{diam}(X)$ are of no concern anyway, since they reduce to the single largest cube $Q=X$, and we are only counting cubes as different sets in the norm $\Norm{\{m_b(B_Q)\}_{Q\in\mathscr D}}{\ell^{p,q}}$. Nevertheless, there may be some cubes $Q$ with $\ell(Q)\approx\operatorname{diam}(X)$ that we are missing by restricting to $\ell(Q)\lesssim A^{-1}\operatorname{diam}(X)$. These observations lead to the following variant of Proposition \ref{prop:Osc<S}:

\begin{proposition}\label{prop:Osc<S-fin}
Let $K$ be an $\omega$-Calder\'on--Zygmund kernel with \eqref{eq:omega0} such that $K$ is non-denegerate at all points $x\in X$ and all scales $r\in(0,\operatorname{diam}(X))$, and let $b\in L^1_{\loc}(\mu)$.
Let $p\in(1,\infty)$ and $q\in[1,\infty]$.
Suppose that there exists an operator $[b,T]\in S^{p,q}(L^2(\mu))$ with kernel $(b(x)-b(y))K(x,y)$. Then
\begin{equation}\label{eq:Osc<S-fin}
  \BNorm{\Big\{1_{\{\ell(Q)<\alpha\operatorname{diam}(X)\}}m_b(B_Q)\}_{Q\in\mathscr D} }{\ell^{p,q}}
  \lesssim\Norm{[b,T]}{S^{p,q}(L^2(\mu))},
\end{equation}
for some $\alpha\in(0,1)$ depending only on the parameters of the space and the kernel~$K$.
More generally, if $w\in A_2$ and $[b,T]\in S^{p,q}(L^2(w))$, then we also have \eqref{eq:Osc<S-fin} with $S^{p,q}(L^2(w))$ in place of $S^{p,q}(L^2(\mu))$.

Under these assumptions, we still have $b\in\operatorname{Osc}^{p,q}$ qualitatively, but without a control of its norm.
\end{proposition}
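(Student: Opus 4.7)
The strategy is to rerun the proof of Proposition~\ref{prop:Osc<S} verbatim but restricted to the collection $\mathscr{D}_\alpha := \{Q \in \mathscr{D} : \ell(Q) < \alpha \operatorname{diam}(X)\}$, and then treat the remaining large-scale cubes qualitatively by hand. I would choose $\alpha := 1/(AC_0)$, with $A$ the constant from Proposition~\ref{prop:GLS} and $C_0$ as in \eqref{eq:cubeVsBalls}. For $Q \in \mathscr{D}_\alpha$, the ball $B_Q$ has radius $r \leq C_0 \ell(Q) < \operatorname{diam}(X)/A$, so $Ar < \operatorname{diam}(X)$ and the hypothesis that $K$ is non-degenerate at the point $z_Q$ and at scale $Ar$ is met by our assumption on $K$. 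Proposition~\ref{prop:GLS} then produces functions $f_Q^j, g_Q^j$ ($j=1,2$) satisfying \eqref{eq:oscVsCom} and \eqref{eq:GLS-NWO}.

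Extending $f_Q^j := g_Q^j := 0$ for $Q \notin \mathscr{D}_\alpha$ preserves the pointwise envelope in \eqref{eq:GLS-NWO}, so the bisublinear maximal operators $\mathcal{M}^j$ from \eqref{eq:Mjfg} remain bounded $L^2(\mu) \times L^2(\mu) \to L^1(\mu)$, and also $L^2(w) \times L^2(\sigma) \to L^1(\mu)$ for every $w \in A_2$ by Lemma~\ref{lem:NWOsuff-w}. Feeding the resulting sequences into Corollary~\ref{cor:ellpq<Spq} (respectively its weighted counterpart Corollary~\ref{cor:ellpq<Spq-w}) along the chain \eqref{eq:osc<SpqFinish} yields the quantitative bound \eqref{eq:Osc<S-fin}, in both the unweighted and the weighted cases.

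For the qualitative statement I would use that a doubling space of finite diameter has $\mu(X) < \infty$, so $L^1_{\loc}(\mu) = L^1(\mu)$ and $m_b(B_Q) \leq 2\Norm{b}{L^1(\mu)}/\mu(B_Q) < \infty$ for every cube $Q$. Under the natural convention that removes the spurious repetition of the single cube $X$ across all sufficiently small levels $k$ (for instance, truncating $\mathscr{D}$ at the top at some $k_0$ with $\delta^{k_0} \approx \operatorname{diam}(X)$), the family $\{Q \in \mathscr{D} : \ell(Q) \geq \alpha \operatorname{diam}(X)\}$ is finite: there are boundedly many admissible levels $k$, and each $\mathscr{D}_k$ has boundedly many elements by \eqref{eq:cubeVsBalls} together with the upper-dimension bound. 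The $\ell^{p,q}$ contribution of these finitely many finite terms is then finite (but not dominated by $\Norm{[b,T]}{S^{p,q}}$), and combined with the quantitative bound this gives $b \in \operatorname{Osc}^{p,q}$. The main (and really the only) obstacle is this bookkeeping convention for the top of the dyadic hierarchy; once it is fixed, the argument is a direct transcription of Proposition~\ref{prop:Osc<S}.
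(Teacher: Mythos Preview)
Your proposal is correct and follows essentially the same route as the paper. The paper defers the quantitative bound \eqref{eq:Osc<S-fin} to the pre-statement discussion (rerunning Proposition~\ref{prop:Osc<S} on cubes with $\ell(Q)\lesssim A^{-1}\operatorname{diam}(X)$, exactly as you do), and for the qualitative part argues, as you do, that there are only boundedly many cubes with $\ell(Q)\geq\alpha\operatorname{diam}(X)$ and that $m_b(B_Q)<\infty$ for each by the $L^1_{\loc}$ assumption; your ``bookkeeping'' concern about the top of $\mathscr D$ is handled in the paper by the explicit convention (stated just before the proposition) that cubes are counted as sets in $\Norm{\{m_b(B_Q)\}_{Q\in\mathscr D}}{\ell^{p,q}}$, which has the same effect as your truncation.
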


\begin{proof}
All claims except for the last one were already explained in the discussion before the statement of the proposition. The last claim about $\in\operatorname{Osc}^{p,q}$ only requires an argument if $\operatorname{diam}(X)<\infty$, since otherwise the condition $\ell(Q)<\operatorname{diam}(X)$ is always satisfied.

It is easy to see that there are only boundedly many different cubes (undertstood as sets) $Q\in\mathscr D$ with $\ell(Q)\geq\alpha\operatorname{diam}(X)$. The {\em a priori} assumption that $b\in L^1_{\loc}(\mu)$ implies that $m_b(B_Q)<\infty$ for each $Q\in\mathscr D$. Thus, by the quasi-triangle inequality in $\ell^{p,q}$, it follows that
\begin{equation}\label{eq:OscQualit}
\begin{split}
  &\Norm{b}{\operatorname{Osc}^{p,q}}
  :=\Norm{\{m_b(B_Q)\}_{Q\in\mathscr D}}{\ell^{p,q}} \\
  &\quad\lesssim
  \BNorm{\Big\{1_{\{\ell(Q)<\alpha\operatorname{diam}(X)\}}m_b(B_Q)\}_{Q\in\mathscr D} }{\ell^{p,q}}
  +\!\!\!\!\!\!\sum_{\substack{Q\in\mathscr D \\ \ell(Q)\geq\alpha\operatorname{diam}(X)}}\!\!\!\!\!\!m_b(Q)<\infty
\end{split}
\end{equation}
by \eqref{eq:Osc<S-fin}, the boundedness of each $m_b(Q)$, and the boundedness of terms in the summation.
\end{proof}

The qualitative conclusion that $b\in\operatorname{Osc}^{p,q}$ is optimal in general:

\begin{example}\label{ex:Riesz-fin}
Consider a non-degenerate Calder\'on--Zygmund kernel on $\C\simeq\R^2$, e.g., the Beurling--Ahlfors kernel $B(x,y)=(x-y)^{-2}$. Let $Q_i$, $i=1,2,3,4$ be four separated unit cubes, say $[0,1]^2+v_i$, where $v_i\in\{0,3\}^2$. Let $X:=\bigcup_{i=1}^4 Q_i$ with the Euclidean distance and Lebesgue measure, and consider the kernel
\begin{equation*}
  K(x,y):=B(x,y)\Big(\sum_{i=1}^4 1_{Q_i\times Q_i}+1_{Q_1\times Q_3\cup Q_3\times Q_1}
    +1_{Q_2\times Q_4\cup Q_4\times Q_2}\Big)(x,y).
\end{equation*}
It is easy to check that this is an $\omega$-Calder\'on--Zygmund kernel with $\omega(t)=O(t)$, and it is also non-degenerate at all points and scales $r\leq\operatorname{diam}(X)$: For $x\in Q_i$ and $r\ll\operatorname{diam}(X)$, we can pick $y$ from the same $Q_i$; for $r\approx\operatorname{diam}(X)$, we can pick $y\in Q_j$ where $j\equiv i+2\mod 4$.

The kernel $K$ gives rise to an $L^2(X)$-bounded operator by the $L^2(\C)$-boundedness of the Beurling--Ahlfors transform and restriction. If $b\in L^1_{\loc}(X)$, and the commutator $[b,T]$ is in $S^{p,q}$, Proposition \ref{prop:Osc<S-fin} gives the estimate \eqref{eq:Osc<S-fin} and the qualitative property $b\in\operatorname{Osc}^{p,q}$. However, we claim that there is no quantitative control of $m_b(X)=\fint_X\abs{b-\ave{b}_X}$. To see this, it suffices to consider a function $b$ that is piecewise constant with $b(x)\equiv\beta_i$ for all $x\in Q_i\cup Q_{i+2}$, where $i=1,2$. Then one immediate checks that $[b,T]=0$, whereas $m_b(X)\approx\abs{\beta_1-\beta_2}$ can be arbitrarily large.
\end{example}

In an important class of spaces, the anomaly of Example \ref{ex:Riesz-fin} can be avoided. Note that, in contrast to most other results of this paper involving the Poincar\'e inequality in the assumptions, the current one does not assume any relation between the Poincar\'e exponent and other parameters of the problem; a Poincar\'e inequality of any exponent will do.

\begin{proposition}\label{prop:OscPoi}
Let $(X,\rho,\mu)$ be a doubling metric measure space supporting a $(1,t)$-Poincar\'e inequality for some $t\in(1,\infty)$.
Let $b\in L^1_{\loc}(\mu)$, let $p\in(1,\infty)$ and $q\in[1,\infty]$.
For any $\alpha\in(0,1)$, we have the estimate
\begin{equation*}
  \Norm{b}{\operatorname{Osc}^{p,q}}
  \lesssim \BNorm{\{1_{\{\ell(P)<\alpha\operatorname{diam}(X)\}}m_b(P)\}_{P\in\mathscr D}}{\operatorname{Osc}^{p,q}}.
\end{equation*}
\end{proposition}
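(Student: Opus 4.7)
If $\operatorname{diam}(X)=\infty$ then $1_{\{\ell(P)<\alpha\operatorname{diam}(X)\}}\equiv 1$ on $\mathscr D$ and the inequality reduces to an equality; so I assume $\operatorname{diam}(X)<\infty$. By the quasi-triangle inequality in $\ell^{p,q}$ it then suffices to dominate
\begin{equation*}
\BNorm{\{1_{\{\ell(Q)\geq\alpha\operatorname{diam}(X)\}}m_b(B_Q)\}_{Q\in\mathscr D}}{\ell^{p,q}}
\end{equation*}
by the right-hand side of the asserted inequality.

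Doubling together with the partition property of each $\mathscr D_k$ implies that both the number of distinct sets $Q\in\mathscr D$ with $\ell(Q)\geq\alpha\operatorname{diam}(X)$ and the set of admissible levels $k$ (those with $\delta^k\geq\alpha\operatorname{diam}(X)$) have cardinality bounded by a constant depending only on $\alpha$ and the space parameters. Consequently the large-cube norm is majorised by a constant times $\max\{m_b(B_Q):Q\in\mathscr D,\,\ell(Q)\geq\alpha\operatorname{diam}(X)\}$, and it suffices to bound each such $m_b(B_Q)$ by the right-hand side. Every such $B_Q$ is a ball in $X$ of radius $\gtrsim\alpha\operatorname{diam}(X)$, hence of radius comparable to $\operatorname{diam}(X)$.

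The heart of the argument is a chaining estimate of $m_b(B_Q)$ by finitely many small-scale mean oscillations. The $(1,t)$-Poincar\'e inequality \eqref{eq:Poincare} forces $X$ to be connected: otherwise one could split $X=X_1\sqcup X_2$ with $\dist(X_1,X_2)>0$ and $\mu(X_i)>0$, and the Lipschitz function $f=1_{X_1}$ would have $\operatorname{lip}f\equiv 0$ while its mean oscillation is positive on any ball charging both components, contradicting \eqref{eq:Poincare}. Combined with doubling and $\operatorname{diam}(X)<\infty$, this lets me cover $B_Q$ by an ordered family $B_1,\ldots,B_N$ of balls of radius $r:=\alpha\operatorname{diam}(X)/C_1$ (with $C_1$ a large constant) with $B_i\cap B_{i+1}\neq\varnothing$ and $N\lesssim 1$. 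For each overlapping pair choose a common enclosing ball $\tilde B_i$ of radius $\approx r$; doubling gives
\begin{equation*}
|\ave{b}_{B_j}-\ave{b}_{\tilde B_i}|\leq\frac{\mu(\tilde B_i)}{\mu(B_j)}m_b(\tilde B_i)\lesssim m_b(\tilde B_i),\qquad j\in\{i,i+1\},
\end{equation*}
and telescoping through the chain yields $|\ave{b}_{B_i}-\ave{b}_{B_j}|\lesssim\max_k m_b(\tilde B_k)$ for all $i,j$. Combined with the elementary inequality $m_b(B_Q)\leq 2\fint_{B_Q}|b-\ave{b}_{B_1}|\ud\mu$ and a disjointification of the $B_i\cap B_Q$ into a covering partition of $B_Q$, this produces
\begin{equation*}
m_b(B_Q)\lesssim\max_i m_b(\tilde B_i).
\end{equation*}

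Finally, property \eqref{eq:cubeVsBalls} lets me match each $\tilde B_i$ to a dyadic cube $P_i\in\mathscr D$ with $\ell(P_i)\approx r<\alpha\operatorname{diam}(X)$ and $\tilde B_i\subseteq B_{P_i}$ (after enlarging the constant $c$ defining $B_{P_i}$), whence $m_b(\tilde B_i)\lesssim m_b(B_{P_i})$ by doubling. Since only boundedly many $P_i$ appear and all are small, the elementary embedding $\ell^{p,q}\hookrightarrow\ell^\infty$ (valid for $p\in(0,\infty)$ and $q\in(0,\infty]$) gives
\begin{equation*}
\max_i m_b(B_{P_i})\leq\BNorm{\{1_{\{\ell(P)<\alpha\operatorname{diam}(X)\}}m_b(B_P)\}_{P\in\mathscr D}}{\ell^{p,q}},
\end{equation*}
completing the proof. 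The main obstacle is assembling the chain cover of $B_Q$ (roughly of size $X$) by small overlapping balls and carrying out the telescope of averages along it: connectedness of $X$ (from Poincar\'e) ensures the chain exists, doubling controls $N$ and all constants in the telescope, and $\operatorname{diam}(X)<\infty$ keeps these bounds uniform.
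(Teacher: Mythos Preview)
Your proof is correct and takes a genuinely different, more elementary route than the paper's. The paper's argument quotes a ``macroscopic Poincar\'e'' estimate from the companion paper \cite{HK:W1p},
\[
  m_b(B(x_0,R))\lesssim\frac{R}{r}\Big(\fint_{B(x_0,cR)}m_b(B(x,r))^t\ud\mu(x)\Big)^{1/t},
\]
applies it with $R=\operatorname{diam}(X)$ and $r\approx\alpha\operatorname{diam}(X)$, and then dominates the integrand pointwise by the $\ell^{p,q}$ norm of the small-cube oscillations. Your chaining argument replaces this black box: you cover $X$ by $\lesssim_\alpha 1$ small balls, use connectedness of $X$ to order them into an overlapping chain (the intersection graph of the open cover is connected, and a spanning-tree walk gives bounded length), and telescope averages along the chain. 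This is self-contained and in fact reveals that only \emph{connectedness} (a consequence of Poincar\'e) is needed for this particular step, not the full inequality with exponent $t$; the paper's approach, by contrast, routes through a result that genuinely uses the Poincar\'e exponent.

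Two minor caveats in your write-up. First, the claim that ``the set of admissible levels $k$ (those with $\delta^k\geq\alpha\operatorname{diam}(X)$)'' has bounded cardinality is false---there are infinitely many such $k$---but this is harmless: what you actually need, and what the paper also uses, is that the number of distinct \emph{sets} $Q\in\mathscr D$ with $\ell(Q)\geq\alpha\operatorname{diam}(X)$ is bounded, and the $\operatorname{Osc}^{p,q}$ norm counts cubes as sets. Second, your sketch of ``Poincar\'e $\Rightarrow$ connected'' assumes $\operatorname{dist}(X_1,X_2)>0$, which does not follow from mere topological disconnection (so $1_{X_1}$ need not be Lipschitz); the implication is nonetheless standard (see e.g.\ \cite{HKST}) and can simply be cited.
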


\begin{proof}
By the definition of the norm on the left, we need to estimate $m_b(B_Q)$ for $Q\in\mathscr D$ with $\ell(Q)\geq\alpha\operatorname{diam}(X)$. By doubling, it is easy to see that $m_b(B_Q)\lesssim m_b(X)$ for such cubes, so it suffices to estimate the latter. To this end, we invoke the assumption of $(1,t)$-Poincar\'e inequality via the following consequence obtained in \cite[Theorem 3.4]{HK:W1p}:
\begin{equation*}
  m_b(B(x_0,R))
  \lesssim\frac{R}{r}
  \Big(\fint_{B(x_0,\max(2,\lambda)R}m_b(B(x,r))^t\ud\mu(x)\Big)^{\frac1t},\quad
  0<r<R<\infty;
\end{equation*}
with $0<r<R:=\operatorname{diam}(X)$, we obtain
\begin{equation}\label{eq:applyMacroPoi}
  m_b(X)
  \lesssim\frac{\operatorname{diam}(X)}{r}
  \Big(\fint_{X}m_b(B(x,r))^t\ud\mu(x)\Big)^{\frac1t}.
\end{equation}
Now, let us consider the collection $\mathscr D_k$ of cubes of maximal sidelength $\ell(Q)=\delta^{k_0}<\alpha\operatorname{diam}(X)$, and pick $r=\delta^{k_0}$. Given $x\in X$, let $Q_x\in\mathscr D_k$ be the unique cube of this generation that contains $x$. Then $B(x,r)\subseteq B_{Q_x}$ (assuming that the expansion constant defining $B_Q$ is large enough), yet $\mu(B_{Q_x})\lesssim\mu(B(x,r))$, and hence
\begin{equation*}
  m_b(B(x,r))\lesssim m_b(B_{Q_x})
  \leq\BNorm{\{1_{\{\ell(P)<\alpha\operatorname{diam}(X)\}}m_b(P)\}_{P\in\mathscr D}}{\operatorname{Osc}^{p,q}}.
\end{equation*}
Substituting this into \eqref{eq:applyMacroPoi} and recalling that $r=\delta^{k_0}\approx\operatorname{diam}(X)$, it follows that
\begin{equation*}
  m_b(Q)\lesssim  m_b(X)\lesssim\BNorm{\{1_{\{\ell(P)<\alpha\operatorname{diam}(X)\}}m_b(P)\}_{P\in\mathscr D}}{\operatorname{Osc}^{p,q}}
\end{equation*}
for all the boundedly many $Q\in\mathscr D$ with $\ell(Q)\geq\alpha\operatorname{diam}(X)$.

Substituting this into \eqref{eq:OscQualit}, we obtain the claim.
\end{proof}

\begin{corollary}\label{cor:Osc<S-Poi}
Let $(X,\rho,\mu)$ be a complete doubling metric measure space supporting a $(1,t)$-Poincar\'e inequality for some $t\in(1,\infty)$.
Let $K$ be an $\omega$-Calder\'on--Zygmund kernel with \eqref{eq:omega0} such that $K$ is non-denegerate at all points $x\in X$ and all scales $r\in(0,\operatorname{diam}(X))$, and let $b\in L^1_{\loc}(\mu)$.
Let $p\in(1,\infty)$ and $q\in[1,\infty]$.
Suppose that there exists an operator $[b,T]\in S^{p,q}(L^2(\mu))$ with kernel $(b(x)-b(y))K(x,y)$. Then
\begin{equation}\label{eq:Osc<S-Poi}
  \Norm{b}{\operatorname{Osc}^{p,q}}
  \lesssim\Norm{[b,T]}{S^{p,q}(L^2(\mu))},
\end{equation}
More generally, if $w\in A_2$ and $[b,T]\in S^{p,q}(L^2(w))$, then we also have \eqref{eq:Osc<S-Poi} with $S^{p,q}(L^2(w))$ in place of $S^{p,q}(L^2(\mu))$.
\end{corollary}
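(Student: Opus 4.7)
The plan is to derive Corollary \ref{cor:Osc<S-Poi} as a direct combination of the two preceding results, Propositions \ref{prop:Osc<S-fin} and \ref{prop:OscPoi}, with the Poincaré inequality serving precisely to bridge the gap between the ``full'' oscillation norm and its small-scale truncation.

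First I would dispose of the case $\operatorname{diam}(X)=\infty$. Here the non-degeneracy assumption ``at all scales $r\in(0,\operatorname{diam}(X))$'' is exactly the global non-degeneracy of Definition \ref{def:nondeg}, and the truncation condition $\ell(Q)<\alpha\operatorname{diam}(X)$ in \eqref{eq:Osc<S-fin} is automatically satisfied for every $Q\in\mathscr D$. Consequently, Proposition \ref{prop:Osc<S} (or equivalently Proposition \ref{prop:Osc<S-fin}) gives \eqref{eq:Osc<S-Poi} with no extra work. The same observation applies in the weighted case when $w\in A_2$.

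In the case $\operatorname{diam}(X)<\infty$, the strategy is to insert Proposition \ref{prop:OscPoi} between $\Norm{b}{\operatorname{Osc}^{p,q}}$ and the truncated quantity that is controlled by the commutator. Explicitly, Proposition \ref{prop:OscPoi}, which uses only the $(1,t)$-Poincaré inequality for some $t\in(1,\infty)$ (and not any relation between $t$ and the Schatten exponent $p$), gives
\begin{equation*}
   \Norm{b}{\operatorname{Osc}^{p,q}}
   \lesssim \BNorm{\{1_{\{\ell(Q)<\alpha\operatorname{diam}(X)\}} m_b(B_Q)\}_{Q\in\mathscr D}}{\ell^{p,q}}
\end{equation*}
for the $\alpha\in(0,1)$ supplied by Proposition \ref{prop:Osc<S-fin} (the latter statement being valid for the particular $\alpha$ dictated by the space and kernel, and the former being valid for any $\alpha\in(0,1)$). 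Applying Proposition \ref{prop:Osc<S-fin} to the right-hand side bounds it by $\Norm{[b,T]}{S^{p,q}(L^2(\mu))}$, which chains to the desired estimate \eqref{eq:Osc<S-Poi}.

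The weighted conclusion follows by the same chain, since the reduction in Proposition \ref{prop:OscPoi} is a purely geometric/measure-theoretic statement about $b$ and involves no operator theory, while Proposition \ref{prop:Osc<S-fin} already comes with its weighted variant for $w\in A_2$. The only ``hard'' input is the Poincaré-based pointwise control of $m_b$ on macroscopic balls by $L^t$ averages of $m_b$ on microscopic balls, borrowed from \cite{HK:W1p} via Proposition \ref{prop:OscPoi}; aside from correctly matching the truncation parameter $\alpha$ between the two propositions, no further work is required.
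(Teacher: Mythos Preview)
Your proposal is correct and follows essentially the same approach as the paper's proof: chain Proposition \ref{prop:OscPoi} (to pass from the full oscillation norm to its small-scale truncation) with Proposition \ref{prop:Osc<S-fin} (to bound the truncation by the Schatten norm of the commutator). The paper does this in two lines without separating out the case $\operatorname{diam}(X)=\infty$, but your explicit treatment of that trivial case and your remark on matching the truncation parameter $\alpha$ are helpful elaborations rather than a different argument.
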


\begin{proof}
We have
\begin{equation*}
\begin{split}
  \Norm{b}{\operatorname{Osc}^{p,q}}
  &\lesssim \BNorm{\{1_{\{\ell(P)<\alpha\operatorname{diam}(X)\}}m_b(P)\}_{P\in\mathscr D}}{\operatorname{Osc}^{p,q}}
  \qquad\text{by Proposition \ref{prop:OscPoi}} \\
  &\lesssim \Norm{[b,T]}{S^{p,q}(L^2(w))}\qquad\text{by Proposition \ref{prop:Osc<S-fin}},
\end{split}
\end{equation*}
and this completes the proof.
\end{proof}

\section{Local oscillation characterisations of Besov spaces}\label{sec:Osc=Besov}

In Section \ref{sec:osc<com}, we established a connection between $S^{p,q}$ norms of commutators and the $\operatorname{Osc}^{p,q}$ norms of their symbol; however, Theorem \ref{thm:main} is stated in terms of the Besov norms instead. Hence, it is in order to clarify the relation between the two last-mentioned spaces. As a preparation, we record the following variant of a pointwise local oscillation formula originating from \cite{Lerner:formula}.

\begin{lemma}\label{lem:Lerner}
Let $f$ be a measurable function on $X$. For every dyadic $Q\in\mathscr D$, there exists a constant $c_Q$ and a collection $\mathscr S\subset\mathscr D$ of subcubes $S\subseteq Q$ such that
\begin{enumerate}[\rm(1)]
  \item\label{it:sparse} there are pairwise disjoint sets $E(S)\subseteq S$ with $\mu(E(S))\geq\frac12\mu(S)$, and
  \item\label{it:Lerner} for every $s\in(0,\infty)$ and almost every $x\in X$,
\begin{equation}\label{eq:Lerner}
  1_Q(x)\abs{f(x)-c_Q}
  \lesssim\sum_{S\in\mathscr S}\inf_c\Norm{f-c}{\avL^s(S)}1_S(x),
\end{equation}
\end{enumerate}
where the implied constant depends on the parameters of the space $X$ and its dyadic system and on the exponent $s\in(0,\infty)$ only.
\end{lemma}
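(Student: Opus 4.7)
Plan. The lemma is a dyadic-cube version of Lerner's local mean oscillation formula \cite{Lerner:formula}, and my plan is to reproduce Lerner's stopping-time proof with only cosmetic changes dictated by working on a space of homogeneous type. For every $P \in \mathscr{D}$ I would fix a median $c_P$ of $f$ on $P$, i.e., a real number with $\mu(\{f>c_P\}\cap P), \mu(\{f<c_P\}\cap P) \leq \tfrac12\mu(P)$, and set $c_Q$ in the lemma to be the median chosen on the outer cube. Fixing a small parameter $\alpha \in (0,\tfrac12)$ (to be calibrated below against the doubling geometry), let
\[
\omega_\alpha(f;P) := \inf_{c \in \R}\big((f-c)1_P\big)^*(\alpha\mu(P))
\]
denote the local mean oscillation at level $\alpha$, $(\cdot)^*$ being the non-increasing rearrangement. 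I build $\mathscr{S}$ inductively: set $\mathscr{S}_0 := \{Q\}$ and, for each $P \in \mathscr{S}_k$, let $\mathscr{S}_{k+1}$ collect the maximal dyadic subcubes $S \subsetneq P$ with $|c_S - c_P| > A\,\omega_\alpha(f;P)$, where $A$ is a suitable absolute constant. Put $\mathscr{S} := \bigcup_{k \geq 0}\mathscr{S}_k$ and $E(S) := S \setminus \bigcup\{S' \in \mathscr{S} : S' \subsetneq S \text{ maximal}\}$.

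To verify (1), I would observe that the union of the stopping children of $P$ is covered by the level set $\{x \in P : |f(x) - c_P| > A'\omega_\alpha(f;P)\}$ for some $A' \approx A$; calibrating $\alpha$ against the doubling-type constants so that any dyadic child of any cube has at least $2\alpha$ of the parent's measure (possible since $\mu(\text{child})/\mu(\text{parent})$ is bounded below by a constant depending only on the doubling and on $\delta$), this level set has measure at most $2\alpha\mu(P) \leq \tfrac12 \mu(P)$. Sparseness and the pairwise disjointness of the $E(S)$ then follow by construction.

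For (2), pick $x$ in the set of full measure inside $Q$ where Lebesgue differentiation through $\mathscr{D}$ for medians holds, and let $Q = S_0 \supsetneq S_1 \supsetneq \ldots$ be the (finite or infinite) chain of stopping cubes containing $x$. Telescoping gives
\[
|f(x) - c_Q| \leq |f(x) - c_{S_J}| + \sum_{j=0}^{J-1}|c_{S_{j+1}} - c_{S_j}|,
\]
the first term being absent in the infinite case. The non-stopping condition on dyadic cubes strictly between $S_j$ and $S_{j+1}$, together with the size comparison of medians across a single dyadic step (which is what the choice of $\alpha$ guarantees), yields $|c_{S_{j+1}} - c_{S_j}| \lesssim \omega_\alpha(f;S_j)$; similarly $|f(x) - c_{S_J}| \lesssim \omega_\alpha(f;S_J)$ on $E(S_J)$. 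Hence
\[
1_Q(x)\,|f(x) - c_Q| \lesssim \sum_{S \in \mathscr{S},\, S \ni x} \omega_\alpha(f;S).
\]
Finally, Chebyshev in the $\avL^s$ quasi-norm gives $\omega_\alpha(f;S) \lesssim_{s,\alpha} \inf_c \Norm{f-c}{\avL^s(S)}$ for every $s \in (0,\infty)$, with implied constant depending only on $s$ and the fixed $\alpha$; this delivers \eqref{eq:Lerner}.

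The main technical obstacle is calibrating the constants $\alpha$ and $A$ against the doubling parameters so that both the sparse measure estimate in (1) and the median-telescoping bound $|c_{S_{j+1}} - c_{S_j}| \lesssim \omega_\alpha(f;S_j)$ (rather than the merely $A$-times-larger quantity forced by the stopping condition) hold simultaneously. A minor additional nuisance specific to the present setting is that a single set can represent several generations of $\mathscr{D}$ in the absence of a positive lower dimension (cf.\ Lemma \ref{lem:DuniqueLevel}); this is harmless once the cubes in $\mathscr{S}$ are treated as pairs (set, level) throughout the construction, say by consistently selecting the minimal-level representative.
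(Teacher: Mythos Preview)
Your proposal is correct and follows essentially the same approach as the paper: the paper simply cites \cite[Theorem 11.1.12 and Lemma 11.1.1]{HNVW3} (Lerner's formula and the Chebyshev bound $\operatorname{osc}_\lambda(f;S)\lesssim\inf_c\Norm{f-c}{\avL^s(S)}$), remarks that the constant $\lambda=2^{-2-d}$ must be replaced by a small number depending on the doubling and dyadic parameters, and that the constant $c_Q$ is a $\lambda$-pseudomedian --- which is exactly the argument you sketch in detail. The only minor caveat is that your use of ordinary medians via $\{f>c_P\}$ tacitly assumes $f$ is real-valued; for complex-valued $f$ (as needed later for $b$) one should use $\lambda$-pseudomedians as in \cite[Definition 11.1.3]{HNVW3}, but the stopping-time construction is otherwise unchanged.
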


\begin{proof}
This is essentially a combination of \cite[Theorem 11.1.12 and Lemma 11.1.1]{HNVW3}. These results are formulated on $\R^d$, but they extend  {\em mutatis mutandis} to dyadic systems on a space of homogeneous type. We only note that the value $\lambda=2^{-2-d}$ in \cite[Theorem 11.1.12]{HNVW3} needs to be replaced by a small number depending on (the doubling constant of) the space $X$ and the parameters of the dyadic system $\mathscr D$ only. This modification of  \cite[Theorem 11.1.12]{HNVW3} guarantees the existence of $\mathscr S\subset\mathscr D$ as in the lemma and such that 
\begin{equation*}
  1_Q\abs{f-c_Q}
  \lesssim\sum_{S\in\mathscr S}\operatorname{osc}_\lambda(f;S)1_S,
\end{equation*}
where $c_Q$ is any {\em $\lambda$-pseudomedian} of $f$ on $Q$ (defined and shown to exist in \cite[Definition 11.1.3 and Lemma 11.1.4]{HNVW3}, respectively) and $\operatorname{osc}_\lambda(f;S)$ is a measure of oscillation of $f$ on $S$ (defined on \cite[page 2]{HNVW3}) for which we only need the upper bound \cite[Lemma 11.1.1]{HNVW3}
\begin{equation*}
   \operatorname{osc}_\lambda(f;S)\lesssim\inf_c\Norm{f-c}{\avL^s(S)}
\end{equation*}
with the implied constant depending only on $\lambda$ (hence on $X$ and $\mathscr D$) and $s\in(0,\infty)$.
The last two displayed bounds clearly combine to give the one asserted in the lemma.
\end{proof}

\begin{proposition}\label{prop:Osc}
For $p,r\in(0,\infty)$ and $q\in(0,\infty]$, for all measurable functions $f$, let
\begin{equation*}
 \Norm{f}{\operatorname{Osc}^{p,q}_r}:= \BNorm{\Big\{\inf_c\Big(\fint_Q\abs{f-c}^r\ud\mu\Big)^{\frac1r}\Big\}_{Q\in\mathscr D}}{\ell^{p,q}}.
\end{equation*}
Then for all $p,r,s\in(0,\infty)$ and $q\in(0,\infty]$, we have
\begin{equation*}
   \Norm{f}{\operatorname{Osc}^{p,q}_r}
   \approx\Norm{f}{\operatorname{Osc}^{p,q}_s},
\end{equation*}
i.e., changing the value of the third index $r$ results in an equivalent norm.
\end{proposition}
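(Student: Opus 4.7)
One direction is trivial: for $r\leq s$, Jensen's inequality gives $\|f-c\|_{\avL^r(Q)}\leq\|f-c\|_{\avL^s(Q)}$ for every constant $c$, so after taking the infimum over $c$ we get $\Norm{f}{\operatorname{Osc}^{p,q}_r}\leq\Norm{f}{\operatorname{Osc}^{p,q}_s}$. The content of the proposition therefore lies in the reverse bound, and the plan is to prove, for arbitrary $r,s\in(0,\infty)$, the one-sided estimate $\Norm{f}{\operatorname{Osc}^{p,q}_s}\lesssim\Norm{f}{\operatorname{Osc}^{p,q}_r}$; the full equivalence follows without any case split on $r$ versus $s$.

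Write $a_Q^{(t)}:=\inf_c\|f-c\|_{\avL^t(Q)}$ for $t\in(0,\infty)$. For each $Q\in\mathscr D$, apply Lemma~\ref{lem:Lerner} with exponent $r$ to obtain a constant $c_Q$ and a sparse collection $\mathscr S_Q\subseteq\{S\in\mathscr D:S\subseteq Q\}$ with disjoint sets $E(S)\subseteq S$, $\mu(E(S))\geq\tfrac12\mu(S)$, such that
\begin{equation*}
  1_Q|f-c_Q|\lesssim\sum_{S\in\mathscr S_Q}a_S^{(r)}1_S.
\end{equation*}
Using $c_Q$ as a trial constant in the infimum defining $a_Q^{(s)}$ and integrating the $s$-th power of this pointwise bound, the problem reduces to establishing the sparse $L^s$-embedding
\begin{equation*}
  \int_Q\Big(\sum_{S\in\mathscr S_Q}a_S^{(r)}1_S\Big)^s\ud\mu\lesssim\sum_{S\in\mathscr S_Q}(a_S^{(r)})^s\mu(S),\qquad s\in(0,\infty).
\end{equation*}

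For $s\leq 1$ this embedding is immediate from $(\sum_jt_j)^s\leq\sum_jt_j^s$; this is the easy regime. For $s>1$---the main obstacle of the proof---I will use duality: the $L^s$-norm is tested against $g$ with $\|g\|_{L^{s'}(Q)}=1$, and H\"older in the sequence index turns the task into showing $\sum_{S\in\mathscr S_Q}\mu(S)(\fint_Sg)^{s'}\lesssim\|g\|_{L^{s'}(Q)}^{s'}$. The sparseness $\mu(E(S))\geq\tfrac12\mu(S)$ together with the pointwise bound $\fint_Sg\lesssim\frac1{\mu(E(S))}\int_{E(S)}M^dg\ud\mu$ and Jensen's inequality allow each term to be absorbed into an integral of $(M^dg)^{s'}$ over the disjoint set $E(S)$, and the $L^{s'}$-boundedness of the dyadic maximal operator $M^d$ finishes this step.

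With the sparse embedding in hand, after enlarging $\mathscr S_Q$ to all $S\subseteq Q$ in $\mathscr D$ (only increasing the sum), I obtain the pointwise-in-$Q$ bound
\begin{equation*}
  (a_Q^{(s)})^s\lesssim\operatorname{Car}\bigl((a^{(r)})^s\bigr)(Q).
\end{equation*}
Since $p/s\in(0,\infty)$ and $q/s\in(0,\infty]$, Proposition~\ref{prop:CarBd} applied on $\ell^{p/s,q/s}$ yields $\Norm{\{(a_Q^{(s)})^s\}_Q}{\ell^{p/s,q/s}}\lesssim\Norm{\{(a_Q^{(r)})^s\}_Q}{\ell^{p/s,q/s}}$, and the standard rescaling $\Norm{|\beta|^s}{\ell^{p/s,q/s}}=\Norm{\beta}{\ell^{p,q}}^s$ for nonnegative sequences converts this to $\Norm{f}{\operatorname{Osc}^{p,q}_s}^s\lesssim\Norm{f}{\operatorname{Osc}^{p,q}_r}^s$, as desired.
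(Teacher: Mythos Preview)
Your proof is correct and follows essentially the same route as the paper: apply Lemma~\ref{lem:Lerner} to get a pointwise sparse domination of $|f-c_Q|$, pass to an $L^s$-average, use a sparse $L^s$-embedding to arrive at $(a_Q^{(s)})^s\lesssim\operatorname{Car}\bigl((a^{(r)})^s\bigr)(Q)$, and conclude via Proposition~\ref{prop:CarBd} on $\ell^{p/s,q/s}$. The only difference is that for the sparse $L^s$-embedding the paper simply invokes \cite[Proposition~11.1.11]{HNVW3}, whereas you supply a short self-contained argument (trivial for $s\leq1$, duality plus the dyadic maximal function for $s>1$); both lead to the same Carleson-operator bound.
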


\begin{proof}
We only need to prove $\lesssim$ when $r>s$, since the other direction is immediate.
For a given $Q\in\mathscr D$, taking the $\avL^r(Q)$ norms of \eqref{eq:Lerner} and using a basic estimate about $L^r$ norms over sparse collections $\mathscr S$ as in Lemma \ref{lem:Lerner}\eqref{it:sparse}, we obtain
\begin{equation*}
\begin{split}
  \lambda_r(Q) &:=\inf_c\Norm{f-c}{\avL^r(Q)}
  \leq\Norm{f-c_Q}{\avL^r(Q)} \\
  &\lesssim\BNorm{\sum_{S\in\mathscr S}\inf_c\Norm{f-c}{\avL^s(S)} 1_S}{\avL^r(Q)}
   \quad\text{by Lemma \ref{lem:Lerner}}\\
  &=\BNorm{\sum_{S\in\mathscr S}\lambda_s(S) 1_S}{\avL^r(Q)} \\
  &\lesssim\BNorm{\sum_{S\in\mathscr S}\lambda_s(S) 1_{E(S)}}{\avL^r(Q)}
    \quad\text{by \cite[Proposition 11.1.11]{HNVW3}} \\
  &=\Big(\sum_{S\in\mathscr S} \lambda_s(S)^r\frac{\mu(E(S))}{\mu(Q)}\Big)^{\frac1r} \\
  &\leq\Big(\sum_{S\subseteq Q} \lambda_s(S)^r\frac{\mu(S)}{\mu(Q)}\Big)^{\frac1r} 
  =\Big(\operatorname{Car}(\lambda_s)^r\Big)^{\frac1r}(Q).
\end{split}
\end{equation*}
Hence,
\begin{equation*}
\begin{split}
  \Norm{f}{\operatorname{Osc}^{p,q}_r}
  &=\Norm{\{\lambda_r(Q)\}_{Q\in\mathscr D}}{\ell^{p,q}} \\
  &\lesssim\Norm{\{\big(\operatorname{Car}(\lambda_s)^r(Q)\big)^{\frac1r}\}_{Q\in\mathscr D}}{\ell^{p,q}} 
  =\Norm{\{\operatorname{Car}(\lambda_s)^r(Q)\}_{Q\in\mathscr D}}{\ell^{\frac pr,\frac qr}}^{\frac 1r} \\
  &\lesssim\Norm{\{(\lambda_s(Q))^r\}_{Q\in\mathscr D}}{\ell^{\frac pr,\frac qr}}^{\frac 1r} 
  =\Norm{\{\lambda_s(Q)\}_{Q\in\mathscr D}}{\ell^{p,q}}
  =\Norm{f}{\operatorname{Osc}^{p,q}_s},
\end{split}
\end{equation*}
by the boundedness of the Carleson operator on $\ell^{\frac{p}{r},\frac{q}{r}}$ (Proposition \ref{prop:CarBd}) in the last inequality.
\end{proof}

\begin{lemma}\label{lem:LpOsc}
For $p\in(0,\infty)$,
\begin{equation*}
\begin{split}
  \inf_c\Big(\fint_B\abs{f-c}^p\ud\mu\Big)^{\frac1p}
  &\approx \Big(\fint_B\fint_B\abs{f(x)-f(y)}^p\ud\mu(x)\ud\mu(y)\Big)^{\frac1p} \\
  &\approx \Big(\fint_B\abs{f(x)-\ave{f}_B}^p\ud\mu(x)\Big)^{\frac1p},\quad\text{if}\quad p\in[1,\infty).
\end{split}
\end{equation*}
\end{lemma}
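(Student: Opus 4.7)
Denote $I_c := \fint_B \abs{f-c}^p\ud\mu$, $D := \fint_B\fint_B\abs{f(x)-f(y)}^p\ud\mu(x)\ud\mu(y)$, and $M := \fint_B\abs{f-\ave{f}_B}^p\ud\mu$. The plan is to establish, in this order, (i) $D^{1/p}\approx\inf_c I_c^{1/p}$ for all $p\in(0,\infty)$ and then (ii) $M^{1/p}\approx\inf_c I_c^{1/p}$ when $p\geq 1$.

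For the first equivalence, the upper bound $D\lesssim \inf_c I_c$ is the usual two-point inequality: for any fixed $c$ we have $\abs{f(x)-f(y)}^p\leq C_p(\abs{f(x)-c}^p+\abs{f(y)-c}^p)$ with $C_p=\max(1,2^{p-1})$, so integrating in $x,y$ gives $D\leq 2C_p I_c$, and the infimum over $c$ yields the claim. For the lower bound, we use the ``averaging the center'' trick: by Fubini,
\begin{equation*}
  \fint_B I_{f(y)}\ud\mu(y)=\fint_B\fint_B\abs{f(x)-f(y)}^p\ud\mu(x)\ud\mu(y)=D,
\end{equation*}
so there exists $y_0\in B$ with $I_{f(y_0)}\leq D$; hence $\inf_c I_c\leq D$. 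This step is valid for every $p\in(0,\infty)$, since it only uses Fubini and the triangle-type inequality with the explicit constant $C_p$, both of which are fine without $p\geq 1$.

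For the second equivalence, the bound $\inf_c I_c\leq M$ is immediate by taking $c=\ave{f}_B$. The converse requires $p\geq 1$, since here I use Jensen's inequality: for any $c\in\C$,
\begin{equation*}
  \abs{\ave{f}_B-c}=\Babs{\fint_B(f-c)\ud\mu}\leq \fint_B\abs{f-c}\ud\mu\leq I_c^{1/p}.
\end{equation*}
Applying the triangle inequality in $L^p(\fint_B)$ then yields
\begin{equation*}
  M^{1/p}\leq I_c^{1/p}+\abs{\ave{f}_B-c}\leq 2 I_c^{1/p},
\end{equation*}
and taking the infimum over $c$ gives $M\leq 2^p \inf_c I_c$. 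No step here is a real obstacle; the only subtlety is remembering that the Jensen-based argument for the third quantity $M$ genuinely needs $p\geq 1$ (which is why that half of the statement is restricted), whereas the equivalence with the symmetric double integral $D$ works for all $p\in(0,\infty)$ thanks to the centering-by-a-random-point argument.
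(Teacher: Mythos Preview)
Your proof is correct and follows essentially the same approach as the paper. The only cosmetic difference is that for $\inf_c I_c\leq D$ the paper observes $\inf_c I_c\leq I_{f(y)}$ for every $y$ and averages, whereas you average first and pick a single $y_0$; and for $M^{1/p}\leq 2I_c^{1/p}$ the paper writes $f-\ave{f}_B=(f-c)-\ave{f-c}_B$, which is your Jensen step since $\ave{f-c}_B=\ave{f}_B-c$.
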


\begin{proof}
For each $y\in B$,
\begin{equation*}
  \inf_c\fint_B\abs{f-c}^p\ud\mu
  \leq\fint_B\abs{f(x)-f(y)}^p\ud\mu(x),
\end{equation*}
and integration over $y\in B$ proves the first ``$\leq$''.

On the other hand, $f(x)-f(y)=(f(x)-c)-(f(y)-c)$, and hence by the quasi-triangle inequality
\begin{equation*}
\begin{split}
   \Big( &\fint_B \fint_B\abs{f(x)-f(y)}^p\ud\mu(x)\ud\mu(y)\Big)^{\frac1p} \\
   &\lesssim\Big(\fint_B\fint_B\abs{f(x)-c}^p\ud\mu(x)\ud\mu(y)\Big)^{\frac1p}
   +\Big(\fint_B\fint_B\abs{f(y)-c}^p\ud\mu(x)\ud\mu(y)\Big)^{\frac1p} \\
   &=2\Big(\fint_B\abs{f(x)-c}^p\ud\mu(x)\Big)^{\frac1p}
\end{split}
\end{equation*}
by integrating out the variable that does not appear in the integrand. Taking the infimum over $c$ proves the first ``$\gtrsim$''.

The second ``$\leq$'' is again obvious. Using $f-\ave{f}_B=(f-c)-\ave{f-c}_B$, we get
\begin{equation*}
  \Big(\fint_B\abs{f-\ave{f}_B}^p\ud\mu\Big)^{\frac1p}
  \leq\Big(\fint_B\abs{f-c}^p\ud\mu\Big)^{\frac1p}+\abs{\ave{f-c}_B} 
  \leq 2\Big(\fint_B\abs{f-c}^p\ud\mu\Big)^{\frac1p},
\end{equation*}
and taking the infimum over $c$ completes the estimate.
\end{proof}

\begin{proposition}\label{prop:Bp=Osc}
Let $p\in(0,\infty)$.
For all measurable $b$ on $X$, we have
\begin{equation*}
\begin{split}
  \Norm{b}{\dot B^p(\mu)}
  &:=\Big(\int_X\int_X\frac{\abs{b(x)-b(y)}^p}{V(x,y)^2}\ud\mu(x)\ud\mu(y)\Big)^{\frac1p} \\
  &\approx\BNorm{\Big\{\inf_c\Big(\fint_{B_Q}\abs{b-c}^p\ud\mu\Big)^{\frac1p}\Big\}_{Q\in\mathscr D}}{\ell^p}.
\end{split}
\end{equation*}
If this norm is finite, then in fact $b\in L^r_{\loc}(\mu)$ for all $r\in[1,\infty)$, and
\begin{equation*}
  \Norm{b}{\dot B^p(\mu)}
  \approx\BNorm{\Big\{\Big(\fint_{B_Q}\abs{b-\ave{b}_{B_Q}}^r\ud\mu\Big)^{\frac1r}\Big\}_{Q\in\mathscr D}}{\ell^p}.
\end{equation*}
\end{proposition}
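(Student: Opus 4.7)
The strategy is to prove the central equivalence $\Norm{b}{\dot B^p(\mu)}\approx\Norm{b}{\operatorname{Osc}^{p,p}_p}$, and then combine it with Proposition \ref{prop:Osc} (applied with $B_Q$ in place of $Q$) and Lemma \ref{lem:LpOsc} to obtain the remaining statements. The central equivalence reduces---via Lemma \ref{lem:LpOsc} (the double-average reformulation of $\inf_c\fint_{B_Q}\abs{b-c}^p$ is valid for every $p\in(0,\infty)$), summation over $Q$, and Tonelli---to the pointwise estimate
\begin{equation*}
  K(x,y):=\sum_{Q\in\mathscr D:\,\{x,y\}\subseteq B_Q}\frac{1}{\mu(B_Q)^2}\approx\frac{1}{V(x,y)^2}
  \qquad\text{for all }x\neq y\text{ in }X.
\end{equation*}

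For the lower bound on $K$, a single cube suffices: fixing a level $k$ with $\delta^k\approx\rho(x,y)$ and taking $Q\in\mathscr D_{k-j}$ to be the cube containing $x$ (for $j$ a fixed constant, large in terms of $A_0,C_0,c,\delta$), the quasi-triangle inequality places $y$ in $B_Q$ as well, while doubling yields $\mu(B_Q)\approx V(x,y)$, and hence $K(x,y)\geq\mu(B_Q)^{-2}\gtrsim V(x,y)^{-2}$. For the upper bound, at each level $k$ with $\delta^k\gtrsim\rho(x,y)$ only $O(1)$ cubes contribute, because the centres $z_Q$ are $c_0\delta^k$-separated by the inner-ball inclusion in \eqref{eq:cubeVsBalls} and confined to $B(x,c\delta^k)$ (so the separation-dimension bound of Proposition \ref{prop:dims} applies), and each such cube satisfies $\mu(B_Q)\gtrsim V(x,\delta^k)$. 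The remaining tail $\sum_{k\leq k_0}V(x,\delta^k)^{-2}$ is then summable as a geometric series using the positive lower dimension of $X$, which is automatic when $\operatorname{diam}(X)=\infty$ by a standard reverse-doubling argument. This geometric summability---together with the attendant need to count cubes as distinct sets rather than pairs $(Q,k)$ in the finite-diameter case, so as to avoid infinitely many copies of $Q=X$ at arbitrarily coarse generations---is the main technical point of the argument.

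Finally, for the ``moreover'' part, finiteness of $\Norm{b}{\dot B^p(\mu)}$ forces $\inf_c\fint_{B_Q}\abs{b-c}^p<\infty$ for every $Q$, so $b\in L^p(B_Q)$ (constants are $L^p$-integrable on balls of finite measure) and the averages $\ave{b}_{B_Q}$ are well defined. Proposition \ref{prop:Osc} then upgrades this to finiteness of the $\operatorname{Osc}^{p,p}_r$-norm for every $r\in(0,\infty)$, and for $r\in[1,\infty)$ Lemma \ref{lem:LpOsc} permits replacing $\inf_c$ by $\ave{b}_{B_Q}$, yielding both the local $L^r$-integrability of $b$ and the second displayed equivalence of the proposition.
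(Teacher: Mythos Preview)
Your overall architecture---reducing to the pointwise kernel estimate via Lemma~\ref{lem:LpOsc} and Tonelli, then handling lower and upper bounds separately, and deducing the second equivalence from Proposition~\ref{prop:Osc}---matches the paper exactly. The lower bound and the ``moreover'' part are fine.

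The gap is in your upper bound. Your claim that ``positive lower dimension of $X$ is automatic when $\operatorname{diam}(X)=\infty$ by a standard reverse-doubling argument'' is false: take $X=\Z$ with counting measure, where $V(0,r)$ is constant for $r\in(0,1)$, so $d=0$. Reverse doubling in the form you need requires additional hypotheses (connectedness, uniform perfectness, or atomlessness), none of which is assumed here. More seriously, summing level by level as $\sum_{k\leq k_0}V(x,\delta^k)^{-2}$ overcounts whenever a cube sits in several generations $\mathscr D_k$, and in a general space of homogeneous type nothing prevents this overcount from being unbounded at the scales you need; the paper is explicit (just after Definition~\ref{def:cubes}) that this multiplicity must be accommodated.

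The paper's fix is to sum over \emph{sets} rather than levels: for fixed $x\neq y$ there are only boundedly many \emph{minimal} cubes $P$ (as sets) with $\{x,y\}\subseteq B_P$, and every contributing $Q$ is a strict ancestor $P^{[k]}$ of one of these. Lemma~\ref{lem:strRD} then gives $\mu(P^{[k]})\geq c^k\mu(P)$ with $c>1$ independent of any lower-dimension hypothesis, so \eqref{eq:strRDsum} yields $\sum_{k\geq 0}\mu(P^{[k]})^{-2}\lesssim\mu(P)^{-2}\approx V(x,y)^{-2}$. This is the mechanism you should use in place of the reverse-doubling claim.
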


\begin{proof}
By Lemma \ref{lem:LpOsc},
\begin{equation}\label{eq:BesovNorms}
\begin{split}
  &\BNorm{\Big\{\inf_c\Big(\fint_{B_Q}\abs{b-c}^p\ud\mu\Big)^{\frac1p}\Big\}_{Q\in\mathscr D}}{\ell^p}^p \\
  &\approx\sum_{Q\in\mathscr D}\fint_{B_Q}\fint_{B_Q}\abs{b(x)-b(y)}^p\ud\mu(x)\ud\mu(y) \\
  &=\int_X\int_X\abs{b(x)-b(y)}^p\sum_{Q\in\mathscr D}\frac{1_{B_Q}(x)1_{B_Q}(y)}{\mu(B_Q)^2}
    \ud\mu(x)\ud\mu(y).
\end{split}
\end{equation}
If $Q$ is a dyadic cube of sidelength $\ell(Q)\approx\rho(x,y)$ that contains $x$, then $B_Q$ contains both $x$ and $y$, and $\mu(B_Q)\approx V(x,y)$. For any pair $(x,y)\in X\times X$, the sum over $Q\in\mathscr D$ contains at least this term, and hence the sum is at least $V(x,y)^{-2}$.

If $x\neq y$, and both $x,y\in B_Q$, then $\ell(Q)\gtrsim\rho(x,y)$. Hence every $Q$ with this property contains a minimal cube with the same property, and doubling implies that there are at most boundedly many such minimal cubes. Then every $Q$ appearing in the sum is a dyadic ancestor $P^{[k]}$ of some minimal cube $P$. For some $c\in(0,1)$ depending only on $X$ and $\mathscr D$, we have
\begin{equation*}
  \sum_{k=0}^\infty\frac{1}{\mu(P^{[k]})^2}
  \lesssim\sum_{k=0}^\infty\frac{c^{2k}}{\mu(P)^2}\lesssim\frac{1}{\mu(P)^2}\approx\frac{1}{V(x,y)^2},
\end{equation*}
and summing over the boundedly many minimal $P$, we derive
\begin{equation*}
  \sum_{Q\in\mathscr D}\frac{1_{B_Q}(x)1_{B_Q}(y)}{\mu(B_Q)^2}\lesssim\frac{1}{V(x,y)^2}.
\end{equation*}
Recall that we already verified the matching lower bound, and thus the proof of the first norm comparison is complete.

It is then an immediate application of earlier results that
\begin{equation*}
\begin{split}
  \Norm{b}{\dot B^p(\mu)}
  &\approx\BNorm{\Big\{\inf_c\Big(\fint_{B_Q}\abs{b-c}^p\ud\mu\Big)^{\frac1p}\Big\}_{Q\in\mathscr D}}{\ell^p}
  \qquad\text{(just explained)} \\
  &\approx\BNorm{\Big\{\inf_c\Big(\fint_{B_Q}\abs{b-c}^r\ud\mu\Big)^{\frac1r}\Big\}_{Q\in\mathscr D}}{\ell^p}
  \qquad\text{(Proposition \ref{prop:Osc})} \\
  &\approx\BNorm{\Big\{\Big(\fint_{B_Q}\abs{b-\ave{b}_B}^r\ud\mu\Big)^{\frac1r}\Big\}_{Q\in\mathscr D}}{\ell^p}
  \qquad\text{(Lemma \ref{lem:LpOsc})}.
\end{split}
\end{equation*}
This completes the proof.
\end{proof}

Note that Proposition \ref{prop:Bp=Osc} coincides with \eqref{eq:Osc=Besov} of Proposition \ref{prop:Osc=classical}.

\begin{remark}
Let us summarise some results proved so far. We just proved that
\begin{equation}\label{eq:remBp<Sp}
\begin{split}
  \Norm{b}{\dot B^p(\mu)} &\approx\Norm{b}{\operatorname{Osc}^{p,p}(\mu)}
  \qquad\text{(Proposition \ref{prop:Bp=Osc})} \\
  &\lesssim\Norm{[b,T]}{S^p}\qquad\text{(Proposition \ref{prop:Osc<S})},\qquad\forall p\in(1,\infty).
\end{split}
\end{equation}
Moreover, we have
\begin{equation*}%\label{eq:remSp<Bp}
  \Norm{[b,T]}{S^p}\lesssim\Norm{b}{\dot B^p(\mu)}\qquad\text{(Corollary \ref{cor:JWSpBp})},\qquad \forall p\in(2,\infty),
\end{equation*}
and, assuming that $(X,\rho,\mu)$ has lower dimension $d\in(1,\infty)$ and supports the $(1,d)$-Poincar\'e inequality,
\begin{equation}\label{eq:remBd0}
  \Norm{b}{\dot B^d(\mu)}<\infty\quad\Leftrightarrow\quad b=\text{const}\qquad\text{(Proposition \ref{prop:Bp=const})}.
%  Corollary \ref{cor:constBesov})}\qquad\forall p\in(0,d].
\end{equation}

Now suppose that $d\geq 2$. Then a combination of these results shows that
\begin{equation}\label{eq:remSp=Bp}
  [b,T]\in S^p\quad\Leftrightarrow\quad\begin{cases} b\in\dot B^p(\mu), & \text{if }p\in(d,\infty), \\ b=\text{const}, & \text{if }p\in(0,d].\end{cases}
\end{equation}
Indeed, case $p\in(d,\infty)$ is immediate by combining the upper and lower bounds for $[b,T]$, noting that $(d,\infty)\subseteq(2,\infty)\subseteq(1,\infty)$ when $d\geq 2$. As for case $p\in(0,d]$, note that ``$\Leftarrow$'' is trivial, while ``$\Rightarrow$'' follows from
\begin{equation*}
\begin{split}
  [b,T]\in S^p\quad &\Rightarrow\quad [b,T]\in S^d\quad\text{(by the containment of Schatten classes)} \\
  &\Rightarrow\quad b\in \dot B^d\quad\text{(by \eqref{eq:remBp<Sp})} \\
  &\Rightarrow\quad b=\text{constant}\quad\text{(by \eqref{eq:remBd0})}.
\end{split}
\end{equation*}

The equivalence \eqref{eq:remSp=Bp} consists of conclusions \eqref{it:p>d} and \eqref{it:p<d} of Theorem \ref{thm:main} in the special case of $d\geq 2$. As far as Theorem \ref{thm:main} is concerned, the remainder of the paper, and in particular Part \ref{part:dyadic}, is only needed to cover the lower dimensional spaces $(d<2)$ and the end-point result in \eqref{it:p=d} of Theorem \ref{thm:main}. Note that most of the concrete situations previously studied satisfy the dimensional condition $d\geq 2$.

It is also worth noting that these results for $d\geq 2$ hold under even weaker Calder\'on--Zygmund regularity than what was stated in Theorem \ref{thm:main}. The commutator upper bound of Corollary \ref{cor:JWSpBp} only required the kernel bound \eqref{eq:CZ0} but no regularity, and the commutator lower bound of Proposition \ref{prop:Osc<S} only the minimal regularity \eqref{eq:omega0}.
\end{remark}

\section{The weak-type oscillation space}\label{sec:weakSpace}

In the previous section, we identified the oscillation space $\operatorname{Osc}^{p,p}$ with the Besov spaces $\dot B^p(\mu)$ featuring in conclusion \eqref{it:p>d} of Theorem \ref{thm:main}. We now complement this result by identifying the weak-type oscillation space $\operatorname{Osc}^{d,\infty}$ with the other weak type space $L^{d,\infty}(\nu_d)$, which in turn was identified with the Haj\l{}asz--Sobolev in Theorem \ref{thm:Rupert} that we borrowed from \cite{HK:W1p}. This is relevant for conclusion \ref{it:p=d} of Theorem \ref{thm:main}.

Let us recall the mean oscillation function
\begin{equation*}
  m_b(x,t):=m_b(B(x,t)):=\fint_{B(x,t)}\abs{b-\ave{b}_{B(x,t)}}\ud\mu,\qquad
  (x,t)\in X\times(0,\infty),
\end{equation*}
and the two related norms
\begin{equation*}
\begin{split}
  \Norm{m_b}{L^{d,\infty}(\nu_d)}
  &:=\sup_{\kappa>0}\kappa\nu_d(\{m_b>\kappa\})^{\frac1d},\qquad
  \ud\nu_d(x,t):=\frac{\ud\mu(x)\ud t}{t^{d+1}}, \\
   \Norm{b}{\operatorname{Osc^{d,\infty}}}
   &:=\Norm{\{m_b(B_Q)\}_{Q\in\mathscr D}}{\ell^{d,\infty}}
   :=\sup_{\kappa>0}\kappa (\#\{Q\in\mathscr D: m_b(B_Q)>\kappa\})^{\frac1d}.
\end{split}
\end{equation*}
that we identify in the result that follows:

\begin{proposition}\label{prop:Osc=LdWeak}
Let $(X,\rho,\mu)$ be a space of homogeneous type that is Ahlfors $d$-regular for some $d\in(0,\infty)$. Then for all  $b\in L^1_{\loc}(\mu)$, we have
\begin{equation*}
  \Norm{b}{\operatorname{Osc}^{d,\infty}}\approx\Norm{m_b}{L^{d,\infty}(\nu_d)}.
\end{equation*}
\end{proposition}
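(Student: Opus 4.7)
The plan is to prove both inequalities by comparing the discrete family $\{m_b(B_Q)\}_{Q\in\mathscr D}$ with the continuous function $m_b(x,t)$ via natural ``cells'' in $X\times(0,\infty)$, with Ahlfors $d$-regularity ensuring that every such cell has $\nu_d$-measure of order one.

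The workhorse is a \emph{comparability principle}: whenever $B\subseteq B'$ are balls with $\mu(B')\lesssim\mu(B)$, one has $m_b(B)\lesssim m_b(B')$, and symmetrically. This follows at once from Lemma \ref{lem:LpOsc} together with the elementary bound
\[
  \inf_c\fint_B\abs{b-c}\ud\mu\leq\frac{\mu(B')}{\mu(B)}\inf_c\fint_{B'}\abs{b-c}\ud\mu.
\]
In Ahlfors $d$-regular spaces, this applies to any two balls whose centres lie at distance $\lesssim r$ and whose radii are both $\approx r$, since then both measures are $\approx r^d$.

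For the bound $\Norm{b}{\operatorname{Osc}^{d,\infty}}\lesssim\Norm{m_b}{L^{d,\infty}(\nu_d)}$, I would attach to each $Q\in\mathscr D$ the cell
\[
  R_Q:=B(z_Q,r_Q/2)\times[2r_Q,3r_Q],\qquad r_Q:=c\ell(Q).
\]
For every $(x,t)\in R_Q$ one has $B(x,t)\supseteq B_Q$ and $\mu(B(x,t))\approx\mu(B_Q)$, so the comparability principle gives $m_b(B_Q)\lesssim m_b(x,t)$ on $R_Q$, while Ahlfors regularity gives $\nu_d(R_Q)\approx\mu(B(z_Q,r_Q/2))\cdot r_Q^{-d}\approx 1$. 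Bounded overlap of the cells follows because the $t$-intervals $[2r_Q,3r_Q]$ are disjoint unless the dyadic levels of two cubes differ by at most a fixed amount, and within any bounded range of levels only boundedly many cubes have centres within a fixed multiple of $r_Q$ of one another by doubling. Integrating the resulting pointwise bound $\sum_{Q:m_b(B_Q)>\kappa}1_{R_Q}\lesssim 1_{\{m_b>\kappa/C\}}$ against $\nu_d$ yields
\[
  \#\{Q\in\mathscr D: m_b(B_Q)>\kappa\}\lesssim\nu_d(\{m_b>\kappa/C\}),
\]
and the desired weak-type norm inequality follows after rescaling $\kappa\to C\kappa$.

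For the reverse inequality, I would define a map $(x,t)\mapsto Q(x,t)$ by selecting the unique $Q\in\mathscr D_k$ containing $x$ at the generation $k$ satisfying $\delta^k\leq t<\delta^{k-1}$. Then $\ell(Q)\approx t$ and $\rho(x,z_Q)\lesssim t$, so for the constant $c$ in $B_Q$ chosen sufficiently large (depending on $\delta$ and $C_0$) we have $B(x,t)\subseteq B_Q$ with $\mu(B(x,t))\approx\mu(B_Q)$; the comparability principle then gives $m_b(x,t)\lesssim m_b(B_{Q(x,t)})$. Since the preimage of each $Q$ under this map has $\nu_d$-measure $\approx\mu(Q)\cdot\ell(Q)^{-d}\approx 1$, summing produces $\nu_d(\{m_b>\kappa\})\lesssim\#\{Q:m_b(B_Q)\gtrsim\kappa\}$, which is the matching bound. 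The only genuine obstacle is bookkeeping: fixing the constant $c$ in $B_Q$ large enough that the containments point the right way in both arguments, and verifying the bounded overlap of the $R_Q$, both of which reduce to routine estimates once Ahlfors regularity is used to trivialise volumes.
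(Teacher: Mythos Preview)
Your proposal is correct and follows essentially the same approach as the paper: both directions proceed by associating to each dyadic cube $Q$ a cell in $X\times(0,\infty)$ of $\nu_d$-measure $\approx 1$ on which $m_b(B_Q)$ and $m_b(x,t)$ are comparable, then passing to level sets. The only cosmetic difference is that the paper arranges its cells $E(Q)=B(z_Q,c\ell(Q))\times[C\delta\ell(Q),C\ell(Q))$ to be genuinely disjoint (small balls, $t$-intervals spanning a full factor of $\delta$), whereas you allow bounded overlap; either works for the weak-type counting.
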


\begin{proof}
The assumed Ahlfors $d$-regularity with $d\in(0,\infty)$ implies in particular a positive lower dimension, and hence Lemma \ref{lem:DuniqueLevel} shows that each $Q\in\mathscr D$ belongs to a unique $\mathscr D_k$ under the assumptions of the proposition. We denote by $\ell(Q)=\delta^k$ the ``side-length'' of $Q$.

If $B_1\subseteq B_2$ are two balls with $\mu(B_2)\lesssim\mu(B_1)$, then
\begin{equation*}
  m_b(B_1)\lesssim m_b(B_2).
\end{equation*}

From the quasi-triangle inequality and doubling, 
it follows that
\begin{equation*}
  \begin{cases}B_Q\subseteq B(x,t), \\ V(x,t)\lesssim\mu(B_Q)\end{cases}\quad
  \text{for all}\quad\begin{cases} x\in B(z_Q,c\ell(Q)),\\ t\in[ C\delta\ell(Q),C\ell(Q)),\end{cases}
\end{equation*}
for all small enough $c$ and large enough $C$ depending only on the parameters of the space and the dyadic system. Thus, if $m_b(B_Q)>\kappa$ for some $Q\in\mathscr D$, then
\begin{equation}\label{eq:mbxt>ck}
  m_b(x,t)\gtrsim\kappa\quad\forall(x,t)\in E(Q):=B(z_Q,c\ell(Q))\times[C\delta\ell(Q),C\ell(Q)),
\end{equation}
where the sets $E(Q)\subseteq X\times(0,\infty)$ are disjoint for small enough $c$m, since the balls $B(z_Q,c\ell(Q))$ are disjoint among cubes $Q$ of the same size, and the intervals $[C\delta\ell(Q),C\ell(Q))$ are disjoint among cubes of different sizes. Moreover,
\begin{equation}\label{eq:nudEQ}
  \nu_d(E(Q))=V(z_Q,c\ell(Q))\int_{C\delta\ell(Q)}^{C\ell(Q)}\frac{\ud t}{t^{d+1}}
  \approx V(z_Q,c\ell(Q))\frac{1}{\ell(Q)^d}\approx 1
\end{equation}
by Ahlfors $d$-regularity. Thus
\begin{equation*}
\begin{split}
  \#\{Q\in\mathscr D: m_b(B_Q)>\kappa\}
  &=\sum_{\substack{Q\in\mathscr D \\ m_b(B_Q)>\kappa}}1 \\
  &\approx \sum_{\substack{Q\in\mathscr D \\ m_b(B_Q)>\kappa}}\nu_d(E(Q))
  \qquad\text{by \eqref{eq:nudEQ}} \\
  &=\nu_d\Big( \bigcup_{\substack{Q\in\mathscr D \\ m_b(B_Q)>\kappa}} E(Q) \Big)   \qquad\text{by disjointness} \\
  &\leq \nu_d(\{(x,t): m_b(x,t)>c\kappa\})\qquad\text{by \eqref{eq:mbxt>ck}}.
\end{split}
\end{equation*}
This implies
\begin{equation}\label{eq:mbell<mbLnu}
  \Norm{\{m_b(B_Q)\}_{Q\in\mathscr D}}{\ell^{d,\infty}}
  \lesssim\Norm{m_b}{L^{d,\infty}(\nu_d)}.
\end{equation}

For the converse inequality, given $(x,t)\in X\times(0,\infty)$, let $Q(x,t)\in\mathscr D$ be the unique dyadic cube such that $x\in Q(x,t)$ and $\ell(Q(x,t))\in[C\delta t,Ct)$. If $C$ is large enough, depending only on the parameters of the space and the dyadic system, it follows that
\begin{equation*}
  B(x,t)\subseteq B_{Q(x,t)},\qquad  \mu(B_{Q(x,t)})\lesssim \mu(B(x,t)),
\end{equation*}
and thus
\begin{equation}\label{eq:mbxt<mbBQ}
  m_b(x,t)\lesssim m_b(B_{Q(x,t)})
\end{equation}
For $Q\in\mathscr D$, let
\begin{equation}\label{eq:defFQ}
    F(Q) :=\{(x,t)\in X\times(0,\infty):Q(x,t)=Q\} 
    =Q\times\Big(\frac{\ell(Q)}{C},\frac{\ell(Q)}{C\delta}\Big].
\end{equation}
These sets are disjoint, since the cubes $Q$ of the same side-length are disjoint, and the intervals $(\frac{\ell(Q)}{C},\frac{\ell(Q)}{C\delta}]$ are disjoint for cubes of different side-lengths $\ell(Q)=\delta^k$. Moreover,
\begin{equation}\label{eq:nudFQ}
  \nu_d(F(Q))=\mu(Q)\int_{\frac{\ell(Q)}{C}}^{\frac{\ell(Q)}{C\delta}}\frac{\ud t}{t^{d+1}}\approx\mu(Q)\frac{1}{\ell(Q)^d}\approx 1
\end{equation}
by Ahlfors $d$-regularity. Thus
\begin{equation*}
\begin{split}
  \nu_d(\{(x,t):m_b(x,t)>\kappa\})
  &\leq \nu_d(\{(x,t):m_b(B_{Q(x,t)})>c\kappa\})
  \qquad\text{by \eqref{eq:mbxt<mbBQ}} \\
  &=\sum_{\substack{Q\in\mathscr D \\ m_b(B_Q)>c\kappa}}\nu_d(\{(x,t): Q=Q(x,t)\}) \\
  &=\sum_{\substack{Q\in\mathscr D \\ m_b(B_Q)>c\kappa}}\nu_d(F(Q))\qquad\text{by definition \eqref{eq:defFQ}} \\
  &\approx \sum_{\substack{Q\in\mathscr D \\ m_b(B_Q)>c\kappa}} 1\qquad\text{by \eqref{eq:nudFQ}} \\
  &=\#\{Q\in\mathscr D:m_b(B_Q)>c\kappa\}.
\end{split}
\end{equation*}
This implies that
\begin{equation}\label{eq:mbLnu<mbell}
  \Norm{m_b}{L^{d,\infty}(\nu_d)}\lesssim\Norm{\{m_b(B_Q)\}_{Q\in\mathscr D}}{\ell^{d,\infty}}
\end{equation}
Combining \eqref{eq:mbell<mbLnu} and \eqref{eq:mbLnu<mbell}, we have proved the claimed equivalence.
\end{proof}

Note that Proposition \ref{prop:Osc=LdWeak} coincides with \eqref{eq:Osc=weakLp} of Proposition \ref{prop:Osc=classical}. Together with Proposition \ref{prop:Bp=Osc}, which settled \eqref{eq:Osc=Besov} of Proposition \ref{prop:Osc=classical}, we have now completed the proof of Proposition \eqref{prop:Osc=classical}.

\section{Dyadic paraproducts and their commutators}\label{sec:parap}

Dyadic paraproducts were considered by \cite{RS:NWO} as the first model example of their method already in their introductory discussion \cite[pp.~242--243]{RS:NWO}. %Hence, these objects very clearly fit within the bounds available by classical methods. 
In this short section, we provide an extension of these results to spaces of homogeneous type, as they will play a role in the analysis of general Calder\'on--Zygmund operators in Part \ref{part:dyadic}. This extension is not difficult, but it requires some care with the fact that the same set may appear as a dyadic cube of several different generations in our general setting.

A different approach (motivated by operator-valued extensions that we do not consider) to the Schatten properties of dyadic paraproducts has been developed in \cite{PS:04} and \cite{WZ:24,Zhang:thesis}. However, these papers deal with paraproducts related to a symmetric $d$-adic filtrations, where each cube has exactly $d$ children of equal measure (with $d=2$ in \cite{PS:04}, and $d$ being an arbitrary but fixed integer $d\geq 2$ in \cite{WZ:24,Zhang:thesis}). In this case, one can also choose a symmetric basis of the Haar functions involving $d$th roots of unity (see \cite[Definition 2.2]{WZ:24} or \cite[Definition 1.4.2]{Zhang:thesis}), and the approach of \cite{PS:04,WZ:24,Zhang:thesis} makes at least some use of this in their explicit computations. While it is possible that these considerations could be adapted to our less symmetric situation (where both the number of children and their relative sizes may vary from one cube to another), our treatment is more closely related to the original approach of \cite{RS:NWO}.

For each dyadic cube $Q\in\mathscr D$, we define the (martingale) difference operators
\begin{equation*}
  \D_Q:=\sum_{Q'\in\operatorname{ch}(Q)}\E_{Q'}-\E_Q;\qquad
  [\D]_Q:=\sum_{Q'\in [\operatorname{ch}](Q)}\E_{Q'}-\E_Q,
\end{equation*}
where the first form implicitly assumes that $Q=(Q,k)$ also carries information about its level $k$ in the dyadic system, while the second one only depends on $Q$ as a set. Note that $\D_Q=0$ if and only if $Q$ has only one dyadic child (itself!), while $[\D]_Q=0$ if and only if $Q=\{x\}$ for some $x\in X$.

For $\beta\in\BMO(\mathscr D)$, the associated dyadic paraproduct is then defined by
\begin{equation}\label{eq:paradef}
  \Pi_\beta:=\sum_{k\in\Z}\sum_{Q\in\mathscr D_k}\D_Q\beta\otimes\frac{1_Q}{\mu(Q)}=\sum_{Q\in\mathscr D}[\D]_Q\beta\otimes\frac{1_Q}{\mu(Q)}.
\end{equation}
In the latter summation, each set $Q\in\mathscr D$ is counted only once, while in the double summation, it appears according to its multiplicity among the different dyadic levels $\mathscr D_k$. The equality is explained by the fact $\D_Q=\D_{(Q,k)}$ is non-zero only if and only if $k=k_{\max}(Q)$.

\begin{lemma}\label{lem:DQbd}
\begin{equation*}
  \Norm{[\D]_Qb}{\infty}\lesssim\Norm{b-\ave{b}_Q}{\avL^1(Q)}
  :=\fint_Q\abs{b-\ave{b}_Q}\ud\mu.
\end{equation*}
\end{lemma}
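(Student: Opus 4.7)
The plan is to compute $[\D]_Q b$ pointwise and then apply Lemma \ref{lem:strRD} to control averages over strict children by averages over $Q$ itself.

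First, observe that the support of $[\D]_Q b$ is contained in $Q$, since $\E_{Q'} b$ for $Q' \in [\ch](Q)$ is supported in $Q' \subseteq Q$, as is $\E_Q b$; hence it suffices to bound $|[\D]_Q b|$ on $Q$. On any strict child $Q' \in [\ch](Q)$, the function $[\D]_Q b$ equals the constant
\begin{equation*}
  \ave{b}_{Q'} - \ave{b}_Q = \fint_{Q'}(b - \ave{b}_Q)\,\ud\mu,
\end{equation*}
so the values of $[\D]_Q b$ on $Q$ are precisely these piecewise constants over the partition $[\ch](Q)$ of $Q$.

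Next, by Lemma \ref{lem:strRD} applied to $Q' \in [\ch](Q)$ (so that $(Q')^{[1]} = Q$), we have $\mu(Q) \leq C\mu(Q')$ with $C$ independent of $Q,Q'$. Therefore, for each $Q' \in [\ch](Q)$,
\begin{equation*}
  \Babs{\fint_{Q'}(b-\ave{b}_Q)\,\ud\mu}
  \leq \frac{1}{\mu(Q')}\int_{Q'}\abs{b-\ave{b}_Q}\,\ud\mu
  \leq \frac{C}{\mu(Q)}\int_Q\abs{b-\ave{b}_Q}\,\ud\mu
  = C\Norm{b-\ave{b}_Q}{\avL^1(Q)}.
\end{equation*}
Taking the supremum over $Q' \in [\ch](Q)$ and recalling that $[\D]_Q b$ vanishes off $Q$, this yields the claim. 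There is no real obstacle here; the argument is just a direct pointwise estimate, with the only nontrivial input being the doubling-type comparability $\mu(Q') \approx \mu(Q)$ between a strict parent and child furnished by Lemma \ref{lem:strRD}, which is precisely what prevents the extra factor from the scale jump between $Q$ and $Q'$ from blowing up.
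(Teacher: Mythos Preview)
The proposal is correct and follows essentially the same approach as the paper: both compute $[\D]_Q b$ as the piecewise constant $\ave{b}_{Q'}-\ave{b}_Q$ on each strict child $Q'$, then use $\mu(Q)\lesssim\mu(Q')$ to pass from the average over $Q'$ to the average over $Q$. The only cosmetic difference is that the paper invokes ``doubling'' directly for this comparability, while you cite Lemma~\ref{lem:strRD}, which packages the same fact.
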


\begin{proof}
On $Q'\in[\operatorname{ch}](Q)$, the value of $\abs{[\D]_Q b}$ is
\begin{equation*}
  \abs{\ave{b}_{Q'}-\ave{b}_Q}
  =\Babs{\fint_{Q'}(b-\ave{b}_Q)}
  \leq\fint_{Q'}\abs{b-\ave{b}_Q}\lesssim\fint_Q\abs{b-\ave{b}_Q},
\end{equation*}
since $Q'\subset Q$ and $\mu(Q)\lesssim\mu(Q')$ by doubling.
\end{proof}

\begin{proposition}\label{prop:paracomest}
For $p\in(0,\infty)$ and $q\in(0,\infty]$, we have the estimates 
\begin{equation*}
\begin{split}
  \Norm{\Pi_b}{S^{p,q}(L^2(\mu))} &\lesssim\Norm{b}{\operatorname{Osc}^{p,q}(\mathscr D)}, \\
  \Norm{[b,\Pi_\beta]}{S^{p,q}(L^2(\mu))} &\lesssim\Norm{b}{\operatorname{Osc}^{p,q}(\mathscr D)}\Norm{\beta}{\BMO(\mathscr D)}.
\end{split}
\end{equation*}
More generally, for every $w\in A_2$, we have
\begin{equation*}
\begin{split}
  \Norm{\Pi_b}{S^{p,q}(L^2(w))} &\lesssim\Norm{b}{\operatorname{Osc}^{p,q}(\mathscr D)}, \\
  \Norm{[b,\Pi_\beta]}{S^{p,q}(L^2(w))} &\lesssim\Norm{b}{\operatorname{Osc}^{p,q}(\mathscr D)}\Norm{\beta}{\BMO(\mathscr D)}.
\end{split}
\end{equation*}
The same estimates are also valid for the adjoint paraproducts $\Pi_b^*$ and $\Pi_\beta^*$ in place of $\Pi_b$ and $\Pi_\beta$, respectively.
\end{proposition}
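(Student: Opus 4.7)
The overall strategy is to write each operator in the form $\sum_{Q\in\mathscr D}\lambda_Q e_Q\otimes h_Q$ and invoke the nearly weakly orthogonal framework of Corollary \ref{cor:Spq<ellpq-w} together with the weighted NWO bound of Lemma \ref{lem:NWOsuff-w}. Since that lemma asks only that each $e_Q$ be supported in (a slight enlargement of) $Q$ and satisfy $\Norm{e_Q}{r}\lesssim\mu(Q)^{1/r-1/2}$ for every $r>2$, the task reduces to producing such a factorisation in which $\Norm{\{\lambda_Q\}_{Q\in\mathscr D}}{\ell^{p,q}}$ is controlled by the right-hand side. Thanks to the equivalences $\operatorname{Osc}^{p,q}_r\approx\operatorname{Osc}^{p,q}_s$ (Proposition \ref{prop:Osc}) and the comparability $m_b(Q)\lesssim m_b(B_Q)$ (a consequence of $Q\subseteq B_Q$ and $\mu(B_Q)\approx\mu(Q)$), all such amplitudes can be brought to the common form $\Norm{b}{\operatorname{Osc}^{p,q}}$.

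For $\Pi_b$ itself: setting $\lambda_Q:=m_b(Q)$, $e_Q:=[\D]_Q b/(\lambda_Q\mu(Q)^{1/2})$ and $h_Q:=\mu(Q)^{-1/2}1_Q$ (with the convention $e_Q=0$ when $\lambda_Q=0$), Lemma \ref{lem:DQbd} gives $\Norm{e_Q}{\infty}\lesssim\mu(Q)^{-1/2}$, and both $e_Q,h_Q$ are supported in $Q$. Hence the NWO hypotheses of Lemma \ref{lem:NWOsuff-w} are met for any $w\in A_2$, and Corollary \ref{cor:Spq<ellpq-w} delivers
\begin{equation*}
  \Norm{\Pi_b}{S^{p,q}(L^2(w))}\lesssim\Norm{\{m_b(Q)\}_{Q\in\mathscr D}}{\ell^{p,q}}\lesssim\Norm{b}{\operatorname{Osc}^{p,q}}.
\end{equation*}
The estimate for $\Pi_b^*=\sum_Q \mu(Q)^{-1}1_Q\otimes[\D]_Q b$ is identical after swapping the roles of $e_Q$ and $h_Q$, since the pointwise bounds are symmetric.

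For the commutator, a direct computation using $\langle bf\rangle_Q=b(x)\langle f\rangle_Q-\fint_Q(b(x)-b(y))f(y)\ud\mu(y)$ and the fact that $[\D]_Q\beta$ and $1_Q$ are supported in $Q$ gives
\begin{equation*}
  [b,\Pi_\beta]=\sum_{Q\in\mathscr D}[\D]_Q\beta\cdot(b-\ave{b}_Q)\otimes\frac{1_Q}{\mu(Q)}-\sum_{Q\in\mathscr D}[\D]_Q\beta\otimes\frac{(b-\ave{b}_Q)1_Q}{\mu(Q)}=:T_1+T_2,
\end{equation*}
where the key cancellation $b(x)-b(y)=(b(x)-\ave{b}_Q)-(b(y)-\ave{b}_Q)$ is used. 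For $T_1$, fix any $r\in(2,\infty)$ and set $\lambda_Q:=\Norm{\beta}{\BMO}m_{b,r}(Q)$ with $m_{b,r}(Q):=(\fint_Q|b-\ave{b}_Q|^r)^{1/r}$, together with
\begin{equation*}
  e_Q:=\frac{[\D]_Q\beta\cdot(b-\ave{b}_Q)}{\lambda_Q\mu(Q)^{1/2}},\qquad h_Q:=\frac{1_Q}{\mu(Q)^{1/2}}.
\end{equation*}
Lemma \ref{lem:DQbd} applied to $\beta$ and Hölder bound $\Norm{e_Q}{r}\lesssim\mu(Q)^{1/r-1/2}$, and both functions are supported in $Q$. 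Corollary \ref{cor:Spq<ellpq-w} and Lemma \ref{lem:NWOsuff-w} then yield $\Norm{T_1}{S^{p,q}(L^2(w))}\lesssim\Norm{\beta}{\BMO}\Norm{\{m_{b,r}(Q)\}_Q}{\ell^{p,q}}\lesssim\Norm{\beta}{\BMO}\Norm{b}{\operatorname{Osc}^{p,q}}$, where the last step uses Proposition \ref{prop:Osc}. Operator $T_2$ is treated identically with the roles of $e_Q$ and $h_Q$ interchanged. The case of $\Pi_\beta^*$ proceeds through the analogous decomposition
\begin{equation*}
  [b,\Pi_\beta^*]=\sum_{Q\in\mathscr D}\frac{(b-\ave{b}_Q)1_Q}{\mu(Q)}\otimes[\D]_Q\beta-\sum_{Q\in\mathscr D}\frac{1_Q}{\mu(Q)}\otimes[\D]_Q\beta\cdot(b-\ave{b}_Q).
\end{equation*}

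The only real obstacle is that the commutator expansion introduces the factor $(b-\ave{b}_Q)$ into the NWO functions, which destroys the uniform pointwise bound $\lesssim\mu(Q)^{-1/2}$ available for $\Pi_b$. The resolution is to use the $L^r$ bound with $r>2$ (rather than $L^\infty$) together with the $\operatorname{Osc}^{p,q}_r\approx\operatorname{Osc}^{p,q}$ equivalence from Proposition \ref{prop:Osc}, which absorbs the $L^r$ oscillation back into the amplitude $\lambda_Q$ without changing its $\ell^{p,q}$ norm. This is precisely where the flexibility in the choice of $r$ in Lemma \ref{lem:NWOsuff-w} and Proposition \ref{prop:Osc} is needed; everything else is a bookkeeping of the NWO factorisation.
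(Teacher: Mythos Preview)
Your proposal is correct and follows essentially the same approach as the paper: write each operator as $\sum_Q\lambda_Q e_Q\otimes h_Q$, verify the NWO bounds on $e_Q,h_Q$ (pointwise for $\Pi_b$, in $L^r$ for the commutator pieces), and invoke Corollary~\ref{cor:Spq<ellpq-w} together with Proposition~\ref{prop:Osc} to pass from $\operatorname{Osc}^{p,q}_r$ to $\operatorname{Osc}^{p,q}$. The only cosmetic differences are that the paper absorbs $\Norm{\beta}{\BMO}$ into the NWO constant rather than the amplitude, and parametrises by $s'=r$ with $s\in(1,2)$; your closing remark on the flexibility of $r$ captures exactly the paper's point that, in the weighted case, $r$ is chosen large enough depending on $w$ via $A_2$ self-improvement.
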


\begin{proof}
For any $s\in(1,\infty)$, we can write
\begin{equation}\label{eq:pararep}
  \Pi_b 
  =\sum_{Q\in\mathscr D}[\D]_Q b\otimes\frac{1_Q}{\mu(Q)}
  =\sum_{Q\in\mathscr D}\Norm{[\D]_Q b}{\infty} f_Q\otimes 1_Q,
\end{equation}
where
\begin{equation*}
  f_Q:= \frac{\D_Q b}{\Norm{[\D]_Q b}{\infty}\mu(Q)}
\end{equation*}
Then
\begin{equation*}
  \frac{\abs{\pair{f}{f_Q}\pair{g}{1_Q}}}{\mu(Q)}\leq\Norm{f}{\avL^1(Q)}\Norm{g}{\avL^1(Q)},
\end{equation*}
and hence the corresponding maximal operator is clearly bounded $L^2(w)\times L^2(\sigma)\to L^1(\mu)$ by Remark \ref{rem:NWOsuff-w}. Hence the representation \eqref{eq:pararep} together with Lemma \ref{lem:DQbd} implies that
\begin{equation*}
\begin{split}
  \Norm{\Pi_b}{S^{p,q}}
  &\lesssim \BNorm{\Big\{ \Norm{[\D]_Q b}{\infty} \Big\}_{Q\in\mathscr D}}{\ell^{p,q}}
  \qquad\text{by Corollary \ref{cor:Spq<ellpq-w}}    \\
  &\lesssim \BNorm{\Big\{ \Norm{b-\ave{b}_Q}{\avL^{1}(Q)} \Big\}_{Q\in\mathscr D}}{\ell^{p,q}}
  =\Norm{b}{\operatorname{Osc}^{p,q}(\mathscr D)} \qquad\text{by Lemma \ref{lem:DQbd}}.
\end{split}
\end{equation*}
This is the first claimed bound.

To estimate the commutator, for any $s\in(1,\infty)$, we can write
\begin{equation}\label{eq:paracomrep}
\begin{split}
  [b,\Pi_\beta]
  &=\sum_{Q\in\mathscr D}\Big(b[\D]_Q\beta\otimes\frac{1_Q}{\mu(Q)}-[\D]_Q\beta\otimes\frac{1_Q b}{\mu(Q)}\Big) \\
  &=\sum_{Q\in\mathscr D}\Norm{b-\ave{b}_Q}{\avL^{s'}(Q)}\Big(e_Q\otimes 1_Q-[\D]_Q\beta\otimes h_Q\Big),
\end{split}  
\end{equation}
where
\begin{equation*}
  e_Q:= \frac{(b-\ave{b}_Q)[\D]_Q\beta}{\Norm{b-\ave{b}_Q}{\avL^{s'}(Q)}\mu(Q)},\qquad
  h_Q:= \frac{(b-\ave{b}_Q)1_Q}{\Norm{b-\ave{b}_Q}{\avL^{s'}(Q)}\mu(Q)}.
\end{equation*}
Then
\begin{equation}\label{eq:paraComME}
\begin{split}
  \frac{\abs{\pair{e_Q}{f}\pair{1_Q}{g} }}{\mu(Q)}
  &\leq\Norm{[\D]_Q\beta}{\infty}\Norm{f}{\avL^s(Q)}\Norm{g}{\avL^1(Q)},\\
  \frac{\abs{\pair{[\D]_Q\beta}{f}\pair{h_Q}{g} }}{\mu(Q)}
  &\leq\Norm{[\D]_Q\beta}{\infty}\Norm{f}{\avL^1(Q)}\Norm{g}{\avL^s(Q)}.
\end{split}
\end{equation}

For any $s\in(1,2)$, the related bi-sublinear maximal operators are bounded $L^2(\mu)\times L^2(\mu)\to L^1(\mu)$ with norm $\lesssim\sup_{Q\in\mathscr D}\Norm{[\D]_Q\beta}{\infty}\lesssim\Norm{\beta}{\BMO(\mathscr D)}$.
Hence
\begin{equation}\label{eq:paraComUnw}
\begin{split}
  \Norm{[b,\Pi_\beta]}{S^{p,q}(L^2(\mu))}
  &\lesssim \BNorm{\Big\{ \Norm{b-\ave{b}_Q}{L^{s'}(Q)} \Big\}_{Q\in\mathscr D}}{\ell^{p,q}}\Norm{\beta}{\BMO} \\
  &\qquad\qquad\text{by \eqref{eq:paracomrep} and Corollary \ref{cor:Spq<ellpq}} \\
  &=\Norm{b}{\operatorname{Osc}^{p,q}_{s'}(\mathscr D)}\Norm{\beta}{\BMO}
  \approx\Norm{b}{\operatorname{Osc}^{p,q}(\mathscr D)}\Norm{\beta}{\BMO} \\
  &\qquad\qquad\text{by Proposition \ref{prop:Osc}}.
\end{split}
\end{equation}
which is the claimed commutator bound in the unweighted case.

In the weighted case, we need to use the fact that we can choose $s>1$ as close to $1$ as we like. Then the bi-sublinear maximal operators related to \eqref{eq:paraComME} are still bounded on $L^2(w)\times L^2(\sigma)\to L^1(\mu)$ by Lemma \ref{lem:NWOsuff-w}. Thus the first step in \eqref{eq:paraComUnw} also hold in the form
\begin{equation*}
\begin{split}
  &\Norm{[b,\Pi_\beta]}{S^{p,q}(L^2(w))}
  \lesssim \BNorm{\Big\{ \Norm{b-\ave{b}_Q}{L^{s'}(Q)} \Big\}_{Q\in\mathscr D}}{\ell^{p,q}}\Norm{\beta}{\BMO} \\
  &\qquad\qquad\text{by \eqref{eq:paracomrep} and Corollary \ref{cor:Spq<ellpq-w}},
\end{split}
\end{equation*}
and the rest of \eqref{eq:paraComUnw} can be repeated verbatim.

For the adjoint paraproducts, we simply interchange the roles of $f$ and $g$ in the argument. Since the argument is essentially symmetric, the analogous conclusions follow.
\end{proof}

\begin{remark}
The proof of Proposition \ref{prop:paracomest} shows that, in the upper bound of the Schatten norms of commutators of $\Pi_b$, the factor $\Norm{\beta}{\BMO(\mathscr D)}\approx\Norm{\Pi_\beta}{\bddlin(L^2(\mu))}$ may be replaced by the (in general much smaller) quantity $\sup_{Q\in\mathscr D}\Norm{[\D]_Q\beta}{\infty}$. In particular, these commutators may be bounded and of Schatten class even if the operator $\Pi_\beta$ itself is unbounded.
\end{remark}

\part{Commutator bounds via dyadic representation}\label{part:dyadic}

\section{Commutators of dyadic shifts}\label{sec:shift}

In this final part of the paper, we complete the proof of Theorem \ref{thm:main} by proving the general commutator upper bound \eqref{eq:S<Osc} in the full range of parameters as stated. This will be accomplished by using the dyadic representation theorem to express the Calder\'on--Zygmund operator $T$ as a series of model operators called dyadic shifts, and proving the corresponding upper bound for these shift operators in place of $T$. For operators expressible in terms of the special shifts of \cite{Pet:00,PTV:02}, this approach has been previously employed in \cite{GLW:23,LLW:2wH,LLWW:2wR,LLW:multi}; the extension to general Calder\'on--Zygmund operators and general dyadic shifts of \cite{Hyt:A2} on $\R^d$ is due to \cite{WZ:24}.

In this section, we study the dyadic shifts and their commutators in their own right, leaving the connection to Calder\'on--Zygmund operators for the subsequent sections. We make the following definition:

\begin{definition}\label{def:shift}
A dyadic shift of complexity $(i,j)$ is an operator of the form
\begin{equation}\label{eq:shift}
  \S=\sum_{k\in\Z}\sum_{R\in\mathscr D_k}A_{(R,k)},\qquad
  A_{(R,k)}=\sum_{\substack{P\in\operatorname{ch}^i(R,k) \\ Q\in\operatorname{ch}^j(R,k)}}a_{PQR}h_P\otimes h_Q,
\end{equation}
where $h_P,h_Q$ are cancellative Haar functions on their respective cubes $P,Q$, and $a_{PQR}$ are coefficients. (If there are no cancellative Haar functions on some cube $Q$, then we simply interpret $h_Q=0$, with a similar convention for $h_P$.) The shift is said to be {\em normalised} if
\begin{equation*}
  \abs{a_{PQR}}\lesssim\frac{\sqrt{\mu(P)\mu(Q)}}{\mu(R)}.
\end{equation*}
\end{definition}

The main result of this section is the following:

\begin{theorem}\label{thm:shift}
Let $(X,\rho,\mu)$ be a space of homogeneous type of separation dimension $\Delta$ in the sense of Definition \ref{def:dims}. Let $\S$ be a normalised dyadic shift of complexity $(i,j)$ on $L^2(\mu)$, and let $b\in L^1_{\loc}(\mu)$. Then for all $p\in(0,\infty)$ and $q\in(0,\infty]$, we have
\begin{equation*}
  \Norm{[b,\S]}{S^{p,q}(L^2(\mu))}
  \lesssim \delta^{-(i\wedge j)\Delta(\frac1p-\frac12)_+}(1+i\wedge j)^{\frac1p}\Norm{b}{\operatorname{Osc}^{p,q}},
\end{equation*}
where $\delta\in(0,1)$ is the parameter from the dyadic system $\mathscr D$.
The factor $(1+i\wedge j)^{\frac1p}$ can be omitted, if $(X,\rho,\mu)$ has positive lower dimension $d>0$.
\end{theorem}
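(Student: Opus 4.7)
The strategy is to analyze each block $A_{(R,k)}$ of the shift separately and then assemble, exploiting the orthogonal-support structure of the Haar frame. Since $A_R$ has both factors supported in $R$, the constant $\ave{b}_R$ commutes with $A_R$, giving the reduction $[b,A_R]=[b_R,A_R]$ with $b_R:=(b-\ave{b}_R)1_R$. Writing $K:=i\wedge j$, my plan has three ingredients: a Hilbert--Schmidt estimate on $[b_R,A_R]$, a rank bound that converts it to $S^{p,q}$ with the advertised $\delta^{-K\Delta(1/p-1/2)_+}$ factor, and a summation step assembling the per-cube bounds into $\Norm{b}{\operatorname{Osc}^{p,q}}$.

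For the per-cube estimate, I compute $\Norm{b_R A_R}{S^2}$ directly from its kernel $b_R(x)\sum_{P,Q}a_{PQR}h_P(x)h_Q(y)$. The pairwise disjointness of the supports of $\{h_P\}_{P\in\operatorname{ch}^i(R)}$ and the orthogonality of $\{h_Q\}_{Q\in\operatorname{ch}^j(R)}$, combined with the normalization $\abs{a_{PQR}}\leq\sqrt{\mu(P)\mu(Q)}/\mu(R)$, the pointwise bound $\abs{h_P}^2\lesssim 1/\mu(P)$ from Remark \ref{rem:Haar}, and $\sum_{Q\in\operatorname{ch}^j(R)}\mu(Q)=\mu(R)$, collapse $\Norm{b_RA_R}{S^2}^2$ to $(1/\mu(R))\int_R\abs{b-\ave{b}_R}^2\ud\mu$, i.e.\ $m_b^{(2)}(R)^2:=\fint_R\abs{b-\ave{b}_R}^2\ud\mu$, and the same bound applies to $A_Rb_R$. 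Next, $A_R$ factors through $\operatorname{span}\{h_P\}_{P\in\operatorname{ch}^i(R)}$ and through $\operatorname{span}\{h_Q\}_{Q\in\operatorname{ch}^j(R)}$, so $\operatorname{rank}(A_R)\leq M^K$, where $M\lesssim\delta^{-\Delta}$ bounds the dyadic branching via the separation dimension. Hence $\operatorname{rank}([b_R,A_R])\leq 2M^K$, and the Lorentz--H\"older inequality applied to the singular-value sequence of a rank-$N$ operator yields $\Norm{T}{S^{p,q}}\lesssim N^{(1/p-1/2)_+}\Norm{T}{S^2}$. Combining,
\begin{equation*}
  \Norm{[b_R,A_R]}{S^{p,q}}\lesssim\delta^{-K\Delta(1/p-1/2)_+}m_b^{(2)}(R).
\end{equation*}

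For the summation, within a fixed level $\mathscr D_k$ the sets $R$ are disjoint, so $\{[b_R,A_{(R,k)}]\}_{R\in\mathscr D_k}$ have pairwise orthogonal ranges and co-ranges, and their singular values concatenate. Re-applying the rank-$S^2$ Lorentz--H\"older bound to the concatenated sequence produces
\begin{equation*}
  \BNorm{\sum_{R\in\mathscr D_k}[b_R,A_{(R,k)}]}{S^{p,q}}
  \lesssim\delta^{-K\Delta(1/p-1/2)_+}\Norm{\{m_b^{(2)}(R)\}_{R\in\mathscr D_k}}{\ell^{p,q}}.
\end{equation*}
The most delicate point, and the main obstacle, is the cross-level summation: for $R\subsetneq R'$ at different levels the ranges of $[b_R,A_R]$ and $[b_{R'},A_{R'}]$ overlap, so the within-level orthogonality fails. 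I will handle this by combining the Schatten--Lorentz quasi-triangle inequality with the observation that each set $R\in\mathscr D$ serves as an $i$-th (or $j$-th) ancestor for Haar-supporting cubes only over a range of at most $O(1+K)$ dyadic levels, which yields the $(1+K)^{1/p}$ multiplicity loss. Under positive lower dimension, Lemma \ref{lem:DuniqueLevel} forces each cube to occupy a unique level and removes the multiplicity, eliminating this factor. The proof concludes by identifying $\Norm{\{m_b^{(2)}(R)\}_{R\in\mathscr D}}{\ell^{p,q}}\approx\Norm{\{m_b(B_R)\}_{R\in\mathscr D}}{\ell^{p,q}}=\Norm{b}{\operatorname{Osc}^{p,q}}$ via Proposition \ref{prop:Osc} and doubling, and by repeating the argument for $A_R b_R$ (the second half of the commutator) to obtain the full bound.
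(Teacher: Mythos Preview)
Your per-block analysis is sound: the reduction $[b,A_R]=[b_R,A_R]$, the Hilbert--Schmidt bound $\Norm{b_RA_R}{S^2}\lesssim m_b^{(2)}(R)$, the rank estimate $\operatorname{rank}([b_R,A_R])\lesssim\delta^{-K\Delta}$, and the within-level orthogonality (each $[b_R,A_{(R,k)}]$ acts inside $L^2(R)$, and the $R\in\mathscr D_k$ are disjoint) are all correct. The gap is the cross-level summation. Your proposed fix---quasi-triangle over levels combined with the multiplicity observation of Lemma \ref{lem:atMostMin}---does not close it: that lemma only says that each \emph{set} $R$ contributes a nonzero $A_{(R,k)}$ for at most $K+1$ values of $k$; it does not limit the number of levels in the sum. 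After the (quasi-)triangle inequality over $k$ you would need
\begin{equation*}
  \sum_{k\in\Z}\Norm{\{m_b^{(2)}(R)\}_{R\in\mathscr D_k}}{\ell^{p,q}}
  \lesssim (1+K)^{1/p}\Norm{\{m_b^{(2)}(R)\}_{R\in\mathscr D}}{\ell^{p,q}},
\end{equation*}
which is false already for $p=q=2$ (the left side is an $\ell^1_k(\ell^2)$ quantity, not the global $\ell^2$). The underlying reason is that multiplication of $h_P$ by $b_R$ destroys the mean-zero property, so $b_RA_R$ no longer maps into $\operatorname{span}\{h_P:P\in\operatorname{ch}^i(R)\}$, and the Haar orthogonality across levels---which is what makes the blocks of a \emph{shift} genuinely diagonal over all of $\mathscr D^*_{i,j}$---is lost for the commutator blocks.

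The paper avoids this by decomposing the \emph{symbol} rather than the commutator. Lemma \ref{lem:bf} writes pointwise multiplication by $b$ as $\Pi_b+\sum_\alpha\Gamma_b^\alpha+\mathfrak H_{\ave{b}}+\ave{b}_X\E_X$, and Lemma \ref{lem:bScom} turns this into $[b,\S]=[\Pi_b,\S]+\sum_\alpha[\Gamma_b^\alpha,\S]+\S_{\ave{b}}$. The crucial point is that $\S_{\ave{b}}$---the shift with coefficients $a_{PQR}(\ave{b}_P-\ave{b}_Q)$---is itself a dyadic shift of the same complexity, so it inherits the full Haar orthogonality of domains and ranges across \emph{all} $(R,k)$; Proposition \ref{prop:SaBd} then gives $\Norm{\S_{\ave{b}}}{S^p}^p=\sum_{(R,k)\in\mathscr D^*_{i,j}}\Norm{A_{(R,k)}}{S^p}^p$ exactly, and the $(1+K)^{1/p}$ factor enters only through the set-multiplicity of $\mathscr D^*_{i,j}$ over $\mathscr D$ as in \eqref{eq:DstarVsD}. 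The leftover pieces $[\Pi_b,\S]$ and $[\Gamma_b^\alpha,\S]$ are handled by the two-sided ideal property $\Norm{[\Pi_b,\S]}{S^{p,q}}\leq 2\Norm{\S}{\bddlin(L^2)}\Norm{\Pi_b}{S^{p,q}}$ together with the independent Schatten bounds for $\Pi_b$ and $\Gamma_b^\alpha$ in Lemma \ref{lem:PiGammaCom}. In short, your block-wise estimates are right, but you need the paper's splitting of $b$ to preserve enough orthogonality to sum them.
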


\begin{remark}\label{rem:WZshiftBd}
In the special case $X=\R^d$ and $\delta=\frac12$ (corresponding to the usual dyadic cubes) as well as $p=q$, versions of Theorem \ref{thm:shift} are contained as part of the proofs of \cite[Theorem 1.6]{WZ:24} and \cite[Theorems 1.3 and 1.4]{WZ:24b}, although not formulated as independent statements. With slightly weaker bounds in \cite{WZ:24}, the results of \cite{WZ:24b} read as 
\begin{equation}\label{eq:WZshiftBd}
\begin{split}
   \Norm{[b,\S]}{S^{p}(L^2(\R^d))}
    &\lesssim\Norm{b}{B^p(\R^d)}
    \begin{cases} (1+i+j), & p\in[2,\infty), \\
      2^{i(\frac1p-\frac12)}, & p\in(1,2), d=1, \\
    \end{cases}
\end{split}
\end{equation}
with analogous estimates for the $S^{p,\infty}$ norm. To compare with Theorem \ref{thm:shift}, we recall that $\Norm{b}{B^p}\approx\Norm{b}{\operatorname{Osc}^{p,p}}$ by Proposition \ref{prop:Bp=Osc} and note that the Euclidean space $\R^d$ satisfies $d=\Delta>0$, so that the factor $(1+i\wedge j)^{\frac1p}$ can be omitted in Theorem \ref{thm:shift}, making its statement marginally sharper than \eqref{eq:WZshiftBd}.
On a technical level, the proofs of \cite{WZ:24b,WZ:24} proceeds via the computation of $\Phi^*\Phi$ for $\Phi=[b,\S]$, while we work with the original commutator, in some sense more directly.
\end{remark}

The proof of Theorem \ref{thm:shift} will occupy the rest of this section. It will be divided into several intermediate results, some of which may have independent interest.

In the definition of dyadic shifts as in \eqref{eq:shift}, we intentionally sum over all pairs $(R,k)$ of dyadic cubes and their generations. It means that the same cube as a set may appear multiple times in the summation, and this is an issue that will require some care.
Note that  notation like ``$P\in\operatorname{ch}^i(R)$'' implies that $R$ is understood as the pair $R=(R,k)$, in which case $\operatorname{ch}^i(R)\subseteq\mathscr D_{k+i}$; in \eqref{eq:shift}, we have indicated this by using the pedantic notation $\operatorname{ch}^i(R,k)$.

The consequences of this multiple counting are relatively mild:

\begin{lemma}\label{lem:atMostMin}
Let $\S$ be a dyadic shift of complexity $(i,j)$ as in \eqref{eq:shift}.
For every set $R\in\mathscr D$, the term $A_{(R,k)}$ is nonzero only if
\begin{equation}\label{eq:KijR}
  k\in \mathcal K_{i,j}(R):=\Big[k_{\min} \vee (k_{\max}(R)-\min(i,j)),k_{\max(R)}\Big]\cap\Z,
\end{equation}
where $\#\mathcal K_{i,j}(R)\leq\min(i,j)+1$.
\end{lemma}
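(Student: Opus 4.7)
The plan is to argue that both displayed assertions of the lemma reduce to a simple observation about how the cancellative Haar functions interact with cubes that appear at several dyadic levels. Namely, for $(P,\ell)\in\mathscr D$, the cancellative Haar function $h_{(P,\ell)}$ is nonzero only when $\ell=k_{\max}(P)$: for any $\ell<k_{\max}(P)$ we have $P\in\mathscr D_{\ell+1}$ too, and since $\mathscr D_{\ell+1}$ is a partition of $X$, the only cube of that generation contained in $P$ is $P$ itself, so $M_{(P,\ell)}=\#\operatorname{ch}(P,\ell)=1$; by the construction recalled in Remark \ref{rem:Haar}, no cancellative Haar functions exist on $(P,\ell)$.

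With this in hand, the plan for the first claim is as follows. By the convention that $A_{(R,k)}=0$ unless $R\in\mathscr D_k$, we may restrict to $k\in[k_{\min}(R),k_{\max}(R)]$. WLOG $i\leq j$ (else swap roles). Now I would argue: if $k+i<k_{\max}(R)$, then $R\in\mathscr D_{k+i'}$ for every $0\leq i'\leq i$, and iterating the partition argument above, the only element of $\operatorname{ch}^i(R,k)$ is $P=(R,k+i)$. But then $k_{\max}(P)=k_{\max}(R)>k+i$, hence $h_P=0$ by the preliminary observation, and therefore every term in the sum defining $A_{(R,k)}$ vanishes. This rules out all $k<k_{\max}(R)-i=k_{\max}(R)-\min(i,j)$, which combined with the trivial range constraint $k_{\min}(R)\leq k\leq k_{\max}(R)$ gives precisely $k\in\mathcal K_{i,j}(R)$.

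The cardinality bound is then immediate: the interval $[k_{\max}(R)-\min(i,j),\,k_{\max}(R)]$ contains at most $\min(i,j)+1$ integers, and intersecting with $[k_{\min}(R),\infty)$ can only shrink it.

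I do not anticipate a real obstacle here; the only subtle point is the preliminary observation about cancellative Haar functions on cubes living at multiple levels, and this is essentially forced by the partition property (item \eqref{it:Dpartition} of Definition \ref{def:cubes}) together with the explicit construction of Remark \ref{rem:Haar}. Everything else is bookkeeping on integer intervals.
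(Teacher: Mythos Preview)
Your proof is correct and follows essentially the same approach as the paper: both arguments observe that if $R$ persists as a cube through level $k+i$ (equivalently, $k+i<k_{\max}(R)$), then $\operatorname{ch}^i(R,k)=\{(R,k+i)\}$ has a single child (namely itself), so no cancellative Haar functions exist there and $A_{(R,k)}=0$. Your version packages the Haar-function observation as a separate preliminary and argues WLOG $i\leq j$, while the paper handles both $i$ and $j$ inline, but these are purely stylistic differences.
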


\begin{proof}
Recalling the notation \eqref{eq:kMinMaxQ},
consider a set $R\in\mathscr D$ and
\begin{equation*}
  k\in[k_{\min}(R),k_{\max}(R)-(i+1)]\cap\Z,
\end{equation*}
if this range is nonempty. Then
\begin{equation*}
   k_{\min}(R)\leq k+i< k+i+1\leq k_{\max(R)},
\end{equation*}
and hence both $\operatorname{ch}^i(R,k)$ and $\operatorname{ch}^{i+1}(R,k)$ consists of the single cube $P=R$ only. Thus $\operatorname{ch}(P)=\{P\}$, and there are no cancellative $h_P$ for $P\in\operatorname{ch}^i(R,k)$, which means that $A_{(R,k)}=0$.

Similarly, if $k\in[k_{\min}(R),k_{\max}(R)-(j+1)]\cap\Z$, we also get $A_{(R,k)}=0$.

Thus $A_{(R,k)}$ can be non-zero only if $k\in[k_{\min}(R),k_{\max}(R)]$ is outside the two ranges just considered, and this leaves exactly the range $\mathcal K_{i,j}(R)$ stated in the lemma. It is clear that this range has at most $\min(i,j)+1$ elements.
\end{proof}

For analysing the commutators of pointwise multipliers with dyadic shifts, it is convenient to begin by expressing the pointwise multipliers in terms of dyadic operators, notably including the dyadic paraproducts that we already encountered in Section \ref{sec:parap}. On $X=\R$, the following lemma goes back to \cite{Pet:00}; versions to $\R^d$ are well known, and the following extension to spaces of homogeneous type is straightforward. Let us introduce the dyadic test function spaces
\begin{equation*}
\begin{split}
   S_0(\mathscr D) &:=\lspan\{h_P^\alpha:P\in\mathscr D,1\leq\alpha<M_P\}, \\
   S(\mathscr D) &:=\begin{cases} S_0(\mathscr D), & \text{if}\quad\mu(X)=\infty, \\
    \lspan(S_0(\mathscr D)\cup\{1\}), & \text{if}\quad\mu(X)<\infty,\end{cases}
\end{split}
\end{equation*}
where ``$1$'' above is the constant function on $X$. Note that $S(\mathscr D)$ is dense in $L^2(\mu)$ in either case; the constants are only needed when $\mu(X)<\infty$.

\begin{lemma}\label{lem:bf}
For all $b\in L^1_{\loc}(\mu)$ and $f\in S(\mathscr D)$,
we can decompose their pointwise product as
\begin{equation*}
  bf=\Pi_b f+\sum_{1\leq\alpha<M}\Gamma_b^\alpha f+\mathfrak H_{\ave{b}}f
  +\ave{b}_X\ave{f}_X
\end{equation*}
where
\begin{equation*}
  \Pi_b=\sum_{P\in\mathscr D}\D_P b\otimes\frac{1_P}{\mu(P)},\quad
  \Gamma_b^\alpha=\sum_{P\in\mathscr D}(\D_P b)h_P^\alpha\otimes h_P^\alpha,\quad
  \mathfrak H_{\ave{b}}  =\sum_{P\in\mathscr D}\ave{b}_P\D_P,
\end{equation*}
and
\begin{equation*}
  \ave{b}_X:=\begin{cases} 0, & \text{if}\quad\mu(X)=\infty, \\ \mu(X)^{-1}\int_X b\ud\mu, & \text{if}\quad\mu(X)<\infty.\end{cases}
\end{equation*}
\end{lemma}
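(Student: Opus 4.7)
The plan is to establish this identity by formal algebraic expansion of $b$ and $f$ in the Haar basis of Remark \ref{rem:Haar} and reorganisation of the resulting double sum. This is a standard Petermichl-style decomposition; the only mild bookkeeping novelty is that in our setting a set $P\in\mathscr D$ may occur at several dyadic levels, so the operator $\D_P$ in the statement should be read as the level-independent $[\D]_P$ from Section \ref{sec:parap}, whose image equals $\lspan\{h_P^\alpha:1\leq\alpha<M_P\}$.

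First I would formally write
\begin{equation*}
  b=\ave{b}_X+\sum_{P\in\mathscr D}[\D]_P b,\qquad f=\ave{f}_X+\sum_{Q\in\mathscr D}[\D]_Q f,
\end{equation*}
where the expansion of $f$ is a finite sum by definition of $S(\mathscr D)$. Multiplication gives
\begin{equation*}
  bf=\ave{b}_X\ave{f}_X+\ave{b}_X(f-\ave{f}_X)+\ave{f}_X(b-\ave{b}_X)+\sum_{P,Q\in\mathscr D}([\D]_P b)([\D]_Q f),
\end{equation*}
and I would split the double sum according to the set-theoretic relation between $P$ and $Q$: disjoint (zero contribution since each factor is supported in the corresponding cube), $P=Q$, $Q\subsetneq P$, or $P\subsetneq Q$.

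For the diagonal case, using $[\D]_P f=\sum_\alpha h_P^\alpha\pair{f}{h_P^\alpha}$, pointwise multiplication by $[\D]_P b$ and summation over $P$ yield $\sum_P ([\D]_P b)([\D]_P f)=\sum_{1\leq\alpha<M}\Gamma_b^\alpha f$. When $Q\subsetneq P$, the factor $[\D]_P b$ is constant on $Q$, and telescoping along the chain of strict ancestors gives $\sum_{P\supsetneq Q}[\D]_P b\big|_Q=\ave{b}_Q-\ave{b}_X$; summing over $Q$ produces $\mathfrak H_{\ave{b}}f-\ave{b}_X(f-\ave{f}_X)$. The symmetric case $P\subsetneq Q$ analogously yields $\Pi_b f-\ave{f}_X(b-\ave{b}_X)$, using $\sum_P ([\D]_P b)\ave{f}_P=\Pi_b f$. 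Adding these three contributions to the constant-correction terms from the multiplication step, all corrections cancel pairwise, leaving the claimed identity.

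The only technical point requiring care is the convergence of the formal expansion of $b$, since $b\in L^1_{\loc}(\mu)$ alone does not ensure that partial sums ascending the dyadic tree converge. This is resolved by observing that $f-\ave{f}_X\in S_0(\mathscr D)$ has zero integral and finite Haar support, so $\ave{f}_P=\ave{f}_X$, $\pair{f}{h_P^\alpha}=0$, and $[\D]_Q f=0$ once $P$ resp.\ $Q$ lies above a fixed dyadic cube $R_0$ containing that support. Consequently every infinite sum arising in the rearrangement reduces, at each fixed $x\in X$, to a finite sum, and the identity can be verified pointwise without convergence issues.
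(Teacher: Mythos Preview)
Your symmetric double-expansion is the natural heuristic, but there is a genuine gap when $\mu(X)=\infty$. The formal identity $b=\ave{b}_X+\sum_{P\in\mathscr D}[\D]_P b$ is false for general $b\in L^1_{\loc}(\mu)$: take $b\equiv 1$ on $\R^d$, so that every $[\D]_P b=0$ while $\ave{b}_X:=0$ by convention, and your right-hand side vanishes identically. Correspondingly, your telescoping claim $\sum_{P\supsetneq Q}[\D]_P b\big|_Q=\ave{b}_Q-\ave{b}_X$ fails: the partial sums are $\ave{b}_Q-\ave{b}_{Q^{[N]}}$, and nothing forces $\ave{b}_{Q^{[N]}}\to 0$. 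Your last paragraph does not repair this, because the finiteness properties you invoke concern only the $f$-side; the ascending sum $\sum_{P\supsetneq Q}[\D]_P b(x)$ for a fixed $x\in Q$ remains a genuinely infinite sum with no reason to converge, let alone to the value you claim. In the counterexample $b\equiv 1$, your double sum $\sum_{P,Q}([\D]_P b)([\D]_Q f)$ is identically zero, yet it is supposed to equal $bf=f$.

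The paper avoids this by breaking the symmetry: it expands only $f=\ave{f}_X+\sum_P\D_P f$ (finitely many nonzero terms) and, for each piece $\D_P f$, writes $b\,\D_P f=\ave{b}_P\D_P f+(b-\ave{b}_P)\D_P f$ followed by the \emph{local downward} expansion $(b-\ave{b}_P)\big|_P=\sum_{Q\subseteq P}\D_Q b$, which is a valid martingale expansion in $L^1(P)$. The term $\ave{b}_P\D_P f$ produces $\mathfrak H_{\ave{b}}$ directly, with no ascending telescoping of $b$ ever needed. Your argument becomes rigorous if you adopt this asymmetry: first reduce to $f=\D_P f$ (and to $f=1$ when $\mu(X)<\infty$), and only then expand $b$ relative to the fixed cube $P$; equivalently, replace your global expansion of $b$ by $b\,1_{R_0}=\ave{b}_{R_0}1_{R_0}+\sum_{P\subseteq R_0}[\D]_P b$ on the fixed cube $R_0$ carrying $\supp f$, so that the upward telescoping terminates at $\ave{b}_{R_0}$ rather than at the ill-defined limit.
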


\begin{proof}
For $P\in\mathscr D$, we can write
\begin{equation*}
\begin{split}
  b\D_P f
  &=\ave{b}_P\D_P f
  +(b-\ave{b}_P)\D_P f, \\
  (b-\ave{b}_P)\D_P f
  &=\sum_{Q\subseteq P}\D_Q b\D_P f
  =\sum_{Q\subsetneq P}\D_Q b \D_P f+\D_P b\D_P f.
\end{split}
\end{equation*}
Since $\D_P f$ is constant on each $Q\subsetneq P$,
\begin{equation*}
  \sum_{Q\subsetneq P}\D_Q b \D_P f
  =\sum_{Q\subsetneq P}\D_Q b \ave{\D_P f}_Q
  =\Pi_b\D_P f.
\end{equation*}
Moreover,
\begin{equation*}
  \D_P b\D_P f
  =\sum_{1\leq\alpha<M_P}
  ((\D_P b) h_P^\alpha\otimes h_P^\alpha)\D_P f
  =\sum_{1\leq\alpha<M}\Gamma_b^\alpha\D_P f.
\end{equation*}
Noting that $\ave{\D_P f}_X=0$ (either by definition, if $\mu(X)=\infty$, or by the vanishing integral of $\D_P f$ otherwise), it follows that
\begin{equation*}
  b\D_P f=\ave{b}_P\D_P f+\Pi_b\D_P f+\sum_{1\leq\alpha<M}\Gamma_b^\alpha\D_P f+\ave{\D_P f}_X,
\end{equation*}
which is the claimed identity for $\D_P f$ in place of $f$.

If $\mu(X)<\infty$, we also need to consider the constant function $f=1$. Note that this is annihilated by both $\Gamma_b^\alpha$ and $\mathfrak H_{\ave{b}}$. As for $\Pi_b$, we have
\begin{equation*}
  \Pi_b 1=\sum_{P\in\mathscr D}\D_P b\ave{1}_P
  =\sum_{P\in\mathscr D}\D_P b=b-\ave{b}_X
  =b\cdot 1-\ave{b}_X\ave{1}_X,
\end{equation*}
and hence
\begin{equation*}
  b\cdot 1=\Pi_b 1+\ave{b}_X 1=\Pi_b 1+\sum_{1\leq\alpha<M}\Gamma_b^\alpha 1+\mathfrak H_{\ave{b}}1+\ave{b}_X \ave{1}_X,
\end{equation*}
which is the claimed identity with $1$ in place of $f$.

The proof is completed by observing that
\begin{equation*}
 f=\sum_{P\in\mathscr D}\D_P f+\ave{f}_X
\end{equation*}
for all $f\in S(\mathscr D)$.
\end{proof}

\begin{lemma}\label{lem:bScom}
For a dyadic shift $\S$ as in \eqref{eq:shift}, its commutator with a pointwise multiplier $b$ is given by
\begin{equation}\label{eq:bS}
  [b,\S]=[\Pi_b,\S]+\sum_{1\leq\alpha<M}[\Gamma_b^\alpha,\S]+\S_{\ave{b}},
\end{equation}
where $\S_{\ave{b}}$ has the same form as $\S$ but with $a_{PQR}(\ave{b}_P-\ave{b}_Q)$ in place of $a_{PQR}$.
\end{lemma}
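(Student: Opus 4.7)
My plan is to apply Lemma~\ref{lem:bf} to both occurrences of pointwise multiplication by $b$ in $[b,\S]f = b(\S f) - \S(bf)$, so that the paraproduct and $\Gamma$-contributions match up as commutators on the two sides, leaving only a diagonal piece $[\mathfrak H_{\ave b},\S]$ to be identified with $\S_{\ave b}$ by a direct computation.

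First I would apply Lemma~\ref{lem:bf} to $b\cdot(\S f)$: since $\S f = \sum a_{PQR}\pair{f}{h_Q}h_P$ is a finite linear combination of cancellative Haar functions, $\ave{\S f}_X = 0$, so the constant remainder in Lemma~\ref{lem:bf} disappears and
\[
b(\S f) = \Pi_b\S f + \sum_{1\leq\alpha<M}\Gamma_b^\alpha \S f + \mathfrak H_{\ave b}\S f.
\]
Dually, applying $\S$ to the decomposition of $bf$ kills the constant remainder because $\S 1 = 0$ (each $h_Q$ is cancellative, so $\pair{1}{h_Q}=0$). Subtracting then yields
\[
[b,\S] = [\Pi_b,\S] + \sum_{1\leq\alpha<M}[\Gamma_b^\alpha,\S] + [\mathfrak H_{\ave b},\S],
\]
reducing the lemma to the identity $[\mathfrak H_{\ave b},\S] = \S_{\ave b}$.

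For that identity, the crucial observation is that cancellative Haar functions diagonalise the martingale differences in the strong sense $[\D]_{R'}h_P = \delta_{R',P}\,h_P$. I would verify this by a case analysis on the position of $R'$ relative to $P$: equality gives $h_P\in L^2_0([\operatorname{ch}](P))$ and hence $[\D]_Ph_P=h_P$; strict containment $R'\subsetneq P$ leaves $h_P$ constant on the child of $P$ containing $R'$ and hence on $R'$; strict containment $R'\supsetneq P$ makes both $\E_{R'}h_P$ and $\E_{Q'}h_P$ (for the unique $Q'\in[\operatorname{ch}](R')$ with $Q'\supseteq P$) vanish because $\int h_P = 0$; and disjointness is trivial. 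Since $\D_{(R,k)}$ vanishes except at the terminal level $k=k_{\max}(R)$, where it coincides with $[\D]_R$, the operator $\mathfrak H_{\ave b}$ may be rewritten as $\sum_{R\in\mathscr D}\ave b_R[\D]_R$; using its self-adjointness one computes $\mathfrak H_{\ave b}\S f = \sum a_{PQR}\ave b_P\pair{f}{h_Q}h_P$ and $\S\mathfrak H_{\ave b}f = \sum a_{PQR}\ave b_Q\pair{f}{h_Q}h_P$, whose difference is exactly $\S_{\ave b}f$.

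The only real subtlety, as in Lemma~\ref{lem:atMostMin}, is the bookkeeping around the multi-level ambiguity of cubes: one must consistently distinguish the level-aware $\D_{(R,k)}$ from the set-only $[\D]_R$. Once this is sorted out, the Haar eigenvalue calculation and the resulting matching of coefficients are entirely routine.
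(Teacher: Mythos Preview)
Your proposal is correct and follows essentially the same route as the paper: apply Lemma~\ref{lem:bf} to decompose multiplication by $b$, observe that the constant remainder $\ave{b}_X\ave{\cdot}_X$ contributes nothing because $\S$ annihilates constants and has mean-zero output, and then identify $[\mathfrak H_{\ave b},\S]=\S_{\ave b}$. The paper dismisses this last identity as ``immediate,'' whereas you supply the explicit verification via $[\D]_{R'}h_P=\delta_{R',P}h_P$; this is a welcome elaboration but not a different argument.
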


\begin{proof}
From Lemma \ref{lem:bf} it is immediate that
\begin{equation*}
  [b,\S]=[\Pi_b,\S]+\sum_{1\leq\alpha<M}[\Gamma_b^\alpha,\S]+[\mathfrak H_{\ave{b}},\S]+[\ave{b}_X\E_X,\S],\quad
  \E_X f:=\ave{f}_X.
\end{equation*}
Since the multiplication by the constant $\ave{b}_X$ commutes with every operator,
the action of the last term on a function $f$ can be written as
\begin{equation*}
  [\ave{b}_X\E_X,\S]f
  =\ave{b}_X[\E_X,\S]f
  =\ave{b}_X(\ave{\S f}_X-\S\ave{f}_X).
\end{equation*}
As a linear combination of elementary tensors of the form $h_P\otimes h_Q$, the dyadic shift $\S$ both annihilates constants (since $\int h_Q=0$) and is annihilated by the average $\ave{\S f}_X$ (since $\int h_P=0$).  Thus both terms above, and hence the last term in the decomposition of $[b,\S]$, vanish.

It remains to check that $[\mathfrak H_{\ave{b}},\S]=\S_{\ave{b}}$, but this is immediate.
\end{proof}

\begin{corollary}\label{cor:bScom}
For all $p\in(0,\infty)$ and $q\in(0,\infty]$, we have
\begin{equation*}
  \Norm{[b,\S]}{S^{p,q}(L^2(w))}
  \lesssim\Norm{\Pi_b}{S^{p,q}(L^2(w))}
  +\sum_{1\leq\alpha<M}\Norm{\Gamma_b^\alpha}{S^{p,q}(L^2(w))}
  +\Norm{S_{\ave{b}}}{S^{p,q}(L^2(w))}.
\end{equation*}
\end{corollary}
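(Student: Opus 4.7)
The plan is to combine the algebraic decomposition of Lemma \ref{lem:bScom} with the ideal property of Schatten--Lorentz classes and the boundedness of normalised dyadic shifts. By Lemma \ref{lem:bScom}, we may write
\begin{equation*}
   [b,\S]=[\Pi_b,\S]+\sum_{1\leq\alpha<M}[\Gamma_b^\alpha,\S]+\S_{\ave{b}}.
\end{equation*}
The space $S^{p,q}$ is a quasi-Banach ideal of $\bddlin(L^2(w))$, so we have the (quasi-)triangle inequality with constant depending only on $p,q$, together with the two-sided ideal bound $\Norm{AB}{S^{p,q}},\Norm{BA}{S^{p,q}}\leq \Norm{A}{S^{p,q}}\Norm{B}{\bddlin(L^2(w))}$. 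Applying this to each of the commutators $[\Pi_b,\S]$ and $[\Gamma_b^\alpha,\S]$ gives
\begin{equation*}
   \Norm{[\Pi_b,\S]}{S^{p,q}(L^2(w))}\lesssim \Norm{\Pi_b}{S^{p,q}(L^2(w))}\Norm{\S}{\bddlin(L^2(w))},
\end{equation*}
and similarly for $[\Gamma_b^\alpha,\S]$, so the claim will follow once we know that $\Norm{\S}{\bddlin(L^2(w))}\lesssim 1$ uniformly in $(i,j)$ for $w\in A_2$ (or just $w\equiv 1$ in the unweighted case).

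The uniform $L^2(w)$-boundedness of the normalised dyadic shift $\S$ is a standard fact. Since the $h_P$ and $h_Q$ appearing in \eqref{eq:shift} are cancellative Haar functions and the coefficients satisfy $\abs{a_{PQR}}\lesssim \sqrt{\mu(P)\mu(Q)}/\mu(R)$, one verifies by a direct expansion in the unweighted case that $\Norm{\S}{\bddlin(L^2(\mu))}\lesssim 1$ uniformly in the complexity, using the orthonormality of the Haar system and summing over $R\in\mathscr D_k$ and $k\in\Z$; the weighted version for $w\in A_2$ follows by the usual dyadic-shift $A_2$ theory (which is in fact the main motivation for the dyadic representation theorem to be used in Theorem \ref{thm:DRT}). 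In the present excerpt the boundedness on $L^2(w)$ is implicitly assumed in Definition \ref{def:shift} since we are speaking of operators on $L^2(\mu)$ (and the $A_2$-extension is the content of the weighted theory recalled in Remarks \ref{rem:NWOsuff-w}--Lemma \ref{lem:NWOsuff-w}).

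Combining these observations, the quasi-triangle inequality gives
\begin{equation*}
\begin{split}
   \Norm{[b,\S]}{S^{p,q}(L^2(w))}
   \lesssim\ & \Norm{[\Pi_b,\S]}{S^{p,q}(L^2(w))}
      +\sum_{1\leq\alpha<M}\Norm{[\Gamma_b^\alpha,\S]}{S^{p,q}(L^2(w))} \\
      &+\Norm{\S_{\ave{b}}}{S^{p,q}(L^2(w))} \\
   \lesssim\ & \Norm{\Pi_b}{S^{p,q}(L^2(w))}
      +\sum_{1\leq\alpha<M}\Norm{\Gamma_b^\alpha}{S^{p,q}(L^2(w))}
      +\Norm{\S_{\ave{b}}}{S^{p,q}(L^2(w))},
\end{split}
\end{equation*}
which is exactly the claim. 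The only non-algebraic input is the uniform $L^2(w)$-boundedness of $\S$, and this is the one point that would need to be invoked (or quoted) carefully; otherwise the argument is a one-line application of Lemma \ref{lem:bScom} and the ideal property.
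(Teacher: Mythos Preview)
Your proof is correct and follows the same approach as the paper: apply the quasi-triangle inequality to the identity of Lemma \ref{lem:bScom}, then the two-sided ideal property of $S^{p,q}$ together with the $L^2(w)$-boundedness of $\S$ (the paper's one-line proof simply says ``apply the quasi-triangle inequality'' and leaves this second step implicit). One small overstatement: your claim that $\Norm{\S}{\bddlin(L^2(w))}\lesssim 1$ \emph{uniformly in $(i,j)$} need not hold for general $w\in A_2$---the standard weighted bounds for normalised dyadic shifts grow polynomially in the complexity---but the corollary as stated only requires boundedness (the implicit constant may depend on $\S$ and $w$), so this does not affect your argument.
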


\begin{proof}
The Schatten--Lorentz classes $S^{p,q}(L^2(w))$ are quasi-normed spaces, so we can apply the quasi-triangle inequality on the identity established in Lemma \ref{lem:bScom}.
\end{proof}

We already estimated $S^{p,q}(L^2(w))$ norms of the dyadic paraproducts $\Pi_b$ in Proposition \ref{prop:paracomest}, and the estimation of the operators $\Gamma_b^\alpha$ is similar. Both operators are covered by the next lemma:

\begin{lemma}\label{lem:PiGammaCom}
For all $p\in(0,\infty)$, $q\in(0,\infty]$, and $w\in A_2$, we have
\begin{equation*}
      \Norm{\Pi_b}{S^{p,q}(L^2(w))} +\Norm{\Gamma_b^\alpha}{S^{p,q}(L^2(w))} 
      \lesssim\Norm{\{m_b(Q)\}_{Q\in\mathscr D}}{\ell^{p,q}(\mathscr D)},
\end{equation*}
\end{lemma}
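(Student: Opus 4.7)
The argument splits into two independent estimates, one for each summand on the left-hand side.

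For $\Pi_b$, the bound is essentially already available: Proposition~\ref{prop:paracomest} gives
\begin{equation*}
  \Norm{\Pi_b}{S^{p,q}(L^2(w))}\lesssim\Norm{b}{\operatorname{Osc}^{p,q}(\mathscr D)}
\end{equation*}
for every $w\in A_2$, and the oscillation norm on the right is by definition (and by Lemma~\ref{lem:LpOsc}) comparable to $\Norm{\{m_b(Q)\}_{Q\in\mathscr D}}{\ell^{p,q}}$. So for this term nothing new has to be proved.

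For $\Gamma_b^\alpha$, the plan is to repeat the NWO decomposition that powers Proposition~\ref{prop:paracomest}. Writing, with terms $m_b(P)=0$ simply omitted (in which case $\D_P b=0$ and the summand vanishes),
\begin{equation*}
  \Gamma_b^\alpha=\sum_{P\in\mathscr D} m_b(P)\,e_P^\alpha\otimes h_P^\alpha,\qquad
  e_P^\alpha:=\frac{(\D_P b)\,h_P^\alpha}{m_b(P)},
\end{equation*}
the bound $\Norm{\D_P b}{\infty}\lesssim m_b(P)$ from Lemma~\ref{lem:DQbd} combined with $\Norm{h_P^\alpha}{\infty}\lesssim\mu(P)^{-1/2}$ gives $\abs{e_P^\alpha}\lesssim\mu(P)^{-1/2}1_P$; the same basic NWO estimate obviously holds for $h_P^\alpha$ itself. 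By Lemma~\ref{lem:NWOsuff-w}, the associated bi-sublinear maximal operator $M_{\mathcal E}$ is therefore bounded $L^2(w)\times L^2(\sigma)\to L^1(\mu)$ for every $w\in A_2$, and Corollary~\ref{cor:Spq<ellpq-w} then yields
\begin{equation*}
  \Norm{\Gamma_b^\alpha}{S^{p,q}(L^2(w))}
  \lesssim \Norm{M_{\mathcal E}}{L^2(w)\times L^2(\sigma)\to L^1(\mu)}\Norm{\{m_b(P)\}_{P\in\mathscr D}}{\ell^{p,q}}
  \lesssim\Norm{\{m_b(P)\}_{P\in\mathscr D}}{\ell^{p,q}}.
\end{equation*}

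Combining the two bounds via the quasi-triangle inequality in $S^{p,q}$ completes the proof. I do not foresee any real obstacle: the only small point worth noting is that $\Gamma_b^\alpha$ differs from $\Pi_b$ essentially by having a cancellative Haar factor in the first slot in place of the cube average $\mu(P)^{-1}1_P$, and the NWO framework of Section~\ref{sec:RSweighted} is by design insensitive to this distinction, so the same scheme works for both.
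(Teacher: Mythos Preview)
Your proposal is correct and follows essentially the same NWO approach as the paper: both reduce to Corollary~\ref{cor:Spq<ellpq-w} after checking that the tensor factors satisfy the basic pointwise bound $\abs{e_P}\lesssim\mu(P)^{-1/2}1_P$. The only cosmetic differences are that the paper re-derives the $\Pi_b$ bound rather than citing Proposition~\ref{prop:paracomest}, and it first expands $\D_P b=\sum_\beta\pair{b}{h_P^\beta}h_P^\beta$ (taking $\lambda_P=\mu(P)^{-1/2}\pair{b}{h_P^\beta}$ and then bounding this by $m_b(P)$), whereas you keep $\D_P b$ intact and take $\lambda_P=m_b(P)$ directly via Lemma~\ref{lem:DQbd}; both choices land on the same estimate.
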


\begin{proof}
Expanding $\D_P b$ in terms of Haar functions, we have
\begin{equation*}
\begin{split}
  \Pi_b &=\sum_{1\leq\beta<M}\sum_{P\in\mathscr D}\frac{\pair{b}{h_P^\beta}}{\mu(P)^{\frac12}} h_P^\beta\otimes\frac{1_P}{\mu(P)^{\frac12}}, \\
  \Gamma_b^\alpha &=\sum_{1\leq\beta<M}\sum_{P\in\mathscr D}  \frac{\pair{b}{h_P^\beta}}{\mu(P)^{\frac12}} 
     \mu(P)^{\frac12}h_P^\beta h_P^\alpha\otimes h_P^\alpha.
\end{split}
\end{equation*}
From the presence of the cancellative Haar functions, we note that the terms corresponding to a dyadic cube $P=(P,k)$ for at most one value of $k$, so the issue of multiplicity of generations of dyadic cubes does not play a role here.

From the basic estimate $\abs{h_P^\beta}\lesssim 1_P/\mu(P)^{\frac12}$ of the Haar functions, it is immediate that all four families of functions
\begin{equation*}
  e_P\in\{h_P^\beta,\frac{1_P}{\mu(P)^{\frac12}}, \mu(P)^{\frac12}h_P^\beta h_P^\alpha,h_P^\alpha\}
\end{equation*}
satisfy the basic sufficient condition \eqref{eq:NWOsuffBasic} of Remark \ref{rem:NWOsuff} for the $L^2(\mu)\times L^2(\mu)\to L^1(\mu)$ boundedness of the associate bi-sublinear maximal operator \eqref{eq:ME}. By Remark \ref{rem:NWOsuff-w}, this same condition even implies the weighted  $L^2(w)\times L^2(\sigma)\to L^1(\mu)$ boundedness for every $w\in A_2$.

Hence Corollaries \ref{cor:Spq<ellpq} and \ref{cor:Spq<ellpq-w} imply that
\begin{equation*}
  \Norm{\Pi_b}{S^{p,q}(L^2(w))}+\Norm{\Gamma_b^\alpha}{S^{p,q}(L^2(w))}
  \lesssim\sum_{1\leq\beta<M}\Norm{\{\abs{\pair{b}{h_P^\beta}}/\mu(P)^{-\frac12}\}_{Q\in\mathscr D}}{\ell^{p,q}},
\end{equation*}
where
\begin{equation*}
  \Babs{\frac{\pair{b}{h_P^\beta}}{\mu(P)^{\frac12}}}
  =\Babs{\int_P(b-\ave{b}_P)\frac{h_P^\beta}{\mu(P)^{\frac12}}\ud\mu}
  \lesssim\fint_P\abs{b-\ave{b}_P}\ud\mu=m_b(P).
\end{equation*}
This completes the proof.
\end{proof}

In order to estimate the Schatten norm of the special shifts $\S_{\ave{b}}$, it is convenient to study a slightly more general problem. For fixed complexity parameters $(i,j)$, let us view the shift $\S=\S_a$ as a linear function of the parameter vector
\begin{equation*}
  a=\{a_{PQR}:R\in\mathscr D, k\in\mathcal K_{i,j}(R),P\in\operatorname{ch}^i(R,k),Q\in\operatorname{ch}^j(R,k)\},
\end{equation*}
where $\mathcal K_{i,j}(R)$ is as in \eqref{eq:KijR}. By Lemma \ref{lem:atMostMin}, these are precisely the shift coefficients that non-trivially contribute to $\S_a$; otherwise, the terms $A_R$ where they appear simply vanish by the vanishing of the related Haar functions by convention.

For these parameter vectors, we consider the norms
\begin{equation}\label{eq:apq2Norm}
  \Norm{a}{\ell^{p,q}(\ell^2)}
  :=\BNorm{\Big\{ \Big(\sum_{\substack{P\in\operatorname{ch}^i(R,k) \\ Q\in\operatorname{ch}^j(R,k)}}\abs{a_{PQR}}^2\Big)^{\frac12}
    \Big\}_{(R,k)\in\mathscr D^*_{i,j}}}{\ell^{p,q}},
\end{equation}
where
\begin{equation*}
  \mathscr D^*_{i,j}
  :=\Big\{ (R,k):R\in\mathscr D, k\in\mathcal K_{i,j}(R) \Big\}
\end{equation*}

{\em A priori}, the $\ell^2$ space inside could have different dimensions for different cubes $R$; however, introducing some extra coefficient $a_{PQR}$ that do not affect the shift $\S_a$, we may assume for convenience that we have the same $\ell^2$ space for each $R$. 

We will need the following growth bound for the number of dyadic descendants in the $i$th generation. It is through this lemma that the separation dimension $\Delta$ enters the conclusions of Theorem \ref{thm:shift} and eventually Theorem \ref{thm:main}.

\begin{lemma}\label{lem:childCount}
Let $(X,\rho,\mu)$ be a space of homogeneous type of separation dimension $\Delta$ in the sense of Definition \ref{def:dims}. Then for all $k\in\Z$, dyadic cubes $R\in\mathscr D_k$ and all $i\in\N$, we have
\begin{equation*}
   \#\operatorname{ch}^i(R,k)\lesssim \delta^{-i\Delta}.
\end{equation*}
\end{lemma}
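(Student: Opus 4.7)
The plan is to realize each descendant $Q \in \operatorname{ch}^i(R,k)$ through its ``center,'' show these centers form a well-separated set inside a fixed ball, and then apply the definition of separation dimension directly.

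First, I would unpack the geometry from Definition \ref{def:cubes}\eqref{it:DlikeB}. Each $Q \in \operatorname{ch}^i(R,k) \subseteq \mathscr D_{k+i}$ comes with a centre $z_Q$ satisfying
\begin{equation*}
  B(z_Q, c_0 \delta^{k+i}) \subseteq Q \subseteq B(z_Q, C_0 \delta^{k+i}),
\end{equation*}
and $R$ itself satisfies $R \subseteq B(z_R, C_0 \delta^k)$. Since every $Q \in \operatorname{ch}^i(R,k)$ is contained in $R$, each centre $z_Q$ lies in $B(z_R, C_0 \delta^k)$.

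Next, I would check that the collection $\{z_Q : Q \in \operatorname{ch}^i(R,k)\}$ is mutually $(c_0 \delta^{k+i})$-separated. Indeed, distinct $Q, Q' \in \operatorname{ch}^i(R,k)$ are disjoint (they are members of the partition $\mathscr D_{k+i}$ of $X$), and therefore $z_{Q'} \in Q'$ cannot belong to $B(z_Q, c_0 \delta^{k+i}) \subseteq Q$; this forces $\rho(z_Q, z_{Q'}) \geq c_0 \delta^{k+i}$.

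Finally, since $c_0 \delta^{k+i} \leq C_0 \delta^k$ (using $c_0 \leq C_0$ and $\delta \in (0,1)$, $i \geq 0$), the hypothesis of separation dimension $\Delta$ applies with $R_{\text{ball}} = C_0 \delta^k$ and $r_{\text{sep}} = c_0 \delta^{k+i}$, yielding
\begin{equation*}
  \#\operatorname{ch}^i(R,k) \leq C\Big(\frac{C_0 \delta^k}{c_0 \delta^{k+i}}\Big)^\Delta = C\Big(\frac{C_0}{c_0}\Big)^\Delta \delta^{-i\Delta} \lesssim \delta^{-i\Delta},
\end{equation*}
as claimed. There is no serious obstacle here; the argument is a direct translation of the combinatorial notion of separation dimension into the language of dyadic descendants, using the ball sandwich \eqref{eq:cubeVsBalls} as the bridge.
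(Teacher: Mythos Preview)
Your proof is correct and follows essentially the same approach as the paper: both arguments use the ball sandwich \eqref{eq:cubeVsBalls} to place the centres $z_Q$ inside $B(z_R, C_0\delta^k)$, observe that disjointness of the cubes forces these centres to be $c_0\delta^{k+i}$-separated, and then invoke the definition of separation dimension to bound their number by $C(C_0/c_0)^\Delta\delta^{-i\Delta}$.
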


\begin{proof}
For all $R\in\mathscr D_k$ and $P\in\operatorname{ch}^i(R)\subseteq\mathscr D_{i+k}$, we have
\begin{equation*}
  B(z_P,c_0\delta^{i+k})\subseteq P\subseteq R\subseteq B(z_R,C_0\delta^k).
\end{equation*}
The cubes $P$, and hence the balls on the left, are disjoint. Thus the points $z_P$ are $c_0\delta^{i+k}$-separated and contained in $B(z_R,C_0\delta^i)$. By the dimension assumption, the number of such points, and hence the number of $P\in\operatorname{ch}^i(R)$, is at most
\begin{equation*}
  C\Big(\frac{C_0\delta^k}{c_0\delta^{k+i}}\Big)^\Delta\approx \delta^{-i\Delta},
\end{equation*}
which is the claim bound.
\end{proof}

\begin{proposition}\label{prop:SaBd}
Let $(X,\rho,\mu)$ be a space of homogeneous type of separation dimension $\Delta$ in the sense of Definition \ref{def:dims}.
Let $\S_a$ be a dyadic shift of complexity $(i,j)$ on $L^2(\mu)$. For all $p\in(0,\infty)$ and $q\in(0,\infty]$, its Schatten norm satisfies
\begin{equation*}
  \Norm{\S_a}{S^{p,q}}\leq
  (C\delta^{-(i\wedge j)d})^{(\frac1p-\frac12)_+}\Norm{a}{\ell^{p,q}(\ell^2)}.
\end{equation*}
\end{proposition}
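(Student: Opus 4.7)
The plan is to prove the bound at the Hilbert--Schmidt endpoint $p=2$, then extend to $p<2$ via a rank-reduction argument exploiting Lemma \ref{lem:childCount}, and to $p>2$ via the embedding $S^2\hookrightarrow S^{p,q}$. The interpolation factor $(C\delta^{-(i\wedge j)\Delta})^{(\frac1p-\frac12)_+}$ will arise precisely from comparing the $\ell^2$-Schatten norm of a rank-$n$ operator to its $\ell^{p,q}$-Schatten norm, with $n\lesssim\delta^{-(i\wedge j)\Delta}$ supplied by Lemma \ref{lem:childCount}.

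First I would decompose the shift by level as $\S_a=\sum_k T_k$, where $T_k:=\sum_{R\in\mathscr D_k}A_{(R,k)}$, with only finitely many levels $k\in\mathcal K_{i,j}(R)$ per set $R$ contributing, by Lemma \ref{lem:atMostMin}. The crucial structural fact is that within a fixed level $k$, the cubes $R\in\mathscr D_k$ form a partition of $X$; consequently, the supports and ranges of $\{A_{(R,k)}\}_{R\in\mathscr D_k}$ are pairwise orthogonal subspaces of $L^2(\mu)$. Thus the singular value sequence of $T_k$ is exactly the merged multiset of singular value sequences of the individual $A_{(R,k)}$. Using the Haar orthonormality in Remark \ref{rem:Haar}, the Hilbert--Schmidt norm of each block is $\|A_{(R,k)}\|_{S^2}^2=\sum_{P,Q}|a_{PQR}|^2=\|a_R^{(k)}\|_{\ell^2}^2$, which gives the base case $p=2$ at the level of individual $T_k$'s.

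To upgrade this to general $p,q$, I would combine the level-wise orthogonality with the rank bound $\operatorname{rank}(A_{(R,k)})\leq\min(\#\operatorname{ch}^i(R,k),\#\operatorname{ch}^j(R,k))\lesssim\delta^{-(i\wedge j)\Delta}=:n$ from Lemma \ref{lem:childCount}. For a rank-$n$ operator, Hölder's inequality on Lorentz sequences yields $\|A\|_{S^{p,q}}\lesssim n^{1/p-1/2}\|A\|_{S^2}$ when $p<2$, and $\|A\|_{S^{p,q}}\lesssim\|A\|_{S^2}$ when $p\geq 2$. Applied block-by-block and then aggregated using the fact that the singular values of $T_k$ merge disjointly, this gives
\begin{equation*}
  \|T_k\|_{S^{p,q}}\lesssim(Cn)^{(\frac1p-\frac12)_+}\Bnorm{\bigl\{\|a_R^{(k)}\|_{\ell^2}\bigr\}_{R\in\mathscr D_k}}{\ell^{p,q}}.
\end{equation*}
Finally, combining across levels via the quasi-triangle inequality for $S^{p,q}$ and the observation that the aggregated sequence $\{\|a_R^{(k)}\|_{\ell^2}\}_{(R,k)\in\mathscr D^*_{i,j}}$ has Lorentz norm equal to $\|a\|_{\ell^{p,q}(\ell^2)}$ by definition \eqref{eq:apq2Norm} completes the bound.

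The main obstacle is the careful handling of level multiplicity. A single cube $R$ (as a set) may belong to several generations of $\mathscr D$, so the $T_k$'s for different $k$ need not be mutually orthogonal; their cross terms potentially produce extra factors in the aggregation step. Lemma \ref{lem:atMostMin} controls the multiplicity to at most $\min(i,j)+1$, and this multiplicity is already absorbed into the definition of $\|a\|_{\ell^{p,q}(\ell^2)}$ via the summation over $\mathscr D^*_{i,j}$; in the companion Theorem \ref{thm:shift} the factor $(1+i\wedge j)^{1/p}$ reflects exactly this, and Lemma \ref{lem:DuniqueLevel} eliminates it in the positive-lower-dimension case. The Lorentz cross-term bookkeeping is most cleanly handled by real interpolation between $p=2$ and $p=\infty$ (the latter being an easy operator-norm bound on each $T_k$), which reduces the delicate Lorentz aggregation to standard Schatten endpoint estimates.
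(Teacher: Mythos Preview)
Your block-level analysis is correct and matches the paper: the Hilbert--Schmidt identity $\|A_{(R,k)}\|_{S^2}=\|a_{(R,k)}\|_{\ell^2}$, the rank bound $\operatorname{rank}A_{(R,k)}\leq\#\operatorname{ch}^{i\wedge j}(R,k)\lesssim\delta^{-(i\wedge j)\Delta}$ via Lemma \ref{lem:childCount}, and the resulting $\|A_{(R,k)}\|_{S^p}\leq(\operatorname{rank})^{(\frac1p-\frac12)_+}\|A_{(R,k)}\|_{S^2}$ are exactly what the paper uses. The gap is in the aggregation step. You propose to combine the level-wise operators $T_k$ by the quasi-triangle inequality in $S^{p,q}$, but there are infinitely many $k\in\Z$, so this yields no finite bound; your suggested interpolation between $p=2$ and $p=\infty$ does not cure this, since you have not established either endpoint for the full shift $\S_a$ (only for each $T_k$ separately).

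More importantly, you misidentify the obstacle. You worry that level multiplicity destroys the orthogonality of the $T_k$, but in fact it does not: a cancellative Haar function $h_Q$ with $Q\in\operatorname{ch}^j(R,k)$ is nonzero only when $k+j=k_{\max}(Q)$, so each nonzero $h_Q$ appears in the domain (resp.\ range) of exactly one $A_{(R,k)}$. Consequently---and this is the paper's key observation---the \emph{entire} shift $\S_a$ is a genuine orthogonal direct sum $\bigoplus_{(R,k)\in\mathscr D^*_{i,j}}A_{(R,k)}$, acting between two orthogonal decompositions of $L^2(\mu)$. Hence the singular value sequence of $\S_a$ is the merged multiset over \emph{all} $(R,k)$, giving directly $\|\S_a\|_{S^p}^p=\sum_{(R,k)}\|A_{(R,k)}\|_{S^p}^p$ for every $p\in(0,\infty)$. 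Combined with your block estimate, this yields $\|\S_a\|_{S^p}\leq(C\delta^{-(i\wedge j)\Delta})^{(\frac1p-\frac12)_+}\|a\|_{\ell^p(\ell^2)}$, and then real interpolation (via $(S^{p_0},S^{p_1})_{\theta,q}=S^{p,q}$ and $(\ell^{p_0}(\ell^2),\ell^{p_1}(\ell^2))_{\theta,q}=\ell^{p,q}(\ell^2)$ with two finite endpoints on the same side of $p=2$) delivers the Lorentz version. The level-multiplicity factor $(1+i\wedge j)^{1/p}$ that you invoke does not belong here; it enters only later, in Corollary \ref{cor:SbBd}, when one passes from $\ell^{p,q}$ over $\mathscr D^*_{i,j}$ to the oscillation norm over $\mathscr D$.
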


\begin{proof}
Let us first consider $p=q\in(0,\infty)$. From the pairwise orthogonality of the domains of the shift components
\begin{equation}\label{eq:ARk}
  A_{(R,k)}=\sum_{\substack{P\in\operatorname{ch}^i(R,k) \\ Q\in\operatorname{ch}^j(R,k)}}
  a_{PQR}h_P\otimes h_Q
\end{equation}
on the one hand, and of their ranges on the other hand, as well as the vanishing of some $A_{(R,k)}$ by Lemma \ref{lem:atMostMin}, it follows that $\S_a$ can be seen as a direct sum $\bigoplus_{(R,k)\in\mathscr D^*_{i,j}}A_{(R,k)}$ acting from an orthogonal direct sum decomposition of $L^2(\mu)$ into another one. Thus, the unordered sequence (including multiplicities) of singular values of $\S_a$ is the union of the corresponding sequences of singular values of each $A_R$. Taking the $\ell^p$ sums, it follows that
\begin{equation*}
  \Norm{\S_a}{S^p}^p
  =\sum_{(R,k)\in\mathscr D_{i,j}^*}\Norm{A_{(R,k)}}{S^p}^p.
\end{equation*}
When $p\geq 2$, for each component $A_{(R,k)}$, we simply dominate by the Hilbert--Schmidt norm:
\begin{equation*}
  \Norm{A_{(R,k)}}{S^p}
  \leq\Norm{A_{(R,k)}}{S^2}
  =\Big(\sum_{\substack{P\in\operatorname{ch}^i(R,k) \\ Q\in\operatorname{ch}^j(R,k)}} \abs{a_{PQR}}^2\Big)^{\frac12}
  =:\Norm{a_{(R,k)}}{\ell^2},\qquad p\geq 2.
\end{equation*}
For $p<2$, we note from the very formula of $A_{(R,k)}$ that it is an operator of rank at most $\min\{\#\operatorname{ch}^i(R,k),\#\operatorname{ch}^j(R,k)\}=\#\operatorname{ch}^{i\wedge j}(R,k)\leq C\delta^{-(i\wedge j)\Delta}$. Denoting the singular values of $A_{(R,k)}$ by $\sigma_h$, it hence follows that
\begin{equation*}
\begin{split}
    \Norm{A_{(R,k)}}{S^p}
  &=\Big(\sum_{h=1}^{\operatorname{rank}(A_{(R,k)})}\sigma_h^p\Big)^{\frac1p}
  \leq\Big(\sum_{h=1}^{\operatorname{rank}(A_{(R,k)})}\sigma_h^2\Big)^{\frac12}
  \Big(\sum_{h=1}^{\operatorname{rank}(A_{(R,k)})}1\Big)^{\frac1p-\frac12} \\
  &=\Norm{A_{(R,k)}}{S^2}(\operatorname{rank}A_{(R,k)})^{\frac1p-\frac12}
  \leq\Norm{a_{(R,k)}}{\ell^2} (C\delta^{-(i\wedge j)\Delta})^{\frac1p-\frac12}
\end{split}
\end{equation*}
for $p\in(0,2)$.
We may combine the two estimates to the statement 
\begin{equation*}
  \Norm{A_{(R,k)}}{S^p}\leq\Norm{a_{(R,k)}}{\ell^2} (C\delta^{-(i\wedge j)\Delta})^{(\frac1p-\frac12)_+}.
\end{equation*}
Taking the $p$-th powers and summing over $(R,k)\in\mathscr D^*_{i,j}$, this gives
\begin{equation}\label{eq:SaSp<ap2}
  \Norm{\S_a}{S^p}\leq \Norm{a}{\ell^p(\ell^2)}(C\delta^{-(i\wedge j)\Delta})^{(\frac1p-\frac12)_+}
  =:C_p\Norm{a}{\ell^p(\ell^2)}
\end{equation}

We will complete the argument by interpolation.
Let
\begin{equation}\label{eq:interpAss}
  0<p_0<p<p_1<\infty,\quad 0<\theta<1,\quad\frac{1}{p}=\frac{1-\theta}{p_0}+\frac{\theta}{p_1},\quad
  0<q\leq\infty,
\end{equation}
and let $(\ ,\ )_{\theta,q}$ denote the $K$-method of real interpolation. Then,
\begin{align}
  S^{p,q}&=(S^{p_0},S^{p_1})_{\theta,q},\qquad\text{by \cite[Th\'eor\`eme 1]{Merucci}},\label{eq:Merucci} \\
  L^{p,q}(B)&=(L^{p_0}(B),L^{p_1}(B))_{\theta,q}\qquad\text{by \cite[Proposition 9]{Kree}}\label{eq:Kree}
\end{align}
for any Banach space $B$, thus in particular for $B=\ell^2$ and $\ell^{p_h},\ell^{p,q}$ as special cases of $L^{p_h},L^{p,q}$. (While \cite[Proposition 9]{Kree} only states the seemingly scalar-valued result $L^{p,q}=(L^{p_0,q_0},L^{p_1,q_1})_{\theta,q}$, it is explained in the preamble of \cite[Part II]{Kree} that, ``up to and including \cite[Section 6]{Kree} (where \cite[Proposition 9]{Kree} appears), `functions on $\Omega$' are understood as $\mu$-measurable functions $\Omega\to B$, where $B$ is a fixed Banach space.'')

Moreover, by \cite[Theorem 3.11.2]{BL:book}, the interpolation method $(\ ,\ )_{\theta,q}$ is exact of exponent $\theta$. Combining the mentioned interpolation results with \eqref{eq:SaSp<ap2}, it follows that
\begin{equation*}
\begin{split}
  \Norm{\S_a}{S^{p,q}}
  &=\Norm{\S_a}{(S^{p_0},S^{p_1})_{\theta,q}}\qquad\text{by \eqref{eq:Merucci}} \\
  &\leq C_{p_0}^{1-\theta}C_{p_1}^\theta \Norm{a}{ (\ell^{p_0}(\ell^2),\ell^{p_1}(\ell^2))_{\theta,q}} 
 \qquad\begin{cases}\text{by \eqref{eq:SaSp<ap2} and the}\\ \text{exactness of } (\ ,\ )_{\theta,q}  \end{cases} \\
  &=C_{p_0}^{1-\theta}C_{p_1}^\theta \Norm{a}{ \ell^{p,q}(\ell^2) }\qquad\text{by \eqref{eq:Kree}}.
\end{split}
\end{equation*}

If $p>2$, we choose $2<p_0<p<p_1<\infty$ so that $C_{p_0}=C_{p_1}= C$, and hence
\begin{equation*}
   \Norm{\S_a}{S^{p,q}}
   \leq C\Norm{a}{ \ell^{p,q}(\ell^2) },\qquad p>2.
\end{equation*}
If $p<2$, we choose $0<p_0<p<p_1<2$ so that $C_{p_h}=(C\delta^{-(i\wedge j)\Delta})^{\frac{1}{p_h}-\frac12}$, and hence
\begin{equation*}
\begin{split}
   \Norm{\S_a}{S^{p,q}}
   &\leq (C\delta^{-(i\wedge j)\Delta})^{(\frac{1}{p_0}-\frac12)(1-\theta)}(C\delta^{-(i\wedge j)\Delta})^{(\frac{1}{p_1}-\frac12)\theta}
   \Norm{a}{ \ell^{p,q}(\ell^2) },\\
   &=(C\delta^{-(i\wedge j)\Delta})^{\frac{1}{p}-\frac12}
   \Norm{a}{ \ell^{p,q}(\ell^2) },\qquad p<2.
\end{split}
\end{equation*}
Let finally $p=2$. Given $\eps\in(0,1)$, we choose $p_0$ so that $\frac{1}{p_0}=\frac{1}{2}+\eps=\frac{1+2\eps}{2}$ and $p_1=4$, say. Then $p_0<2<p_1$ and hence $\frac12=\frac{1-\theta}{p_0}+\frac{\theta}{p_1}$ for some $\theta\in(0,1)$ that can be easily solved from the previous equation (but we do not need it). Then $C_{p_1}=C$ and $1-\theta<1$; hence
\begin{equation*}
   \Norm{\S_a}{S^{2,q}}
   \leq (C\delta^{-(i\wedge j)\Delta})^{(\frac{1}{p_0}-\frac12)(1-\theta)}C^{\theta}
   \Norm{a}{ \ell^{2,q}(\ell^2) }
   \leq C\delta^{-(i\wedge j)\Delta\eps}   \Norm{a}{ \ell^{p,q}(\ell^2) }.
\end{equation*}
Since this bound is valid for any $\eps\in(0,1)$, taking the limit $\eps\to 0$, we finally deduce that
\begin{equation*}
   \Norm{\S_a}{S^{2,q}}
   \leq C \Norm{a}{ \ell^{p,q}(\ell^2) },\qquad p=2,
\end{equation*}
and we have verified the claimed bound in all cases.
\end{proof}

We now specialise the previous considerations to the special dyadic shifts $\S_a=\S_{\ave{b}}$ arising from the analysis of the commutator $[b,\S]$.

\begin{corollary}\label{cor:SbBd}
Let $(X,\rho,\mu)$ be a space of homogeneous type with separation dimension $\Delta$. Let $\S$ be a normalised dyadic shift on $L^2(\mu)$, and let $S_{\ave{b}}$ be new shift related to the commutator $[b,\S]$ of $\S$ with $b\in L^1_{\loc}(\mu)$.
For all $p\in(0,\infty)$ and $q\in(0,\infty]$,
\begin{equation*}
  \Norm{\S_{\ave{b}}}{S^{p,q}}
  \lesssim (C\delta^{-(i\wedge j)\Delta})^{(\frac1p-\frac12)_+}(1+i\wedge j)^{\frac1p}
  \Norm{b}{\operatorname{Osc}^{p,q}(\mathscr D)}
\end{equation*}
The factor $(1+i\wedge j)^{\frac1p}$ can be omitted if $(X,\rho,\mu)$ has positive lower dimension.
\end{corollary}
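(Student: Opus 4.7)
The plan is to apply Proposition \ref{prop:SaBd} directly to the shift $\S_{\ave{b}}$, whose coefficients are $a'_{PQR}:=a_{PQR}(\ave{b}_P-\ave{b}_Q)$, and reduce the resulting $\ell^{p,q}(\ell^2)$ norm of $a'$ to $\Norm{b}{\operatorname{Osc}^{p,q}(\mathscr D)}$ by a two-stage estimate: (i) pointwise (in $(R,k)$) control of the inner $\ell^2$-sum by the $L^2$-oscillation of $b$ on $R$, and (ii) a multiplicity-of-levels bound passing from $\ell^{p,q}(\mathscr D^*_{i,j})$ to $\ell^{p,q}(\mathscr D)$.

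For the pointwise step, for each $(R,k)\in\mathscr D^*_{i,j}$ I would insert $\ave{b}_R$ and use $\abs{\ave{b}_P-\ave{b}_Q}^2\leq 2(\abs{\ave{b}_P-\ave{b}_R}^2+\abs{\ave{b}_Q-\ave{b}_R}^2)$ together with the normalisation $\abs{a_{PQR}}\lesssim\sqrt{\mu(P)\mu(Q)}/\mu(R)$. Since $\operatorname{ch}^i(R,k)$ and $\operatorname{ch}^j(R,k)$ each partition $R$, the factors $\sum_Q\mu(Q)/\mu(R)=1$ (and symmetrically for $P$) split off cleanly; Jensen then gives $\mu(P)\abs{\ave{b}_P-\ave{b}_R}^2\leq\int_P\abs{b-\ave{b}_R}^2\ud\mu$, and summing over the partition yields
$$
  \Big(\sum_{P,Q}\abs{a'_{PQR}}^2\Big)^{1/2}
  \lesssim\Big(\fint_R\abs{b-\ave{b}_R}^2\ud\mu\Big)^{1/2}
  =\Norm{b-\ave{b}_R}{\avL^2(R)}.
$$

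For the multiplicity step, Lemma \ref{lem:atMostMin} gives $\#\mathcal K_{i,j}(R)\leq 1+i\wedge j$, so any sequence indexed by $\mathscr D^*_{i,j}$ whose entries depend only on $R$ has distribution function at most $(1+i\wedge j)$ times that of the corresponding $(R\in\mathscr D)$-indexed sequence, and the distribution-function representation $\Norm{\cdot}{\ell^{p,q}}^q\approx\int_0^\infty\lambda^{q-1}\mu_\ell(\lambda)^{q/p}\ud\lambda$ yields the factor $(1+i\wedge j)^{1/p}$. Concluding by Proposition \ref{prop:Osc} and Lemma \ref{lem:LpOsc} (to swap the $L^2$-oscillation for the $L^1$-oscillation defining $\operatorname{Osc}^{p,q}$),
$$
  \Norm{a'}{\ell^{p,q}(\ell^2)}
  \lesssim(1+i\wedge j)^{\frac1p}\Norm{b}{\operatorname{Osc}^{p,q}(\mathscr D)},
$$
and combining with Proposition \ref{prop:SaBd} gives the claim. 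For the removal of the multiplicity factor under a positive lower dimension, Lemma \ref{lem:DuniqueLevel} permits choosing $\mathscr D$ so that $k_{\min}(R)=k_{\max}(R)$ for every cube, so $\#\mathcal K_{i,j}(R)=1$ and $\mathscr D^*_{i,j}$ collapses to $\mathscr D$. No serious obstacle arises; the only subtle point is the handling of the Lorentz index $q\neq p$ in the multiplicity passage, settled by the distribution-function formulation of $\Norm{\cdot}{\ell^{p,q}}$.
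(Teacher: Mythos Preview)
Your proposal is correct and follows essentially the same approach as the paper's proof: both apply Proposition \ref{prop:SaBd} to the coefficients $a'_{PQR}=a_{PQR}(\ave{b}_P-\ave{b}_Q)$, insert $\ave{b}_R$ to split the oscillation, sum out the redundant variable via the partition property, use Jensen to reach $\Norm{b-\ave{b}_R}{\avL^2(R)}$, handle the level-multiplicity via Lemma \ref{lem:atMostMin} and the distribution-function form of $\ell^{p,q}$, and conclude with Proposition \ref{prop:Osc}. The only cosmetic difference is that the paper splits at the $\ell^2$-norm level via Minkowski (terms $I+II$) whereas you square first; both give the same bound.
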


\begin{proof}
The components of the shift $\S_{\ave{b}}$ are now of the form
\begin{equation}\label{eq:aPQRprime}
  a_{PQR}':=
  a_{PQR}(\ave{b}_P-\ave{b}_Q),\qquad
  \abs{a_{PQR}}\lesssim\frac{\sqrt{\mu(P)\mu(Q)}}{\mu(R)}.
\end{equation}
Then
\begin{equation*}
\begin{split}
  \Big(\sum_{\substack{P\in\operatorname{ch}^i(R,k) \\ Q\in\operatorname{ch}^j(R,k)}} &\abs{a_{PQR}'}^2\Big)^{\frac12}
  \lesssim\Big(\sum_{\substack{P\in\operatorname{ch}^i(R,k) \\ Q\in\operatorname{ch}^j(R,k)}}
    \frac{\mu(P)\mu(Q)}{\mu(R)^2}\abs{\ave{b}_P-\ave{b}_Q}^2\Big)^{\frac12} \\
  &\leq\Big(\sum_{\substack{P\in\operatorname{ch}^i(R,k) \\ Q\in\operatorname{ch}^j(R,k)}}
    \frac{\mu(P)\mu(Q)}{\mu(R)^2}\abs{\ave{b}_P-\ave{b}_R}^2\Big)^{\frac12} \\
   &\quad+\Big(\sum_{\substack{P\in\operatorname{ch}^i(R,k) \\ Q\in\operatorname{ch}^j(R,k)}}
    \frac{\mu(P)\mu(Q)}{\mu(R)^2}\abs{\ave{b}_Q-\ave{b}_R}^2\Big)^{\frac12} =:I+II.
\end{split}
\end{equation*}
Here
\begin{equation*}
  I=\Big(\sum_{P\in\operatorname{ch}^i(R,k)}
    \frac{\mu(P)}{\mu(R)}\abs{\ave{b}_P-\ave{b}_R}^2\Big)^{\frac12}
\end{equation*}
and
\begin{equation*}
  \abs{\ave{b}_P-\ave{b}_R}^2
  =\Babs{\fint_P(b-\ave{b}_R)}^2
  \leq\fint_P\abs{b-\ave{b}_R}^2,
\end{equation*}
so that
\begin{equation*}
\begin{split}
  I &\leq\Big(\sum_{P\in\operatorname{ch}^i(R,k)}
    \frac{\mu(P)}{\mu(R)}\fint_P\abs{b-\ave{b}_R}^2\Big)^{\frac12} \\
    &=\Big(\sum_{P\in\operatorname{ch}^i(R,k)}
    \frac{1}{\mu(R)}\int_P\abs{b-\ave{b}_R}^2\Big)^{\frac12} 
    =\Big(\frac{1}{\mu(R)}\int_R\abs{b-\ave{b}_R}^2\Big)^{\frac12}.
\end{split}
\end{equation*}
By a symmetric argument, $II$ satisfies exactly the same bound.

From Proposition \ref{prop:SaBd}, it then follows that
\begin{equation*}\begin{split}
  \Norm{\S_{\ave{b}}}{S^{p,q}}
  &\leq(C\delta^{-(i\wedge j)d})^{(\frac1p-\frac12)_+}\BNorm{\Big\{
    \Big(\sum_{\substack{P\in\operatorname{ch}^i(R,k) \\ Q\in\operatorname{ch}^j(R,k)}}\abs{a_{PQR}'}^2\Big)^{\frac12}
    \Big\}_{(R,k)\in\mathscr D^*_{i,j}}}{\ell^{p,q}} \\ 
  &\leq(C\delta^{-(i\wedge j)d})^{(\frac1p-\frac12)_+}\BNorm{\Big\{
    \Big(\fint_R\abs{b-\ave{b}_R}^2\Big)^{\frac12}
    \Big\}_{(R,k)\in\mathscr D^*_{i,j}}}{\ell^{p,q}}. 
\end{split}\end{equation*}

Abbreviating $c_R:=(\fint_R\abs{b-\ave{b}_R}^2)^{\frac12}$, note that $\{c_R\}_{(R,k)\in\mathscr D^*_{i,j}}$ a modification of the sequence $\{c_R\}_{R\in\mathscr D}$, where each $c_R$ is repeated $\#\mathcal K_{i,j}(R)\leq\min(i,j)+1$ times.
If $\sigma$ and $\sigma^*$ are two sequences, where $\sigma^*$ is formed by repeating each element of $\sigma$ at most $N$ times, its is clear that
\begin{equation*}
  \#\{\alpha:\abs{\sigma^*(\alpha)}>t\}\leq N\#\{\alpha:\abs{\sigma(\alpha)}>t\}
\end{equation*}
From the equivalent formula for the Lorentz norm (see e.g. \cite[Lemma F.3.4]{HNVW2})
\begin{equation*}
  \Norm{\sigma}{\ell^{p,q}}=p^{\frac1q}\Norm{t\mapsto \#\{\alpha:\abs{\sigma(\alpha)}>t\}^{\frac1p}}{L^q(\R_+,\frac{\ud t}{t})},
\end{equation*}
it then follows that
\begin{equation*}
  \Norm{\sigma^*}{\ell^{p,q}}\leq N^{\frac1p}\Norm{\sigma}{\ell^{p,q}}.
\end{equation*}
Applying this observation to the concrete sequence above, we conclude that
\begin{equation}\label{eq:DstarVsD}
\begin{split}
  &\BNorm{\Big\{
    \Big(\fint_R\abs{b-\ave{b}_R}^2\Big)^{\frac12}
    \Big\}_{(R,k)\in\mathscr D^*_{i,j}}}{\ell^{p,q}} \\
    &\qquad\leq (\min(i,j)+1)^{\frac1p}\BNorm{\Big\{
    \Big(\fint_R\abs{b-\ave{b}_R}^2\Big)^{\frac12}
    \Big\}_{R\in\mathscr D }}{\ell^{p,q}},
\end{split}
\end{equation}
where
\begin{equation*}
  \BNorm{\Big\{
    \Big(\fint_R\abs{b-\ave{b}_R}^2\Big)^{\frac12}
    \Big\}_{R\in\mathscr D}}{\ell^{p,q}}
    =\Norm{b}{\operatorname{Osc}^{p,q}_2(\mathscr D)}
    \approx \Norm{b}{\operatorname{Osc}^{p,q}(\mathscr D)}
\end{equation*}
by Proposition \ref{prop:Osc}.

If $(X,\rho,\mu)$ has positive lower dimension, then Lemma \ref{lem:DuniqueLevel} guarantees that each $Q\in\mathscr D$ appears in only one level $\mathscr D_k$. Thus the parameter $k$ in $(R,k)\in\mathscr D^*_{i,}$ is uniquely determined by $R\in\mathscr D$, and \eqref{eq:DstarVsD} holds with equality and without the factor $(\min(i,j)+1)^{\frac1p}$. This explains the missing of this factor in the conclusions under this special case; the rest of the proof is unaffected.
\end{proof}

We have now all components to complete:

\begin{proof}[Proof of Theorem \ref{thm:shift}]
We can estimate
\begin{equation*}
\begin{split}
  &\Norm{[b,\S]}{S^{p,q}} \\
  &\lesssim\Big(\Norm{\Pi_b}{S^{p,q}}+\sum_{1\leq\alpha<M}\Norm{\Gamma_b^\alpha}{S^{p,q}}\Big)
  +\Norm{S_{\ave{b}}}{S^{p,q}}\qquad\text{by Corollary \ref{cor:bScom}} \\
  &\lesssim\Norm{b}{\operatorname{Osc}^{p,q}(\mathscr D)}\Big(1+
  \delta^{-(i\wedge j)\Delta(\frac1p-\frac12)_+}(1+i\wedge j)^{\frac1p}\Big),
  \ \begin{cases}\text{by Lemma \ref{lem:PiGammaCom}} \\ \text{and Corollary \ref{cor:SbBd}}\end{cases}
\end{split}
\end{equation*}
where Corollary \ref{cor:SbBd} further guarantees that the factor $(1+i\wedge j)^{\frac 1p}$ can be omitted if $(X,\rho,\mu)$ has positive lower dimension $d>0$.
\end{proof}

\section{Commutators of dyadic shifts: weighted estimates}\label{sec:shift-w}

Some of the easier estimates in Section \ref{sec:shift} were already stated and proved in their weighted form. The aim of this section is to obtain a weighted version of the remaining results of Section \ref{sec:shift}, concluding in the weighted extension of Theorem \ref{thm:shift}, which is the main result of this section. While Theorem \ref{thm:shift} extends the Euclidean results of \cite{WZ:24b,WZ:24} to general spaces of homogeneous type (with slight technical improvement also in $\R^d$, as discussed in Remark \ref{rem:WZshiftBd}), the following Theorem \ref{thm:shift-w} is new even in $\R^d$, except for a particular case of special low-complexity shifts treated in \cite[Section 3.2]{GLW:23}. The said special case avoids the issue of controlling the dependence on the complexity parameters, which is a key aspect of our Theorem \ref{thm:shift-w}, to ensure the summability of a series over these shifts in the Dyadic Representation Theorem \ref{thm:DRT} below.

(For the mentioned special low-complexity shifts on $\R^d$, also more general {\em two-weighted} Schatten estimates have been obtained in \cite{LLW:2wH,LLWW:2wR}. While we have left such two-weight inequalities outside the scope of the present work, we believe that the proof of our following Theorem \ref{thm:shift-w} will suggest extensions of the main results of \cite{LLW:2wH,LLWW:2wR} beyond the particular Hilbert and Riesz transforms considered there.)

\begin{theorem}\label{thm:shift-w}
Let $(X,\rho,\mu)$ be a space of homogeneous type of separation dimension $\Delta$ in the sense of Definition \ref{def:dims}, and let $w\in A_2$. Let $\S$ be a normalised dyadic shift of complexity $(i,j)$ on $L^2(\mu)$, and let $b\in L^1_{\loc}(\mu)$. Then for all $p\in(0,\infty)$ and $q\in(0,\infty]$, we have
\begin{equation*}
  \Norm{[b,\S]}{S^{p,q}(L^2(w))}
  \lesssim \delta^{-(i\wedge j)\Delta(\frac1p-\frac12)_+}(1+i\wedge j)^{\frac1p}\Norm{b}{\operatorname{Osc}^{p,q}},
\end{equation*}
where $\delta\in(0,1)$ is the parameter from the dyadic system $\mathscr D$.
The factor $(1+i\wedge j)^{\frac1p}$ can be omitted, if $(X,\rho,\mu)$ has positive lower dimension $d>0$.
\end{theorem}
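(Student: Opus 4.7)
The plan is to mirror the structure of the proof of Theorem \ref{thm:shift}, lifting each of its ingredients to the weighted setting. I would begin with the algebraic decomposition $[b,\S] = [\Pi_b, \S] + \sum_{1 \leq \alpha < M} [\Gamma_b^\alpha, \S] + \S_{\ave{b}}$ from Lemma \ref{lem:bScom}, which is independent of the ambient inner product. Combined with the quasi-triangle inequality in $S^{p,q}(L^2(w))$ and the product estimate $\|[A,B]\|_{S^{p,q}} \leq 2\|A\|_{S^{p,q}} \|B\|_{\bddlin(L^2(w))}$, this reduces the task to three sub-estimates: weighted Schatten--Lorentz bounds for $\Pi_b$, $\Gamma_b^\alpha$, and $\S_{\ave{b}}$. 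The first two are already furnished by Proposition \ref{prop:paracomest}. For the operator norm $\|\S\|_{\bddlin(L^2(w))}$ that enters the estimate, I would invoke the standard weighted $L^2$-boundedness of normalized cancellative dyadic shifts on $L^2(w)$ for $w\in A_2$, with a constant depending only on $[w]_{A_2}$; alternatively this can be read off from the $p\to\infty$ limit of the weighted Proposition below, since for a normalized shift $\|a\|_{\ell^\infty(\ell^2)}\leq 1$.

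The central new ingredient is the weighted analogue of Proposition \ref{prop:SaBd}:
\begin{equation*}
  \|\S_a\|_{S^{p,q}(L^2(w))} \leq (C\delta^{-(i\wedge j)\Delta})^{(1/p - 1/2)_+} \|a\|_{\ell^{p,q}(\ell^2)},
\end{equation*}
with implicit constant depending on $[w]_{A_2}$ but not on the complexity $(i,j)$. Once this is in hand, the weighted counterpart of Corollary \ref{cor:SbBd} follows by an essentially verbatim computation: it invokes Proposition \ref{prop:SaBd} as a black box and otherwise uses only weight-free geometric estimates (Lemma \ref{lem:childCount} and the multiplicity bound in $\mathscr D^*_{i,j}$), so the same $\delta^{-(i\wedge j)\Delta(1/p-1/2)_+}(1+i\wedge j)^{1/p}$ factor is produced, with the $(1+i\wedge j)^{1/p}$ factor again vanishing under positive lower dimension by Lemma \ref{lem:DuniqueLevel}. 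I would prove the weighted Proposition \ref{prop:SaBd} by the same three-step scheme as in its unweighted proof: a Hilbert--Schmidt estimate at $p=2$; the rank bound $\|A_{(R,k)}\|_{S^p(H)}\leq (\operatorname{rank} A_{(R,k)})^{1/p-1/2}\|A_{(R,k)}\|_{S^2(H)}$ for $p<2$ (valid in any Hilbert space $H$); and interpolation via the Merucci and Kr\'ee theorems used in the unweighted case, which apply uniformly in the underlying Hilbert space.

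The hard part will be the weighted Hilbert--Schmidt bound $\|\S_a\|_{S^2(L^2(w))}\lesssim \|a\|_{\ell^2(\ell^2)}$. In the unweighted case, this was immediate from the orthogonality of Haar functions at distinct scales in $L^2(\mu)$, which decoupled $\S_a$ as an orthogonal direct sum across all $(R,k)\in \mathscr D^*_{i,j}$. In $L^2(w)$, only \emph{disjoint-support} orthogonality survives, so the direct-sum identity persists at each \emph{fixed} level $k$, but the cross-level interactions --- those arising from nested cubes $R'\subsetneq R$ at different levels --- must be handled by hand. I would absorb them using the $A_\infty$/reverse-H\"older property of $w\in A_2$, which provides a quantitative substitute for scale-orthogonality of Haar functions: for nested $P'\subsetneq P$ at strictly different scales, the weighted cross-term $\int h_P h_{P'}w\,\ud\mu$ reduces via the $\mu$-cancellation of $h_{P'}$ to an integral of $h_{P'}$ against $(w-\ave{w}_{P'})$ on $P'$, which is controlled by the mean oscillation of $w$ on $P'$. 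Summing these contributions over the nested structure produces a Carleson-measure sum of oscillations, which is governed by $[w]_{A_2}$ and yields an overall bound depending only on the $A_2$-characteristic, closing the weighted Hilbert--Schmidt estimate and thereby completing the proof.
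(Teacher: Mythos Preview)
Your overall architecture matches the paper: the decomposition via Lemma \ref{lem:bScom}, the weighted bounds for $\Pi_b$ and $\Gamma_b^\alpha$ from Lemma \ref{lem:PiGammaCom}/Proposition \ref{prop:paracomest}, and the reduction to a weighted Schatten bound for $\S_{\ave{b}}$ are all correct. The gap is in your ``central new ingredient''. The estimate
\[
  \Norm{\S_a}{S^{2}(L^2(w))}\lesssim \Norm{a}{\ell^{2}(\ell^2)}
\]
with \emph{unweighted} $\ell^2$ on the right and constant depending only on $[w]_{A_2}$ is false. Take $X=[0,1]$, $w(x)=x^\alpha$ with $\alpha\in(0,1)$, and a single-term shift $\S_a=h_P\otimes h_Q$ with $P=[1-2^{-m},1]$, $Q=[0,2^{-m}]$, $R=[0,1]$, all other coefficients zero. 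Then $\Norm{a}{\ell^2(\ell^2)}=1$, while
\[
  \Norm{\S_a}{S^2(L^2(w))}=\Norm{h_P}{L^2(w)}\Norm{h_Q}{L^2(\sigma)}
  \approx \ave{w}_P^{1/2}\ave{\sigma}_Q^{1/2}\approx 2^{m\alpha/2}\to\infty
\]
as $m\to\infty$, with $[w]_{A_2}$ fixed. Your proposed Carleson/oscillation argument targets \emph{cross-level} interactions, but this counterexample lives entirely at a single level and a single $R$: the obstruction is that $\ave{w}_P\ave{\sigma}_Q$ is unbounded over distant children $P,Q$ of $R$, something no amount of scale-orthogonality can repair.

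The paper's fix is to \emph{keep the weight} in the intermediate bound. It proves (Proposition \ref{prop:SaBd-w})
\[
  \Norm{\S_a}{S^{p,q}(L^2(w))}\lesssim (C\delta^{-(i\wedge j)\Delta})^{(\frac1p-\frac12)_+}\Norm{a}{\ell^{p,q}(\ell^2(w,\sigma))},
  \quad
  \Norm{a_{(R,k)}}{\ell^2(w,\sigma)}^2:=\sum_{P,Q}\abs{a_{PQR}}^2\ave{w}_P\ave{\sigma}_Q,
\]
where the block-diagonalisation over $(R,k)$ is recovered not by Haar orthogonality but by the weighted square-function equivalence $\Norm{f}{L^2(w)}^2\approx\sum_Q\Norm{\D_Q f}{L^2(w)}^2$ (Proposition \ref{prop:sqFnEst}). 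The weight is removed only at the very end, in Corollary \ref{cor:SbBd-w}, using the specific structure of $\S_{\ave{b}}$: for its coefficients $a_{PQR}'=a_{PQR}(\ave{b}_P-\ave{b}_Q)$ with $\abs{a_{PQR}}\lesssim\sqrt{\mu(P)\mu(Q)}/\mu(R)$, one applies H\"older in the $P$-sum and the reverse-H\"older self-improvement $\sigma\in A_{2-\eps}$ to bound $\Norm{a_{(R,k)}'}{\ell^2(w,\sigma)}$ by an unweighted $L^{2r'}$ oscillation $(\fint_R\abs{b-\ave{b}_R}^{2r'})^{1/(2r')}$, after which Proposition \ref{prop:Osc} finishes as in the unweighted case. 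So the reverse-H\"older property is indeed the key $A_2$ input, but it enters at the level of the \emph{specific} coefficients of $\S_{\ave{b}}$, not as a repair of Haar orthogonality for general shifts.
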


In the unweighted case, we had the simple formula $\Norm{A_{(R,k)}}{S^2(L^2(\mu))}=\Norm{a_{(R,k)}}{\ell^2}$. The weighted version of this will also involve the weight on the right-hand side.

\begin{lemma}\label{lem:ARS2}
For a weight $w$, a component $A_{(R,k)}$ of a dyadic shift as in \eqref{eq:ARk} satisfies
\begin{equation}\label{eq:ARS2}
\begin{split}
  \Norm{A_{(R,k)}}{S^2(L^2(w))}
  &\lesssim
  \Big(\sum_{\substack{P\in\operatorname{ch}^i(R,k) \\ Q\in\operatorname{ch}^j(R,k)}}
    \abs{a_{PQR}}^2 \ave{w}_P\ave{\sigma}_Q\Big)^{\frac12} \\
    &=:\Norm{a_{(R,k)}}{\ell^2(w,\sigma)},
\end{split}
\end{equation}
where $\sigma=w^{-1}$.
\end{lemma}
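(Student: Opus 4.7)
The plan is to compute $\Norm{A_{(R,k)}}{S^2(L^2(w))}^2$ directly from the integral kernel, exploiting the disjointness of the cubes $P \in \operatorname{ch}^i(R,k)$ (and similarly of the $Q$'s) to reduce to a diagonal sum.

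First I would use the isometry $\Phi : L^2(w) \to L^2(\mu)$, $\Phi f := w^{1/2} f$ to transport the operator to $L^2(\mu)$, where the Hilbert--Schmidt norm is just the $L^2(\mu\times\mu)$ norm of the kernel. Since $A_{(R,k)}$ has integral kernel $K(x,y) = \sum_{P,Q} a_{PQR} h_P(x) h_Q(y)$ with respect to $\mu$, the transported operator $\Phi A_{(R,k)} \Phi^{-1}$ has kernel $w(x)^{1/2} w(y)^{-1/2} K(x,y)$, and hence
\begin{equation*}
\Norm{A_{(R,k)}}{S^2(L^2(w))}^2 = \iint \abs{K(x,y)}^2\, w(x)\, \sigma(y) \,\ud\mu(x)\,\ud\mu(y).
\end{equation*}

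Next I would expand the modulus squared. The key observation is that $P, P' \in \operatorname{ch}^i(R,k)$ with $P \neq P'$ are disjoint cubes (and similarly for $Q, Q'$), so $h_P h_{P'} = 0$ $\mu$-a.e. This kills all off-diagonal cross-terms and leaves
\begin{equation*}
\abs{K(x,y)}^2 = \sum_{P,Q} \abs{a_{PQR}}^2\, \abs{h_P(x)}^2\, \abs{h_Q(y)}^2.
\end{equation*}
Substituting back and using Fubini, the double integral factorises as
\begin{equation*}
\Norm{A_{(R,k)}}{S^2(L^2(w))}^2 = \sum_{P,Q} \abs{a_{PQR}}^2 \Norm{h_P}{L^2(w)}^2 \Norm{h_Q}{L^2(\sigma)}^2.
\end{equation*}

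To finish, I would invoke the basic pointwise Haar bound $\abs{h_P}^2 \lesssim \mu(P)^{-1} 1_P$ from Remark \ref{rem:Haar}, which yields $\Norm{h_P}{L^2(w)}^2 \lesssim \mu(P)^{-1} \int_P w\,\ud\mu = \ave{w}_P$, and symmetrically $\Norm{h_Q}{L^2(\sigma)}^2 \lesssim \ave{\sigma}_Q$. This gives exactly \eqref{eq:ARS2}. I do not foresee any real obstacle: the argument is a direct kernel computation, and the only point requiring a moment of care is the cancellation of off-diagonal terms, which rests squarely on the disjointness built into the partition $\operatorname{ch}^i(R,k)$ of $R$.
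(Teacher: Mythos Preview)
Your proof is correct and essentially identical to the paper's: both conjugate by the isometry $f\mapsto w^{1/2}f$ to reduce to the Hilbert--Schmidt norm on $L^2(\mu)$, expand the kernel, use the disjointness of the $P$'s (and of the $Q$'s) to kill cross-terms, and finish with the pointwise Haar bound $\abs{h_P}\lesssim\mu(P)^{-1/2}1_P$. There is no meaningful difference in approach.
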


\begin{proof}
Note that pointwise multiplication by $w^{\frac12}$ is an isometric bijection from $L^2(w)$ to $L^2(\mu)$, and pointwise multiplication by $\sigma^{\frac12}$ is an isometric bijection from $L^2(\mu)$ to $L^2(w)$. Hence we can factorise an operator $T$ on $L^2(w)$ as
\begin{equation*}
  T=\sigma^{\frac12}\circ w^{\frac12}T\sigma^{\frac12}\circ w^{\frac12},
\end{equation*}
and
\begin{equation*}
  \Norm{T}{S^{p,q}(L^2(w))}=\Norm{w^{\frac12}T\sigma^{\frac12}}{S^{p,q}(L^2(\mu))}.
\end{equation*}
Recalling that the $S^2(L^2(\mu))$ norm of an integral operator is the $L^2(L^2)$ norm of its kernel, we obtain in particular
\begin{equation*}
\begin{split}
  \Norm{A_{(R,k)}}{S^2(L^2(w))}^2 &=\Norm{w^{\frac12}A_{(R,k)}\sigma^{\frac12}}{S^2(L^2(\mu))}^2 \\
  &=\iint_{R\times R}\Babs{\sum_{\substack{P\in\operatorname{ch}^i(R,k) \\ Q\in\operatorname{ch}^j(R,k)}}
    a_{PQR} w^{\frac12}h_P\otimes\sigma^{\frac12}h_Q}^2\ud\mu\ud\mu \\
  &=\sum_{\substack{P\in\operatorname{ch}^i(R,k) \\ Q\in\operatorname{ch}^j(R,k)}}
    \abs{a_{PQR}}^2\Norm{w^{\frac12}h_P}{L^2(\mu)}^2\Norm{\sigma^{\frac12}h_Q}{L^2(\mu)}^2,
\end{split}  
\end{equation*}
since the cubes $P\in\operatorname{ch}^i(R,k)$ are pairwise disjoint, and similarly with the cubes $Q$. Since $\abs{h_P}\lesssim\mu(P)^{-\frac12}1_P$, we get
\begin{equation*}
  \Norm{w^{\frac12}h_P}{L^2(\mu)}^2\lesssim\frac{\Norm{w^{\frac12}1_P}{L^2(\mu)}^2}{\mu(P)}=\frac{w(P)}{\mu(P)}=\ave{w}_P,
\end{equation*}
and similarly for the cubes $Q$.
\end{proof}

Lemma \ref{lem:ARS2} suggest a weighted version of Proposition \ref{prop:SaBd}, which turns out to be correct:

\begin{proposition}\label{prop:SaBd-w}
Let $(X,\rho,\mu)$ be a space of homogeneous type of separation dimension $\Delta$ in the sense of Definition \ref{def:dims}.
Let $\S_a$ be a dyadic shift of complexity $(i,j)$ on $L^2(\mu)$. For all $p\in(0,\infty)$ and $q\in(0,\infty]$ and all $w\in A_2$, its weighted Schatten norm satisfies
\begin{equation*}
  \Norm{\S_a}{S^{p,q}(L^2(w))}\leq
  (C\delta^{-(i\wedge j)\Delta})^{(\frac1p-\frac12)_+}\Norm{a}{\ell^{p,q}(\ell^2(w,\sigma))},
\end{equation*}
where
\begin{equation*}
  \Norm{a}{\ell^{p,q}(\ell^2(w,\sigma))}
  :=\Norm{\{\Norm{a_{(R,k)}}{\ell^2(w,\sigma)} \}_{(R,k)\in\mathscr D^*_{i,j}}}{\ell^{p,q}}
\end{equation*}
and $\Norm{a_{(R,k)}}{\ell^2(w,\sigma)}$ is as in \eqref{eq:ARS2}
\end{proposition}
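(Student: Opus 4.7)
The plan is to mirror the argument of Proposition \ref{prop:SaBd}, making the essential modification that the individual Hilbert--Schmidt identity $\Norm{A_{(R,k)}}{S^2(L^2(\mu))} = \Norm{a_{(R,k)}}{\ell^2}$ is replaced by the weighted Hilbert--Schmidt bound from Lemma \ref{lem:ARS2}, namely $\Norm{A_{(R,k)}}{S^2(L^2(w))} \lesssim \Norm{a_{(R,k)}}{\ell^2(w,\sigma)}$.

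I would first handle the case $p = q \in (0,\infty)$ by the same two-regime analysis as in the unweighted setting. For $p \geq 2$, each block $A_{(R,k)}$ is dominated by its weighted Hilbert--Schmidt norm; for $p < 2$, one interpolates between the Hilbert--Schmidt bound and the rank bound $\operatorname{rank}(A_{(R,k)}) \leq \#\operatorname{ch}^{i \wedge j}(R,k) \lesssim \delta^{-(i\wedge j)\Delta}$ from Lemma \ref{lem:childCount}. Both regimes yield the block estimate
\begin{equation*}
  \Norm{A_{(R,k)}}{S^p(L^2(w))} \leq (C\delta^{-(i\wedge j)\Delta})^{(\frac1p-\frac12)_+}\,\Norm{a_{(R,k)}}{\ell^2(w,\sigma)}.
\end{equation*}

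The main obstacle is combining these block estimates into a global bound, namely establishing the analogue of the $p$-additive decomposition
\begin{equation*}
  \Norm{\S_a}{S^p(L^2(w))}^p \lesssim \sum_{(R,k)\in\mathscr D^*_{i,j}} \Norm{A_{(R,k)}}{S^p(L^2(w))}^p,
\end{equation*}
which in the unweighted case was an equality obtained via the $L^2(\mu)$-orthogonality of the Haar system. In $L^2(w)$ this orthogonality fails for Haar functions $h_P, h_{P'}$ on nested cubes, and the blocks $A_{(R,k)}$ for different $(R,k)$ cease to act on orthogonal subspaces. To salvage the decomposition, I would apply the disbalanced Haar expansion $h_P = h_P^{w} + \gamma_P^w\,\mathbf{1}_P/w(P)^{1/2}$, where $h_P^w$ is a $w$-cancellative Haar function and hence orthogonal in $L^2(w)$, and similarly $h_Q = h_Q^\sigma + \gamma_Q^\sigma\,\mathbf{1}_Q/\sigma(Q)^{1/2}$. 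Substituting into $A_{(R,k)} = \sum a_{PQR}\, h_P \otimes h_Q$ splits it into four pieces: the pure weighted-Haar piece admits a genuine direct-sum decomposition across $(R,k)$ in $L^2(w)$, so the unweighted argument of Proposition \ref{prop:SaBd} carries through verbatim; the three remaining paraproduct-type pieces (Haar--mean, mean--Haar, mean--mean) are controlled by a weighted analogue of Lemma \ref{lem:PiGammaCom} together with the standard bound $\abs{\gamma_P^w}\lesssim [w]_{A_2}^{1/2}$ ensured by the $A_2$ hypothesis.

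Finally, the Lorentz version for $q \neq p$ follows by the same real interpolation scheme as at the end of the proof of Proposition \ref{prop:SaBd}: Merucci's identification $S^{p,q}=(S^{p_0},S^{p_1})_{\theta,q}$ combined with Kr\'ee's interpolation of vector-valued $L^p$-spaces, here taken with values in the fixed Banach space $\ell^2(w,\sigma)$ (so that the weight-dependence is absorbed into the target norm and interpolation acts only on the sequence exponent). No further weighted input is needed at this step, and the resulting constant is that of the $p=q$ case.
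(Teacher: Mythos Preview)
Your block-level estimate and the final interpolation step are exactly as in the paper's proof. The substantive difference is in how you pass from block estimates to the global $S^p$ bound.

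The paper does \emph{not} use the disbalanced Haar expansion. Instead it invokes the weighted square function equivalence (Proposition~\ref{prop:sqFnEst}):
\begin{equation*}
  \Norm{f}{L^2(w)}^2\approx\Norm{\ave{f}_X}{L^2(w)}^2+\sum_{k,R}\Norm{\D_{(R,k)}^{(i)}f}{L^2(w)}^2,
  \qquad \D_{(R,k)}^{(i)}:=\sum_{P\in\operatorname{ch}^i(R,k)}\D_P.
\end{equation*}
This says that $f\mapsto(\ave{f}_X,(\D_{(R,k)}^{(i)}f)_{R,k})$ is an \emph{isomorphism} of $L^2(w)$ onto a Hilbert direct sum. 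Since $A_{(R,k)}$ maps $\D_{(R,k)}^{(j)}L^2(w)$ into $\D_{(R,k)}^{(i)}L^2(w)$ and annihilates the other summands, $\S_a$ is isomorphic to a block-diagonal operator, and one immediately obtains $\Norm{\S_a}{S^p(L^2(w))}^p\approx\sum_{(R,k)}\Norm{A_{(R,k)}}{S^p(L^2(w))}^p$ without any further splitting.

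Your route via the disbalanced Haar expansion is a legitimate alternative idea, but the sketch has a gap at the mixed pieces. The claim that these are ``controlled by a weighted analogue of Lemma~\ref{lem:PiGammaCom}'' is not justified: that lemma bounds the specific paraproducts $\Pi_b,\Gamma_b^\alpha$ associated with a symbol $b$ by $\Norm{\{m_b(Q)\}}{\ell^{p,q}}$, which has no evident relation to the coefficient norm $\Norm{a}{\ell^{p,q}(\ell^2(w,\sigma))}$ that must appear on the right-hand side here. The mixed pieces are sums over \emph{all} $(R,k)$ of operators involving $\mathbf{1}_P$ or $\mathbf{1}_Q$, so they are not block-diagonal and one would still need some weighted almost-orthogonality input to assemble them---at which point one is effectively re-proving the square function estimate. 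The paper's direct use of Proposition~\ref{prop:sqFnEst} is both shorter and avoids this circularity.
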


For the proof, we will also need the following square function estimate, which is certainly well known to experts:

\begin{proposition}\label{prop:sqFnEst}
Let $(X,\rho,\mu)$ be a space of homogeneous type.
For $w\in A_2$, we have the two-sided estimate
\begin{equation*}
  \Norm{\ave{f}_X}{L^2(w)}^2+\sum_{Q\in\mathscr D}\Norm{\D_Q f}{L^2(w)}^2\approx \Norm{f}{L^2(w)}^2,\quad
\end{equation*}
for all $f\in L^2(w)$, where
\begin{equation*}
  \ave{f}_X:=\begin{cases} \mu(X)^{-1}\int_X f\ud\mu, & \text{if }\mu(X)<\infty, \\ 0, &\text{otherwise}.\end{cases}
\end{equation*}
\end{proposition}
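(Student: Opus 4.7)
By Fubini--Tonelli, $\sum_{Q \in \mathscr{D}} \Norm{\D_Q f}{L^2(w)}^2 = \Norm{(\sum_{Q} |\D_Q f|^2)^{1/2}}{L^2(w)}^2$, and $\Norm{\ave{f}_X}{L^2(w)}^2 = |\ave{f}_X|^2 w(X)$. So the claim is the equivalence $\Norm{f}{L^2(w)}^2 \approx A(f, w)^2$, where
\begin{equation*}
A(f, w)^2 := |\ave{f}_X|^2 w(X) + \sum_{Q \in \mathscr{D}} \Norm{\D_Q f}{L^2(w)}^2.
\end{equation*}
In the unweighted case $w \equiv 1$ this is Parseval's identity for the $L^2(\mu)$-orthogonal decomposition $f = \ave{f}_X + \sum_Q \D_Q f$; the multi-generation issue causes no trouble because $\D_{(Q,k)} = 0$ except at $k = k_{\max}(Q)$ (Lemma \ref{lem:atMostMin}).

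For the upper bound $A(f, w) \lesssim \Norm{f}{L^2(w)}$, the plan is to invoke the weighted $L^2(w)$ boundedness of the dyadic square function $S_{\mathscr{D}} f := (\sum_Q |\D_Q f|^2)^{1/2}$. The operator $S_{\mathscr{D}}$ is the martingale square function for the filtration generated by $(\mathscr{D}_k)_{k \in \mathbb{Z}}$; its unweighted $L^p(\mu)$-boundedness for $p \in (1, \infty)$ is Burkholder's inequality, and the extension to $L^p(w)$ for $w \in A_p$ in spaces of homogeneous type follows by Rubio de Francia extrapolation (or, more directly, by a sparse domination for $S_{\mathscr{D}}$). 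Specialising to $p = 2$ yields $\Norm{S_{\mathscr{D}} f}{L^2(w)} \lesssim \Norm{f}{L^2(w)}$. The zero-frequency term is handled directly: when $\mu(X) < \infty$, Cauchy--Schwarz gives $|\ave{f}_X| \leq \mu(X)^{-1} \Norm{f}{L^2(w)} \sigma(X)^{1/2}$, so $|\ave{f}_X|^2 w(X) \leq \ave{w}_X \ave{\sigma}_X \Norm{f}{L^2(w)}^2 \leq [w]_{A_2} \Norm{f}{L^2(w)}^2$; when $\mu(X) = \infty$, this term vanishes by convention.

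For the lower bound $\Norm{f}{L^2(w)} \lesssim A(f, w)$, I would argue by duality. Since $\sigma := w^{-1} \in A_2$ with $[\sigma]_{A_2} = [w]_{A_2}$, the upper bound just proved applies equally to $\sigma$, giving $A(g, \sigma) \lesssim \Norm{g}{L^2(\sigma)}$. Using the $L^2(\mu)$-duality $\Norm{f}{L^2(w)} = \sup_{\Norm{g}{L^2(\sigma)} \leq 1} |\pair{f}{g}_{L^2(\mu)}|$ together with the $L^2(\mu)$-orthogonality of $\ave{\cdot}_X$ and the $\D_Q$'s,
\begin{equation*}
\pair{f}{g}_{L^2(\mu)} = \ave{f}_X \overline{\ave{g}_X}\, \mu(X) + \sum_{Q \in \mathscr{D}} \pair{\D_Q f}{\D_Q g}_{L^2(\mu)}.
\end{equation*}
Bounding each inner product by Cauchy--Schwarz with respect to $(w, \sigma)$, i.e. $|\pair{\D_Q f}{\D_Q g}_{L^2(\mu)}| \leq \Norm{\D_Q f}{L^2(w)} \Norm{\D_Q g}{L^2(\sigma)}$, and using $\mu(X) \leq w(X)^{1/2} \sigma(X)^{1/2}$ for the first term, then applying Cauchy--Schwarz to the resulting sum of products, yields $|\pair{f}{g}_{L^2(\mu)}| \leq A(f, w) A(g, \sigma) \lesssim A(f, w) \Norm{g}{L^2(\sigma)}$. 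Taking the supremum over $g$ finishes the argument.

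The main obstacle is locating a clean reference for the weighted $L^2(w)$-boundedness of the dyadic square function in the precise setting of spaces of homogeneous type with the Christ/Hyt\"onen--Kairema cubes, especially accommodating the possible repetition of cubes across generations. This is essentially classical weighted martingale theory, but some bookkeeping may be needed; once granted, the remainder of the proof is a routine combination of martingale orthogonality and $L^2$-duality.
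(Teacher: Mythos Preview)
Your proposal is correct and follows essentially the same approach as the paper: the upper bound via the weighted boundedness of the dyadic square function plus a Cauchy--Schwarz estimate for the zero-frequency term, and the lower bound by the duality argument applying the upper bound to $\sigma=w^{-1}\in A_2$. The only difference is that where you discuss extrapolation or sparse domination as routes to the weighted square function bound and flag the reference issue, the paper simply cites \cite[Corollary 1.3]{Treil:23} (a matrix-weight result that covers the scalar case needed here).
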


\begin{proof}
The estimate $\Norm{\ave{f}_X}{L^2(w)}\lesssim\Norm{f}{L^2(w)}$ is only needed when $\mu(X)<\infty$, in which case $X=B$ is a ball. Then this bound is immediate from $f=fw^{\frac12}\sigma^{\frac12}$, Cauchy--Schwarz, and the definition of $w\in A_2$. For the bound
\begin{equation}\label{eq:sqFnBd}
  \sum_{Q\in\mathscr D}\Norm{\D_Q f}{L^2(w)}^2\lesssim \Norm{f}{L^2(w)}^2,\qquad w\in A_2,
\end{equation}
see e.g.~\cite[Corollary 1.3]{Treil:23}. (The cited paper, dealing with not-necessarily doubling matrix-valued weights, is somewhat of an overkill for the case at hand, but it certainly covers what we need, and the proof is fairly short.)

It remains to prove the other direction. From the identities
\begin{equation*}
  f=\ave{f}_X+\sum_{Q\in\mathscr D}\D_Q f,\qquad
  \pair{f}{g}
  =\pair{\ave{f}_{X}}{\ave{g}_{X}}+\sum_{Q\in\mathscr D}\pair{\D_Q f}{\D_Q g}
\end{equation*}
it follows that
\begin{equation*}
\begin{split}
  &\abs{\pair{f}{g}} \\
  &\leq \Big(\Norm{\ave{f}_X}{L^2(w)}^2
  +\sum_{Q\in\mathscr D}\Norm{\D_Q f}{L^2(w)}^2\Big)^{\frac12}
  \Big(\Norm{\ave{g}_X}{L^2(\sigma)}^2+\sum_{Q\in\mathscr D}\Norm{\D_Q g}{L^2(\sigma)}^2\Big)^{\frac12} \\
  &\lesssim \Big(\Norm{\ave{f}_X}{L^2(w)}^2
  +\sum_{Q\in\mathscr D}\Norm{\D_Q f}{L^2(w)}^2\Big)^{\frac12}\Norm{g}{L^2(\sigma)},
\end{split}  
\end{equation*}
where the last step follows from the fact that \eqref{eq:sqFnBd} also holds for $\sigma=w^{-1}\in A_2$ in place of $w$. Taking the supremum over $g\in L^2(\sigma)$ of norm one, we obtain an upper bound for $\Norm{f}{L^2(w)}$, which completes the claimed equivalence.
\end{proof}

\begin{proof}[Proof of Proposition \ref{prop:SaBd-w}]
Let us denote
\begin{equation*}
  \D_{(R,k)}^{(i)}:=\sum_{P\in\operatorname{ch}^i(R,k)}\D_P
  =\sum_{P\in\operatorname{ch}^i(R,k)}\D_{(P,k+i)}
\end{equation*}
Then
\begin{equation*}
\begin{split}
  \sum_{k\in\Z} &\sum_{R\in\mathscr D_k}\Norm{\D_{(R,k)}^{(i)}f}{L^2(w)}^2 \\
  &=\sum_{k\in\Z}\sum_{R\in\mathscr D_k}\sum_{P\in\operatorname{ch}^i(R,k)}
  \Norm{\D_{(P,k+i)} f}{L^2(w)}^2\qquad\text{by disjoint supports} \\
  &=\sum_{k\in\Z}\sum_{P\in\mathscr D_{k+i}}  \Norm{\D_{(P,k+i)} f}{L^2(w)}^2
  =\sum_{k\in\Z}\sum_{P\in\mathscr D_{k}}  \Norm{\D_{(P,k)} f}{L^2(w)}^2.
\end{split}
\end{equation*}
Thus Proposition \ref{prop:sqFnEst} implies that
\begin{equation*}
  \Norm{f}{L^2(w)}^2\approx\Norm{\ave{f}_X}{L^2(w)}^2
  +\sum_{k\in\Z}\sum_{R\in\mathscr D_k}\Norm{\D^{(i)}_{(R,k)}f}{L^2(w)}^2.
\end{equation*}
Hence, for every $i$,
\begin{equation*}
  f\mapsto \ave{f}_X\oplus \bigoplus_{\substack{k\in\Z \\ R\in\mathscr D_k}}\D_{(R,k)}^{(i)}f,\qquad
\end{equation*}
is an isomorphism of $L^2(w)$ onto a Hilbert space direct sum of subspaces of $L^2(w)$ consisting of constant functions (if $\mu(X)<\infty$) and the ranges $\D_{(R,k)}^{(i)}L^2(w)$.

Next, we observe that the component of the dyadic shift $A_{(R,k)}$ acts from the subspace $\D_{(R,k)}^{(j)}L^2(w)$ to $\D_{(R,k)}^{(i)}L^2(w)$ and annihilates all other $\D_{(R',k')}^{(j)}L^2(w)$. Thus the dyadic shift $\S_a$ is isomorphic to a block diagonal operator with respect to these decompositions. By direct comparison of the singular value decompositions, it follows that
\begin{equation*}
   \Norm{\S_a}{S^p(L^2(w))}^p\approx\sum_{(R,k)\in\mathscr D^*_{i,j}}\Norm{A_{(R,k)}}{S^p(L^2(w))}^p.
\end{equation*}

The rest of the proof now follows essentially by repeating the proof of Proposition \ref{prop:SaBd}, using Lemma \ref{lem:ARS2} in place of the trivial identity for $\Norm{A_{(R,k)}}{S^2}$ in the unweighted case. We have
\begin{equation*}
\begin{split}
  \Norm{A_{(R,k)}}{S^p(L^2(w))}
  &\leq(\operatorname{rank}A_{(R,k)})^{(\frac1p-\frac12)_+}\Norm{A_{(R,k)}}{S^2(L^2(w))} \\
  &\lesssim \delta^{-(i\wedge j)\Delta(\frac1p-\frac12)_+}\Norm{a_{(R,k)}}{\ell^2(w,\sigma)},
\end{split}
\end{equation*}
and thus
\begin{equation*}
  \Norm{\S_a}{S^p(L^2(w))}\lesssim \delta^{-(i\wedge j)\Delta(\frac1p-\frac12)_+}\Norm{a}{\ell^p(\ell^2(w,\sigma))}.
\end{equation*}
This is the weighted analogue of \eqref{eq:SaSp<ap2} in the proof of Proposition \ref{prop:SaBd}, and the rest of the proof runs verbatim by using the same general interpolation results as in the proof of Proposition \ref{prop:SaBd}; in \eqref{eq:Kree}, we take $B=\ell^2(w,\sigma)$ instead of plain $\ell^2$, but this does not matter, since the result is valid any Banach space $B$ anyway.
\end{proof}

As in Section \ref{sec:shift}, we will need to specialise Proposition \ref{prop:SaBd-w} to the particular shifts $\S_a=\S_{\ave{b}}$ arising from the analysis of the commutator $[b,\S]$. The weighted analogue of Corollary \ref{cor:SbBd} is the following, where we once again get rid of the weights on the right-hand side.

\begin{corollary}\label{cor:SbBd-w}
Let $(X,\rho,\mu)$ be a space of homogeneous type with separation dimension $\Delta$, and let $w\in A_2$. Let $\S$ be a normalised dyadic shift on $L^2(\mu)$, and let $S_{\ave{b}}$ be new shift related to the commutator $[b,\S]$ of $\S$ with $b\in L^1_{\loc}(\mu)$.
For all $p\in(0,\infty)$ and $q\in(0,\infty]$,
\begin{equation*}
  \Norm{\S_{\ave{b}}}{S^{p,q}(L^2(w))}
  \lesssim (C\delta^{-(i\wedge j)\Delta})^{(\frac1p-\frac12)_+}(1+i\wedge j)^{\frac1p}
  \Norm{b}{\operatorname{Osc}^{p,q}(\mathscr D)}
\end{equation*}
The factor $(1+i\wedge j)^{\frac1p}$ can be omitted if $(X,\rho,\mu)$ has positive lower dimension.
\end{corollary}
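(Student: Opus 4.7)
The plan is to follow the structure of the unweighted Corollary \ref{cor:SbBd}, replacing Proposition \ref{prop:SaBd} by its weighted counterpart Proposition \ref{prop:SaBd-w}. Applied to the shift coefficients $a'_{PQR} := a_{PQR}(\ave{b}_P - \ave{b}_Q)$, this reduces the task to the pointwise (in $(R,k)$) bound
\[
  \Norm{a'_{(R,k)}}{\ell^2(w,\sigma)} \lesssim \inf_c\Big(\fint_R |b-c|^{s_0}\ud\mu\Big)^{1/s_0}
\]
for some fixed $s_0>0$ depending only on $[w]_{A_2}$. Granting this, the remainder of the argument is identical to that of Corollary \ref{cor:SbBd}: Proposition \ref{prop:Osc} promotes the right-hand side to $\Norm{b}{\operatorname{Osc}^{p,q}(\mathscr D)}$, and passing from the index set $\mathscr D^*_{i,j}$ to $\mathscr D$ via the multiplicity observation \eqref{eq:DstarVsD} yields the factor $(1+i\wedge j)^{1/p}$, absent when $X$ has positive lower dimension by Lemma \ref{lem:DuniqueLevel}.

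First, using $|\ave{b}_P - \ave{b}_Q|^2 \lesssim |\ave{b}_P - \ave{b}_R|^2 + |\ave{b}_R - \ave{b}_Q|^2$ together with the normalisation $|a_{PQR}|^2 \lesssim \mu(P)\mu(Q)/\mu(R)^2$, the square $\Norm{a'_{(R,k)}}{\ell^2(w,\sigma)}^2$ splits into two symmetric pieces. The first, after performing the $Q$-summation via $\sum_{Q\in\operatorname{ch}^j(R,k)}\sigma(Q) = \sigma(R)$, is
\[
  I^2 \lesssim \frac{\sigma(R)}{\mu(R)^2}\sum_{P \in \operatorname{ch}^i(R,k)} w(P)|\ave{b}_P - \ave{b}_R|^2.
\]
A weighted Cauchy--Schwarz inequality applied to $\ave{b}_P - \ave{b}_R = \fint_P(b - \ave{b}_R)\ud\mu$, combined with $\ave{w}_P \ave{\sigma}_P \leq [w]_{A_2}$, yields $w(P)|\ave{b}_P - \ave{b}_R|^2 \leq [w]_{A_2}\int_P|b-\ave{b}_R|^2 w\,\ud\mu$. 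Summing over the disjoint $P \subseteq R$ and using $\sigma(R)/\mu(R)^2 \leq [w]_{A_2}/w(R)$ then gives
\[
  I^2 \lesssim [w]_{A_2}^2 \cdot \frac{1}{w(R)}\int_R |b - \ave{b}_R|^2\, w\,\ud\mu.
\]

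The hard part will be stripping the weight from the right-hand side, which I plan to do using the reverse H\"older self-improvement of $A_2$: there exists $\eps > 0$, depending on $[w]_{A_2}$ and the space only, such that $\ave{w^{1+\eps}}_R^{1/(1+\eps)} \lesssim \ave{w}_R$ uniformly in $R$. H\"older's inequality with conjugate exponents $1+\eps$ and $(1+\eps)/\eps$ then yields the weight-free bound $w(R)^{-1}\int_R f^2\, w\,\ud\mu \lesssim (\fint_R f^{s}\,\ud\mu)^{2/s}$ with $s := 2(1+\eps)/\eps > 2$, hence $I \lesssim (\fint_R |b - \ave{b}_R|^s\,\ud\mu)^{1/s}$. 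A fully symmetric argument, using $\sigma = w^{-1} \in A_2$ (so that $[\sigma]_{A_2}=[w]_{A_2}$) in place of $w$, bounds the second piece $II$ by the analogous quantity with some possibly different exponent $s' > 2$. Taking $s_0 := s \vee s'$ and appealing to Lemma \ref{lem:LpOsc} delivers the pointwise bound promised in the first paragraph, completing the proof.
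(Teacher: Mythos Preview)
Your proof is correct and follows essentially the same route as the paper: reduce via Proposition \ref{prop:SaBd-w} to a pointwise bound on $\Norm{a'_{(R,k)}}{\ell^2(w,\sigma)}$, split into the $|\ave{b}_P-\ave{b}_R|$ and $|\ave{b}_Q-\ave{b}_R|$ pieces, remove the weight using the self-improvement of $A_2$, and finish with Proposition \ref{prop:Osc} and the multiplicity estimate \eqref{eq:DstarVsD}. The only difference is a local one in how the weight is stripped: the paper applies H\"older with exponent $r=1/(1-\eps)$ directly to the discrete sum $\sum_P \frac{\mu(P)}{\mu(R)}|\ave{b}_P-\ave{b}_R|^2\ave{w}_P$ and then controls $(\fint_R w^r)^{1/r}\fint_R\sigma$ by the $A_{2-\eps}$ condition for $\sigma$, whereas you first pass via weighted Cauchy--Schwarz to the weighted average $w(R)^{-1}\int_R|b-\ave{b}_R|^2 w$ and then invoke reverse H\"older for $w$; these are equivalent manifestations of the same self-improvement, and both yield $(\fint_R|b-\ave{b}_R|^{s})^{1/s}$ for some $s>2$.
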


\begin{proof}
As in the proof of Corollary \ref{cor:SbBd}, the coefficients $a_{PQR}'$ of the shift $\S_{\ave{b}}=\S_{a'}$ are of the form \eqref{eq:aPQRprime}. Then
\begin{equation*}
\begin{split}
  \Norm{a_{(R,k)}'}{\ell^2(w,\sigma)} &\lesssim\Big(\sum_{\substack{P\in\operatorname{ch}^i(R,k) \\ Q\in\operatorname{ch}^j(R,k)}}
    \frac{w(P)\sigma(Q)}{\mu(R)^2}\abs{\ave{b}_P-\ave{b}_R}^2 \Big)^{\frac12} \\
   &\quad+\Big(\sum_{\substack{P\in\operatorname{ch}^i(R,k) \\ Q\in\operatorname{ch}^j(R,k)}}
    \frac{w(P)\sigma(Q)}{\mu(R)^2}\abs{\ave{b}_Q-\ave{b}_R}^2 \Big)^{\frac12} =:I+II.
\end{split}
\end{equation*}
Here
\begin{equation*}
  I=\Big(\sum_{P\in\operatorname{ch}^i(R,k)}
    \frac{\mu(P)}{\mu(R)} \abs{\ave{b}_P-\ave{b}_R}^2\ave{w}_P \sum_{Q\in\operatorname{ch}^j(R,k)}
    \frac{\sigma(Q)}{\mu(R)} \Big)^{\frac12},
\end{equation*}
where
\begin{equation}\label{eq:sigmaEasy}
  \sum_{Q\in\operatorname{ch}^j(R,k)}    \frac{\sigma(Q)}{\mu(R)}
  =\frac{\sigma(R)}{\mu(R)}=\fint_R \sigma\ud\mu
\end{equation}
For an auxiliary number $r\in(1,\infty)$, we estimate
\begin{equation*}
\begin{split}
  \sum_{P\in\operatorname{ch}^i(R,k)}
   & \frac{\mu(P)}{\mu(R)} \abs{\ave{b}_P-\ave{b}_R}^2 \ave{w}_P \\
  &\leq\Big(\sum_{P\in\operatorname{ch}^i(R,k)}\frac{\mu(P)}{\mu(R)} \abs{\ave{b}_P-\ave{b}_R}^{2r'}\Big)^{\frac{1}{r'}}
  \Big(\sum_{P\in\operatorname{ch}^i(R,k)}\frac{\mu(P)}{\mu(R)}\ave{w}_P^r\Big)^{\frac1r},
\end{split}
\end{equation*}
where
\begin{equation*}
  \mu(P)\ave{w}_P^r\leq\mu(P)\ave{w^r}_P=\int_P w^r\ud\mu,
\end{equation*}
and hence
\begin{equation*}
\begin{split}
  \Big(\sum_{P\in\operatorname{ch}^i(R,k)}\frac{\mu(P)}{\mu(R)}\ave{w}_P^r\Big)^{\frac1r}
  \leq\Big(\frac{1}{\mu(R)}\sum_{P\in\operatorname{ch}^i(R,k)}\int_P w^r \ud\mu\Big)^{\frac1r}
  =\Big(\fint_R w^r\ud\mu\Big)^{\frac1r}.
\end{split}
\end{equation*}
Writing $r=\frac{1}{1-\eps}$, the product of this term and \eqref{eq:sigmaEasy} is
\begin{equation}\label{eq:useRHI}
  \fint_R\sigma\ud\mu\Big(\fint_R w^r\ud\mu\Big)^{\frac1r}
  =\fint_R\sigma\ud\mu\Big(\fint_R \sigma^{-\frac{1}{1-\eps}}\ud\mu\Big)^{1-\eps}
  \lesssim 1
\end{equation}
provided that $\sigma\in A_{2-\eps}$. By the self-improvement property of $A_2$ weights, \cite[Lemma I.8]{ST:89}, we can fix an $\eps>0$ and hence $r=\frac{1}{1-\eps}\in(1,\infty)$, such that this holds.

Then we are only left with estimating
\begin{equation*}
\begin{split}
   \sum_{P\in\operatorname{ch}^i(R,k)}\frac{\mu(P)}{\mu(R)} \abs{\ave{b}_P-\ave{b}_R}^{2r'}
   &\leq\sum_{P\in\operatorname{ch}^i(R,k)}\frac{\mu(P)}{\mu(R)} \fint_P\abs{b-\ave{b}_R}^{2r'}\ud\mu \\
   &= \fint_R\abs{b-\ave{b}_R}^{2r'}\ud\mu.
\end{split}
\end{equation*}
Combining the estimates, we obtain
\begin{equation*}
  I\lesssim\Big(\fint_R\abs{b-\ave{b}_R}^{2r'}\ud\mu\Big)^{\frac{1}{2r'}},
\end{equation*}
and the estimate of $II$ is similar, reversing the roles of $w$ and $\sigma$.

Thus, for the coefficients \eqref{eq:aPQRprime} we have seen that
\begin{equation*}
  \Norm{a_{(R,k)}'}{\ell^2(w,\sigma)}\lesssim \Big(\fint_R\abs{b-\ave{b}_R}^{2r'}\ud\mu\Big)^{\frac{1}{2r'}}.
\end{equation*}
From Proposition \ref{prop:SaBd-w}, it then follows that
\begin{equation*}\begin{split}
  \Norm{\S_{\ave{b}}}{S^{p,q}(L^2(w))}
  &=\Norm{\S_{a'}}{S^{p,q}(L^2(w))} \\
  &\leq(C\delta^{-(i\wedge j)d})^{(\frac1p-\frac12)_+}\BNorm{\Big\{ \Norm{a_{(R,k)}'}{\ell^2(w,\sigma)}
    \Big\}_{(R,k)\in\mathscr D^*_{i,j}}}{\ell^{p,q}} \\ 
  &\leq(C\delta^{-(i\wedge j)d})^{(\frac1p-\frac12)_+}\BNorm{\Big\{
    \Big(\fint_R\abs{b-\ave{b}_R}^{2r'}\Big)^{\frac{1}{2r'}}
    \Big\}_{(R,k)\in\mathscr D^*_{i,j}}}{\ell^{p,q}}. 
\end{split}\end{equation*}
By repeating the argument leading to \eqref{eq:DstarVsD} in the proof of Corollary \ref{cor:SbBd}, we obtain
\begin{equation*}
\begin{split}
  &\BNorm{\Big\{
    \Big(\fint_R\abs{b-\ave{b}_R}^{2r'}\Big)^{\frac{1}{2r'}}
    \Big\}_{(R,k)\in\mathscr D^*_{i,j}}}{\ell^{p,q}} \\
    &\qquad\leq (\min(i,j)+1)^{\frac1p}\BNorm{\Big\{
    \Big(\fint_R\abs{b-\ave{b}_R}^{2r'}\Big)^{\frac{1}{2r'}}
    \Big\}_{R\in\mathscr D }}{\ell^{p,q}},
\end{split}
\end{equation*}
where
\begin{equation*}
  \BNorm{\Big\{
    \Big(\fint_R\abs{b-\ave{b}_R}^{2r'}\Big)^{\frac{1}{2r'}}
    \Big\}_{R\in\mathscr D}}{\ell^{p,q}}
    =\Norm{b}{\operatorname{Osc}^{p,q}_{2r'}(\mathscr D)}
    \approx \Norm{b}{\operatorname{Osc}^{p,q}(\mathscr D)}
\end{equation*}
by Proposition \ref{prop:Osc}. The argument for omitting the factor $(1+\min(i,j))^{\frac1p}$ for spaces of positive lower dimension is the same as in the proof of Corollary \ref{cor:SbBd}.
\end{proof}

\begin{proof}[Proof of Theorem \ref{thm:shift-w}]
This is the same as the proof of Theorem \ref{thm:shift}, only using the weighted Corollary \ref{cor:SbBd-w} in place of the unweighted Corollary \ref{cor:SbBd}. All other ingredients of the proof were already proved for weighted space from to beginning.
\end{proof}

\section{The dyadic representation theorem and commutators}

With significant predecessors in \cite{Figiel:90,NTV:Tb,Pet:00}, a dyadic representation theorem in the sense of today was introduced in \cite{Hyt:A2} for Calder\'on--Zygmund operators on $\R^d$. Several variants have appeared since then; a version that is particularly relevant for us is contained in \cite{GH:18}. (We note that \cite{WZ:24b,WZ:24} use a version of \cite{Hyt:Expo} instead.)

The approach of \cite{GH:18} has been extended to doubling metric spaces in \cite{Hyt:Enflo} for the study of the mapping properties of the Calder\'on--Zygmund operator itself. For efficient estimation of commutators of the operator, we will still require some elaboration, and state and prove the following version here:

\begin{theorem}\label{thm:DRT}
Let $(X,\rho,\mu)$ be a space of homogeneous type, and 
let $T\in\bddlin(L^2(\mu))$ have an $\omega$-Calder\'on--Zygmund kernel in the sense of Definition \ref{def:CZomega}. Then $T$ has a representation, in the sense of bilinear forms on $L^2(\mu)$,
\begin{equation}\label{eq:DRT}
\begin{split}
  T &=  T_1 +\E\Big(\Pi_{T1}+\Pi^*_{T^*1}\Big) \\
  &\quad +\E\sum_{m=0}^\infty\omega(\delta^m)\Big\{
  \S_{m,m}+  \sum_{i=0}^{m-1}(\S_{m,i}+\S_{i,m})+  \sum_{i=1}^m(\S_{0,i}+\S_{i,0})\Big\},
\end{split}
\end{equation}
where
\begin{enumerate}[\rm(1)]
  \item $T_1$ is an operator of $\operatorname{rank}T_1\leq 1$ given by
\begin{equation*}
  T_1=\begin{cases} \mu(X)^{-2}\pair{T1}{1} 1\otimes 1, & \text{if}\quad\mu(X)<\infty,\\ 0, & \text{if}\quad\mu(X)=\infty;\end{cases}
\end{equation*}
  \item each $\S_{m,n}$ is a (sum of $O(1)$ many) normalised dyadic shift(s) of complexity $(m,n)$ in the sense of Definition \ref{def:shift}, and
  \item  each $\Pi_b$ is a dyadic paraproduct with symbol $b\in\{T1,T^*1\}$, i.e.,
\begin{equation*}
  \Pi_b=\sum_{Q\in\mathscr D}\D_Q b\otimes\frac{1_Q}{\mu(Q)}.
\end{equation*}
\end{enumerate}
\end{theorem}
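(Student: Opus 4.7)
The plan is to follow the established strategy of the dyadic representation theorem initiated in \cite{Hyt:A2} and adapted to doubling metric spaces in \cite{Hyt:Enflo}, making the necessary modifications to extract cleanly the paraproduct terms and to normalize the resulting shifts. The starting point is to introduce a random dyadic system $\mathscr{D}^\omega$ on $(X,\rho,\mu)$ in the sense of \cite{HK:12,AH:13}, constructed so that, for every fixed $Q\in\mathscr{D}^\omega$ with sufficiently small diameter relative to a reference cube, the probability that $Q$ is \emph{good} (i.e., well separated from the boundary of every ancestor of a much larger cube) is bounded below by a constant $\pi_{\mathrm{good}}>0$ independent of $Q$, and badness depends only on the random parameter $\omega$ and not on $Q$ itself.

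First, I would expand bilinear forms $\pair{Tf}{g}$ with $f,g\in S(\mathscr{D}^\omega)$ via the Haar decompositions
\begin{equation*}
  f=\ave{f}_X+\sum_{P\in\mathscr{D}^\omega}\D_P f,\qquad g=\ave{g}_X+\sum_{Q\in\mathscr{D}^\omega}\D_Q g,
\end{equation*}
where the constant terms are present only if $\mu(X)<\infty$. The interaction of the two constant terms yields the rank-one operator $T_1$; the interaction between a constant and a martingale difference, when $\mu(X)<\infty$, produces a boundary correction that can be absorbed into a suitable paraproduct after using $T(1),T^*(1)\in\BMO$ (provided by the $L^2$-boundedness of $T$). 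For the bilinear sum over pairs $(P,Q)\in\mathscr{D}^\omega\times\mathscr{D}^\omega$, I would split by scales, assuming by symmetry that $\ell(P)\leq\ell(Q)$, and insert the indicator that $P$ is good; the contribution of bad cubes vanishes after taking expectation over $\omega$, because the Haar coefficients $\D_P f,\D_Q g$ are independent of the badness event for smaller cubes after a standard conditioning/reduction argument.

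Next, for a good pair with $\ell(P)=\delta^{m}\ell(Q)$, I would split into the separated case $\dist(P,Q)\gtrsim\ell(Q)$ and the nested case $P\subset Q$. In the separated case, one uses the kernel regularity \eqref{eq:CZ0}--\eqref{eq:CZ1} to obtain
\begin{equation*}
  \abs{\pair{T\D_P f}{\D_Q g}}\lesssim\omega(\delta^m)\frac{\sqrt{\mu(P)\mu(Q)}}{\mu(R)}\Norm{\D_P f}{2}\Norm{\D_Q g}{2},
\end{equation*}
where $R$ is the smallest common ancestor of $P,Q$ (whose measure dominates $\mu(Q)$ up to doubling constants due to goodness); collecting over $R$ and normalizing yields the shifts $\S_{m,i}$ with $i\in\{0,\dots,m\}$ and the matching coefficient $\omega(\delta^m)$. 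In the nested case $P\subset Q$, one writes $\D_Q g=\sum_{Q'\in\operatorname{ch}(Q)}\mathbf{1}_{Q'}\ave{\D_Q g}_{Q'}$ and splits the Haar-like kernel into a ``far-from-$P$'' part, which again falls into a shift pattern with $i=0$, and a ``near-$P$'' part, which is combined with a principal-value term to produce the paraproduct $\Pi_{T1}$ (the coefficient $\pair{T(1_{Q\setminus P^*)}}{h_P}$ being replaced by $\pair{T1}{h_P}$ modulo an admissible error). The symmetric case $Q\subset P$ produces $\Pi^*_{T^*1}$. Throughout, after dividing by the success probability $\pi_{\mathrm{good}}$ we can write the entire expansion as an expectation over $\omega$, yielding \eqref{eq:DRT}.

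The main obstacle is verifying the normalization $\abs{a_{PQR}}\lesssim\sqrt{\mu(P)\mu(Q)}/\mu(R)$ uniformly in the complexities $(m,n)$ and organizing the dependence on $m,n$ so that the modulus of continuity factor $\omega(\delta^m)$ appears cleanly in front. Two technical wrinkles, specific to our general setting, need attention: (a) the fact that a single set $R\in\mathscr{D}$ may belong to several generations $\mathscr{D}_k$, which forces us to carry the pair $(R,k)$ in the shift summation (as in Section \ref{sec:shift}), so that the shift structure is preserved when the geometry degenerates; and (b) the handling of the case $\mu(X)<\infty$, which introduces the rank-one term $T_1$ and requires the constant function to be incorporated into $S(\mathscr{D})$. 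Both issues can be dealt with by bookkeeping; they do not affect the probabilistic/geometric heart of the argument, which closely mirrors the one in \cite{Hyt:Enflo} once the refined shift class of Definition \ref{def:shift} is adopted.
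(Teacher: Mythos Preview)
Your proposal sketches the classical good/bad-cube route of \cite{Hyt:A2,NRV}: expand $\pair{Tf}{g}$ over all Haar pairs $(P,Q)$, restrict to good smaller cubes by the independence trick, and split into separated and nested contributions. This is a valid and well-trodden path, and with care it does yield a representation of the stated flavour. However, the paper follows a genuinely different strategy, the one from \cite{GH:18,Hyt:Enflo}, and the difference is worth noting.

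The paper does \emph{not} use goodness/badness at all. Instead it telescopes $\E_bT\E_b-\E_aT\E_a=\sum_k(\D_kT\D_k+\E_kT\D_k+\D_kT\E_k)$, so that only pairs $P,Q$ of the \emph{same} generation appear. The cross terms $\E_kT\D_k$ and $\D_kT\E_k$ are rewritten via the auxiliary operators $\D_{Q,P}=1_Q\otimes(1_Q/\mu(Q)-1_P/\mu(P))$, and the paraproducts $\Pi_{T1},\Pi_{T^*1}^*$ drop out of this algebra \emph{before} any randomisation (Lemma~\ref{lem:basicDec}). Only afterwards is the expectation used, and for a different purpose than yours: for equal-size cubes $P,Q$ at distance $\approx\delta^{-m}\ell(P)$, one inserts the event $\{P^{(m)}=Q^{(m)}\}$, whose probability is $\geq\tfrac12$ and which is independent of the Haar data at scale $\leq\ell(P)$ (Lemma~\ref{lem:indep}). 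Dividing by this probability places each pair under a common ancestor $R$, and a telescoping of $1_Q/\mu(Q)-1_P/\mu(P)$ along the chain from $Q$ (resp.~$P$) up to $R$ produces precisely the shift pattern $\S_{m,m}$, $\S_{m,i}$, $\S_{i,m}$ ($i<m$), $\S_{j,0}$, $\S_{0,j}$ ($1\le j\le m$) stated in the theorem.

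What each buys: your approach is conceptually standard and would certainly give \emph{a} dyadic representation, but the good/bad machinery typically leads to shifts $\S_{i,j}$ over all $(i,j)$ with an extra geometric parameter from the goodness threshold, and the exact index structure of \eqref{eq:DRT} does not fall out naturally; you would need additional reorganisation to match it. The paper's route is algebraically cleaner (no separated/nested dichotomy, no goodness parameter), handles the $\mu(X)<\infty$ case uniformly via the telescoping limit, and yields directly the specific $O(m)$ shifts per level that feed into the sharper Dini-type summability in Corollary~\ref{cor:bTSpq} (cf.~Remark~\ref{rem:cfWZ}).
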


We will give the proof of Theorem \ref{thm:DRT} in Section \ref{sec:proofDRT}. In the remainder of this section, we show how to apply this theorem to commutators. From the operator identity \eqref{eq:DRT}, we obtain the corresponding identity for commutators,
\begin{equation}\label{eq:DRTcom}
\begin{split}
   [b,T]=  [b,T_1] &+\E\Big([b,\Pi_{T1}] +[b,\Pi^*_{T^*1}]\Big) +\E\sum_{m=0}^\infty \omega(\delta^m)\Big\{ [b,\S_{m,m}]+ \\
  &\qquad +  \sum_{i=0}^{m-1}([b,\S_{m,i}]+[b,\S_{i,m}])+\sum_{i=1}^m([b,\S_{0,i}]+[b,\S_{i,0}])\Big\}.
\end{split}
\end{equation}

\begin{remark}
Since \eqref{eq:DRT} is understood as an identity of bilinear forms on $L^2(\mu)$, the commutator identity \eqref{eq:DRTcom} would {\em a priori} follow as an identity of bilinear forms on
\begin{equation*}
  L^2_b(\mu):=\{f\in L^2(\mu): bf\in L^2(\mu)\}.
\end{equation*}
For any measurable, a.e.\ finite-valued $b$, this subspace is seen to be dense in $L^2(\mu)$. (Given $f\in L^2(\mu)$, the functions $f_n:=1_{\{\abs{b}\leq n\}}f$ belong to $L^2_b(\mu)$ and approach $f$ in $L^2(\mu)$ by dominated convergence.) Hence, as soon as we check the convergence of the right-hand side of \eqref{eq:DRTcom} in a Schatten $S^{p,q}$ norm (and hence, {\em a fortiori}, in the operator norm), the identity extends to an identity of bilinear forms on all of $L^2(\mu)$ by density. Similar considerations can be adapted to the weighted spaces $L^2(w)$ and $L^2(\sigma)$.
\end{remark}

Next, taking $S^{p,q}$ norms of both sides of \eqref{eq:DRTcom}, we obtain the estimate
\begin{equation}\label{eq:DRTcomSpq}
\begin{split}
  \Norm{[b,T]}{S^{p,q}} &\lesssim \Norm{[b,T_1]}{S^{p,q}} + \E\Big( \Norm{[b,\Pi_{T1}]}{S^{p,q}}+\Norm{[b,\Pi^*_{T^*1}]}{S^{p,q}}\Big) \\
  &\quad +\E\sum_{m=0}^\infty \omega(\delta^m)\Big\{ 
  \sum_{i=0}^{m-1}(\Norm{[b,\S_{m,i}]}{S^{p,q}}
  +\Norm{[b,\S_{i,m}]}{S^{p,q}}) \\
  &\quad+\sum_{i=1}^{m}(\Norm{[b,\S_{m,i}]}{S^{p,q}}+\Norm{[b,\S_{i,m}]}{S^{p,q}}) 
+\Norm{[b,\S_{m,m}]}{S^{p,q}} \Big\};
\end{split}
\end{equation}
note that this is only valid when $p,q$ are such that $\Norm{\ }{S^{p,q}}$ is (at least equivalent to) a genuine norm, not just a quasi-norm.
The same estimate \eqref{eq:DRTcomSpq} is also valid with $S^{p,q}$ replaced by $S^{p,q}(L^2(w))$ throughout.
To estimate the left-hand side, it thus remains to estimate the terms on the right.

\begin{lemma}
\begin{equation*}
  \Norm{[b,T_1]}{S^{p,q}(L^2(w))}\lesssim\Norm{b}{\operatorname{Osc}^{p,q}}.
\end{equation*}
\end{lemma}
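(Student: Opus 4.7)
The plan is as follows. The case $\mu(X)=\infty$ is immediate since then $T_1=0$. For the non-trivial case $\mu(X)<\infty$, I would first observe that $T_1=c\cdot 1\otimes 1$ with $c=\mu(X)^{-2}\pair{T1}{1}$, and that constants commute with $T_1$. A direct expansion of the bracket then gives
\begin{equation*}
[b,T_1]=c\bigl(g\otimes 1-1\otimes g\bigr),\qquad g:=b-\ave{b}_X,
\end{equation*}
an operator of rank at most $2$. Since all Schatten--Lorentz norms of a rank-$\leq n$ operator are comparable, with constants depending only on $p,q,n$, to its operator norm, the problem reduces to bounding $\Norm{[b,T_1]}{\bddlin(L^2(w))}$.

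For this operator-norm estimate, I would use the triangle inequality together with the duality $L^2(w)^*=L^2(\sigma)$ where $\sigma:=w^{-1}$, and the trivial bound $|c|\leq\Norm{T}{\bddlin(L^2(\mu))}/\mu(X)$ from Cauchy--Schwarz applied to $\pair{T1}{1}$, to obtain
\begin{equation*}
\Norm{[b,T_1]}{\bddlin(L^2(w))}\lesssim \frac{1}{\mu(X)}\bigl(\sigma(X)^{1/2}\Norm{g}{L^2(w)}+w(X)^{1/2}\Norm{g}{L^2(\sigma)}\bigr).
\end{equation*}

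The main step is removing the weights from the norms of $g$. Here I would invoke the self-improvement of $A_2$ (\cite[Lemma I.8]{ST:89}) to produce $r>1$ such that both $w,\sigma\in RH_r$. H\"older's inequality on $X$ then yields $\Norm{g}{L^2(w)}\lesssim w(X)^{1/2}(\fint_X|g|^{2r'})^{1/(2r')}$, and symmetrically for $\sigma$. Combining this with the $A_2$ identity $w(X)\sigma(X)\leq [w]_{A_2}\mu(X)^2$ cancels all the weights and leaves the unweighted oscillation $(\fint_X|b-\ave{b}_X|^{2r'})^{1/(2r')}$.

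Finally, since $\mu(X)<\infty$ the set $X$ is itself the top dyadic cube in $\mathscr D$, and doubling gives $X\subseteq B_X$ with $\mu(B_X)\approx\mu(X)$, so the above quantity is comparable to $\inf_c(\fint_{B_X}|b-c|^{2r'})^{1/(2r')}$. Dominating this single term by the $\ell^{p,q}$-norm over all $Q\in\mathscr D$ and invoking Proposition \ref{prop:Osc} (with the standard passage between $Q$ and $B_Q$ via doubling) produces the claimed bound by $\Norm{b}{\operatorname{Osc}^{p,q}}$. The main technical obstacle is the weighted-to-unweighted conversion in the third paragraph, which is precisely where the hypothesis $w\in A_2$ and its self-improvement are essential; everything else is an elementary consequence of the rank-$\leq 2$ structure of $[b,T_1]$.
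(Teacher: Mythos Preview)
Your proposal is correct and follows essentially the same route as the paper's own proof: reduce to $\mu(X)<\infty$, expand $[b,T_1]$ as a rank-$\leq 2$ combination of $(b-\ave{b}_X)\otimes 1$ and $1\otimes(b-\ave{b}_X)$, bound the constant $c$ via $\Norm{T}{\bddlin(L^2(\mu))}$, and then remove the weights from the $L^2(w)$ and $L^2(\sigma)$ norms of $b-\ave{b}_X$ via H\"older together with the self-improvement of $A_2$ (you phrase it as $w,\sigma\in RH_r$, the paper equivalently as $\sigma\in A_{2-\eps}$), landing on an unweighted $\avL^{2r'}$-oscillation of $b$ over $X=B_X$, which Proposition~\ref{prop:Osc} identifies with a single term of $\operatorname{Osc}^{p,q}$. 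The only cosmetic difference is that you explicitly invoke the rank-$\leq 2$ reduction to the operator norm, whereas the paper estimates the two rank-one pieces directly in $S^{p,q}$; the substance is identical.
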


\begin{proof}
If $\mu(X)=\infty$, then $T_1=0$, so we only need to consider $\mu(X)<\infty$. In this case we can write
\begin{equation*}
  [b,T_1]=\frac{\pair{T1}{1}}{\mu(X)^2}(b\otimes 1-1\otimes b),
\end{equation*}
where further
\begin{equation*}
  b\otimes 1-1\otimes b
  =(b-\ave{b}_X)\otimes 1-1\otimes(b-\ave{b}_X).
\end{equation*}
Hence $\Norm{[b,T_1]}{S^{p,q}(L^2(w))}$ is estimated by the sum of
\begin{equation*}
  I:=\Babs{\Bpair{T\frac{1}{\mu(X)^{\frac12}}}{\frac{1}{\mu(X)^{\frac12}}}}
  \BNorm{\frac{b-\ave{b}_X}{\mu(X)^{\frac12}}}{L^2(w)}\BNorm{\frac{1}{\mu(X)^{\frac12}}}{L^2(\sigma)},
\end{equation*}
and a similar expression $II$ with the roles of $w$ and $\sigma$ reversed.
Here
\begin{equation*}
  \Babs{\Bpair{T\frac{1}{\mu(X)^{\frac12}}}{\frac{1}{\mu(X)^{\frac12}}}}
  \leq\Norm{T}{\bddlin(L^2(\mu))}\BNorm{\frac{1}{\mu(X)^{\frac12}}}{L^2(\mu)}^2
  =\Norm{T}{\bddlin(L^2(\mu))}\lesssim 1.
\end{equation*}
Moreover,
\begin{equation*}
\begin{split}
   \BNorm{\frac{b-\ave{b}_X}{\mu(X)^{\frac12}}}{L^2(w)} &\BNorm{\frac{1}{\mu(X)^{\frac12}}}{L^2(\sigma)}
   =\Big(\fint_X\abs{b-\ave{b}_X}^2 w\ud\mu\Big)^{\frac12}\Big(\fint_X \sigma\ud\mu\Big)^{\frac12} \\
  &\leq\Big(\fint_X\abs{b-\ave{b}_X}^{2r'}\ud\mu\Big)^{\frac{1}{2r'}}
  \Big(\fint_X w^r\ud\mu\Big)^{\frac1r}\fint_X\sigma\ud\mu.
\end{split}
\end{equation*}
When $\mu(X)<\infty$, we have $X=Q=B_Q$ for some $Q\in\mathscr D$. Then the product of the last two factors is bounded by a constant by the $A_2$ condition and the same argument as in \eqref{eq:useRHI}, for $r>1$ close enough to $1$. On the other hand, the first factor above is one of the terms appearing in the definition of $\Norm{b}{\operatorname{Osc}^{p,q}_{2r'}}$ for any $p,q$. Then, for any $r'$, we have $\Norm{b}{\operatorname{Osc}^{p,q}_{2r'}}\approx \Norm{b}{\operatorname{Osc}^{p,q}}$ by Proposition \ref{prop:Osc}.

Since the same argument applies to the other term $II$ with the roles of $w$ and $\sigma$ reversed, we obtain the claimed bound.
\end{proof}

For the paraproduct term, Proposition \ref{prop:paracomest} shows that
\begin{equation*}
  \Norm{[b,\Pi_{T1}]}{S^{p,q}(L^2(w))}
  \lesssim\Norm{b}{\operatorname{Osc}^{p,q}(\mathscr D)}\Norm{T1}{\BMO(\mathscr D)}
  \lesssim\Norm{b}{\operatorname{Osc}^{p,q}(\mathscr D)}
\end{equation*}
by the $L^\infty\to\BMO$ boundedness of Calder\'on--Zygmund operators, a well-known result also in the generality of spaces of homogeneous type.

For the adjoint paraproduct, the same Proposition \ref{prop:paracomest} gives
\begin{equation*}
  \Norm{[b,\Pi_{T^*1}^*]}{S^{p,q}(L^2(w))}
  \lesssim\Norm{b}{\operatorname{Osc}^{p,q}(\mathscr D)}\Norm{T^*1}{\BMO(\mathscr D)}
  \lesssim\Norm{b}{\operatorname{Osc}^{p,q}(\mathscr D)},
\end{equation*}
since $T^*$ is another Calder\'on--Zygmund operator that also maps $L^\infty\to\BMO$.

Finally, we come to the dyadic shifts. Here, the bounds also depend on the summation variables $(m,i)$, and we need to take care of convergence. By Theorem \ref{thm:shift}, noting that $0\wedge j=j\wedge 0=0$ and $m\wedge i=i\wedge m=i$,
\begin{equation*}
\begin{split}
   &\Norm{[b,\S_{m,m}]}{S^{p,q}}+  \sum_{i=0}^{m-1}(\Norm{\S_{m,i}}{S^{p,q}}+\Norm{\S_{i,m}}{S^{p,q}})+ 
    \sum_{j=1}^m(\Norm{\S_{0,j}}{S^{p,q}}+\Norm{\S_{j,0}}{S^{p,q}}) \\
    &\lesssim\Norm{b}{\operatorname{Osc}^{p,q}}\Big(\delta^{-m\Delta(\frac1p-\frac12)_+}(1+m)^{\frac1p}
    \!+\!\sum_{i=0}^{m-1}\delta^{-i\Delta(\frac1p-\frac12)_+}(1+i)^{\frac1p}\!+\!\sum_{j=1}^m (1+j)^{\frac1p}\Big) \\
 &\approx \Norm{b}{\operatorname{Osc}^{p,q}}(1+m)^{\frac1p}\begin{cases}\delta^{-m\Delta(\frac1p-\frac12)}, & \text{if}\quad p\in(0,2), \\
 (1+m), & \text{if}\quad p\in[2,\infty),\end{cases}   
\end{split}
\end{equation*}
by summing either a geometric series or a constant series in the last step; the factors $(1+i)^{\frac1p},(1+m)^{\frac1p}$ can be omitted if $(X,\rho,\mu)$ has positive lower dimension. By Theorem \ref{thm:shift-w}, the same estimate is valid with $S^{p,q}(L^2(w))$ in place of $S^{p,q}$.

Substituting back, we obtain the following result, which, up to the verification of the Dyadic Representation Theorem \ref{thm:DRT}, completes the proof of Theorem \ref{thm:main}\eqref{it:p>d} (see also Remark \ref{rem:bTSpq} below):

\begin{corollary}\label{cor:bTSpq}
Let $(X,\rho,\mu)$ be a space of homogeneous type with separation dimension $\Delta$ in the sense of Definition \ref{def:dims}. Let $w\in A_2$ and $T\in\bddlin(L^2(\mu))$ be a singular integral operator with $\omega$-Calder\'on--Zygmund kernel in the sense of Definition \ref{def:CZomega}. Let $p\in(1,\infty)$ and $q\in[1,\infty]$, and let $b\in\operatorname{Osc}^{p,q}$. Then
\begin{equation*}
\begin{split}
  \frac{\Norm{[b,T]}{S^{p,q}(L^2(w))}}{\Norm{b}{\operatorname{Osc}^{p,q}}}
  &\lesssim1+\sum_{m=0}^\infty\omega(\delta^m)(1+m)^{\frac1p} 
  \begin{cases}\delta^{-m\Delta(\frac1p-\frac12)}, &  p\in[1,2), \\ (1+m), & p\in[2,\infty),\end{cases} \\
  &\approx 1+\int_0^1\frac{\ud t}{t}\omega(t)(1+\log\tfrac1t)^{\frac1p}
  \begin{cases} t^{-\Delta(\frac1p-\frac12)}, & p\in[1,2) \\ (1+\log\frac1t), & p\in(2,\infty),
  \end{cases}
\end{split}
\end{equation*}
where the factors $(1+m)^{\frac1p}$ and $(1+\log\frac1t)^{\frac1p}$ may be omitted if $X$ has positive lower dimension. In particular, $[b,T]\in S^{p,q}(L^2(w))$ and $\Norm{[b,T]}{S^{p,q}(L^2(w))}\lesssim\Norm{b}{\operatorname{Osc}^{p,q}}$ whenever the series or the integral above converges.
\end{corollary}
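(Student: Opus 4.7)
The proof is the assembly of ingredients already developed in the preceding pages, and I do not foresee any substantial obstacle. The plan is to take Schatten quasi-norms of the Dyadic Representation formula \eqref{eq:DRTcom} and estimate each term separately. Since $p\geq 1$ and $q\geq 1$ ensure that $\Norm{\,\cdot\,}{S^{p,q}(L^2(w))}$ is (equivalent to) a genuine norm, applying the triangle inequality to \eqref{eq:DRTcom} yields \eqref{eq:DRTcomSpq} with $S^{p,q}(L^2(w))$ in place of $S^{p,q}(L^2(\mu))$. The rank-one term $[b,T_1]$ was bounded by $\Norm{b}{\operatorname{Osc}^{p,q}}$ in the lemma stated just above; the paraproduct commutators $[b,\Pi_{T1}]$ and $[b,\Pi_{T^*1}^*]$ are bounded by $\Norm{b}{\operatorname{Osc}^{p,q}}$ via Proposition \ref{prop:paracomest} (and its adjoint version), combined with $\Norm{T1}{\BMO(\mathscr D)}+\Norm{T^*1}{\BMO(\mathscr D)}\lesssim 1$ which is the $L^\infty\to\BMO$ boundedness of Calder\'on--Zygmund operators on spaces of homogeneous type. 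For each shift $\S_{m,i}$, Theorem \ref{thm:shift-w} gives
\[
\Norm{[b,\S_{m,i}]}{S^{p,q}(L^2(w))}\lesssim \delta^{-(m\wedge i)\Delta(\frac1p-\frac12)_+}(1+m\wedge i)^{\frac1p}\Norm{b}{\operatorname{Osc}^{p,q}}.
\]

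Next I would sum the shift contributions at a fixed generation $m$, namely
\[
\delta^{-m\Delta(\frac1p-\frac12)_+}(1+m)^{\frac1p}
+\sum_{i=0}^{m-1}\delta^{-i\Delta(\frac1p-\frac12)_+}(1+i)^{\frac1p}
+\sum_{j=1}^{m}(1+j)^{\frac1p}.
\]
For $p\in[1,2)$ the geometric series in $i$ is dominated by its last term, producing $\approx (1+m)^{\frac1p}\delta^{-m\Delta(\frac1p-\frac12)}$; for $p\in[2,\infty)$ the exponent $(\tfrac1p-\tfrac12)_+$ vanishes and the two arithmetic sums combine to $\approx(1+m)\cdot(1+m)^{\frac1p}=(1+m)^{1+\frac1p}$. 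Multiplying by $\omega(\delta^m)$ and summing over $m\geq 0$ produces precisely the first displayed series bound. When $(X,\rho,\mu)$ has positive lower dimension, Theorem \ref{thm:shift-w} allows dropping the $(1+m\wedge i)^{\frac1p}$ factors and the same arithmetic gives the sharper claim.

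For the integral comparison of the second line, the standard change of variables $t=\delta^m$ (so that $m\approx \log(1/t)/\log(1/\delta)$, $dt/t\asymp 1$ between consecutive dyadic points) identifies the series $\sum_{m\geq 0}\omega(\delta^m)\phi(m)$ with the integral $\int_0^1 t^{-1}\omega(t)\phi(\log(1/t)/\log(1/\delta))\,dt$ up to absolute constants, provided $\omega$ is essentially monotone on dyadic scales — which one may always arrange by replacing $\omega$ by its nondecreasing envelope without worsening the Calder\'on--Zygmund hypothesis. The final qualitative claim $[b,T]\in S^{p,q}(L^2(w))$ is then read off from absolute convergence of the series (or equivalently finiteness of the integral) together with the completeness of $S^{p,q}(L^2(w))$. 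The main content — the factors $\delta^{-m\Delta(\frac1p-\frac12)_+}(1+m)^{\frac1p}$ that control the growth of shift commutator norms — sits entirely in Theorems \ref{thm:shift} and \ref{thm:shift-w}; the rest is bookkeeping.
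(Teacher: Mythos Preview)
Your proposal is correct and follows essentially the same approach as the paper: apply the triangle inequality to the commutator representation \eqref{eq:DRTcom}, bound the rank-one and paraproduct terms by the preceding lemma and Proposition \ref{prop:paracomest}, invoke Theorem \ref{thm:shift-w} for each shift, and sum the resulting geometric or arithmetic series in $i$ according to whether $p<2$ or $p\geq 2$. The paper carries out exactly this bookkeeping, with the integral comparison left implicit.
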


While the estimates for individual shifts are valid for all $p\in(0,\infty)$, this final estimate via the dyadic representation theorem requires that $S^{p,q}$ is a normed space, and hence we impose the restriction on $p$.

\begin{remark}\label{rem:bTSpq}
It is clear that the series in Corollary \ref{cor:bTSpq} converges
provided that $T$ is an $\omega$-Calder\'on--Zygmund operator with
\begin{equation}\label{eq:OtEta}
   \omega(t)=O(t^\eta),\qquad\eta>\Delta(\frac1p-\frac12)_+.
\end{equation}
However, for $p\geq 2$, it is enough to require the modified Dini condition
\begin{equation*}
  \int_0^1\omega(t)(1+\log\frac{1}{t})^{1+\frac1p}\frac{\ud t}{t}\approx\sum_{m=0}^\infty\omega(\delta^m)(1+m)^{1+\frac1p}<\infty,
\end{equation*}
or just the log-Dini condition
\begin{equation}\label{eq:logDini}
  \int_0^1\omega(t)(1+\log\frac{1}{t})\frac{\ud t}{t}\approx\sum_{m=0}^\infty\omega(\delta^m)(1+m)<\infty,
\end{equation}
if $(X,\rho,\mu)$ has positive lower dimension.

Recall that, by the much simpler Janson--Wolff method of Corollary \ref{cor:JWSpBp}, on the {\em unweighted} $L^2(\mu)$ with Schatten exponents $p=q\in(2,\infty)$, we do not require any Calder\'on--Zygmund regularity, only the kernel bound $\abs{K(x,y)}\lesssim V(x,y)^{-1}$.
\end{remark}

\begin{remark}\label{rem:cfWZ}
Let us compare the sufficient conditions of Corollary %\ref{cor:JWSpBp} and
\ref{cor:bTSpq} and Remark \ref{rem:bTSpq} with \cite[Theorems 1.3 and 1.4]{WZ:24b} for $X=\R^n$, which clearly has dimensions $d=\Delta=D=n$, and hence in particular positive lower dimension. Recall that $\Norm{b}{\operatorname{Osc}^{p,p}}\approx\Norm{b}{\dot B^p(\mu)}$ by Proposition \ref{prop:Bp=Osc} and, for $n\geq 2$, that
\begin{equation*}
  \Norm{b}{\operatorname{Osc}^{n,\infty}(\R^m)}
  \approx\Norm{m_b}{L^{n,\infty}(\nu_n)}
  \approx\Norm{b}{\dot M^{1,n}(\R^n)}
  \approx\Norm{b}{\dot W^{1,n}(\R^n)}
\end{equation*}
by Proposition \ref{prop:Osc=LdWeak}, Theorem \ref{thm:Rupert}, and \cite[Theorem 1]{Hajlasz:96}, respectively.

The power-type modulus condition \eqref{eq:OtEta} coincides with the assumptions of \cite{WZ:24b}, while the Dini-type condition \eqref{eq:logDini} is more general, and allows for a slightly larger class of kernels than \cite{WZ:24b}. This difference is thanks to our adapting the sharper dyadic representation theorem of \cite{GH:18} instead of that of \cite{Hyt:Expo} used in \cite{WZ:24b}.

On the other hand, the weighted version of Corollary \ref{cor:bTSpq} is new compared to \cite{WZ:24b}, even for  power-type moduli. The only weighted results of this type in the previous literature seem to be those of \cite{GLW:23}, which are restricted to the Riesz transforms only.
\end{remark}

\section{Random dyadic cubes}\label{sec:random}

The statement of Theorem \ref{thm:DRT} involves an expectation over a random choice of dyadic cubes of the underlying space. For the application of this theorem, these random cubes can be taken as a black box; however, to actually prove the theorem in Section \ref{sec:proofDRT}, it seems necessary to revisit some details of their construction. With Euclidean roots in \cite{NTV:Tb}, the first such construction in spaces of homogeneous type is from \cite{HM:12} with subsequent variants in \cite{AH:13,HK:12,HT:14,NRV}. We will quote some results from \cite{AH:13}, denoting by $A_0$ the constant in the quasi-triangle inequality $\rho(x,y)\leq A_0(\rho(x,z)+\rho(z,y))$. We follow a summary of the results of \cite{AH:13} given in \cite{Hyt:Enflo}, where only metric spaces with $A_0=1$ were treated, and hence some modifications are needed.

There are reference points $x^k_\alpha$, where $k\in\Z$ and $\alpha\in\mathscr A_k$, where each $\mathscr A_k$ is some countable index set. There is parameter space $\Omega=\prod_{k\in\Z}\Omega_k$, where each $\Omega_k$ is a copy of the same finite set $\Omega_0$. Thus, there is a natural probability measure on $\Omega$.

For every $\omega\in\Omega$, there is a partial order relation $\leq_\omega$ among pairs $(k,\alpha)$ and $(\ell,\gamma)$. Within a fixed level, we declare that $(k,\alpha)\leq_\omega(k,\beta)$ if and only if $\alpha=\beta$. The restriction of the relation $\leq_\omega$ between levels $k$ and $k+1$ depends only on the component $\omega_k$. The relation between arbitrary levels $k\leq\ell$ is obtained via extension by transitivity.

There are also new random reference points $z^k_\alpha=z^k_\alpha(\omega_k)$ such that
\begin{equation}\label{eq:z-x}
  d(z^k_\alpha(\omega),x^k_\alpha)<2A_0\delta^k. 
\end{equation}
(This follows from \cite[Eq. (2.1)]{AH:13} and the definition of $z^k_\alpha$ further down the same page.)

By \cite[Theorem 2.11]{AH:13} and \cite[Theorem 2.2]{HK:12}, there are open, ``half-open'', and closed sets (referred to as ``cubes''), respectively, $\tilde Q^k_\alpha(\omega)\subseteq Q^k_\alpha(\omega)\subseteq \bar Q^k_\alpha(\omega)$ that satisfy
\begin{equation}\label{eq:Q-BQ}
  B(z^k_\alpha,\tfrac16A_0^{-5}\delta^k)\subseteq\tilde Q^k_\alpha(\omega)
  \subseteq\bar Q^k_\alpha(\omega)\subseteq B(z^k_\alpha,6A_0^4\delta^k)
  \subseteq B(x^k_\alpha, 8A_0^5\delta^k):
\end{equation}
All but the last containment in \eqref{eq:Q-BQ} is stated in \cite[Theorem 2.11]{AH:13}, and the last containment is immediate from \eqref{eq:z-x} and the quasi-triangle inequality. The ``half-open cubes'' are not addressed in \cite{AH:13}, but they can be found in \cite{HK:12}; specifically, \cite[Lemma 2.18]{HK:12} shows how to these can be constructed with the open and closed cubes as input, but without a need to know how the open and closed cubes themselves were constructed. This observation is relevant, since the construction of \cite{AH:13} and \cite{HK:12} are slightly different; \cite[Lemma 2.18]{HK:12} shows that one can also construct $Q^k_\alpha(\omega)$ with the $\tilde Q^k_\alpha(\omega)$ and $\bar Q^k_\alpha(\omega)$ of \cite{AH:13} as input.

Note that the word ``half-open'' is used only metaphorically, without assigning a precise meaning to it in the setting that we consider. The point is simply that the ``half-open'' $Q^k_\alpha(\omega)$ play a similar role as the usual half-open dyadic cubes of $\R^d$.

We refer to these sets as ``(dyadic) cubes''. The parameter $\delta\in(0,1)$ in \eqref{eq:Q-BQ} is a small number that indicates the ratio of the scales of two consecutive generations of the dyadic system. In $\R^d$, one typically takes $\delta=\frac12$. These cubes  satisfy the disjointness
\begin{equation*}
  \tilde Q^k_\alpha(\omega)\cap\bar Q^k_\beta(\omega)=\varnothing
  = Q^k_\alpha(\omega)\cap Q^k_\beta(\omega)
  \quad(\alpha\neq\beta)
\end{equation*}
and the covering properties
\begin{equation*}
\begin{split}
  X &=\bigcup_\alpha\bar Q^k_\alpha(\omega),\quad
  \bar Q^k_\alpha(\omega)=\bigcup_{\beta:(k+1,\beta)\leq_\omega(k,\alpha)}\bar Q^{k+1}_\beta(\omega), \\
  X &=\bigcup_\alpha Q^k_\alpha(\omega),\quad
  Q^k_\alpha(\omega)=\bigcup_{\beta:(k+1,\beta)\leq_\omega(k,\alpha)} Q^{k+1}_\beta(\omega);
\end{split}
\end{equation*}
see \cite[Theorem 2.11]{AH:13} and \cite[Theorem 2.2]{HK:12} for the versions involving $\bar Q^k_\alpha(\omega)$ and $Q^k_\alpha(\omega)$, respectively.

The properties above are valid for each individual $\omega\in\Omega$.
The benefit of considering a random choice of $\omega\in\Omega$ is to achieve the probabilistic ``smallness of the boundary'' \cite[(2.3)]{AH:13}
\begin{equation}\label{eq:smallBdry}
\begin{split}
  \P_\omega\Big(u\in\bigcup_\alpha\ &\partial_\eps Q^k_\alpha(\omega)\Big)\leq C\eps^\eta,\\
  &\partial_\eps Q^k_\alpha(\omega):=\{u\in \bar Q^k_\alpha(\omega):d(u,(\tilde Q^k_\alpha(\omega))^c)<\eps\delta^k\},
\end{split}
\end{equation}
for some fixed $0<\eta\leq 1\leq C<\infty$ and all $\eps>0$. In particular, as $\eps\to 0$,
\begin{equation*}
  \P_\omega\Big(x\in\bigcup_{k,\alpha} \partial Q^k_\alpha(\omega)\Big)=0,\quad
  \partial Q^k_\alpha(\omega):=\bar Q^k_\alpha(\omega)\setminus \tilde Q^k_\alpha(\omega).
\end{equation*}
The sets $\tilde Q^k_\alpha(\omega)$ and $\bar Q^k_\alpha(\omega)$ (and hence also $\partial_\eps Q^k_\alpha(\omega)$ and $\partial Q^k_\alpha(\omega)$) depend only on $(\omega_i)_{i=k}^\infty$ but, unfortunately, it is not known whether the $Q^k_\alpha(\omega)$ can be guaranteed to have this property. Indeed, this is the main reason that we discuss the triple of open, half-open, and closed cubes. While the half-open cubes $Q^k_\alpha(\omega)$ enjoy the best partitioning properties for each fixed $\omega$, the open and closed cubes have cleaner probabilistic properties as functions (random variables) of $\omega$.

 We will also need to understand the probability of the event described in the following lemma, which is a straightforward extension of \cite[Lemma 3.1]{Hyt:Enflo} from metric to quasi-metric spaces:

\begin{lemma}\label{lem:m0}
Let $(X,\rho,\mu)$ be a space of homogeneous type.
Let us fix $\eps>0$ so small that the bound in \eqref{eq:smallBdry} satisfies $C\eps^\eta\leq\frac12$, and let
\begin{equation}\label{eq:eps0}
  \eps_0:=\frac12 A_0^{-2}\eps.
\end{equation}
Then there exists an $m_0\in\Z_+$ such that we have the following:
 For some $k\in\Z$ and $m\geq m_0$, let $x^{k+m}_\beta,x^{k+m}_\gamma$ be two reference points at level $k+m$ with
 \begin{equation}\label{eq:xkm}
   \rho(x^{k+m}_\gamma,x^{k+m}_\beta)\leq
   \eps_0\delta^k.
\end{equation}
Consider the random event
\begin{equation*}
\begin{split}
  A &:=\{\omega\in\Omega: \exists\alpha\text{ such that }(k+m,\beta)\leq_\omega(k,\alpha)\text{ and }(k+m,\gamma)\leq_\omega(k,\alpha)\} \\
  &\phantom{:}=\{\omega\in\Omega:  \exists\alpha\text{ such that }\bar Q^{k+m}_\beta(\omega)\subseteq\bar Q^k_\alpha(\omega)\text{ and }
  \bar Q^{k+m}_\gamma(\omega)\subseteq\bar Q^k_\alpha(\omega)\} \\
  &\phantom{:}=\{\omega\in\Omega:  \exists\alpha\text{ such that }Q^{k+m}_\beta(\omega)\subseteq Q^k_\alpha(\omega)\text{ and }
  Q^{k+m}_\gamma(\omega)\subseteq Q^k_\alpha(\omega)\}
\end{split}
\end{equation*}
Then
\begin{enumerate}[\rm(1)]
  \item\label{it:dep} $A$ depends only on the components $(\omega_{k+i})_{i=0}^{m-1}$, and
  \item\label{it:PAhalf} $\P(A)\geq\frac12$,
\end{enumerate}
\end{lemma}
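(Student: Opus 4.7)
The plan is to prove (1) by tracking how the relation $\leq_\omega$ between levels $k$ and $k+m$ is generated, and to prove (2) by showing that the event $A$ is implied by a geometric event which, in turn, is a consequence of $x^{k+m}_\beta$ avoiding a small-boundary set that has controlled probability by \eqref{eq:smallBdry}. The choice of $m_0$ will be engineered so that the geometric cubes at level $k+m$ are so much smaller than those at level $k$ that both reference points must land inside the same level-$k$ cube.

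For (1), recall that the restriction of $\leq_\omega$ between levels $k+i$ and $k+i+1$ depends only on $\omega_{k+i}$. The relation $(k+m,\beta)\leq_\omega(k,\alpha)$ is, by definition, the transitive closure of a chain $(k+m,\beta)\leq_\omega(k+m-1,\alpha_{m-1})\leq_\omega\cdots\leq_\omega(k,\alpha_0)$ with $\alpha_0=\alpha$, and each link depends only on one $\omega_{k+i}$ with $0\le i\le m-1$. The event $A$ is the existence of some $\alpha$ with $(k+m,\beta)\leq_\omega(k,\alpha)$ \emph{and} $(k+m,\gamma)\leq_\omega(k,\alpha)$, so it depends only on $(\omega_{k+i})_{i=0}^{m-1}$.

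For (2), I choose $m_0$ so large that $\delta^{m_0}<\eps_0/(8A_0^5)$ (in particular $8A_0^5\delta^{m_0}<\eps_0$). Let $\alpha$ be the unique index with $x^{k+m}_\beta\in Q^k_\alpha(\omega)$; then $x^{k+m}_\beta\in\bar Q^k_\alpha(\omega)$ as well. I claim: if the event $\{x^{k+m}_\beta\notin\bigcup_{\alpha'}\partial_\eps Q^k_{\alpha'}(\omega)\}$ occurs, then $\bar Q^{k+m}_\beta(\omega),\bar Q^{k+m}_\gamma(\omega)\subseteq\tilde Q^k_\alpha(\omega)$, which by the disjointness $\tilde Q^k_\alpha\cap\bar Q^k_{\alpha'}=\varnothing$ for $\alpha'\neq\alpha$ forces both closed cubes into $\bar Q^k_\alpha(\omega)$, giving $A$. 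Indeed, by \eqref{eq:Q-BQ} every $u\in\bar Q^{k+m}_\beta(\omega)$ satisfies $\rho(u,x^{k+m}_\beta)<8A_0^5\delta^{k+m}\leq 8A_0^5\delta^{m_0}\delta^k<\eps_0\delta^k$, which is less than $\eps\delta^k/A_0$; hence $u$ cannot lie in $(\tilde Q^k_\alpha)^c$, whose distance from $x^{k+m}_\beta$ is at least $\eps\delta^k$. For $u\in\bar Q^{k+m}_\gamma(\omega)$, combining \eqref{eq:xkm} and \eqref{eq:Q-BQ} with the quasi-triangle inequality gives
\begin{equation*}
  \rho(x^{k+m}_\beta,u)\le A_0\bigl(\rho(x^{k+m}_\beta,x^{k+m}_\gamma)+\rho(x^{k+m}_\gamma,u)\bigr)
  \le A_0(\eps_0+8A_0^5\delta^{m_0})\delta^k<2A_0\eps_0\delta^k,
\end{equation*}
and by the choice $\eps_0=\tfrac12A_0^{-2}\eps$ this is $\le A_0^{-1}\eps\delta^k<\eps\delta^k$, so again $u\in\tilde Q^k_\alpha(\omega)$.

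The main obstacle is essentially calibrating the constants: checking that the compounding of quasi-triangle factors (each costing $A_0$) is absorbed by the $A_0^{-2}$ in $\eps_0$. With this done, \eqref{eq:smallBdry} and our initial choice $C\eps^\eta\le\tfrac12$ yield
\begin{equation*}
  \P(A^c)\le\P\Bigl(x^{k+m}_\beta\in\bigcup_{\alpha'}\partial_\eps Q^k_{\alpha'}(\omega)\Bigr)\le C\eps^\eta\le\tfrac12,
\end{equation*}
which is exactly conclusion (2).
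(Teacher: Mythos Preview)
Your proof is correct and follows essentially the same strategy as the paper's: both reduce (2) to the small-boundary estimate \eqref{eq:smallBdry}, arguing that when $x^{k+m}_\beta$ avoids $\bigcup_{\alpha'}\partial_\eps Q^k_{\alpha'}(\omega)$, the nearby level-$(k+m)$ cubes must fall inside $\tilde Q^k_\alpha(\omega)$. The only cosmetic difference is that the paper routes its distance estimates through the random centre $z^{k+m}_\gamma$ (via \eqref{eq:z-x}) and concludes $z^{k+m}_\gamma\in\tilde Q^k_\alpha$, whereas you work with the deterministic centre $x^{k+m}_\gamma$ and the containment $\bar Q^{k+m}_\gamma\subseteq B(x^{k+m}_\gamma,8A_0^5\delta^{k+m})$ from \eqref{eq:Q-BQ} to place the entire closed cube inside $\tilde Q^k_\alpha$; the constant bookkeeping differs accordingly but leads to the same conclusion. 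One small omission: you do not comment on why the three descriptions of $A$ in the lemma statement coincide, which the paper does address (briefly, via the covering properties of the cubes); you may wish to add a sentence acknowledging this.
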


\begin{proof}
The coincidence of the alternative descriptions of $A$ with the defining formula follows easily from the covering properties of the cubes.

Claim \eqref{it:dep} is immediate from the definition of the relation $\leq_\omega$ between levels $k$ and $k+1$ in a way that depends only on the component $\omega_{k}$, and its general definition by requiring transitivity.

When $\eps>0$ is chosen as stated, with probability at least $\frac12$, the point $x^{k+m}_\beta$ is not contained in any $\partial_\eps Q^k_\alpha(\omega)$. In particular, choosing $\alpha$ so that $x^{k+m}_\beta\in \bar Q^k_\alpha(\omega)$ (which must exist by the covering property), it follows that $\rho(x^{k+m}_\beta,(\tilde Q^k_\alpha(\omega))^c)\geq \eps\delta^k$. Now suppose that \eqref{eq:xkm} holds.
By the quasi-triangle inequality,
\begin{equation*}
  \eps\delta^k
  \leq \rho(x^{k+m}_\beta,(\tilde Q^k_\alpha(\omega))^c)
  \leq A_0\rho(z^{k+m}_\gamma(\omega),(\tilde Q^k_\alpha(\omega))^c) + A_0\rho(z^{k+m}_\gamma(\omega),x^{k+m}_\beta),
\end{equation*}
where, by another quasi-triangle inequality, \eqref{eq:z-x}, \eqref{eq:xkm}, and \eqref{eq:eps0},
\begin{equation*}
\begin{split}
  A_0\rho(z^{k+m}_\gamma(\omega),x^{k+m}_\beta)
  &\leq A_0^2\rho(z^{k+m}_\gamma(\omega),x^{k+m}_\gamma)
    +A_0^2\rho(x^{k+m}_\gamma,x^{k+m}_\beta) \\
  &\leq 2A_0^3\delta^{k+m}+A_0^2\eps_0\delta^k
  =2A_0^3\delta^{k+m}+\frac12\eps\delta^k.
\end{split}
\end{equation*}
Substituting the previous estimate in the one before it, we obtain
\begin{equation*}
  \eps\delta^k\leq A_0\rho(z^{k+m}_\gamma(\omega),(\tilde Q^k_\alpha(\omega))^c) +2A_0^3\delta^{k+m}+\frac{1}{2}\eps\delta^k,
\end{equation*}
and hence
\begin{equation*}
    A_0\rho(z^{k+m}_\gamma(\omega),(\tilde Q^k_\alpha(\omega))^c)
    \geq \delta^k\big(\frac12\eps-2A_0^3\delta^m\big)
    \geq \delta^k\big(\frac12\eps-2A_0^3\delta^{m_0}\big)>0
\end{equation*}
provided that $m_0$ is large enough, depending only on $A_0$, $\delta$, and $\eps_0$; thus, in the end, only on the parameters of $X$ and its family of dyadic systems. Hence $z^{k+m}_\gamma(\omega)\in\tilde Q^k_\alpha(\omega)$, and thus $(k+m,\gamma)\leq_\omega(k,\alpha)$.
\end{proof}

In the sequel, we will drop the somewhat heavy notation above, and denote a typical $Q^k_\alpha(\omega)$ simply by $Q$. We let $\mathscr D_k=\mathscr D_k(\omega):=\{Q^k_\alpha(\omega):\alpha\in\mathscr A_k\}$ be the dyadic cubes of generation $k$. We also write $\ell(Q):=\delta^k$ if $Q\in\mathscr D_k$. We denote by $B_Q:=B(x^k_\alpha,8A_0^5\delta^k)$ the (non-random!) ball that contains $Q=Q^k_\alpha(\omega)$ by \eqref{eq:Q-BQ}.

It will be convenient to rephrase Lemma \ref{lem:m0} in this lighter notation as follows:

\begin{lemma}\label{lem:indep}
Let $(X,\rho,\mu)$ be a space of homogeneous type.
Let $\eps_0>0$ be small enough, depending only on the properties of the underlying space. Then there exists $m_0\in\Z_+$ such that the following property holds: Suppose that two reference points $x_P,x_Q$ of dyadic cubes $\ell(P)=\ell(Q)$ satisfy 
\begin{equation*}
   \rho(x_P,x_Q)\leq\eps_0\delta^{-m}\ell(P)
\end{equation*}
for some $m\geq m_0$, and consider the random event 
 \begin{equation*}
    \{P^{(m)}=Q^{(m)}\}
\end{equation*}
that $P$ and $Q$ have the same $m$-th dyadic ancestor. Then
\begin{enumerate}[\rm(1)]
  \item\label{it:indep} $\{P^{(m)}=Q^{(m)}\}$ is independent of $\{[S]: S\in\mathscr D,\ell(S)\leq\ell(P)\}$, where $[S]$ is the equivalence class of $S\in\mathscr D$ modulo sets of $\mu$-measure zero.
  \item\label{it:piHalf} $\pi_m(P,Q):=\prob(P^{(m)}=Q^{(m)})\geq\frac12$.
\end{enumerate}
\end{lemma}

\begin{proof}
Denoting by $k+m$ the common level of the reference points $x_P=x_\beta^{k+m}$, $x_Q=x_\gamma^{k+m}$, we see that the even $\{P^{(m)}=Q^{(m)}\}$ coincides with the event denoted by $A$ in Lemma \ref{lem:m0}. Then Claim \eqref{it:piHalf} is immediate from Lemma \ref{lem:m0}\eqref{it:PAhalf}.

Concerning Claim \ref{it:indep}, Lemma \ref{lem:m0} guarantees that $A=\{P^{(m)}=Q^{(m)}\}$ only depends on $(\omega_{k+i})_{i=0}^{m-1}=(\omega_i)_{i=k}^{k+m-1}$. On the other hand
\begin{equation*}
\begin{split}
  \{\bar S:S\in\mathscr D(\omega),\ell(S)\leq\ell(P)\}
  &=\{\bar Q^j_{\eta}(\omega): j\geq k+m,\eta\in\mathscr A_j\} \\
  &=\{\bar Q^j_{\eta}((\omega_i)_{i=j}^\infty): j\geq k+m,\eta\in\mathscr A_j\}
\end{split}
\end{equation*}
only depends on $(\omega_i)_{i=k+m}^\infty$, and is hence independent of $A$.

Next, given the $\sigma$-finite measure $\mu$, \cite[Theorem 5.6]{HK:12} guarantees that
\begin{equation*}
  \mu\Big(\bigcup_{S\in\mathscr D(\omega)}\partial S\Big)=
  \mu\Big(\bigcup_{k,\eta}\partial Q^k_\eta(\omega)\Big)=0\quad\text{for a.e. }\omega\in\Omega.
\end{equation*}
Let us denote the exceptional set by $Z$. Since $\tilde S\subseteq S\subseteq\bar S$ and $\partial S=\bar S\setminus\tilde S$, it follows that $\bar S\in[S]$ whenever $S\in\mathscr D(\omega)$ and $\omega\in\Omega\setminus Z$.

We have proved that $\{P^{(m)}=Q^{(m)}\}$ is independent of $\{\bar S:S\in\mathscr D,\ell(S)\leq\ell(P)\}$, and hence of $\{[\bar S]:S\in\mathscr D,\ell(S)\leq\ell(P)\}$. But the latter collection coincides with $\{[S]:S\in\mathscr D,\ell(S)\leq\ell(P)\}$ almost surely (i.e., for almost every $\omega\in\Omega$). Since independence is preserved under almost sure equality, we conclude that $\{P^{(m)}=Q^{(m)}\}$ is independent of $\{[S]:S\in\mathscr D,\ell(S)\leq\ell(P)\}$, confirming Claim \eqref{it:indep}.
\end{proof}

For each fixed $\omega\in\Omega$, the system $\mathscr D(\omega)$ is an instance of a system of dyadic cubes as in Definition \ref{def:cubes}. In particular, there are the associated Haar functions etc. We will make extensive use of them in the following section.

\section{Proof of the dyadic representation theorem}\label{sec:proofDRT}

This section is dedicated to the proof of Theorem \ref{thm:DRT} with the help of the random dyadic cubes discussed in Section \ref{sec:random}. Throughout this section, a dyadic cube $Q$ will be understood as abbreviation for the pair $(Q,k)$ including the information about the level $k$ of $Q$. Thus, we will only use the lighter notation $Q$ instead of $(Q,k)$. We recall the conditional expectation operators and their differences
\begin{equation*}
\begin{split}
  \E_Q f &:=\ave{f}_Q 1_Q=\pair{f}{h_Q^0}h_Q^0,\\
  \D_Q f &:=\sum_{Q'\in\operatorname{ch}(Q)}\E_{Q'}f-\E_Qf=\sum_{1\leq\alpha<M_Q}\pair{f}{h_Q^\alpha}h_Q^\alpha.
\end{split}
\end{equation*}
where $h_Q^0=\mu(Q)^{-1/2}1_Q$ and $h_Q^\alpha$, $1\leq\alpha<M_Q\leq M<\infty$ are the non-cancellative and the cancellative Haar functions associated with a cube $Q$; see Remark \ref{rem:Haar}. We also define
\begin{equation*}
  \E_k:=\sum_{Q\in\mathscr D_k}\E_Q,\qquad\D_k:=\E_{k+1}-\E_k=\sum_{Q\in\mathscr D_k}\D_Q.
\end{equation*}
For $f\in L^2(\mu)$, it is easy to check that
\begin{equation*}
   \E_k f\to\begin{cases} f, & k\to\infty, \\ \ave{f}_X, & k\to-\infty,\end{cases}\qquad
   \ave{f}_X:=\begin{cases} 0, & \mu(X)=\infty, \\ \fint_X f\ud\mu, & \mu(X)<\infty.\end{cases}
\end{equation*}
Hence, for $T\in\bddlin(L^2(\mu))$ and $f,g\in L^2(\mu)$, it follows that
\begin{equation*}
  \pair{T\E_k f}{\E_k g}\to\begin{cases}\pair{Tf}{g}, & k\to\infty, \\ \ave{f}_X\ave{g}_X\pair{T1}{1},  & k\to-\infty.\end{cases}
\end{equation*}
Thus, in the sense of bilinear forms on $L^2(\mu)$, we can write
\begin{equation}\label{eq:T0T1}
  T=\lim_{\substack{b\to\infty \\ a\to-\infty}}(\E_bT\E_b-\E_aT\E_a)+\pair{T1}{1}\frac{1}{\mu(X)}\otimes\frac{1}{\mu(X)}=:T_0+T_1,
\end{equation}
where $T_1$ is as in Theorem \ref{thm:DRT}; note that $T_1$, and hence also $T_0=T-T_1$, is independent of the dyadic system used in the above decomposition.

We begin the analysis of $T_0$ with a basic decomposition valid for any fixed dyadic system $\mathscr D$; i.e., the eventual random selection does not yet play any role at this point.

\begin{lemma}\label{lem:basicDec}
For $T_0$ as in \eqref{eq:T0T1}, we have
\begin{equation*}
\begin{split}
    T_0 &=\Pi_{T1}+\Pi_{T^*1}^*+T_{00}, \\
    T_{00} &=\sum_{k\in\Z}\sum_{P,Q\in\mathscr D_k}
    \Big(\D_PT\D_Q+\D_{P,Q}^*T\D_Q+\D_PT\D_{Q,P}\Big),
\end{split}
\end{equation*}
where $\Pi_b$ are dyadic paraproducts associated with $b\in\{T1,T^*1\}$, and
\begin{equation}\label{eq:DQP}
  \D_{Q,P}:=1_Q\otimes\Big(\frac{1_Q}{\mu(Q)}-\frac{1_P}{\mu(P)}\Big).
\end{equation}
\end{lemma}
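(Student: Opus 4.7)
The plan is to carry out a standard dyadic martingale decomposition of $T_0$ and identify the resulting pieces with the paraproducts $\Pi_{T1}, \Pi_{T^*1}^*$ and with the $\D_{P,Q}$-type remainders. As a preliminary, expand $\E_{k+1} = \E_k + \D_k$ in each telescoping term to obtain
\begin{equation*}
  \E_b T \E_b - \E_a T \E_a = \sum_{k=a}^{b-1} (\E_{k+1}T\E_{k+1} - \E_k T \E_k)
  = \sum_{k=a}^{b-1} (\D_k T \E_k + \E_k T \D_k + \D_k T \D_k);
\end{equation*}
passing to the limit as $a\to-\infty$, $b\to\infty$ in the bilinear-form sense on $L^2(\mu)$, and appealing to \eqref{eq:T0T1}, yields $T_0 = \sum_k (\D_k T \E_k + \E_k T \D_k + \D_k T \D_k)$. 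The diagonal piece equals $\sum_{k}\sum_{P,Q \in \mathscr D_k} \D_P T \D_Q$ by definition, which is the first block of $T_{00}$.

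To handle $\sum_k \D_k T \E_k$, fix $Q\in\mathscr D_k$ and decompose $\E_k f$ around $\ave{f}_Q$:
\begin{equation*}
   \E_k f = \ave{f}_Q\cdot 1 + \sum_{P\in\mathscr D_k} (\ave{f}_P - \ave{f}_Q)1_P
   = \ave{f}_Q\cdot 1 + \sum_{P \in \mathscr D_k} \D_{P,Q} f,
\end{equation*}
where the second equality is just the defining formula \eqref{eq:DQP}. Applying $\D_Q T$, the constant piece contributes $\ave{f}_Q \D_Q(T1)$, whose sum in $Q \in \mathscr D$ (over all $k$) is precisely $\Pi_{T1}f$; the remainder is $\sum_{P \in \mathscr D_k} \D_Q T \D_{P,Q} f$. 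Summing over $Q\in\mathscr D_k$ and $k\in\Z$, and relabeling $P\leftrightarrow Q$ in the inner sum, produces exactly the $\D_P T \D_{Q,P}$ block of $T_{00}$.

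For $\sum_k \E_k T \D_k$, I would apply the same argument to the adjoint operator $T^*$, whose kernel is the transpose of $K$ and hence still an $\omega$-Calder\'on--Zygmund kernel, to obtain $\sum_k \D_k T^* \E_k = \Pi_{T^*1} + \sum_{k}\sum_{P,Q\in\mathscr D_k}\D_P T^* \D_{Q,P}$. Taking formal adjoints of this identity (valid in the bilinear-form sense) and relabeling indices gives $\sum_k \E_k T \D_k = \Pi_{T^*1}^* + \sum_k\sum_{P,Q\in\mathscr D_k}\D_{P,Q}^* T \D_Q$. Combining the three pieces yields the claimed decomposition.

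There is no real obstacle here: the entire proof is martingale bookkeeping, and its heart is the one-line algebraic identity $\D_{P,Q} f = (\ave{f}_P - \ave{f}_Q)1_P$, which turns the natural splitting of $\E_k$ into exactly the off-diagonal corrections $\D_{P,Q}$ appearing in $T_{00}$. The only delicate point is the meaning of the (doubly infinite) sum $\sum_k$; this is best interpreted in the bilinear-form sense on $L^2(\mu)$, where the partial sums are exactly $\E_bT\E_b-\E_aT\E_a$, and where the asymptotic contribution $\ave{f}_X\ave{g}_X\pair{T1}{1}$ arising as $a\to-\infty$ has already been absorbed into the rank-one operator $T_1$ separated off in \eqref{eq:T0T1}.
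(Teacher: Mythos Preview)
Your approach coincides with the paper's: telescope, separate the diagonal $\D_kT\D_k$, and extract a paraproduct from each mixed term. The paper organises the splitting locally, writing $\D_PT\E_Q = \D_PT\D_{Q,P} + \D_PT1_Q\otimes\frac{1_P}{\mu(P)}$ for each pair $(P,Q)$ and then summing the second piece over $Q$, rather than via your global identity $\E_kf = \ave{f}_Q\cdot 1 + \sum_P\D_{P,Q}f$; after relabelling, the resulting terms match.

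There is, however, one genuine gap. When $\mu(X)=\infty$, the constant function $1$ is not in $L^2(\mu)$, so neither summand on the right of your decomposition of $\E_kf$ lies in $L^2$, and the expression $\D_Q(T1)$ has no a priori meaning. This is precisely the analytic (as opposed to algebraic) content of the proof, and the paper spends most of its argument here: it defines $\D_PT1 := \sum_{Q\in\mathscr D_k}\D_PT1_Q$ and proves absolute convergence of this series in $L^2$ via the Haar-coefficient bound \eqref{eq:HaarEst}, which is where the $\omega$-Calder\'on--Zygmund regularity of $K$ is used. Your assertion that ``there is no real obstacle'' and that the proof is ``martingale bookkeeping'' overlooks this point; without the kernel hypothesis, the paraproduct symbol $T1$ is undefined and the claimed identity cannot even be formulated.
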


\begin{proof}
Starting from the defining formula \eqref{eq:T0T1},
we expand the difference by the telescoping identity
\begin{equation*}
\begin{split}
  \E_bT\E_b-\E_aT\E_a
  &=\sum_{k=a}^{b-1}(\E_{k+1}T\E_{k+1}-\E_kT\E_k) \\
  &=\sum_{k=a}^{b-1}(\D_kT\D_k+\E_k T\D_k+\D_kT\E_k) \\
  &=\sum_{k=a}^{b-1}\sum_{P,Q\in\mathscr D_k}
  (\D_PT\D_Q+\E_P T\D_Q+\D_PT\E_Q),
\end{split}
\end{equation*}
and hence
\begin{equation}\label{eq:T0exp}
  T_0=\sum_{k\in\Z}\sum_{P,Q\in\mathscr D_k}
  (\D_PT\D_Q+\E_P T\D_Q+\D_PT\E_Q).
\end{equation}

For the localised operators, we further decompose
\begin{equation*}
\begin{split}
  \D_PT\E_Q
  &=\D_P T1_Q\otimes \frac{1_Q}{\mu(Q)} \\
  &=\D_P T1_Q\otimes (\frac{1_Q}{\mu(Q)}-\frac{1_P}{\mu(P)})
  +\D_PT1_Q\otimes \frac{1_P}{\mu(P)} \\
 &=\D_PT\D_{Q,P}+\D_PT1_Q\otimes \frac{1_P}{\mu(P)}.
\end{split}
\end{equation*}
Expanding $\D_P$ in terms of the Haar functions $h_P^\alpha$
\begin{equation*}
  \pair{\D_PT1_Q}{g}
  =\pair{T1_Q}{\D_P g}
  =\sum_{1\leq\alpha<M_P}\sqrt{\mu(Q)}\pair{Th_Q^0}{h_P^\alpha}\pair{h_P^\alpha}{g}
\end{equation*}
and using the bound \cite[(5.3)]{Hyt:Enflo}
\begin{equation}\label{eq:HaarEst}
   \abs{\pair{Th_P^\alpha}{h_Q^\beta}}\lesssim
   \omega\Big(\frac{\ell(P)}{\ell(P)+\rho(z_P,z_Q)}\Big)\frac{\sqrt{\mu(P)\mu(Q)}}{V(z_P,\ell(P)+\rho(z_P,z_Q))},
\end{equation}
we obtain
\begin{equation*}
\begin{split}
  \Norm{\D_PT1_Q}{2}
  &=\sup\{ \abs{\pair{\D_PT1_Q}{g}}:\Norm{g}{2}\leq 1\} \\
  &\lesssim\omega\Big(\frac{\ell(P)}{\ell(P)+\rho(z_P,z_Q)}\Big)\frac{\sqrt{\mu(P)}\mu(Q)}{V(z_P,\ell(P)+\rho(z_P,z_Q))}.
\end{split}
\end{equation*}
(While the results of \cite{Hyt:Enflo} are formulated for metric spaces only, it is easy to check that the distinction between metric and quasi-metric spaces makes no difference in the above considerations.)

Hence
\begin{equation*}
\begin{split}
  \sum_{Q\in\mathscr D_k}  \frac{\Norm{\D_PT1_Q}{2} }{\sqrt{\mu(P)}}
    &=\Big(\sum_{\substack{Q\in\mathscr D_k \\ \frac{\rho(z_P,z_Q)}{\ell(P)} < 1}}
  +\sum_{m=1}^\infty\sum_{\substack{Q\in\mathscr D_k \\ \delta \leq \frac{\rho(z_P,z_Q)}{\delta^{-m}\ell(P)} < 1}}\Big)
   \frac{\Norm{\D_PT1_Q}{2} }{\sqrt{\mu(P)}} \\
  &\lesssim\sum_{m=0}^\infty\sum_{\substack{Q\in\mathscr D_k \\ \frac{\rho(z_P,z_Q)}{\delta^{-m}\ell(P)} < 1}}
  \frac{\omega(\delta^m)\mu(Q)}{V(z_P,\delta^{-m}\ell(P))}
  \lesssim\sum_{m=0}^\infty\omega(\delta^m).
\end{split}
\end{equation*}
Thus the series below converges and we can make the following definition:
\begin{equation*}
  \D_PT1:= \sum_{Q\in\mathscr D_k}  \D_PT1_Q.
\end{equation*}
It follows that
\begin{equation*}
  \sum_{Q,P\in\mathscr D_k}\D_PT1_Q\otimes\frac{1_P}{\mu(P)}
  =\sum_{P\in\mathscr D_k}\D_PT1\otimes\frac{1_P}{\mu(P)}
  =\sum_{P\in\mathscr D_k}(\D_PT1)\E_P.
\end{equation*}
We thus have
\begin{equation*}
  \sum_{P,Q\in\mathscr D_k}\D_PT\E_Q
  =\sum_{P,Q\in\mathscr D_k}\D_PT\D_{Q,P}+\sum_{P\in\mathscr D_k}(\D_PT1)\E_P.
\end{equation*}
Applying a symmetric reasoning to $\E_PT\D_Q$ but observing that considerations on the dual side produce operators adjoint to the ones we already encountered, it follows that
\begin{equation}\label{eq:extractParap}
\begin{split}
  \sum_{P,Q\in\mathscr D_k}
  &\Big(\D_PT\D_Q+\E_PT\D_Q+\D_PT\E_Q\Big) \\
  &=\sum_{P,Q\in\mathscr D_k}\Big(\D_PT\D_Q+\D_{P,Q}^*T\D_Q+\D_PT\D_{Q,P}\Big) \\
  &\qquad+\sum_{P\in\mathscr D_k}\Big(((\D_P T^*1)\E_P)^*+ (\D_PT1)\E_P\Big).
\end{split}
\end{equation}
Summing over all levels $k\in\Z$ of dyadic cubes, the last line in \eqref{eq:extractParap} gives the paraproducts
\begin{equation}\label{eq:identifyParap}
  \sum_{k\in\Z}\sum_{P\in\mathscr D_k}(\D_PT1)\E_P=\Pi_{T1},\qquad
  \sum_{k\in\Z}\sum_{P\in\mathscr D_k}((\D_PT^*1)\E_P)^*=\Pi_{T^*1}^*.
\end{equation}
A combination of \eqref{eq:T0exp}, \eqref{eq:extractParap}, and \eqref{eq:identifyParap} gives the identities claimed in the statement of the lemma.
\end{proof}

The next is to rearrange the pairs of cubes $P,Q\in\mathscr D_k$ in the decomposition of Lemma \ref{lem:basicDec} under common ancestors. It is here that the random selection of the dyadic system enters, and only prove an identity ``on average'', i.e., with an expectation over the random selection on both sides.

\begin{lemma}\label{lem:underCommon}
For each choice of
\begin{equation*}
  T_{P,Q}\in\{\D_PT\D_Q,\quad\D_{P,Q}T\D_Q,\quad\D_PT\D_{Q,P}\},
\end{equation*}
the expectation $\E$ over a random choice of the dyadic system $\mathscr D$ satisfies
\begin{equation}\label{eq:underCommon}
  \E\sum_{k\in\Z}\sum_{P,Q\in\mathscr D_k}T_{P,Q}
 =\E\sum_{m=m_0}^\infty\sum_{k\in\Z}\sum_{\substack{ R\in\mathscr D_k \\ P,Q\in\operatorname{ch}^m(R)}}
 \theta_m(P,Q)T_{P,Q}
\end{equation}
where $m_0$ is the parameter from Lemma \ref{lem:indep}, and $\theta_m(P,Q)\in\{0\}\cup[1,2]$ are some coefficients depending on the ensemble of dyadic systems over which we randomise. Moreover, for $m>m_0$, we have
\begin{equation}\label{eq:thetaNon0}
  \theta_m(P,Q)\neq 0\quad\Rightarrow\quad
  \rho(x_P,x_Q)\gtrsim\delta^{-m}\ell(P).
\end{equation}
\end{lemma}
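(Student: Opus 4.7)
The plan is to reindex the double sum on the left-hand side by common ancestors in the random dyadic system, using weights that compensate for the probability that two fixed cubes $P,Q$ actually share an $m$-th ancestor in a given realisation $\omega$. For each pair one specific value of $m$ will be chosen, dictated by the spatial separation of the reference points of $P$ and $Q$. Lemma \ref{lem:indep} then supplies both the required probability bound and the independence needed to evaluate the expectation pair by pair.

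For reference points $x_P,x_Q$ of cubes $P,Q\in\mathscr D$ at a common deterministic level $\ell$, let $m(P,Q)\geq m_0$ be the smallest integer with $\rho(x_P,x_Q)\leq\eps_0\delta^{-m}\ell(P)$, with the convention $m(P,Q):=m_0$ when $P=Q$. Define
\[
  \theta_m(P,Q):=\begin{cases}1/\pi_m(P,Q),& \text{if }m=m(P,Q),\\ 0,& \text{otherwise}.\end{cases}
\]
Because $\rho(x_P,x_Q)\leq\eps_0\delta^{-m(P,Q)}\ell(P)$ by construction, Lemma \ref{lem:indep} gives $\pi_{m(P,Q)}(P,Q)\geq\tfrac12$, so that $\theta_m(P,Q)\in\{0\}\cup[1,2]$, as required. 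When $m>m_0$ and $\theta_m(P,Q)\neq 0$, the minimality of $m(P,Q)=m$ forces $\rho(x_P,x_Q)>\eps_0\delta^{-(m-1)}\ell(P)=\eps_0\delta\cdot\delta^{-m}\ell(P)\gtrsim\delta^{-m}\ell(P)$, which is exactly \eqref{eq:thetaNon0}.

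The key per-pair identity is then obtained as follows. Fix $\ell\in\Z$ and indices $\alpha,\beta\in\mathscr A_\ell$, and write $P=Q^\ell_\alpha(\omega)$, $Q=Q^\ell_\beta(\omega)$. Each of the three choices of $T_{P,Q}$ is built from $P$, $Q$, and their children up to sets of measure zero, and is therefore measurable with respect to the $\sigma$-algebra $\sigma(\{[S]:S\in\mathscr D,\ \ell(S)\leq\ell(P)\})$. By Lemma \ref{lem:indep}, this $\sigma$-algebra is independent of the event $A_{P,Q,m}:=\{P^{(m)}=Q^{(m)}\}$ for every $m\geq m_0$, so
\[
  \E[T_{P,Q}\cdot 1_{A_{P,Q,m}}]=\pi_m(P,Q)\cdot\E[T_{P,Q}].
\]
Multiplying by $\theta_m(P,Q)$ and summing over $m\geq m_0$, only the term $m=m(P,Q)$ contributes, and we arrive at $\E[T_{P,Q}]=\sum_{m\geq m_0}\theta_m(P,Q)\,\E[T_{P,Q}\cdot 1_{A_{P,Q,m}}]$.

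To finish, I would sum this per-pair identity over $\ell\in\Z$ and $(\alpha,\beta)\in\mathscr A_\ell^2$. The left-hand side becomes $\E\sum_k\sum_{P,Q\in\mathscr D_k}T_{P,Q}$, matching the left-hand side of \eqref{eq:underCommon}. On the right-hand side, for each $m$ the indicator $1_{A_{P,Q,m}}$ restricts to pairs $P,Q\in\mathscr D_\ell$ sharing a common ancestor $R\in\mathscr D_{\ell-m}$, so the resulting inner sum can be reorganised equivalently as a sum over $R\in\mathscr D_{\ell-m}$ together with $P,Q\in\operatorname{ch}^m(R)$. Interchanging the orders of summation in $m$ and $\ell$ and relabeling the ancestor's level by $k:=\ell-m$ produces exactly the right-hand side of \eqref{eq:underCommon}. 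The only nontrivial ingredient is the measurability-and-independence assertion from Lemma \ref{lem:indep}; the remaining steps are straightforward bookkeeping.
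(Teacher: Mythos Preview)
Your proof is correct and follows essentially the same approach as the paper. Your function $m(P,Q)$ picks out precisely the unique $m$ for which the paper's indicator $\chi_m(P,Q)$ equals $1$, so your $\theta_m(P,Q)$ coincides with the paper's $\theta_m(P,Q):=\chi_m(P,Q)/\pi_m(P,Q)$; the independence argument via Lemma~\ref{lem:indep} and the subsequent reorganisation under common ancestors are identical.
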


\begin{proof}
Let us consider any of the terms. $T_{P,Q}$ as in the lemma.
With parameters $\eps_0,m_0$ provided by Lemma \ref{lem:indep}, we rearrange the summation of $T_{P,Q}$ over $P,Q\in\mathscr D_k$ by pigeonholing according to the relative distance of (the reference points of) these cubes:
\begin{equation*}
  \sum_{P,Q\in\mathscr D_k}T_{P,Q}
  =\sum_{m=m_0}^\infty \sum_{P,Q\in\mathscr D_k}\chi_m(P,Q)T_{P,Q},
\end{equation*}
where we define
\begin{equation*}
 \chi_{m_0}(P,Q) :=\begin{cases} 1, & \text{if }d(x_P,x_Q)<\eps_0\delta^{-m_0}\ell(P), \\ 
  0, &\text{otherwise},\end{cases}
\end{equation*}
and, for $m>m_0$,
\begin{equation}\label{eq:defChi}
  \chi_{m}(P,Q) :=\begin{cases} 1, & \text{if }\eps_0\delta^{1-m}\ell(P)\leq d(x_P,x_Q)<\eps_0\delta^{-m}\ell(P), \\ 
  0, &\text{otherwise}.\end{cases}
\end{equation}

For $P,Q\in\mathscr D_k$, notice that both $\D_P$ and $\D_{P,Q}$ depend only on $\{S\in\mathscr D:\ell(S)\leq\ell(P)\}$; indeed, $\D_{P,Q}$ depends only on $\{S\in\mathscr D:\ell(S)=\ell(P)\}$ and $\D_P$ only on $\{S\in\mathscr D:\ell(S)\in\{\delta\ell(P),\ell(P)\}\}$.
Thus, under a random choice of the dyadic system as described in Section \ref{sec:random}, it follows from the independence guaranteed by Lemma \ref{lem:indep} (noting that $T_{P,Q}$, as an operator on $L^2(\mu)$, only depends on the equivalence classes of $P$ and $Q$ modulo sets of $\mu$-measure zero, that appear in part \eqref{it:indep} of Lemma \ref{lem:indep}) that
\begin{equation*}
\begin{split}
  \E(1_{\{P^{(m)}=Q^{(m)}\}}T_{P,Q})
  &=\E(1_{\{P^{(m)}=Q^{(m)}\}})\E T_{P,Q} \\
  &=\P(P^{(m)}=Q^{(m)})\E T_{P,Q}
  =:\pi_m(P,Q)\E T_{P,Q}.
\end{split}
\end{equation*}

If $\chi_m(P,Q)=1$, then $\rho(x_P,x_Q)<\eps_0\delta^{-m}\ell(P)$, and Lemma \ref{lem:indep} guarantees that $\pi_m(P,Q)\in[\frac12,1]$. Hence there is no danger of division by zero in solving
\begin{equation*}
\begin{split}
  \chi_m(P,Q)\E T_{P,Q}
  &=\chi_m(P,Q)\frac{  \E(1_{\{P^{(m)}=Q^{(m)}\}}T_{P,Q}) }{\pi_m(P,Q)} \\
  &=\E\Big(1_{\{P^{(m)}=Q^{(m)}\}}\frac{\chi_m(P,Q)}{\pi_m(P,Q)}T_{P,Q}\Big);
\end{split}
\end{equation*}
note that $\chi_m(P,Q)$, defined via the non-random reference points $x_P,x_Q$, may be freely moved in and out of the expectation.

Now
\begin{equation}\label{eq:defThetam}
  \theta_m(P,Q):=\frac{\chi_m(P,Q)}{\pi_m(P,Q)}\in\{0\}\cup[1,2],
\end{equation}
since $\chi_m(P,Q)\in\{0,1\}$ and $\pi_m(P,Q)\in[\frac12,1]$ when $\chi_m(P,Q)=1$.

Then
\begin{equation*}
\begin{split}
  \E\sum_{P,Q\in\mathscr D_k}\chi_m(P,Q)T_{P,Q}
 &=\E 
 \sum_{\substack{P,Q\in\mathscr D_k  \\
  P^{(m)}=Q^{(m)} }}\theta_m(P,Q)T_{P,Q} \\
 &=\E\sum_{\substack{ R\in\mathscr D_{k-m} \\ P,Q\in\operatorname{ch}^m(R)}}\theta_m(P,Q)T_{P,Q} 
\end{split}
\end{equation*}
where we took the common ancestor $R:=P^{(m)}=Q^{(m)}$ as a new summation variable.

Summing over $m\geq m_0$ and $k\in\Z$ (and taking $k':=k-m$ as the new summation variable, but denoting it again by $k$), we obtain the claimed identity \eqref{eq:underCommon}. The other claim \eqref{eq:thetaNon0} follows immediate from the definition of $\theta_m(P,Q)$ in \eqref{eq:defThetam} and the definition of $\chi_m(P,Q)$ in \eqref{eq:defChi}.
\end{proof}

The final task is to identify the series inside the expectation of the right of \eqref{eq:underCommon} as a sum of dyadic shift.
In this remaining part of the argument, we can once again concentrate on a fixed dyadic system at a time, i.e., the expectation in front plays no further role in the analysis.

We start with the easiest case of $T_{P,Q}=\D_PT\D_Q$:

\begin{lemma}\label{lem:P-Q-shifts}
For $m\geq m_0$ and $\theta_m(P,Q)$ as in Lemma \ref{lem:underCommon}, we have
\begin{equation*}
  \sum_{k\in\Z}\sum_{\substack{R\in\mathscr D_k \\ P,Q\in\operatorname{ch}^m(R)}}\theta_m(P,Q)\D_PT\D_Q
  =\omega(\delta^{m})\S_{(m,m)},
\end{equation*}
where $\S_{(m,m)}$ is a sum of $O(1)$ normalised dyadic shifts of complexity $(m,m)$.
\end{lemma}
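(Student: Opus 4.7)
The plan is to reduce to showing a per-pair-of-Haar-indices estimate, then read off the shift coefficients from the kernel bound.

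First, I would expand each localised piece in the Haar basis attached to $P$ and $Q$:
\begin{equation*}
  \D_P T \D_Q = \sum_{\alpha=1}^{M_P-1}\sum_{\beta=1}^{M_Q-1}\pair{T h_Q^\beta}{h_P^\alpha}\, h_P^\alpha \otimes h_Q^\beta.
\end{equation*}
Since $M_P, M_Q \leq M$ with $M$ depending only on the space, the pair index $(\alpha,\beta)$ ranges in an $O(1)$ set. Interchanging the summations so that $(\alpha,\beta)$ is outermost, it suffices to show that for each fixed pair
\begin{equation*}
  \S_{(m,m)}^{\alpha,\beta} := \sum_{k\in\Z}\sum_{R\in\mathscr D_k}\sum_{\substack{P,Q\in\operatorname{ch}^m(R) \\ M_P>\alpha,\, M_Q>\beta}} a_{PQR}^{\alpha,\beta}\, h_P^\alpha \otimes h_Q^\beta,\quad a_{PQR}^{\alpha,\beta}:=\frac{\theta_m(P,Q)\pair{T h_Q^\beta}{h_P^\alpha}}{\omega(\delta^m)},
\end{equation*}
is a normalised dyadic shift of complexity $(m,m)$. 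Then $\S_{(m,m)}=\sum_{\alpha,\beta}\S_{(m,m)}^{\alpha,\beta}$ is a sum of at most $(M-1)^2$ such shifts.

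Second, I would verify the normalisation $|a_{PQR}^{\alpha,\beta}|\lesssim \sqrt{\mu(P)\mu(Q)}/\mu(R)$. Plugging in $|\theta_m(P,Q)|\leq 2$ and the Haar kernel estimate \eqref{eq:HaarEst}, the task reduces to showing
\begin{equation*}
  \omega\!\Big(\tfrac{\ell(P)}{\ell(P)+\rho(z_P,z_Q)}\Big)\cdot \frac{1}{V(z_P,\ell(P)+\rho(z_P,z_Q))} \lesssim \frac{\omega(\delta^m)}{\mu(R)}.
\end{equation*}
The geometric input is the distance constraint: the definition \eqref{eq:defChi} of $\chi_m$ enforces $\rho(x_P,x_Q) < \eps_0\delta^{-m}\ell(P)\approx \ell(R)$, and \eqref{eq:thetaNon0} gives $\rho(x_P,x_Q)\gtrsim\delta^{-m}\ell(P)\approx\ell(R)$ when $m>m_0$. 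Since $\rho(z_P,x_P),\rho(z_Q,x_Q)\lesssim \ell(P)=\delta^m\ell(R)\ll\ell(R)$, the quasi-triangle inequality gives $\rho(z_P,z_Q)\approx\ell(R)$ and hence $\ell(P)+\rho(z_P,z_Q)\approx \ell(R)$. Monotonicity of $\omega$ then yields $\omega(\cdot)\approx\omega(\delta^m)$, while doubling (and the fact that $z_P\in B_R$ so balls centred at $z_P$ of radius $\approx \ell(R)$ are comparable to $R$) gives $V(z_P,\ell(P)+\rho(z_P,z_Q))\approx\mu(R)$. Combining yields the required bound.

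Third, the case $m=m_0$ only provides the upper bound $\rho(x_P,x_Q)\lesssim \ell(R)$, so $\ell(P)+\rho(z_P,z_Q)$ may range between $\ell(P)$ and $\ell(R)$. I would handle this by further partitioning pairs $(P,Q)\in\operatorname{ch}^{m_0}(R)$ according to the sub-level $j\in\{0,1,\ldots,m_0\}$ for which $\ell(P)+\rho(z_P,z_Q)\approx\delta^{-j}\ell(P)$; on each sub-collection the two-sided comparability needed above is restored, and the factor $\omega(\delta^j)/V(z_P,\delta^{-j}\ell(P))\lesssim \omega(\delta^{m_0})/\mu(R)$ follows (absorbing the fixed $m_0$-dependent constants into the implicit constant, since $\omega(\delta^{m_0})$ is a positive fixed number). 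This produces $m_0+1=O(1)$ additional sub-shifts, which does not affect the $O(1)$ count in the statement.

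The main obstacle I anticipate is the second step: pinning down the comparability $\ell(P)+\rho(z_P,z_Q)\approx\ell(R)$ and hence $V(z_P,\cdot)\approx\mu(R)$ uniformly across admissible pairs, since this is what converts the kernel estimate into a clean $\sqrt{\mu(P)\mu(Q)}/\mu(R)$ normalisation with exactly the factor $\omega(\delta^m)$ pulled out. The auxiliary splitting for $m=m_0$ is the routine price for the absent lower distance bound at the initial scale.
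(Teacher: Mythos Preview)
Your proposal is correct and follows essentially the same route as the paper: Haar-expand each $\D_P T \D_Q$, bound the pairings via \eqref{eq:HaarEst}, and recognise the resulting coefficients as those of a normalised shift of complexity $(m,m)$, one per Haar-index pair $(\alpha,\beta)$. Your third step, however---the further partitioning for $m=m_0$ into $m_0+1$ sub-levels---is unnecessary: the paper dispatches this case in one line by using the trivial lower bound $\ell(P)+\rho(x_P,x_Q)\geq\ell(P)=\delta^{m_0}\ell(R)\approx_{m_0}\ell(R)$ and absorbing the fixed $m_0$-dependent factors (the doubling constant for the volume, and the ratio $\omega(1)/\omega(\delta^{m_0})$ for the modulus) directly into the implied constant. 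This is exactly the mechanism you yourself invoke when you write ``since $\omega(\delta^{m_0})$ is a positive fixed number'', so the extra splitting buys nothing.
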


\begin{proof}
Expanding $\D_P,\D_Q$ in terms of Haar functions, we have
\begin{equation*}
  \sum_{\substack{P,Q\in \\ \operatorname{ch}^m(R) }} \theta_m(P,Q)\D_P T\D_Q
  =\sum_{\substack{ P,Q\in \\ \operatorname{ch}^m(R)}}\sum_{\substack{1\leq\alpha<M_P \\ 1\leq\beta<M_Q}} \theta_m(P,Q)
   \pair{h_P^\alpha}{Th_Q^\beta} h_P^\alpha\otimes h_Q^\beta.
\end{equation*}
Whenever $\theta_m(P,Q)\neq 0$, we obtain, by \eqref{eq:HaarEst},
\begin{equation}\label{eq:HaarEst2}
\begin{split}
 \abs{ \pair{h_P^\alpha}{Th_Q^\beta} }
 &\lesssim \omega\Big(\frac{\ell(P)}{\ell(P)+\rho(x_P,x_Q)}\Big)\frac{\sqrt{\mu(P)\mu(Q)}}{V(x_P,\ell(P)+\rho(x_P,x_Q))} \\
 &\lesssim \omega(\delta^m)\frac{\sqrt{\mu(P)\mu(Q)}}{\mu(R)}
\end{split}
\end{equation}
using \eqref{eq:thetaNon0} for $m>m_0$ and simply $\ell(P)+\rho(x_P,x_Q)\geq\ell(P)\approx \ell(R)$ for $m=m_0$, absorbing the dependence on the fixed $m_0$ into the implied constants.

Since $M_P,M_Q\leq M$, we see that $\sum_{P,Q\in\operatorname{ch}^m(R)}\theta_m(P,Q)\D_PT\D_{Q}$ is a sum of $O(1)$ series of the form
\begin{equation*}
  \omega(\delta^m)\sum_{P,Q\in\operatorname{ch}^m(R)} a_{PQR}h_P\otimes h_Q,
\end{equation*}
where $h_P=h_P^\alpha$ and $h_Q=h_Q^\beta$ are Haar functions and
\begin{equation*}
  \abs{a_{PQR}}\lesssim\frac{\sqrt{\mu(P)\mu(Q)}}{\mu(R)}.
\end{equation*}
These are precisely the conditions for a component $A_R$ of a normalised dyadic shift of complexity $(m,m)$, and hence
\begin{equation}\label{eq:P-Q-shifts}
  \sum_{k\in\Z}\sum_{R\in\mathscr D_k}\sum_{P,Q\in\operatorname{ch}^m(R)}\theta_m(P,Q) \D_P T\D_Q
  =\omega(\delta^m)\S_{(m,m)},
\end{equation}
as claimed.
\end{proof}

We then turn to the other instances of $T_{P,Q}$.

\begin{lemma}\label{lem:P-QP-shifts}
For $m\geq m_0$ and $\theta_m(P,Q)$ as in Lemma \ref{lem:underCommon}, we have
\begin{equation*}
  \sum_{k\in\Z}\sum_{R\in\mathscr D_k}
  \sum_{P,Q\in\operatorname{ch}^m(R)}\theta_m(P,Q)
  \D_PT\D_{Q,P}
  =\omega(\delta^m)\Big(\sum_{i=0}^{m-1}\S_{(m,i)}+\sum_{j=1}^m\S_{(j,0)}\Big),
\end{equation*}
where each $\S_{a,b}$ is a sum of $O(1)$ many normalised dyadic shifts of complexity $(a,b)$.
\end{lemma}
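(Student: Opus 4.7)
The strategy parallels that of Lemma \ref{lem:P-Q-shifts}: expand the operator using Haar functions and apply the kernel bound \eqref{eq:HaarEst}. The complication is that $\D_{Q,P}$ is not a standard martingale difference. Noting that $\D_{Q,P}f=(\ave{f}_Q-\ave{f}_P)1_Q$, we may write
\begin{equation*}
  \D_P T\D_{Q,P}=\sum_{\alpha=1}^{M_P-1}\pair{T1_Q}{h_P^\alpha}\,h_P^\alpha\otimes\Big(\frac{1_Q}{\mu(Q)}-\frac{1_P}{\mu(P)}\Big),
\end{equation*}
and then telescope the second factor through the common ancestor $R$:
\begin{equation*}
  \frac{1_Q}{\mu(Q)}-\frac{1_P}{\mu(P)}=\sum_{i=0}^{m-1}\Big(\frac{1_{Q^{(i)}}}{\mu(Q^{(i)})}-\frac{1_{Q^{(i+1)}}}{\mu(Q^{(i+1)})}\Big)-\sum_{i=0}^{m-1}\Big(\frac{1_{P^{(i)}}}{\mu(P^{(i)})}-\frac{1_{P^{(i+1)}}}{\mu(P^{(i+1)})}\Big).
\end{equation*}
Each telescoping step $1_{S'}/\mu(S')-1_S/\mu(S)$ with $S'\in\operatorname{ch}(S)$ lies in $L^2_0(\operatorname{ch}(S))$ and expands as $\sum_{\beta=1}^{M_S-1}c^\beta_{S',S}h_S^\beta$, where the $L^2$ identity $\sum_\beta|c^\beta_{S',S}|^2=1/\mu(S')-1/\mu(S)$ yields the pointwise bound $|c^\beta_{S',S}|\leq\mu(S')^{-1/2}$. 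This splits $\D_PT\D_{Q,P}$ into a $Q$-side contribution (Part~A) and a $P$-side contribution (Part~B).

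For Part~A I would set $Q':=Q^{(i+1)}\in\operatorname{ch}^{m-i-1}(R)$, so each resulting term has the form $h_P^\alpha\otimes h_{Q'}^\beta$ with $P\in\operatorname{ch}^m(R)$ and $Q'\in\operatorname{ch}^{m-i-1}(R)$, i.e., a shift of complexity $(m,m-i-1)$ at the same ancestor $R$. To verify the normalisation I would use \eqref{eq:thetaNon0} together with \eqref{eq:HaarEst} (valid because $h_P^\alpha$ is cancellative) to get $|\pair{T1_Q}{h_P^\alpha}|\lesssim\omega(\delta^m)\mu(Q)\sqrt{\mu(P)}/\mu(R)$ whenever $\theta_m(P,Q)\neq 0$; then collapse the $Q$-sum via $\sum_{Q\in\operatorname{ch}^{i+1}(Q')}\mu(Q)\,\mu(Q^{(i)})^{-1/2}=\sum_{Q^{(i)}\in\operatorname{ch}(Q')}\sqrt{\mu(Q^{(i)})}\leq\sqrt{M\mu(Q')}$ by Cauchy--Schwarz. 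This yields the shift normalisation $|a_{PQ'R}|\lesssim\omega(\delta^m)\sqrt{\mu(P)\mu(Q')}/\mu(R)$. As $i$ ranges over $\{0,\ldots,m-1\}$, the complexity $m-i-1$ runs over $\{0,\ldots,m-1\}$, producing the first sum in the conclusion.

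The main obstacle is Part~B, where the second tensor factor $h_{P^{(i+1)}}^\beta$ is a Haar function on an \emph{ancestor of $P$} and is independent of $Q$. Trying to form an $(m,m-i-1)$-shift at the original $R$ fails: the coefficient bound $\mu(P^{(i)})^{-1/2}$ cannot be absorbed into a factor $\sqrt{\mu(P^{(i+1)})}/\mu(R)$. The resolution is to \emph{re-centre} on a new common ancestor $R':=P^{(i+1)}$ itself, viewing $P\in\operatorname{ch}^{i+1}(R')$ and $h_{R'}^\beta$ as the trivial-complexity factor, producing a shift of complexity $(i+1,0)$ at $R'$. Since the $Q$-dependence is confined to the coefficient, one first sums over $Q\in\operatorname{ch}^m(R'^{(m-i-1)})$, using $\sum_Q\mu(Q)=\mu(R)$ and \eqref{eq:HaarEst} to collapse this piece to $\omega(\delta^m)\sqrt{\mu(P)}$, and then absorbs the surviving $\mu(P^{(i)})^{-1/2}$ against the target $\mu(R')^{-1/2}$ by the deterministic bound $\mu(P^{(i)})\geq\mu(R')/M$, a consequence of $P^{(i)}\in\operatorname{ch}(R')$ and $\#\operatorname{ch}(R')\leq M$. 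This gives the correct shift normalisation $|a_{PR'}|\lesssim\omega(\delta^m)\sqrt{\mu(P)/\mu(R')}$.

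Summing over the telescoping index $i\in\{0,\ldots,m-1\}$ and absorbing the $O(1)$ many Haar indices $(\alpha,\beta)$, Part~A yields $\omega(\delta^m)\sum_{i=0}^{m-1}\S_{(m,i)}$ and Part~B yields $\omega(\delta^m)\sum_{j=1}^m\S_{(j,0)}$ (with $j=i+1$), which together give the claimed identity.
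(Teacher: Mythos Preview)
Your approach is essentially identical to the paper's: telescope $1_Q/\mu(Q)-1_P/\mu(P)$ through the common ancestor $R$, treat the $Q$-side as shifts of complexity $(m,m-i-1)$ at the original ancestor $R$, and re-centre the $P$-side on $R'=P^{(i+1)}$ to obtain shifts of complexity $(i+1,0)$; the paper uses the index $j=i+1$ but the argument is the same.

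One minor correction: your justification ``$\mu(P^{(i)})\geq\mu(R')/M$, a consequence of $P^{(i)}\in\operatorname{ch}(R')$ and $\#\operatorname{ch}(R')\leq M$'' is not valid as stated, since having at most $M$ children only guarantees that \emph{some} child has measure $\geq\mu(R')/M$, not the particular one $P^{(i)}$. The correct argument is that $\mu(P^{(i)})\approx\mu(R')$ by doubling, since $P^{(i)}$ contains a ball of radius $c_0\delta\ell(R')$ while $R'$ is contained in a ball of radius $C_0\ell(R')$; this is what the paper uses (implicitly) when passing from $O(\mu(S)^{-1/2})$ to $O(\mu(U)^{-1/2})$ with $U=S^{(1)}$. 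Also note that for $m=m_0$ the separation bound \eqref{eq:thetaNon0} is unavailable, and one instead uses $\ell(P)+\rho(x_P,x_Q)\geq\ell(P)\approx\ell(R)$, absorbing the fixed $m_0$ into the implied constants.
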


\begin{proof}
Recalling the definition \eqref{eq:DQP} of $\D_{Q,P}$, we expand
\begin{equation*}
  \frac{1_Q}{\mu(Q)}=\sum_{j=1}^{m} \Big( \frac{ 1_{ Q^{(j-1)} } }{\mu(Q^{(j-1)} ) } -\frac{ 1_{Q^{(j)} } }{\mu(Q^{(j)})}\Big)
  +\frac{1_{Q^{(m)} }}{\mu(Q^{(m)})}
\end{equation*}
and similarly with $\displaystyle \frac{1_P}{\mu(P)}$. Since $Q^{(m)}=R=P^{(m)}$, it follows that
\begin{equation}\label{eq:QPdiff}
\begin{split}
  \frac{1_Q}{\mu(Q)}-\frac{1_P}{\mu(P)}
  &=\sum_{j=1}^{m} \Big( \frac{ 1_{ Q^{(j-1)} } }{\mu(Q^{(j-1)} ) } -\frac{ 1_{Q^{(j)} } }{\mu(Q^{(j)})}\Big) \\
  &\qquad-\sum_{j=1}^{m} \Big( \frac{ 1_{ P^{(j-1)} } }{\mu(P^{(j-1)} ) } -\frac{ 1_{P^{(j)} } }{\mu(P^{(j)})}\Big).
\end{split}
\end{equation}
Now
\begin{equation}\label{eq:QPdiff2}
\begin{split}
  &\sum_{P,Q\in\operatorname{ch}^m(R)}
  \theta_m(P,Q)\D_P T\D_{Q,P} \\
  &=\sum_{P,Q\in\operatorname{ch}^m(R)}
  \sum_{1\leq\alpha<M_P}\theta_m(P,Q)\pair{h_P^\alpha}{T 1_Q} h_P^\alpha\otimes
  \Big( \frac{1_Q}{\mu(Q)}-\frac{1_P}{\mu(P)}\Big).
\end{split}
\end{equation}
Using \eqref{eq:QPdiff} to expand the last factor in \eqref{eq:QPdiff2}, we obtain $2m$ series, each of which will be identified with (a component of) one of the $2m$ dyadic shifts on the right hand side of the claim.

With $\displaystyle \frac{ 1_{ Q^{(j-1)} } }{\mu(Q^{(j-1)} ) } -\frac{ 1_{Q^{(j)} } }{\mu(Q^{(j)})} $ in place of $\displaystyle\frac{1_Q}{\mu(Q)}-\frac{1_P}{\mu(P)}$ in \eqref{eq:QPdiff2}, and taking $S:=Q^{(j-1)}$ as a new summation variable, we get
\begin{equation}\label{eq:QjTerm}
\begin{split}
   &\sum_{P,Q\in\operatorname{ch}^m(R) }
   \theta_m(P,Q) \pair{h_P^\alpha}{T 1_Q} h_P^\alpha\otimes
  \Big( \frac{1_{Q^{(j-1)}}}{\mu(Q^{(j-1)})}-\frac{1_{Q^{(j)}}}{\mu(Q^{(j)})}\Big) \\
  &=   \sum_{\substack{ P\in\operatorname{ch}^m(R) \\ S\in\operatorname{ch}^{m-j+1}(R) }}
  \Big(\sum_{Q\in\operatorname{ch}^{j-1}(S)}\theta_m(P,Q)
    \pair{h_P^\alpha}{T 1_Q} \Big)h_P^\alpha\otimes\Big(\frac{1_S}{\mu(S)}-\frac{1_{S^{(1)}}}{\mu(S^{(1)})}\Big).
\end{split}
\end{equation}
By \eqref{eq:HaarEst2} and recalling that $1_Q=\sqrt{\mu(Q)}h_Q^0$, we have the estimate
\begin{equation}\label{eq:HaarEst3}
\begin{split}
  \sum_{Q\in\operatorname{ch}^{j-1}(S)}
   \abs{ \theta_m(P,Q) \pair{h_P^\alpha}{T 1_Q} }
   &\lesssim \sum_{Q\in\operatorname{ch}^{j-1}(S)}
   \omega(\delta^m)\frac{\sqrt{\mu(P)}\mu(Q)}{\mu(R)} \\
   &\leq\omega(\delta^m)\frac{\sqrt{\mu(P)}\mu(S)}{\mu(R)}.
\end{split}
\end{equation}
On the other hand, the function $\displaystyle \frac{1_S}{\mu(S)}-\frac{1_{S^{(1)}}}{\mu(S^{(1)})}$ is recognised as a linear combination, with coefficients of size $O(\mu(S)^{-1/2})$, of the Haar functions $h_{S^{(1)}}^\beta$. Thus
\begin{equation*}
  \eqref{eq:QjTerm}
  = \omega(\delta^m)\sum_{\substack{ P\in\operatorname{ch}^m(R) \\ S\in\operatorname{ch}^{m-j+1}(R) }}
  \sum_{1\leq\beta<M_{S^{(1)}}}O\Big( \frac{\sqrt{\mu(P)\mu(S)}}{\mu(R)}\Big)h_P^\alpha\otimes h_{S^{(1)}}^\beta
\end{equation*}
Taking $S^{(1)}\in\operatorname{ch}^{m-j}(R)$ as a new summation variable in place of $S$, we recognise this as a term of a shift of complexity $(m,m-j)$. Hence
\begin{equation*}
  \sum_{k\in\Z}\sum_{R\in\mathscr D_k}\eqref{eq:QjTerm}
  =\omega(\delta^m)\S_{(m,m-j)},
\end{equation*}
where $\S_{(m,m-j)}$ is a sum of boundedly many normalised dyadic shifts of complexity $(m,m-j)$. Summing over $j\in\{1,\ldots,m\}$ and denoting $i:=m-j\in\{0,\ldots,m-1\}$, we obtain the first half of the claimed decomposition.

It remains to consider the expressions obtained by substituting $\displaystyle \frac{ 1_{ P^{(j-1)} } }{\mu(P^{(j-1)} ) } -\frac{ 1_{P^{(j)} } }{\mu(P^{(j)})} $ in place of $\displaystyle\frac{1_Q}{\mu(Q)}-\frac{1_P}{\mu(P)}$ in \eqref{eq:QPdiff2}. Taking $S:=P^{(j-1)}\in\operatorname{ch}^{m-j+1}(R)$ as a new summation variable, we get
\begin{equation}\label{eq:PjTerm}
\begin{split}
   &\sum_{P,Q\in\operatorname{ch}^m(R)}
   \theta_m(P,Q)\pair{h_P^\alpha}{T 1_Q} h_P^\alpha\otimes
  \Big( \frac{1_{P^{(j-1)}}}{\mu(P^{(j-1)})}-\frac{1_{P^{(j)}}}{\mu(P^{(j)})}\Big) \\
  &=   \sum_{\substack{ S\in\operatorname{ch}^{m-j+1}(R) \\ P\in\operatorname{ch}^{j-1}(S)}}
  \Big(\sum_{Q\in\operatorname{ch}^{m}(R)}\theta_m(P,Q)
    \pair{h_P^\alpha}{T 1_Q} \Big)h_P^\alpha\otimes\Big(\frac{1_S}{\mu(S)}-\frac{1_{S^{(1)}}}{\mu(S^{(1)})}\Big).
\end{split}
\end{equation}
By \eqref{eq:HaarEst2}, similarly to \eqref{eq:HaarEst3}, we obtain the estimate
\begin{equation*}
  \sum_{Q\in\operatorname{ch}^{m}(R)}
    \Babs{\theta_m(P,Q)\pair{h_P^\alpha}{T 1_Q} }
    \lesssim \sum_{Q\in\operatorname{ch}^{m}(R)}\omega(\delta^m)\frac{\sqrt{\mu(P)}\mu(Q)}{\mu(R)}
    =\omega(\delta^m)\sqrt{\mu(P)}.
\end{equation*}
As before, $\displaystyle \frac{1_S}{\mu(S)}-\frac{1_{S^{(1)}}}{\mu(S^{(1)})}$ is a linear combination, with coefficients of the order $\displaystyle O(\mu(S)^{-1/2})=O(\sqrt{\mu(S)}/\mu(S))$, of the Haar functions $h_{S^{(1)}}^\beta$.
Thus
\begin{equation*}
\begin{split}
  \eqref{eq:PjTerm}
  &=\omega(\delta^m)\sum_{S\in\operatorname{ch}^{m-j+1}(R)}
  \sum_{\substack{P\in\operatorname{ch}^{j-1}(S) \\ 1\leq\beta<M_{S^{(1)}}}}O( \frac{ \sqrt{\mu(P)\mu(S)}}{\mu(S)})
    h_P^\alpha\otimes h_{S^{(1)}}^\beta \\
  &=\omega(\delta^m)\sum_{U\in\operatorname{ch}^{m-j}(R)}
  \sum_{\substack{P\in\operatorname{ch}^{j}(U) \\ 1\leq\beta<M_{U}}}O( \frac{ \sqrt{\mu(P)\mu(U)}}{\mu(U)})
    h_P^\alpha\otimes h_{U}^\beta,
\end{split}
\end{equation*}
taking $U:=S^{(1)}\in\operatorname{ch}^{m-j}(R)$ as a new summation variable. We can recognise the inner summation as a component $A_{U}$ of a normalised dyadic shift of complexity $(j,0)$. Then
\begin{equation*}
  \sum_{k\in\Z}\sum_{R\in\mathscr D_k}\eqref{eq:PjTerm}
  =\omega(\delta^m)\sum_{k\in\Z}\sum_{R\in\mathscr D_k}\sum_{U\in\operatorname{ch}^{m-j}(R)}A_U
  =\omega(\delta^m)\S_{(j,0)},
\end{equation*}
where $\S_{(j,0)}$ is a normalised dyadic shift of complexity $(j,0)$.

Summing over $j\in\{1,\ldots,m\}$, we obtain the second half of the claimed decomposition.
\end{proof}

By symmetry with the case considered in Lemma \ref{lem:P-QP-shifts}, we also obtain:

\begin{lemma}\label{lem:PQ-Q-shifts}
For $m\geq m_0$ and $\theta_m(P,Q)$ as in Lemma \ref{lem:underCommon}, we have
\begin{equation}\label{eq:PQ-Q-shifts}
  \sum_{k\in\Z}\sum_{\substack{ R\in\mathscr D_k \\ P,Q\in\operatorname{ch}^m(R) }}\theta_m(P,Q)\D_{P,Q}^*T\D_Q
  =\omega(\delta^m)\Big(\sum_{i=0}^{m-1}\S_{i,m}+\sum_{j=1}^m\S_{0,j}\Big),
\end{equation}
where each $\S_{a,b}$ is a (sum of $O(1)$ many) shift(s) of complexity $(a,b)$.
\end{lemma}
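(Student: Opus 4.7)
The plan is to derive the identity \eqref{eq:PQ-Q-shifts} from Lemma \ref{lem:P-QP-shifts} by a duality argument, applied to the adjoint operator $T^*$. The key structural observations that make this work are: (a) the coefficients $\theta_m(P,Q)$ are symmetric in $(P,Q)$, since $\chi_m(P,Q)$ only depends on the symmetric quantity $\rho(x_P,x_Q)$, and $\pi_m(P,Q)=\prob(P^{(m)}=Q^{(m)})$ is manifestly symmetric; (b) the outer summation $\sum_{R\in\mathscr D_k}\sum_{P,Q\in\operatorname{ch}^m(R)}$ is symmetric in $P$ and $Q$; and (c) the definitions give $\D_Q^*=\D_Q$, while $(\D_{P,Q})^*=\D_{P,Q}^*$ by construction.

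First I would rewrite the operator of interest by taking adjoints. Using $(\D_{P,Q}^*\,T\,\D_Q)^*=\D_Q\,T^*\,\D_{P,Q}$ and then swapping the dummy labels $P\leftrightarrow Q$ in the summation (which is legitimate by observation (b)), the left-hand side of \eqref{eq:PQ-Q-shifts} becomes the adjoint of
\begin{equation*}
    \sum_{k\in\Z}\sum_{\substack{R\in\mathscr D_k \\ P,Q\in\operatorname{ch}^m(R)}}
    \theta_m(P,Q)\,\D_P\,T^*\,\D_{Q,P},
\end{equation*}
where I have also used the symmetry of $\theta_m$. Since $T^*\in\bddlin(L^2(\mu))$ has the $\omega$-Calder\'on--Zygmund kernel $(x,y)\mapsto K(y,x)$, which obeys the symmetric conditions of Definition \ref{def:CZomega}, Lemma \ref{lem:P-QP-shifts} applies to $T^*$ in place of $T$. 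This identifies the above sum with
\begin{equation*}
   \omega(\delta^m)\Big(\sum_{i=0}^{m-1}\widetilde{\S}_{(m,i)}+\sum_{j=1}^m\widetilde{\S}_{(j,0)}\Big),
\end{equation*}
where each $\widetilde{\S}_{(a,b)}$ is a sum of $O(1)$ normalised dyadic shifts of complexity $(a,b)$.

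Finally I would take adjoints of the whole identity. The adjoint of a normalised dyadic shift $\sum_R\sum_{P,Q}a_{PQR}h_P\otimes h_Q$ of complexity $(a,b)$ is $\sum_R\sum_{P,Q}\overline{a_{PQR}}\,h_Q\otimes h_P$, which is a normalised dyadic shift of complexity $(b,a)$ with coefficients still satisfying the normalisation $|\overline{a_{PQR}}|\lesssim\sqrt{\mu(P)\mu(Q)}/\mu(R)$. Hence $\widetilde{\S}_{(m,i)}^*=\S_{(i,m)}$ and $\widetilde{\S}_{(j,0)}^*=\S_{(0,j)}$ are normalised dyadic shifts (again sums of $O(1)$ such), giving exactly the right-hand side of \eqref{eq:PQ-Q-shifts}. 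There is no substantial obstacle here since the hard work has already been done in Lemma \ref{lem:P-QP-shifts}; the only point requiring care is to track the complexities correctly under adjunction, which is immediate from the definition of a normalised dyadic shift.
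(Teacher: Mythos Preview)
Your proposal is correct and is precisely the symmetry argument the paper invokes: the paper's proof consists of the single remark ``By symmetry with the case considered in Lemma \ref{lem:P-QP-shifts}, we also obtain,'' and your duality/relabelling argument is exactly what this symmetry means. The only content you add beyond the paper is the explicit verification that $\theta_m(P,Q)$ is symmetric and that adjoints of normalised shifts of complexity $(a,b)$ are normalised shifts of complexity $(b,a)$, both of which are straightforward.
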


We can now collect the pieces together to give:

\begin{proof}[Proof of the Dyadic Representation Theorem \ref{thm:DRT}]
By \eqref{eq:T0T1} and Lemma \ref{lem:basicDec}, we have
\begin{equation*}
  T=T_1+\Pi_{T1}+\Pi_{T^*1}^*+\sum_{\substack{k\in\Z \\ P,Q\in\mathscr D_k}}
  \Big(\D_PT\D_Q+\D_{P,Q}^*T\D_Q+\D_PT\D_{Q,P}\Big).
\end{equation*}
This identity is valid for every fixed dyadic system $\mathscr D$, and we can take the expectation of both sides under a random choice of $\mathscr D$. Note that $T$ and $T_1$ are independent of $\mathscr D$. This gives
\begin{equation*}
\begin{split}
  T &-T_1 -\E(\Pi_{T1}+\Pi_{T^*1}^*)=\E\sum_{\substack{k\in\Z \\ P,Q\in\mathscr D_k}}
  \Big(\D_PT\D_Q+\D_PT\D_{Q,P}+\D_{P,Q}^*T\D_Q\Big) \\
  &=\E\sum_{m=m_0}^\infty\sum_{\substack{k\in\Z,R\in\mathscr D_k \\ P,Q\in\operatorname{ch}^m(R)}}
  \theta_m(P,Q)\Big(\D_PT\D_Q+\D_PT\D_{Q,P}+\D_{P,Q}^*T\D_Q\Big) \\
  &\qquad\qquad\text{by Lemma \ref{lem:underCommon}} \\
  &=\E\sum_{m=m_0}^\infty\omega(\delta^m)\Big(\S_{m,m}
  +[\sum_{i=0}^{m-1}\S_{m,i}+\sum_{j=1}^m\S_{j,0}]
  +[\sum_{i=0}^{m-1}\S_{i,m}+\sum_{j=1}^m\S_{0,j}]\Big)  \\
  &\qquad\qquad\text{by Lemmas \ref{lem:P-Q-shifts}, \ref{lem:P-QP-shifts}, and \ref{lem:PQ-Q-shifts}}.
\end{split}
\end{equation*}
Moving the terms $T_1$ and $\E(\Pi_{T1}+\Pi_{T^*1}^*)$ to the right-hand side, this is precisely the identity claimed in Theorem \ref{thm:DRT}. (In the statement of Theorem \ref{thm:DRT}, we started the summation from $m=0$ instead of $m=m_0$, but this is easily fixed by simply defining the ``extra'' shifts to be identically zero.)
\end{proof}

\subsection*{Acknowledgements}
This work was originally part of a longer manuscript ``Schatten properties of commutators and equivalent Sobolev norms on metric spaces'', coauthored with Riikka Korte and containing both the present text and the companion paper \cite{HK:W1p}, whose results play a critical role in completing the end-point case \eqref{it:p=d} of Main Theorem \ref{thm:main} of the present work. While we decided to split the original manuscript into two, the author would like to thank Riikka Korte for the fruitful collaboration that allowed the completion of the larger project. The author would also like to thank Lorenzo Zacchini and an anonymous referee for their careful reading of the manuscript, which helped in eliminating several typos and some oversights of earlier versions.

%\bibliography{metricW1p}
%\bibliographystyle{abbrv}

\end{document}